 \noindent \textsc{IMPA, Estrada Dona Castorina 110, CEP 22460 Rio de
   Janeiro, Brasil} \par 
\noindent \textsc{CNRS UMR 6085, Universit\'e de Rouen, Avenue de
   l'Universit\'e, BP.12,} \par
\noindent \textsc{Technop\^ole du Madril\-let, F76801
   Saint-\'Etienne-du-Rouvray, France} \par  
\noindent  \textrm{E-mail address}: \texttt{landim@impa.br}
\newtheorem{theorem}{Theorem}[section]
\newtheorem{lemma}[theorem]{Lemma}
\newtheorem{proposition}[theorem]{Proposition}
\newtheorem{corollary}[theorem]{Corollary}
\newtheorem{definition}[theorem]{Definition}
\newtheorem{remark}[theorem]{Remark}
\newcounter{as}
\newcommand{\mc}[1]{{\mathcal #1}}
\newcommand{\mf}[1]{{\mathfrak #1}}
\newcommand{\mb}[1]{{\mathbf #1}}
\newcommand{\bb}[1]{{\mathbb #1}}
\newcommand{\bs}[1]{{\boldsymbol #1}}
\newcommand{\ms}[1]{{\mathscr #1}}
\newcommand{\<}{\langle}
\renewcommand{\>}{\rangle}
\renewcommand{\Cap}{{\rm cap}}
\title{Metastable Markov chains}
\author{C. Landim}
\date{}
\begin{document}

\maketitle

\begin{abstract}
  We review recent results on the metastable behavior of
  continuous-time Markov chains derived through the characterization
  of Markov chains as unique solutions of martingale problems.
\end{abstract}

\tableofcontents

\bigskip

We present in this review recent developments in the theory of
metastable Markov chains. The goal of the theory consists in
describing the evolution of a Markov chain by a simpler dynamics,
typically one whose state-space is much smaller than the original one,
preserving the ``macroscopic'' features of the original process. 

To illustrate the problem, we present in the next section an example
which motivates the definitions of metastability introduced in Section
\ref{sec1}. We then develop three general methods, based on the
characterization of Markov chains as solutions of a martingale
problems, to derive the metastable behavior of these dynamics.

There are two recent and compulsory monographs on this subject. The
first one, by Olivieri and Vares \cite{ov05}, addresses the problem
from the perspective of the large deviations theory, and the second
one, by Bovier and Den Hollander \cite{bh15}, uses potential theoretic
tools. We do not recall these approaches here and refer the reader to
the books. The reader will also find there physical motivations, an
historical account and an exhaustive list of references, three aspects
which are overlooked here. We tried, though, to include in the
references the articles published after 2015.

Throughout the article, all new notation and concepts are introduced
in blue. We believe this will help the reader who may want to skip
some introductory parts. We present in Section \ref{sec13} and
\ref{sec10} all results on Markov chains and potential theory used in
the article. Comments on the method presented in this review are left
to the end of Subsection \ref{ssec1.3}.

\section{A random walk in a graph}
\label{sec0}

We present in this section an example of a Markov chain to motivate
three different definitions of metastability.  Denote by
$\color{blue} E_N$, $N\ge 1$, the set shown in Figure
\ref{fig1}. In this picture, each large square represents a
$d$-dimensional discrete cube of length $N$,
${\color{blue}\Lambda_N} = \{1, \dots, N\}^d$, $d\ge 2$.  Each pair of
neighboring cubes has one and only one common point. In particular,
$E_N$ has $4 (N^d-1)$ elements.  Elements of $E_N$ are
represented by the Greek letters $\eta$, $\xi$, $\zeta$, and are
called points or configurations.

\begin{figure}[h]
\centering
\begin{tikzpicture}[scale = .2]
\foreach \x in {0, ..., 8}
\foreach \y in {0, ..., 8}
\draw (\x,\y) -- (\x,\y+1) -- (\x+1,\y+1) -- (\x+1,\y) -- (\x,\y);
\foreach \x in {9, ..., 17}
\foreach \y in {9, ..., 17}
\draw (\x,\y) -- (\x,\y+1) -- (\x+1,\y+1) -- (\x+1,\y) -- (\x,\y);
\foreach \x in {-9, ..., -1}
\foreach \y in {9, ..., 17}
\draw (\x,\y) -- (\x,\y+1) -- (\x+1,\y+1) -- (\x+1,\y) -- (\x,\y);
\foreach \x in {0, ..., 8}
\foreach \y in {18, ..., 26}
\draw (\x,\y) -- (\x,\y+1) -- (\x+1,\y+1) -- (\x+1,\y) -- (\x,\y);
\draw[<->, red] (-10,9) -- (-10,18);
\draw[red]  (-10, 14) node[anchor=east] {$N$};
\draw (0, 4) node[anchor=east] {$E_{2,N}$};
\draw (9, 13) node[anchor=east] {$E_{1,N}$};
\draw (-12, 13) node[anchor=east] {$E_{3,N}$};
\draw (0,22) node[anchor=east] {$E_{0,N}$};
\end{tikzpicture}
\caption{The set $E_N$}
\label{fig1}
\end{figure}
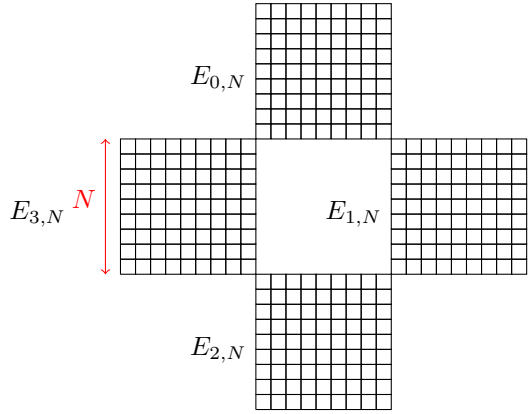

Let $\color{blue} E_{j,N}$, $0\le j\le 3$, be copies of
$\Lambda_N$. The set $E_N$ is formed by the union of the sets
$E_{j,N}$ in which some corner points have been identified. We
denote by $E_{0,N}$ the north cube and proceed labeling the sets
in the clockwise order so that $E_{3,N}$ represents the west
cube.

Denote by $\color{blue}\eta_N(t)$ the continuous-time,
$E_N$-valued, Markov chain which waits a mean-one exponential
time at each configuration and then jumps uniformly to one of the
neighbor points. This Markov chain is clearly irreducible.  Denote by
$\color{blue} \text{deg }(\eta)$, $\eta\in E_N$, the degree of the
configuration $\eta$, that is the number of neighbors.  The measure
$\color{blue}\pi_N$, defined by
$\pi_N(\eta) = Z^{-1}_N \text{deg }(\eta)$, where $Z_N$ is the
normalizing constant which turns $\pi_N$ a probability measure,
satisfies the detailed balance conditions, and is therefore the unique
stationary state.

The purpose of this section is to provide a \emph{\color{blue}
  synthetic description} of the Markov chain $\eta_N(t)$. In this
example, the reduced model is evident. Denote by
${\color{blue}\Upsilon_N} : E_N \to \{0, 1,2,3\}$ the projection
which sends a configuration in $E_{j,N}$ to $j$:
\begin{equation*}
\Upsilon_N(\eta) \;=\; \sum_{k=0}^3 k \, \chi_{E_{k,N}} 
(\eta) \;,
\end{equation*}
where $\color{blue}\chi_A$ stands for the indicator function of the
set $A$.  The value of $\Upsilon_N$ at the intersections of the cubes
is not important and can be set arbitrarily.

The derivation of the asymptotic evolution of the \emph{\color{blue}
 coarse-grained model}
\begin{equation*}
Y_N(t) \;=\; \Upsilon_N(\eta_N(t))
\end{equation*}
is based on properties of random walks evolving on discrete
cubes. Denote by $\color{blue}z_N(t)$ the symmetric, continuous-time
random walk on $\Lambda_N$ [the process $\eta_N(t)$ restricted to
$\Lambda_N$], and by $\color{blue}\pi_{\Lambda_N}$ its stationary
state, the probability measure which gives weights proportional to the
degree of the vertices. It is well known, cf. \cite[Proposition
10.13]{LevPerWil09}, that the mixing time of $z_N(t)$ is of order
$N^2$ and that the time needed to hit a point at distance $N$ is of
order $\color{blue}\alpha_N = N^2\log N$ in dimension $2$, and
$\color{blue} \alpha_N = N^d$ in dimension $d\ge 3$.

Assume that the chain starts at the center of the cube
$E_{j,N}$. Denote by $\color{blue}B$ the set of points which
belong to more than one cube, called hereafter the \emph{\color{blue}
  intersection points}, and by $H^N_{B}$ the hitting time of $B$:
\begin{equation*}
H^N_{B} \;=\; \inf\{t\ge 0 : \eta_N(t) \in B\}\;.
\end{equation*}
Since the mixing time is of order $N^2$ and the hitting time $H^N_{B}$
is of a much larger order, the chain \emph{\color{blue} equilibrates},
or \emph{\color{blue} thermalizes}, before reaching one of the corners
of $E_{j,N}$. This mean that the distribution of the chain approaches
$\pi_{\Lambda_N}$ before attaining $B$. In particular, $\eta_N(t)$
looses track of its starting point before hitting one of the corners,
and it reaches one of the two intersection points with a probability
close to $1/2$.

After thermalizing inside the cube $E_{j,N}$, the random walk
$\eta_N(t)$ wanders around $E_{j,N}$ for a length of time of order
$\alpha_N$, and then attains a point in the intersection of
$E_{j,N}$ with $E_{j\pm 1,N}$, where summation is performed
modulo $4$. Denote this point by $\xi$, and assume, to fix ideas, that
it belongs to $E_{j,N} \cap E_{j+ 1,N}$.

Fix a sequence $\color{blue} (\ell_N: N\ge 1)$ such that
$\ell_N\to\infty$, $\ell_N/N\to 0$. The precise choice of $\ell_N$ is
not important. Denote by $\color{blue} V_N$ the set of points in
$E_N$ which are at an Euclidean distance $\ell_N$ or less from
$\xi$. After hitting $\xi$, the random walk performs some short
excursions from $\xi$ to $\xi$ which remain in $V_N$. Some of these
excursions are contained in the set $E_{j,N}$ and some in
$E_{j+ 1,N}$.

It takes a time of order $\ell^2_N$ for $\eta_N(t)$ to escape from
$V_N$, that is, to reach a point in $V^c_N$, the complement of
$V_N$. Note that $\ell^2_N$ is much smaller than $\alpha_N$ and so the
escape time from $V_N$ is negligible in this time-scale.

Starting from a point at the external boundary of $V_N$, it takes a
time of order $N^2 \log \ell_N$ in dimension $2$ and $N^d$ in
dimension $d\ge 3$ to hit again the set $B$. Since this time is much
longer than the mixing time, once in $V_N^c$, before hitting the set
$B$ again, the process equilibrates inside the cube. Thus, we are back
to the initial situation, and we can iterate the previous argument to
provide a complete description of the evolution of the random walk
$\eta_N(t)$ among the cubes.

According to the previous analysis, the evolution of the random walk
can be described as follows. Starting from a point not too close from
the corners, the random walk equilibrates in the cube from where it
starts before it reaches one of the intersection points. Since it has
equilibrated, it reaches one of the two boundary points with equal
probability. Then, after some short excursion close to the
intersection point, it escapes from the corner to one of the
neighboring cubes, with equal probability due to the symmetry of the
set $E_N$. In particular, with probability $1/2$ the random walk
returns to the cube from which it came when it hit the intersection
point. The escape time being much shorter than the equilibration time,
the small excursions around the intersection can be neglected in the
asymptotic regime. After escaping, the process equilibrates in the
cube where it is and we may iterate the description of the evolution.

Loss of memory being the essence of Markovian evolution, in the
time-scale $\alpha_N$, the coarse-grained, speeded-up process 
\begin{equation*}
\mb Y_N(t)  \;:=\; Y_N(t \alpha_N) \;=\; \Upsilon_N(\eta_N(t \alpha_N))
\end{equation*}
should evolve as a $\color{blue} S:=\{0, 1,2,3\}$-valued,
continuous-time Markov chain $\mb Y(t)$ with holding rates equal to
some $\lambda>0$ and jump probabilities given by $p(j,j\pm 1) = 1/2$.

In which sense can $\mb Y_N(t)$ converge to a Markov chain?  Figure
\ref{fig2} presents a typical realization of the process $\mb Y_N(t)$.
The process remains a time interval of order $\alpha_N$ at a point
$x\in S$ until $\eta_N(t)$ reaches an intersection point. At this
time, $\eta_N(t)$ performs very short excursions [in the time scale
$\alpha_N$] in both neighboring squares. These short excursions are
represented in Figure \ref{fig2} by the bold rectangles to indicate a
large number of oscillations in a very short time interval. After many
short excursions the random walk escapes from the boundary and remains
in one of the neighboring cubes for a new time interval of order
$\alpha_N$.

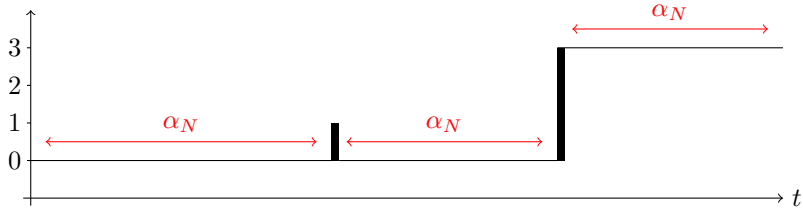
\begin{figure}[h]
  \centering
\begin{tikzpicture}[scale = .5]
\draw[->] (-.2, 0) -- (20,0) node[right] {$t$};
\draw[->] (0,-.2) -- (0,5);
\foreach \y in {0, ..., 3}
\draw (-.1,1+\y) -- (0,1+\y) node[left] {$\y$};
\draw (0, 1) -- (8,1);
\fill[black!100] (8,1) -- (8.2,1) -- (8.2,2) -- (8,2) -- (8,1);
\draw (8.2, 1) -- (14,1);
\fill[black!100] (14,1) -- (14.2,1) -- (14.2,4) -- (14,4) -- (14,1);
\draw (14, 4) -- (20,4);
\draw [<->, red] (0.4,1.5) -- (7.6,1.5);
\draw [red] (4,1.5) node[above] {$\alpha_N$};
\draw [<->, red] (8.4,1.5) -- (13.6,1.5);
\draw [red] (11,1.5) node[above] {$\alpha_N$};
\draw [<->, red] (14.4,4.5) -- (19.6,4.5);
\draw [red] (17,4.5) node[above] {$\alpha_N$};
\end{tikzpicture}
\caption{A typical trajectory of the process $X_N(t)$. The red arrows
  indicate the length of the time intervals which are of order
  $\alpha_N$.}
\label{fig2}
\end{figure}

These fluctuations in very short time intervals, represented by the black
rectangles in Figure \ref{fig2}, rule out the possibility that
$\mb Y_N(t)$ converges in any of the Skorohod topologies. Thus, either
we content ourselves with the convergence of the finite-dimensional
distributions or we need to adjust the trajectories of $\mb Y_N(t)$
by removing these short excursions. 

The first step consists in introducing a set
$\color{blue} \Delta_N \subset E_N$ to separate the squares
$E_{j,N}$.  This procedure is illustrated in Figure \ref{fig4},
where $\ms E^j_N$ represents $E_{j,N} \setminus \Delta_N$. The
set $\Delta_N$ is not unique. We only require that it is small enough
for the fraction of time spent in $\Delta_N$ to be negligible, but
large enough for the process, starting from a point outside of
$\Delta_N$, to equilibrate before it hits an intersection point.

In the example of this section, the set $\color{blue} \ms E^k_N$ can
be the points of $E_{k,N}$ which are at distance at least
$\ell_N$ from the intersection points, or, as in Figure \ref{fig4},
the set of points at distance greater than $\ell_N$ from the faces of
the cubes. Here, as above, $\ell_N$ is a sequence such that
$\ell_N\to\infty$, $\ell_N/N \to 0$.

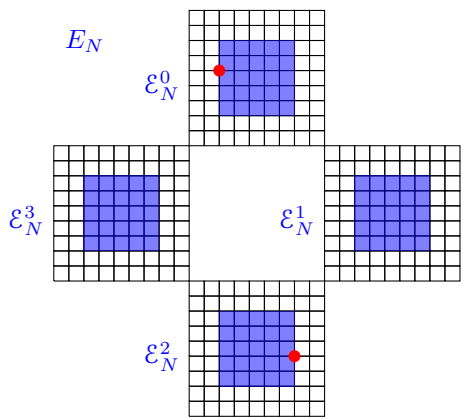
\begin{figure}[h]
  \centering
\begin{tikzpicture}[scale = .20]
\foreach \x in {0, ..., 8}
\foreach \y in {0, ..., 8}
\draw (\x,\y) -- (\x,\y+1) -- (\x+1,\y+1) -- (\x+1,\y) -- (\x,\y);
\foreach \x in {9, ..., 17}
\foreach \y in {9, ..., 17}
\draw (\x,\y) -- (\x,\y+1) -- (\x+1,\y+1) -- (\x+1,\y) -- (\x,\y);
\foreach \x in {-9, ..., -1}
\foreach \y in {9, ..., 17}
\draw (\x,\y) -- (\x,\y+1) -- (\x+1,\y+1) -- (\x+1,\y) -- (\x,\y);
\foreach \x in {0, ..., 8}
\foreach \y in {18, ..., 26}
\draw (\x,\y) -- (\x,\y+1) -- (\x+1,\y+1) -- (\x+1,\y) -- (\x,\y);
\fill [rectangle,blue,opacity=.5] (2,2)--(2,7)--(7,7)--(7,2);
\fill [rectangle,blue,opacity=.5] (11,11)--(16,11)--(16,16)--(11,16);
\fill [rectangle,blue,opacity=.5] (-7,11)--(-2,11)--(-2,16)--(-7,16);
\fill [rectangle,blue,opacity=.5] (2,20)--(7,20)--(7,25)--(2,25);
\draw[blue] (0, 4) node[anchor=east] {$\ms E^2_N$};
\draw[blue] (9, 13) node[anchor=east] {$\ms E^1_N$};
\draw[blue] (-9, 13) node[anchor=east] {$\ms E^3_N$};
\draw[blue] (0,22) node[anchor=east] {$\ms E^0_N$};
\draw[blue] (-5, 25) node[anchor=east] {$E_N$};
\fill [red] (2,23) circle [radius= .4 cm];
\fill [red] (7,4) circle [radius= .4 cm];
\end{tikzpicture}
\caption{The sets $\ms E^k_N$ are indicated in blue. The two red dots
  represent points in $\ms E^0_N$ and $\ms E^2_N$. The trace process
  $\eta^{\ms E}(t)$ may jump from one to the other. It has therefore
  long jumps, in contrast with the original random walks which only
  jumps to nearest neighbors. The picture is misleading as the annulus
  around each blue square is much smaller than the square.}
\label{fig4}
\end{figure}

In the next section, we propose two different types of amendments of
the trajectories of $\eta_N(t)$ to achieve convergence in the Skorohod
topology of the coarse-grained model. 

Before we turn to that, consider the example shown in Figure
\ref{fig5}. Assume that each line has $N$ points, counting the common
intersection point. Consider a random walk evolving on this graph. The
process waits a mean-one exponential time at the end of which it jumps
to one of its neighbors with equal probability. Since one-dimensional
random walks on a set of $N$ points equilibrate in a time of order
$N^2$, and since it hits a point a distance $N$ in the same time-scale,
there is no \emph{\color{blue} separation of scales} and the argument
presented above to claim the possibility of a synthetic description
of the dynamics does not apply.

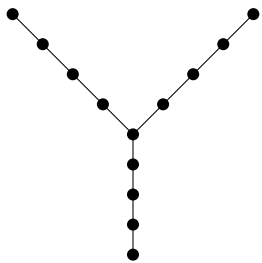
\begin{figure}[h]
  \centering
\begin{tikzpicture}[scale = .20]
\draw (0,0) -- (0,-8);
\draw (0,0) -- (8,8);
\draw (0,0) -- (-8,8);
\foreach \x in {0, ..., 4}
\fill [black] (0, -2*\x) circle [radius= .4 cm];
\foreach \x in {1, ..., 4}
\fill [black] (2*\x,2*\x) circle [radius= .4 cm];
\foreach \x in {1, ..., 4}
\fill [black] (-2*\x,2*\x) circle [radius= .4 cm];
\end{tikzpicture}
\caption{A random walk on a graph which does not have a synthetic
  description as a $3$-state Markov chain.} 
\label{fig5}
\end{figure}

\section{Metastability as model reduction}
\label{sec1}

The phenomenon described in the previous section, in which a process
remains a long time in a set in which it equilibrates before it
attains, in a very short transition, another set where the same
behavior is observed, is shared by many different types of
dynamics (cf. Section \ref{sec12} for many examples).

For this reason, we present in a general framework the adjustments
needed in the trajectory of the coarse-grained model to yield
convergence in the Skorohod topology.  Let $(E_N : N\ge 1)$ be a
sequence of finite state spaces. Elements of $E_N$ are represented by
the Greek letters $\eta$, $\xi$, $\zeta$. Denote by $\eta_N(t)$ a
continuous-time, $E_N$-valued, irreducible Markov chain.  Its
generator is represented by $\color{blue} \ms L_N$ and its unique
stationary state by $\color{blue} \pi_N$. Therefore, for every
function $f: E_N \to \bb R$,
\begin{equation*}
(\ms L_N\, f)(\eta) \;=\; \sum_{\xi\in E_N} R_N(\eta,\xi)\,
\big[\, f(\xi) - f(\eta)\, \big]\;,
\end{equation*}
where $\color{blue} R_N(\eta,\xi)$ stands for the jump rates.

For a nonempty subset $A$ of $E_N$, let $H_A$, resp. $H^+_A$, stands
for the hitting time of the set $A$, resp. the return time to $A$,
\begin{equation}
\label{1-5}
{\color{blue} H_A} \;:=\; \inf\{t\ge 0 : \eta_N(t) \in A\}\;,
\quad {\color{blue} H^+_A} \;=\; \inf\{t\ge \tau_1 : \eta_N(t) \in A\}\;.
\end{equation}
in this formula, $\tau_1$ represents the time of the first jump of
$\eta_N(t)$, $\tau_1 = \inf\{t \ge 0 : \eta_N(t) \not = \eta_N(0)\}$.

Assume that $E_N$ contains $\color{blue} \mf n>1$ disjoint sets
$\ms E^1_N, \dots, \ms E^{\mf n}_N$, called \emph{\color{blue}
  valleys}, separated by a set $\Delta_N$, so that
$\ms E^1_N, \dots, \ms E^{\mf n}_N, \Delta_N$ forms a partition of
$E_N$.  Let $\color{blue} S := \{1, \dots, \mf n\}$, and denote by
$\Phi_N\colon E_N \to S \cup \{\mf d\}$ the projection which sends a
configuration in $\ms E^j_N$, $\Delta_N$ to $j$, $\mf d$,
respectively:
\begin{equation*}
{\color{blue} \Phi_N (\eta)} \;:=\; \sum_{k=1}^{\mf n} k \; 
\chi_{\ms E^k_N} (\eta) \;+\; \mf d\, \chi_{\Delta_N} (\eta)\;.
\end{equation*}
Let $X_N(t)$ be the $(S\cup \{\mf d\})$-valued process given by
\begin{equation}
\label{A-1}
{\color{blue}  X_N(t)} \;:=\; \Phi_N\big(\, \eta_N (t) \,\big)\;.
\end{equation}

In the example of the previous section, the trajectory of $X_N(t) =
\Phi_N(\eta_N(t))$ resembles the one presented in Figure \ref{fig2}
with additional spikes due to very short excursions [in the time scale
$\alpha_N$] out of $\ms E^k_N$ which occur far from the intersection
points.

\subsection{Last passage} 
\label{ssec1.1}

The first adjustment of the trajectories which enables convergence in
the Skorohod topology consists in removing the fast fluctuations by
recording the last set $\ms E^k_N$ visited by $\eta_N(t)$. For $t>0$,
denote by $\eta_N(t\,-)$ the left limit of $\eta_N$ at $t$:
\begin{equation*}
\eta_N(t\,-) \;=\; \lim_{s\to t \,,\, s<t} \eta_N(s)\;.
\end{equation*}
Let $X^V_N(t)$ be given by
\begin{equation}
\label{A-2}
{\color{blue} X^V_N(t)} \;:=\; 
\Phi_N\big(\,\eta_N(\, \mf v_N (t)\, )\, \big)\;.
\end{equation}
where 
\begin{equation*}
{\color{blue} {\mf v_N (t)}} \;=\;
\begin{cases}
t & \text{if $\eta_N (t) \in \ms E_N$}\;, \\
\mf w_N (t)\,- & \text{otherwise}\;,
\end{cases}
\end{equation*}
and $\mf w_N (t)$ represents the last time before $t$ the process was
in one of the valleys $\ms E^k_N$:
\begin{equation*}
\mf w_N (t) \;:=\; \sup \{ s\le t : \eta_N(s)\in \ms E_N\}
\quad\text{and}\quad
{\color{blue} \ms E_N} \;:=\; \bigcup_{k=1}^{\mf n} \ms E^k_N
\;.
\end{equation*}
If the set on the right-hand side is empty, we set $\mf w_N
(t)=0$. This remark is not important as we will always start the
process from a configuration in $\ms E_N$. Note that $X^V_N(t) \in S$
because $\eta_N(\mf v_N (t)) \in \ms E_N$ for all $t\ge 0$ whenever
$\eta_N(0)\in \ms E_N$.

The time change $\mf v_N (t)$ removes the rapid oscillations from the
trajectory. Indeed, in the example of the previous section assume that
the process starts from a configuration in $\ms E^k_N$, and denote by
$\ms E^j_N$ the next valley visited. Recall that $H_{\ms E^j_N}$
represents the hitting time of this valley.  In the time interval $[0,
H_{\ms E^j_N})$, during the rapid excursions of the random walk
$\eta_N(t)$ in $\Delta_N$, $X^V_N(t)$ remains equal to $k$. In
particular, the fast fluctuations in the time interval $[0, H_{\ms
  E^j_N}]$ are washed out. We may iterate the argument starting from
time $H_{\ms E^j_N}$ to extend this property to the full trajectory.

Since $H_{\ms E^j_N} $ is of order $\alpha_N$, the trajectory of
$X^V_N(t)$ is formed by a sequence of time intervals of this magnitude
in which the process remains constant. The objections raised above for
the convergence in the Skorohod topology are thus overturned, and we
may expect, due to the loss of memory which emerges from the
equilibration, that in the time scale $\alpha_N$, $X^V_N(t)$ converges
to a $S$-valued Markov chain in the Skorohod topology.

\begin{definition}[Metastability according to LP]
\label{l1-1}
The Markov chain $\eta_N(t)$ is said to be metastable, in the sense of
last passage, in the time-scale $\theta_N$ if there exists a partition
$\{\ms E^1_N, \dots, \ms E^{\mf n}_N, \Delta_N\}$ of the state space
$E_N$ and a $S$-valued, continuous-time Markov chain $\bs X(t)$
such that 
\begin{enumerate}
\item[{\bf (LP1)}] For any $k\in S=\{1, \dots, \mf n\}$ and any sequence
$(\eta_N:N\ge 1)$ such that $\eta_N\in \ms E^k_N$, starting from
$\eta_N$, $\bs X^V_N(t) = X^V_N(t\, \theta_N)$ converges in the
Skorohod topology to $\bs X(t)$.
\item[{\bf (LP2)}] The time spent in $\Delta_N$ is negligible: For all $t>0$
\begin{equation*}
\lim_{N\to\infty}
\max_{\eta\in\ms E_N} \bb E^N_{\eta} \Big[\,
\int_0^t \chi_{\Delta_N} \big(\eta_N(s\theta_N)\big) 
\; ds\, \Big] \;=\; 0\;. 
\end{equation*}
\end{enumerate}
The sets $\ms E^j_N$ are called valleys and the process $\bs X(t)$ the
\emph{\color{blue} reduced model}.
\end{definition}

The main difficulty in proving such a result lies in the fact that the
process $\eta_N(\, \mf v_N (t)\, )$ is not markovian. For this reason
we propose an alternative modification of the trajectory which keeps
this property. This method requires the definition of the trace of a
process, which we present below in the context of continuous-time
Markov chains taking values in a finite state space.

\subsection{Trace process}  
\label{ssec1.2}

Let $E$ be a finite set and let $\eta(t)$ be an irreducible,
continuous-time, $E$-valued Markov chain. Denote by $\color{blue}
R(\eta,\xi)$, $\eta\not = \xi\in E$, the jump rates of this chain, by
$\color{blue}\lambda (\eta) = \sum_{\xi\in E} R(\eta,\xi)$ the holding
rates, and by $\color{blue} \pi$ the unique stationary probability
measure.

Denote by $\color{blue} D([0,\infty), E)$ the space of
right-continuous trajectories $\omega: [0,\infty) \to E$ which have
left limits endowed with the Skorohod topology \cite{Bil99}. This
notation will be used below, without further comments, replacing $E$
by another metric space. Let $\color{blue} \bb P_\eta$, $\eta\in E$,
be the probability measures on $D([0,\infty), E)$ induced by the
Markov chain $\eta(t)$ starting from $\eta$. Expectation with respect
to $\bb P_\eta$ is represented by $\color{blue} \bb E_\eta$.

Fix a non-empty, proper subset $F$ of $E$ and denote by $T_{F} (t)$,
$t\ge 0$, the total time the process $\eta(t)$ spends in $F$ on the
time-interval $[0,t]$:
\begin{equation*}
{\color{blue} T_F(t)}
\;:=\; \int_0^{t} \chi_F( \eta(s)) \; ds\; ,
\end{equation*}
where, we recall, $\chi_F$ represents the indicator function of the
set $F$.  Denote by $S_F(t)$ the generalized inverse of the additive
functional $T_F(t)$:
\begin{equation}
\label{1-3}
{\color{blue} S_F(t)} \;:=\; \sup\{s\ge 0 : T_F(s) \le t\}\;. 
\end{equation}
The irreducibility guarantees that for all $t>0$, $S_F(t)$ is finite
almost surely.

The process $T_F$ is continuous. It is either constant, when the chain
visits configurations which do not belong to $F$, or it increases
linearly. Figure \ref{fig3} illustrates this behavior.  Denote by
$\eta_F(t)$ the \emph{\color{blue} trace} of the chain $\eta(t)$ on
the set $F$, defined by $\color{blue} \eta_F(t) := \eta(S_F(t))$.
Taking the trace of the process corresponds to changing the axis of
time in Figure \ref{fig3}. When the process hits $F^c$, time is frozen
until $\eta(t)$ reaches $F$ again, at which time the clock is
restarted. In particular, $\eta_F(t)$ takes values in the set $F$.

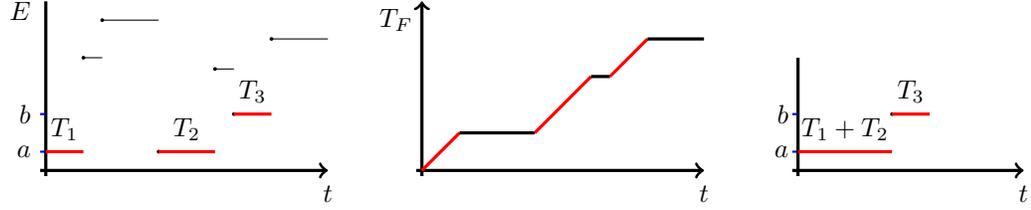
\begin{figure}[h]
\centering
\begin{tikzpicture}[scale = .25]
\draw[very thick, ->] (-.3, 0) -- (15,0);
\draw[very thick] (0, -.3) -- (0,9);
\fill[black!100] (0,1) circle [radius= .1 cm];
\draw[very thick, red] (0,1) -- (2,1);
\fill[black!100] (2,6) circle [radius= .1 cm];
\draw (2,6) -- (3,6);
\fill[black!100] (3,8) circle [radius= .1 cm];
\draw (3,8) -- (6,8);
\fill[black!100] (6,1) circle [radius= .1 cm];
\draw[very thick, red] (6,1) -- (9,1);
\fill[black!100] (9,5.4) circle [radius= .1 cm];
\draw (9,5.4) -- (10,5.4);
\fill[black!100] (10,3) circle [radius= .1 cm];
\draw[very thick, red] (10,3) -- (12,3);
\fill[black!100] (12,7) circle [radius= .1 cm];
\draw (12,7.0) -- (15,7);
\draw (15,-.3) node[anchor = north]{$t$};
\draw (1,1.1) node[anchor = south]{$T_1$};
\draw (7.5,1.1) node[anchor = south]{$T_2$};
\draw (11,3.1) node[anchor = south]{$T_3$};
\draw (-.3,8.5) node[anchor = east]{$E$};
\draw (-.3,1) node[anchor = east]{$a$};
\draw (-.3,3) node[anchor = east]{$b$};
\draw[thick, blue] (-.3, 1) -- (0,1);
\draw[thick, blue] (-.3, 3) -- (0,3);
\draw[very thick, ->] (19.7, 0) -- (35,0);
\draw[very thick, ->] (20, -.3) -- (20,9);
\draw[very thick, red] (20, 0) -- (22,2);
\draw[very thick] (22,2) -- (26,2);
\draw[very thick, red] (26,2) -- (29,5);
\draw[very thick] (29,5) -- (30,5);
\draw[very thick, red] (30,5) -- (32,7);
\draw[very thick] (32,7) -- (35,7);
\draw (20,8) node[anchor = east]{$T_F$};
\draw (35,-.3) node[anchor = north]{$t$};
\draw[very thick, ->] (39.7, 0) -- (52,0);
\draw[very thick] (40, -.3) -- (40,6);
\draw (52,-.3) node[anchor = north]{$t$};
\draw[thick, blue] (39.7, 1) -- (40,1);
\draw[thick, blue] (39.7, 3) -- (40,3);
\draw (40,1) node[anchor = east]{$a$};
\draw (40,3) node[anchor = east]{$b$};
\fill[black!100] (40,1) circle [radius= .1 cm];
\draw[very thick, red] (40,1) -- (45,1);
\draw (42.5,1.1) node[anchor = south]{$T_1+T_2$};
\fill[black!100] (45,3) circle [radius= .1 cm];
\draw[very thick, red] (45,3) -- (47,3);
\draw (46,3.1) node[anchor = south]{$T_3$};
\end{tikzpicture}
\caption{An example of the transformation which maps the chain
  $\eta(t)$ into its trace on the set $\{a,b\}$. The first graph shows
  the trajectory of $\eta(t)$, the second one the function $T_F(t)$
  for $F=\{a,b\}$, and the third one the trajectory $\eta_F(t) = \eta
  (S_F(t))$. Note that $S_F(t)$ is obtained from $T_F(t)$ by inverting
  the roles of the $x$ and $y$ axes.}
\label{fig3}
\end{figure}

It can be proven \cite[Section 6]{bl2} that $\eta_F(t)$ is an
irreducible, continuous-time, $F$-valued Markov chain. The
jump rates of the chain $\eta_F(t)$, denoted by $R_F(\eta,\xi)$, are
given by
\begin{equation}
\label{1-1}
{\color{blue} R_F(\eta,\xi)} \;:=\; \lambda(\eta) \, \bb P_\eta[H^+_{F}
= H_\xi] \;, \quad \eta\,,\, \xi\,\in\, F\;, 
\quad \eta\,\not=\, \xi\;, 
\end{equation}
where the hitting time $H_A$ and the return time $H^+_A$ have been
introduced in \eqref{1-5}.

The unique stationary probability measure of the trace chain, denoted
by $\pi_F(\eta)$, is the measure $\pi$ conditioned to $F$:
\begin{equation}
\label{1-4}
\pi_F(\eta) \;=\; \frac{\pi(\eta)}{\pi(F)}\;, \quad \eta\,\in\, F\;.
\end{equation}
Moreover, $\pi_F$ is reversible if so is $\pi$ \cite{bl2}.

\subsection{Metastability} 
\label{ssec1.3}

We return to the chain $\eta_N(t)$ introduced at the beginning of this
section. Denote by $\color{blue} \bb P^N_\eta$, $\eta\in E_N$,
the probability measures on $D([0,\infty), E_N)$ induced by the
Markov chain $\eta_N(t)$ starting from $\eta$. Expectation with
respect to $\bb P^N_\eta$ is represented by $\color{blue} \bb
E^N_\eta$.

Denote by $\color{blue} \eta^{\ms E_N}(t)$ the trace of the process
$\eta_N(t)$ on the set $\ms E_N$. As explained in Figure \ref{fig3},
by taking the trace of $\eta_N(t)$ on $\ms E_N$ we first remove from
the trajectory the time-intervals corresponding to the excursions in
$\Delta_N$ (the intervals in black in the leftmost picture of Figure
\ref{fig3}), and then, we push back the trajectory, as in the
rightmost picture of this figure. This procedure removes rapid
fluctuations from the trajectory providing an alternative definition
of metastability.

Let $\Psi_N \colon \ms E_N \to S$ the projection which sends a
configuration in $\ms E^j_N$ to $j$:
\begin{equation*}
{\color{blue} \Psi_N(\eta)} \;=\; \sum_{k=1}^{\mf n} k \; \chi_{\ms
  E^k_N} (\eta) \;. 
\end{equation*}
In contrast with $\Phi_N$, $\Psi_N$ is defined only on $\ms E_N$. Let
$X^T_N(t)$ be the process given by
\begin{equation}
\label{1-2}
{\color{blue} X^T_N(t)} \;:=\; \Psi_N\big(\, \eta^{\ms E_N}(t) \,\big)\;.
\end{equation}
Note that $X^T_N(t)$ is not a Markov chain, but just a hidden Markov
chain. It corresponds to the trace on $S$ of the process $X_N(t)$
introduced in \eqref{A-1}.

\begin{definition}[Metastability]
\label{l1-2}
The Markov chain $\eta_N(t)$ is said to be metastable in the
time-scale $\theta_N$ if there exists a partition $\{\ms E^1_N, \dots,
\ms E^{\mf n}_N, \Delta_N\}$ of the state space $E_N$ and a
$S$-valued, continuous-time Markov chain $\bs X(t)$ such that 
\begin{enumerate}
\item[{\bf (T1)}] For any $k\in S=\{1, \dots, \mf n\}$ and any
  sequence $(\eta_N:N\ge 1)$ such that $\eta_N\in \ms E^k_N$, starting
  from $\eta_N$, the process ${\color{blue}\bs X^T_N(t)} :=
  X^T_N(t\theta_N) = \Psi_N(\eta^{\ms E_N}(t\theta_N))$ converges in
  the Skorohod topology to $\bs X(t)$;
\item[{\bf (T2)}] The time spent in $\Delta_N$ is negligible: For all $t>0$
\begin{equation*}
\lim_{N\to\infty}
\max_{\eta\in\ms E_N} \bb E^N_{\eta} \Big[\,
\int_0^t \chi_{\Delta_N} \big(\eta_N(s\theta_N)\big) 
\; ds\, \Big] \;=\; 0\;. 
\end{equation*}
\end{enumerate}
\end{definition}

The first condition asserts that in the time scale $\theta_N$ the
trace on $S$ of the process $X_N(t)$ converges to a Markov chain,
while the second one states that in this time scale the amount of time
the process $X_N(t)$ spends outside $S$ is negligible, uniformly over
initial configurations in $\ms E_N$. In particular, condition (T2) can
be stated as
\begin{equation*}
\lim_{N\to\infty}
\max_{\eta\in\ms E_N} \bb E^N_{\eta} \Big[\,
\int_0^t \chi_{\mf d} \big(X_N(s\theta_N)\big) 
\; ds\, \Big] \;=\; 0\;. 
\end{equation*}

\begin{remark}
\label{A-r7}
The use of the word ``metastability'', instead of tunneling, to name
the phenomenon described in the previous section, might be
inadequate. Metastability has been used to represent the transition
from a metastable state to a stable one.  This corresponds to the case
in which the reduced model $\bs X(t)$ takes value in a set with two
elements, one being transient and the other absorbing. We allow
ourselves this abuse of nomenclature.
\end{remark}

\begin{remark}
\label{A-r1}
The same sequence of Markov chains $(\eta_N(t) : N\ge 1)$ may have
more that one metastable description. In a certain time-scale
$\alpha_N$, one may observe transitions between shallow valleys and in
a much longer time-scale $\beta_N$ transitions between deeper valleys.
\end{remark}

\begin{remark}
\label{A-r6}
There are examples of Markov chains \cite{fm1, fli1, jlt11, jlt14, bcl18} with
a countably infinite number of valleys. In these cases, the reduced
model $\bs X(t)$ is a continuous-time Markov chain in a countable
state-space. In this article, we restrict ourselves to the finite case
to avoid technical issues on the martingale problem.
\end{remark}

\begin{remark}
\label{A-r2}
One of the main features of metastability is the fast transition
between valleys. This information is encapsulated in condition (T2)
which states that the time spent outside the valleys is negligible. In
particular, the transition time between two valleys is negligible in
the metastable time-scale.
\end{remark}

\begin{remark}
\label{A-r8}
All results presented in this review are in asymptotic form, they
characterize the limiting behavior of the coarse-grained
model. Quantitative estimates at fixed $N$ are important in concrete
problems. For example, to describe synthetically a molecular dynamics
which can be represented as a Markov chain in a very large, but fixed,
state space.  The problem consists in finding a reduced model which
keeps the main features of the original chain. It might be interesting
to adapt the approach presented here to this framework.

The transition path theory \cite{ev06, msv09, ce14, lv14} has been
designed for this set-up, as well as the intertwining method
\cite{AveGau18, AveCasGauMel17a, AveCasGauMel17b,
  AveCasGauMel17c}. See also the results by Bianchi and Gaudilli\`ere
\cite{bg16}
\end{remark}

\begin{remark}
\label{A-r3}
In constrast with the pathwise approach \cite{cgov84, ov05}, no
attempt is made here to describe the transition path between two
valleys.
\end{remark}

\begin{remark}
\label{A-r4}
In the example of the previous section, the process $X_N(t)$ remains
constant in time-intervals of length of order $\alpha_N$. In this
sense, $\Psi_N$ can be understood as a \emph{\color{blue} slow
  variable}, since it evolves in a much longer time-scale than the
original process, and metastability as the search for slow variables
and the description of the evolution of these slow variables.
\end{remark}

\begin{remark}
\label{A-r5}
In most examples, as the Ising model at low temperature \cite{ns91,
  ns92}, metastability is observed as a result of the presence of an
energy barrier which the system has to overpass to reach a new region
of the state-space.

The example of the previous section is of different nature. In this
model, there is no energy landscape but a bottleneck which creates a
metastable behavior. Here, entropy [the number of configurations]
determines the height of the barriers. Say, for example, that three
squares are $3$-dimensional while the last one is $2$-dimensional. In
this case, in the time-scale $N^3$, one observes an evolution among
the $3$-dimensional cubes and the last square can be included in the
set $\Delta_N$ as the time spent there is of order $N^2\log N$.

In other models, as random walks in a potential field, both energy
and entropy play a role.
\end{remark}

\subsection{Finite-dimensional distributions} 
\label{ssec1.4}

Definition \ref{l1-1} describes the evolution of a modified version of
the original process, and Definition \ref{l1-2} the one of the trace.
To avoid tiny surgeries of the trajectories, we may turn to the
convergence of the finite-dimensional distributions, an alternative
adopted by Kipnis and Newman in \cite{kn85} and Sugiura \cite{s95,
  s01}.

\begin{definition}[Metastability according to FDD]
\label{A-d3}
The Markov chain $\eta_N(t)$ is said to be metastable, in the sense of
finite-dimensional distributions, in the time-scale $\theta_N$ if
there exists a partition $\{\ms E^1_N, \dots, \ms E^{\mf n}_N,
\Delta_N\}$ of the state space $E_N$ and a $S$-valued,
continuous-time Markov chain $\bs X(t)$ such that the
finite-dimensional distributions of $\bs X_N(t) := X_N(t\theta_N)$
converge to the ones of $\bs X(t)$.
\end{definition}

Note that while $\bs X_N(t)$ takes value in $S\cup \{\mf d\}$, $\bs
X(t)$ is $S$-valued. \smallskip

The article is organized as follows.  We present, in Sections
\ref{sec3}--\ref{sec15}, a general scheme to derive the metastable
behavior of a Markov chain in the sense of Definition \ref{l1-2} for
dynamics which ``visit points''. This approach is based on the
characterization of Markov chains as solutions of martingale problems,
examined in Section \ref{sec2}. In the following two sections, an
alternative approach is proposed for dynamics in which the entropy
plays a role in the metastable behavior. In Section \ref{sec14}, we
discuss tightness. In Section \ref{sec8}, we show that conditions
(T1), (T2) entail the metastability in the sense of the last passage,
and, in Section \ref{sec9}, we prove that these conditions together
with property \eqref{9-1} lead to the convergence of the
finite-dimensional distributions. In Section \ref{sec13} and
\ref{sec10} we recall some general results on Markov chains and
potential theory used in the article. In the last section, we list
some dynamics which fall within the scope of the theory.

\section{Martingale problems}
\label{sec2}

The proof of condition (T1) in Definition \ref{l1-2} relies on the
uniqueness of solutions of martingale problems, the subject of this
section. To avoid technical problems, we restrict ourselves to the
context continuous-time Markov chains taking values in a finite
state-space $E$. We refer to the classical books \cite{StrVar06,
  EthKur86} for further details.

Recall the notation introduced in Subsection \ref{ssec1.2}. Assume
that the Markov chain $\eta(t)$ is defined on the probability space
$\color{blue} (\Omega, \mc F, \bb P)$, where $\Omega = D([0,\infty),
E)$ and $\mc F$ represents the Borel $\sigma$-algebra of
$D([0,\infty), E)$. Let $({\color{blue} \mc F^o_t}: t\ge 0)$ be the
filtration generated by $\{\eta(s) : 0\le s\le t\}$.

Denote by $L$ the generator of the Markov chain $\eta(t)$: for every
function $f:E \to \bb R$,
\begin{equation}
\label{2-1}
{\color{blue} (Lf)(\eta)} \;=\; \sum_{\xi\in E} R(\eta,\xi) \, [f(\xi)
- f(\eta)]\;. 
\end{equation}
It is well known that for every $f:E \to \bb R$,
\begin{equation}
\label{2-2}
{\color{blue} M^f(t)} \:=\; f(\eta(t)) \;-\; f(\eta(0)) 
\;-\; \int_0^t (Lf)(\eta(s))\; ds
\end{equation}
is a zero-mean martingale in $(\Omega, (\mc F^o_t), \bb P)$.

\smallskip It turns out that the converse is true. Let $A$ be the
generator of an $E$-valued, irreducible, continuous-time Markov chain,
and $\nu$ a probability measure on $E$.

\begin{definition}[The martingale problem $(A,\nu)$]
\label{3-d1}
A probability measures $\mb P$ on $(\Omega, \mc F)$ is a solution of
the martingale problem associated to the generator $A$ and the measure
$\nu$ if for every $f:E\to \bb R$ the process $M^f$ given by
\eqref{2-2} [with $L$ replaced by $A$] is a martingale in
$(\Omega, (\mc F^o_t), \mb P)$ and $\mb P[\eta(0)=\eta] = \nu(\eta)$
for all $\eta\in E$.
\end{definition}

Next result is a particular case of Theorem 4.4.1 in \cite{EthKur86}.

\begin{theorem}
\label{3-l1}
Let $A$ be the generator of an $E$-valued, irreducible,
continuous-time Markov chain.  For every probability measure $\nu$ in
$E$, there exists a unique solution of the martingale problem
associated to the generator $A$ and the measure $\nu$. Moreover, under
this solution, the process $\eta(t)$ is the continuous-time Markov
chain whose generator is $A$.
\end{theorem}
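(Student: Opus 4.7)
Existence is classical: I would construct the law $\mb P_\nu$ directly by alternating, from an initial state sampled from $\nu$, exponential holding times of rate $\lambda(\eta)=\sum_{\xi\neq\eta}R(\eta,\xi)$ with jumps distributed according to $R(\eta,\cdot)/\lambda(\eta)$. Finiteness of $E$ and irreducibility preclude explosion, giving a well-defined measure on $(\Omega,\mc F)$; Dynkin's formula, obtained by summing the compensators between successive jump times via the strong Markov property, shows that for every $f\colon E\to\bb R$ the process $M^f$ from \eqref{2-2} is an $(\mc F^o_t,\mb P_\nu)$-martingale. Hence $\mb P_\nu$ solves the martingale problem.

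For uniqueness and identification, let $\mb P$ be any solution of $(A,\nu)$. The plan is to establish the Markov property with transition semigroup $(e^{tA})_{t\ge 0}$:
$$
\mb E^{\mb P}\bigl[\, f(\eta(t)) \,\bigm|\, \mc F^o_s\,\bigr] \;=\; (e^{(t-s)A}f)(\eta(s))\;, \qquad 0\le s\le t\;,\ f\colon E\to\bb R\;.
$$
Once this is in hand, the tower property together with $\mb P[\eta(0)=\eta_0]=\nu(\eta_0)$ determines every finite-dimensional distribution of $\eta$ under $\mb P$. Since $E$ is finite and trajectories are c\`adl\`ag, the Borel $\sigma$-algebra on $D([0,\infty),E)$ is generated by evaluations at rational times, and $\mb P$ is therefore uniquely determined; it must coincide with $\mb P_\nu$, yielding both uniqueness and the ``moreover'' clause.

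To prove the displayed identity I would first extend the martingale property from time-independent $f$ to functions $\phi\colon[0,T]\times E\to\bb R$ that are $C^1$ in $s$, showing that $\phi(t,\eta(t))-\phi(0,\eta(0))-\int_0^t[\partial_s\phi+A\phi](s,\eta(s))\,ds$ is an $(\mc F^o_t,\mb P)$-martingale. The proof is a Riemann-type approximation: on a partition $0=s_0<\cdots<s_m=t$, decompose each increment as
$$
\phi(s_{k+1},\eta(s_{k+1}))-\phi(s_k,\eta(s_k)) \;=\; \bigl[\phi(s_{k+1},\eta(s_{k+1}))-\phi(s_{k+1},\eta(s_k))\bigr] \,+\, \bigl[\phi(s_{k+1},\eta(s_k))-\phi(s_k,\eta(s_k))\bigr]\;,
$$
observe that the first bracket is a martingale increment for the time-independent function $\phi(s_{k+1},\cdot)$ by the given hypothesis, and that the second converges to $\int_{s_k}^{s_{k+1}}\partial_s\phi(r,\eta(s_k))\,dr$; uniform boundedness on the finite set $E$ makes the mesh-size limit routine. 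Applied to $\phi(s,\eta)=(e^{(t-s)A}f)(\eta)$, which solves $\partial_s\phi+A\phi\equiv 0$ by the backward Kolmogorov equation, this yields that $r\mapsto(e^{(t-r)A}f)(\eta(r))$ is a martingale on $[0,t]$, and conditioning at $s$ gives the desired identity. The main obstacle is exactly this time-dependent extension: one must carefully separate the ``spatial'' and ``temporal'' pieces of the telescoping sum and verify that the approximation error tends to zero; the rest is elementary semigroup bookkeeping made possible by the finiteness of $E$.
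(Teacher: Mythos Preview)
Your proof is correct and follows the classical route: construct the chain explicitly for existence, then for uniqueness extend the martingale property to time-dependent test functions and apply it to $\phi(s,\eta)=(e^{(t-s)A}f)(\eta)$ to recover the Markov property with the prescribed semigroup. The time-dependent extension via telescoping and Riemann approximation is standard and works without difficulty here because $E$ is finite, so all functions are bounded and $A$ is a bounded operator.

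The paper, however, does not prove this theorem at all: it simply states that the result is a particular case of Theorem~4.4.1 in Ethier--Kurtz \cite{EthKur86} and moves on. So there is nothing to compare at the level of arguments; you have supplied a self-contained proof where the paper only gives a citation. Your argument is essentially the specialization of the Ethier--Kurtz proof to the finite-state setting, where the technicalities (domain issues, well-posedness, explosion) evaporate.
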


This result provides a simple strategy to prove condition (T1) of
Definition \ref{l1-2}.  Fix $k\in S$, a sequence
$\eta_N\in \ms E^k_N$, and denote by $\color{blue} \mb P^N$ the
probability measure on $D([0,\infty), S)$ induced by the process
$\bs X^T_N(t) = X^T_N(t \theta_N)$ and the measure $\bb P^N_{\eta_N}$.
Prove first that the sequence $\mb P^N$ is tight. Then, to
characterize the limit points, show that they solve a martingale
problem $(L,\delta_k)$, where $L$ is the generator of a $S$-valued
Markov chain [guessed a priori] and $\color{blue} \delta_k$ the
probability measure on $S$ concentrated on $k$. Tightness is postponed
to Section \ref{sec14} and uniqueness is discussed in the next
sections.

\section{The martingale approach}
\label{sec3}

We carry out in this section the strategy outlined in the previous
section to prove the uniqueness of limit points of the sequence $\bs
X^T_N(t)$.  It is based on the uniqueness of solutions of martingale
problems, presented above, and on the fact that limits of martingales
are martingales recalled below.

Let $(\Omega, \mc F, \bb P)$ be a probability space, $(\mc F_t : t\ge
0)$ a filtration, and $(M_N: N\ge 1)$ a sequence of martingales
measurable with respect to the filtration.

\begin{lemma}
\label{D-l1}
Assume that for each $t\ge 0$, $M_N(t)$ converges in $L^1(\bb P)$ to a
random variable $M(t)$. Then, $M(t)$ is a martingale with respect to
the filtration $(\mc F_t : t\ge 0)$. 
\end{lemma}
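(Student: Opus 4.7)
The plan is to verify the three defining properties of a martingale for $M(t)$: integrability, adaptedness to $(\mc F_t)$, and the tower identity $\bb E[M(t) \mid \mc F_s] = M(s)$ for $s \le t$. Integrability is immediate, since $L^1$ convergence preserves $L^1$-membership: $\bb E|M(t)| \le \bb E|M_N(t)| + \bb E|M_N(t) - M(t)|$, and both terms are finite for any $N$, with the second tending to $0$.

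For adaptedness, I would use that $L^1$ convergence implies convergence in probability, hence almost-sure convergence along a subsequence $N_k$. Since each $M_{N_k}(t)$ is $\mc F_t$-measurable and almost-sure limits preserve measurability (with respect to the completed $\sigma$-algebra), $M(t)$ is $\mc F_t$-measurable up to a null set, which is enough for the martingale definition under the usual conventions on the filtration.

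For the martingale identity, I would fix $0\le s\le t$ and an arbitrary $A \in \mc F_s$, and start from the defining relation $\bb E[ M_N(t)\, \chi_A ] \;=\; \bb E[ M_N(s)\, \chi_A ]$ valid for every $N$. The key estimate is
\begin{equation*}
\big|\, \bb E[ M_N(t)\, \chi_A ] \;-\; \bb E[M(t)\, \chi_A ]\,\big|
\;\le\; \bb E\big|\, M_N(t) - M(t)\,\big| \;\longrightarrow\; 0\;,
\end{equation*}
and similarly at time $s$. Passing to the limit on both sides yields $\bb E[ M(t)\, \chi_A ] = \bb E[ M(s)\, \chi_A ]$ for every $A\in\mc F_s$; combined with the $\mc F_s$-measurability of $M(s)$ obtained in the previous paragraph, this is exactly $\bb E[M(t)\mid \mc F_s] = M(s)$.

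There is no real obstacle here: the statement is a standard stability property of martingales under $L^1$-convergence, and the only mildly delicate point is the adaptedness check, which is handled by passing to an a.s.-convergent subsequence. No uniform integrability hypothesis is needed because $L^1$ convergence already supplies the control required to commute limit and expectation against any bounded $\mc F_s$-measurable test function $\chi_A$.
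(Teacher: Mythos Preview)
Your argument is correct and follows essentially the same route as the paper: test the martingale identity $\bb E[M_N(t)\,Y] = \bb E[M_N(s)\,Y]$ against bounded $\mc F_s$-measurable $Y$ (you take $Y=\chi_A$, the paper takes general bounded $Y$) and pass to the limit using $L^1$ convergence. Your treatment is in fact slightly more careful than the paper's, since you explicitly verify adaptedness of $M(t)$ via an a.s.-convergent subsequence, a point the paper leaves implicit.
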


\begin{proof}
Fix $0\le s< t$ and a bounded random variable $Y$, measurable with
respect to $\mc F_s$. Since $M_N$ is a martingale,
\begin{equation*}
\bb E\big[\, M_N(t)\, Y\,\big]\;=\; \bb E\big[\, M_N(s)\, Y\,\big]\;.
\end{equation*}
As $Y$ is bounded and $M_N(t)$, $M_N(s)$ converge in $L^1(\bb P)$ to 
$M(t)$, $M(s)$, respectively. The same identity holds with $M_N$
replaced by $M$. Moreover, $M(t)$, $M(s)$ belong to $L^1(\bb P)$. Since this
identity is in foce for all bounded random variable $Y$, $\bb E [\,
M(t)\,|\, \mc F_s \, ]\,=\, M(s)$, as claimed.
\end{proof}

Fix $k\in S$, a configuration $\eta_N$ in $\ms E^k_N$, and denote by
$\mb P^N$ the probability measure on $D([0,\infty), S)$ induced by the
process $\bs X^T_N(t)$ and the measure $\bb P^N_{\eta_N}$. The main
result of this section asserts that all limit points of the sequence
$\bs X^T_N(t)$ solve a martingale problem $(L,\delta_k)$ if we can
prove a \emph{\color{blue} local ergodic theorem} and calculate the
limit of the \emph{\color{blue} coarse-grained jump function},
properties (P1) and (P2) formulated at the end of this section.

Fix a function $F:S \to \bb R$.  As the trace process is a Markov
chain, \eqref{2-2} applied to the function $F\circ \Psi_N$ yields that
under $\bb P^{N}_{\eta_N}$
\begin{equation}
\label{3-2}
M_N(t) \:=\; F\big(\, \Psi_N (\eta^{\ms E_N}_{t\theta_N}) \,\big) 
\;-\; F\big(\, \Psi_N (\eta^{\ms E_N}_{0})\,\big) \;-\; \int_0^{t\theta_N} 
[\, \ms L_{\ms E_N} (F \circ \Psi_N)\, ] \, (\eta^{\ms
  E_N}_{s})\; ds
\end{equation}
is a martingale. In this formula, $\color{blue} \ms L_{\ms E_N}$
represents the generator of the trace process $\eta^{\ms E_N}(t)$.
Since $\bs X^T_N(t) = \Psi_N (\eta^{\ms E_N}(t\theta_N))$, changing
variables this expression becomes
\begin{equation*}
F\big(\, \bs X^T_N(t) \,\big) 
\;-\; F\big(\, \bs X^T_N(0) \,\big) \;-\; 
\int_0^t \, \theta_N\, 
[\, \ms L_{\ms E_N} (F \circ \Psi_N)\, ] \, (\eta^{\ms
  E_N}(s\theta_N) )\; ds\;.
\end{equation*}

Denote by $\color{blue} R^{T}_N (\eta,\xi)$ the jump rates of the
trace chain $\eta^{\ms E_N}(t)$. The expression inside of the integral
can be written as
\begin{equation*}
\theta_N\, \sum_{\xi\in\mc E_N} R^{T}_N (\zeta ,\xi) \,
\big\{\, (F \circ \Psi_N) \, (\xi) 
- (F \circ \Psi_N) \, (\zeta)  \big\}\quad\text{where}\quad 
\zeta \,=\, \eta^{\ms E_N}(s\theta_N)\;.  
\end{equation*}
Writing $\ms E_N$ as $\cup_\ell \ms E^\ell_N$, since $\Psi_N(\xi) = \ell$ for
$\xi\in \ms E^\ell_N$, this expression is equal to
\begin{align*}
& \theta_N\, \sum_{\ell\in S} \sum_{\xi\in\mc E^\ell_N} 
R^{T}_N \big(\, \eta^{\ms E_N}(s \theta_N) \,,\, \xi \, \big ) \,
\big\{\, F(\ell) \,-\, F(\bs X^T_N(s))  \,\big\} \\
&\quad =\;
\sum_{\ell\in S} R^{(\ell)}_N \big(\, \eta^{\ms E_N} (s\theta_N) \,
  \big)\, 
\big\{\, F(\ell) \,-\, F(\bs X^T_N(s))  \,\big\} \;,
\end{align*}
where $R^{(\ell)}_N(\zeta)$ represent the jump rate from the
configuration $\zeta$ to the set $\ms E^\ell_N$ for the trace process
speeded-up by $\theta_N$:
\begin{equation}
\label{3-1}
{\color{blue} R^{(\ell)}_N (\zeta)} \;=\; 
\theta_N\, \sum_{\xi\in\ms E^\ell_N} R^{T}_N (\zeta ,\xi) \;.
\end{equation}

Up to this point, we proved that the martingale $M_N(t)$ is equal to
\begin{equation*}
F\big(\, \bs X^T_N(t) \,\big) 
\;-\; F\big(\, \bs X^T_N(0) \,\big) \;-\; \int_0^{t} 
\sum_{\ell\in S} R^{(\ell)}_N \big(\, \eta^{\ms E_N} (s\theta_N) \,
\big) \,
\big\{\, F(\ell) \,-\, F(\bs X^T_N(s))  \,\big\} \; ds\;.
\end{equation*}
If the functions $R^{(\ell)}_N$ were constant over the sets
$\ms E^\ell_N$, $R^{(\ell)}_N (\eta) = r^{(\ell)}_N (\Psi_N(\eta))$
for some $r^{(\ell)}_N : S \to \bb R_+$, the martingale $M_N(t)$ could
be written in terms of the process $\bs X^T_N(s)$:
\begin{equation*}
M_N(t) \;=\; F (\bs X^T_N(t)) \;-\;  F(\bs X^T_N(0)) \;-\;
\int_0^t  \sum_{\ell\in S} r^{(\ell)}_N (\bs X^T_N(s)) \, 
\big\{\, F(\ell) \,-\, F(\bs X^T_N(s)) \,\big\} \; ds\;.
\end{equation*}
Furthermore, if for all $j\not = \ell\in S$, the sequences
$r^{(\ell)}_N (j)$ converged to some $\bs r(j,\ell) \in \bb R_+$, one
could replace in the previous formula $r^{(\ell)}_N (\bs X^T_N(s))$ by 
$\bs r (\bs X^T_N(s),\ell)$ at the cost of a small error.

Therefore, under the two previous conditions, up to a negligible
error,
\begin{equation}
\label{3-5}
F (\bs X^T_N(t)) \;-\;  F(\bs X^T_N(0)) \;-\;
\int_0^t  \sum_{\ell\in S} \bs r (\bs X^T_N(s), \ell) \, 
\big\{\, F(\ell) \,-\, F(\bs X^T_N(s)) \,\big\} \; ds
\end{equation}
is a martingale.  

Denote by $\mb P$ a limit point of the sequence $\mb P^N$. Let $X(t)$
represent the coordinate process of $D([0,\infty), S)$:
\begin{equation*}
{\color{blue} X(t, \omega)} \;=\; \omega(t)\;, \quad \omega\,\in\, D([0,\infty),
S)\;, \quad t\ge 0\;.
\end{equation*}
Assume that $\mb P [\, X(t-) \,=\, X(t)\,] = 1$ for all $t>0$, where
$\color{blue} X(t-) = \lim_{s<t \,,\, s\to t} X(s)$.

Suppose, without loss of generality, that $\mb P^N$ converges to
$\mb P$.  Let $L$ be the generator of the $S$-valued Markov chain
associated to the jump rates $r$. As $\mb P [\, X(t-) \,=\, X(t)\,]=1$,
the finite-dimensional projections are continuous (cf. equation (13.3)
in \cite{Bil99}).  Thus, since the expression in \eqref{3-5} is
uniformly bounded, we may pass to the limit and conclude from Lemma
\ref{D-l1} that
\begin{equation*}
F (X(t)) \;-\;  F(X (0)) \;-\;
\int_0^t  (L\, F) (X (s))  \; ds\;
\end{equation*}
is a martingale under the measure $\mb P$. 
Moreover, as $\eta_N \in \ms E^k_N$, $\mb P^N[X(0)=k]=1$ for all $N$
so that $\mb P[X(0)=k]=1$. Therefore, $\mb P$ is a solution of the
$(L,\delta_k)$ martingale problem. By Theorem \ref{3-l1}, this
property characterizes $\mb P$, and under this measure the coordinate
process is a continuous-time Markov chain whose generator is $L$.

We summarize the conclusions of the previous analysis in Theorem
\ref{3-t1} below. We first formulate the main hypotheses.

\begin{itemize}
\item[(P1)] (Local ergodicity). The mean rate functions
  $R^{(\ell)}_N (\eta)$, introduced in \eqref{3-1}, can be replaced by
  coarse-grained functions $r^{(\ell)}_N$. More precisely, there exist
  sequences of functions $r^{(\ell)}_N: \ms E_N \to \bb R_+$,
  $\ell\in S$, which are constant on the sets $\ms E^j_N$, $j\in S$,
  and such that for every function $F: S \to \bb R$, $t>0$, and
  sequence $\eta_N\in \ms E_N$,
\begin{equation}
\label{3-4}
\lim_{N\to\infty} \bb E_{\eta_N} 
\Big[ \int_0^t  F(\bs X^T_N(s)) \, 
\big\{\, R^{(\ell)}_N \big(\, \eta^{\ms E_N}(s \theta_N)\,\big)  
\,-\,  r^{(\ell)}_N (\bs X^T_N(s)) \,\big\} \; ds\; \Big] \;=\; 0\;.
\end{equation}

\item[(P2)] (The coarse-grained jump rates). The sequence of functions
  $r^{(\ell)}_N$, $\ell\in S$, called the \emph{\color{blue}
    coarse-grained jump functions}, converge. More precisely, since
  these functions are constant over the valleys $\ms E^k_N$, they can
  be written as
  \begin{equation*}
    r^{(\ell)}_N (\eta)\;=\; \sum_{k\in S} \bs r_N(k,\ell)\; 
    \chi_{\ms E^k_N} (\eta) 
  \end{equation*}
  for some non-negative real numbers $\bs r_N(k,\ell)$, named the
  \emph{\color{blue} coarse-grained jump rates}.  Note from the
  formula for the martingale $M_N(t)$ that the values of $\bs
  r_N(\ell,\ell)$ are unimportant. We assume that these rates
  converge: There exist $\bs r(j,\ell) \in [0,\infty)$, such that for
  all $j\not = \ell \in S$,
\begin{equation}
\label{3-3}
\lim_{N\to\infty} \bs r_N (j,\ell) \;=\; \bs r(j,\ell) \;.
\end{equation}
\end{itemize}

\begin{theorem}
\label{3-t1}
Fix $k\in S$, a sequence $\eta_N\in \ms E^k_N$, and denote by
$\mb P^N$ the probability measure on $D([0,\infty), S)$ induced by the
process $\bs X^T_N(t)$ and the measure $\bb P^N_{\eta_N}$. Assume that
conditions \eqref{3-4} and \eqref{3-3} are in force. Then, every limit
point $\mb P$ of the sequence $\mb P^N$ such that
\begin{equation*}
\mb P\big[\, X(t-) \,=\, X(t)\,\big] \;=\; 1 \quad\text{for all}\;\; t>0\;.
\end{equation*}
solves the $(L,\delta_k)$ martingale problem, where $L$ is the
generator of the $S$-valued Markov chain whose jump rates are
$\bs r(j,\ell)$.
\end{theorem}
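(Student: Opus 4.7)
The strategy is to verify that every such limit point $\mb P$ solves the $(L,\delta_k)$ martingale problem and then invoke Theorem \ref{3-l1} for uniqueness. The argument is essentially laid out in the discussion preceding the statement; my job is to justify the rewritings and the passage to the limit.

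First I would fix $F : S \to \bb R$ and, starting from the martingale $M_N(t)$ of \eqref{3-2} for $F \circ \Psi_N$ applied to the trace chain $\eta^{\ms E_N}$, expand the generator through the trace rates, split $\ms E_N = \bigcup_\ell \ms E^\ell_N$, and perform the time-change $s \mapsto s\theta_N$ to rewrite
\begin{equation*}
M_N(t) \,=\, F(\bs X^T_N(t)) - F(\bs X^T_N(0)) - \int_0^t \sum_{\ell \in S} R^{(\ell)}_N(\eta^{\ms E_N}(s\theta_N))\,\{F(\ell) - F(\bs X^T_N(s))\}\, ds,
\end{equation*}
a uniformly bounded martingale under $\bb P^N_{\eta_N}$. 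Hypothesis (P1) lets me replace $R^{(\ell)}_N(\eta^{\ms E_N}(s\theta_N))$ by the valley-constant $r^{(\ell)}_N(\bs X^T_N(s)) = \bs r_N(\bs X^T_N(s),\ell)$ at the cost of an error vanishing in $L^1(\bb P^N_{\eta_N})$; hypothesis (P2), together with boundedness of $F$ and finiteness of the time interval, then allows me to substitute $\bs r(j,\ell)$ for $\bs r_N(j,\ell)$ with a further vanishing $L^1$-error. The outcome is that $M_N(t)$ differs from $\widetilde M_N(t) := F(\bs X^T_N(t)) - F(\bs X^T_N(0)) - \int_0^t (LF)(\bs X^T_N(s))\, ds$ by a quantity going to zero in $L^1(\bb P^N_{\eta_N})$, where $L$ is the generator attached to the rates $\bs r(j,\ell)$.

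Next I would pass to the limit. The hypothesis $\mb P[X(t-)=X(t)]=1$ makes the evaluation map $\omega \mapsto \omega(t)$ continuous $\mb P$-a.s.\ on $D([0,\infty), S)$; combined with Skorohod convergence $\mb P^N \Rightarrow \mb P$ (working on a common space via Skorohod representation) and boundedness, this yields convergence in $L^1$ of $F(\bs X^T_N(t))$ and, via Fubini and bounded convergence, of $\int_0^t (LF)(\bs X^T_N(s))\, ds$. Testing the identity $\bb E^N_{\eta_N}[\widetilde M_N(t)\, G] = \bb E^N_{\eta_N}[\widetilde M_N(s)\, G]$ against bounded continuous cylinder functionals $G$ of the path on $[0,s]$ evaluated at continuity points, I can pass to the limit and conclude, following Lemma \ref{D-l1}, that $\widetilde M(t) := F(X(t)) - F(X(0)) - \int_0^t (LF)(X(s))\, ds$ is a martingale under $\mb P$ with respect to the natural filtration. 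Finally, $\eta_N \in \ms E^k_N$ forces $\bs X^T_N(0) = k$ $\bb P^N_{\eta_N}$-a.s., so $\mb P[X(0)=k]=1$, and Theorem \ref{3-l1} identifies $\mb P$ as the unique solution of the $(L,\delta_k)$ martingale problem.

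The main obstacle is not algebraic but analytic: Skorohod convergence $\mb P^N \Rightarrow \mb P$ does not on its own deliver the $L^1$-convergence required by Lemma \ref{D-l1}, because evaluations at fixed times are not continuous on $D([0,\infty), S)$ in general. The hypothesis $\mb P[X(t-)=X(t)]=1$ for every $t>0$ is precisely what is needed to guarantee continuity of these evaluations $\mb P$-a.s., and hence to justify the interchange of limits both in the boundary terms and inside the time integral. Uniform boundedness (finiteness of $S$ and of $F$ and $\bs r$) then upgrades weak to $L^1$ convergence and makes Fubini applicable, after which the martingale property survives the limit essentially for free.
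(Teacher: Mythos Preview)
Your proof is correct and follows essentially the same approach as the paper: the discussion preceding the theorem statement \emph{is} the paper's proof, and you have reproduced its steps (rewrite $M_N(t)$ via the trace rates and the jump functions $R^{(\ell)}_N$, apply (P1) then (P2) to reach \eqref{3-5}, pass to the limit using the continuity of fixed-time projections under the hypothesis $\mb P[X(t-)=X(t)]=1$ and Lemma~\ref{D-l1}, then check the initial condition). Your extra care about Skorohod representation and Fubini is fine but not strictly needed, since the paper simply invokes \cite[(13.3)]{Bil99} for continuity of the projections and uniform boundedness for the rest.
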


Note that we do not need to prove property (P1) with an absolute value
inside the expectation. This observation simplifies considerably the
proof of this replacement.

We present in Sections \ref{sec6}, \ref{sec7} sufficient conditions,
formulated in terms of the stationary state and of capacities between
the sets $\ms E^j_N$, for conditions (P1), (P2) to hold. In Sections
\ref{sec5}, \ref{sec11} we propose alternative proofs of the
uniqueness of limit points for the sequence $\mb P^N$.

\section{Local ergodicity}
\label{sec6}

In this section, we provide sufficient conditions, formulated in terms
of the stationary state and of capacities, to replace the
jump rates $R^{(k)}_N$, introduced in \eqref{3-1}, by coarse-grained
jump functions which are constant on each set $\ms E^k_N$. We assume that
the reader is acquainted with the results on potential theory of
Markov chains, recapitulated in Section \ref{sec10}

Recall from \eqref{1-5} the definition of the hitting time $H_{\ms A}$
and the return time $H^+_{\ms A}$ of a subset $\ms A$ of $E_N$.  For
two non-empty, disjoint subsets $\ms A$, $\ms B$ of $E_N$, denote by
$\Cap_N(\ms A, \ms B)$ the capacity between $\ms A$ and $\ms B$:
\begin{equation}
\label{5-5}
{\color{blue}\Cap_N(\ms A, \ms B)} \;=\; \sum_{\eta\in \ms A} \pi_N(\eta)\,
\lambda_N(\eta)\, \bb P^N_{\eta} \big[\, H_{\ms B} < H^+_{\ms
  A}\,\big]\;, 
\end{equation}
where $\lambda_N(\eta)$ stands for the holding rate at $\eta$ of the
Markov chain $\eta_N(t)$:
$\color{blue} \lambda_N(\eta) = \sum_{\xi\in E_N} R_N(\eta,\xi)$. 

Recall that 
\begin{equation*}
R^{(k)}_N (\eta) \;=\; \theta_N\, \sum_{\xi\in\ms E^k_N} 
R^{T}_N (\eta ,\xi)   \;,
\end{equation*}
where $R^{T}_N (\eta ,\xi)$ represents the jump rates of the trace
process. Thus, $R^{(k)}_N (\eta)$ is the rate at which the trace
process jumps from $\eta$ to $\ms E^k_N$ multiplied by $\theta_N$.
In view of equation \eqref{1-1} for the jump rates of the trace
process, 
\begin{equation}
\label{5-1}
R^{(k)}_N (\eta) \;=\; \theta_N\, \lambda_N(\eta) \, 
\, \bb P_\eta \big[\, H^+_{\ms E_N} =
H_{\ms E^k_N} \, \big] \;=\; \theta_N\, \lambda_N(\eta) \, 
\, \bb P_\eta \big[\, H_{\ms E^k_N} < H^+_{\breve{\ms E}^k_N} \,
\big]  \;,
\end{equation}
where
\begin{equation*}
{\color{blue}\breve{\ms E}^k_N} \;:=\; \bigcup_{j\not = k} \ms E^j_N
\;, \quad k\, \in\, S \;.
\end{equation*}
In particular, $R^{(k)}_N$ vanishes in the interior of the sets
$\ms E^j_N$, where by interior we mean the set of configurations in
$\ms E^j_N$ whose neighbors belong to $\ms E^j_N$ [the configuration
$\xi\in \ms E^j_N$ such that
$\sum_{\zeta\not\in \ms E^j_N} R_N(\xi,\zeta)=0$]. This means that
$R^{(k)}_N$ is a singular function. While it vanishes in the interior
of the sets $\ms E^j_N$, it assumes a large value at the boundary
because the right-hand side of \eqref{5-1} is multiplied by
$\theta_N$.

The goal of this section is to replace the time integral of the
singular function $R^{(k)}_N$ by the time integral of a very regular
function, one which is constant at each set $\ms E^j_N$. This
replacement is expected to hold whenever the process equilibrates in
the valleys $\ms E^j_N$ before it jumps to a new one.

Let $f_N: \ms E_N \to \bb R$ be a sequence of real functions defined
on $\ms E_N$. Fix $t>0$, and consider the time integral
\begin{equation*}
\int_0^t f_N\big(\, \eta^{\ms E_N}(s\theta_N)\,\big)\; ds \;=\;
\frac 1{\theta_N} \int_0^{t\theta_N}  f_N \big(\, \eta^{\ms E_N}(s)\,\big)\;
ds \;. 
\end{equation*}
The time integral can be decomposed according to the sojourns in the
sets $\ms E^j_N$. If the process equilibrates during these visits, by
the ergodic theorem, we expect the integral of $f_N$ over these
time-intervals to be close to the integral of the mean value of
$f_N$ on these sets. Hence, let
\begin{equation}
\label{6-1}
\widehat f_N (\eta) \;=\; E_{\pi_{\ms E}} \big[\, f_N \,|\, \mc G_N
\,\big]\;, 
\end{equation}
where $\color{blue} \mc G_N$ represents the $\sigma$-algebra of
subsets of $\ms E_N$ generated by the sets $\ms E^j_N$, $j\in S$, and
$\color{blue} \pi_{\ms E}$ the stationary state of the trace process
$\eta^{\ms E_N}(t)$ [which, by \eqref{1-4}, is the stationary state
$\pi_N$ conditioned to $\ms E_N$].

Clearly,
\begin{equation*}
\widehat f_N (\eta) \;=\; \sum_{j\in S} F_N(j)\; \chi_{\ms E^j_N}
(\eta) \;, \quad\text{where}\quad
F_N(j) \;=\; \frac {1}{\pi_N(\ms E^j_N)} 
\sum_{\zeta\in \ms E^j_N} \pi_N(\zeta) \, f_N (\zeta) \;.
\end{equation*}
The function $\widehat f_N $ is the candidate, and one expects that,
under certain conditions on the sequence $f_N$,
\begin{equation*}
\frac 1{\theta_N} \int_0^{t\theta_N}  \Big\{ f_N \big(\, \eta^{\ms
  E_N}(s)\,\big) \,-\, 
\widehat f_N \big(\, \eta^{\ms E_N}(s)\,\big) \, \Big\} \; ds 
\end{equation*}
vanishes as $N\to\infty$. 

\begin{theorem}
\label{l6-1}
Let $f_N$, $g_N : \ms E_N \to \bb R$ be sequences of functions such that
\begin{itemize}
\item[(a)] For each $j\in S$, there exists a configuration $\xi^{j,N}$
  such that
\begin{equation*}
\lim_{N\to\infty} \, \frac 1{\theta_N}\, \max_{\eta\in \ms E^j_N} 
\frac{\sum_{\zeta\in \ms E^j_N} |\, f_N(\zeta)\,| \, \pi_N(\zeta)}
{\Cap_N (\eta, \xi^{j,N})} \;=\; 0\;,
\end{equation*}
where the supremum is carried over all configurations $\eta\not =
\xi^{j,N}$. 
\item[(b)] The sequence $g_N$ is uniformly bounded and is constant
  over each set $\ms E^j_N$: There exist a finite constant $C_0$ and a
  sequence of functions $G_N: S\to \bb R$ such that
\begin{equation*}
g_N(\eta) \;=\; \sum_{j\in S} G_N(j)\; \chi_{\ms E^j_N} (\eta) 
\quad\text{and}\quad
\max_{\eta\in \ms E_N} \big|\, g_N(\eta)\,\big| \;\le\; C_0   
\end{equation*}
for all $N\ge 1$.
\end{itemize}
Then, for all $t>0$,
\begin{equation*}
\lim_{N\to\infty} \max_{\eta\in \ms E_N} \, \Big|\, \bb E^N_\eta \Big[\,
\int_0^{t}  \Big\{ f_N \big(\, \eta^{\ms E_N}(s\theta_N) \,\big) \,-\, 
\widehat f_N \big(\, \eta^{\ms E_N}(s\theta_N) \,\big) \, \Big\} 
\, g_N \big(\, \eta^{\ms E_N}(s\theta_N)\,\big) \; ds \, \Big] \,
\Big| \;=\; 0\;.
\end{equation*}
\end{theorem}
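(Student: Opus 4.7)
The plan is to reduce the statement to a bound on the $L^\infty$ norm of a Poisson-equation solution for the trace process $\eta^{\ms E_N}(t)$ and then invoke the capacity estimates of Section \ref{sec10}. First, since $\widehat f_N \equiv F_N(j)$ and $g_N \equiv G_N(j)$ on $\ms E^j_N$, the integrand decomposes as
\begin{equation*}
[f_N - \widehat f_N]\, g_N \;=\; \sum_{j \in S} G_N(j)\, \mf h^j_N\;,
\qquad
\mf h^j_N(\zeta) \;:=\; \chi_{\ms E^j_N}(\zeta)\,\big[f_N(\zeta) - F_N(j)\big]\;.
\end{equation*}
Since $|S|=\mf n$ is fixed and $|G_N(j)|\le C_0$, it suffices to show that for every $j\in S$,
\begin{equation*}
\lim_{N\to\infty}\, \max_{\eta\in \ms E_N}\, \Big|\, \bb E^N_\eta\Big[\int_0^t \mf h^j_N\big(\eta^{\ms E_N}(s\theta_N)\big)\, ds\Big]\,\Big| \;=\; 0\;.
\end{equation*}
By the definition of $F_N(j)$, the function $\mf h^j_N$ has mean zero with respect to the stationary measure $\pi_{\ms E}$ of the trace chain.

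Next, since $\mf h^j_N$ is $\pi_{\ms E}$-centered, the Poisson equation $\ms L_{\ms E_N} u_N = -\mf h^j_N$ on $\ms E_N$ has a solution, which I normalize by $u_N(\xi^{j,N})=0$. Dynkin's formula applied to the trace chain gives
\begin{equation*}
\bb E^N_\eta\Big[\int_0^{t\theta_N} \mf h^j_N\big(\eta^{\ms E_N}(u)\big)\,du\Big] \;=\; u_N(\eta) \,-\, \bb E^N_\eta\big[u_N\big(\eta^{\ms E_N}(t\theta_N)\big)\big]\;,
\end{equation*}
so, after the change of variables $u=s\theta_N$, the quantity to control is bounded by $2\,\theta_N^{-1}\,\|u_N\|_\infty$.

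The crucial step is then to bound $\|u_N\|_\infty$ by the quantity in hypothesis (a). Using the probabilistic representation
\begin{equation*}
u_N(\eta) \;=\; \bb E^N_\eta\Big[\int_0^{H_{\xi^{j,N}}} \mf h^j_N\big(\eta^{\ms E_N}(u)\big)\,du\Big]\;,
\end{equation*}
together with $H_{\xi^{j,N}}\ge H_{\ms E^j_N}$ (because $\xi^{j,N}\in \ms E^j_N$) and the fact that $\mf h^j_N$ vanishes off $\ms E^j_N$, the strong Markov property at $H_{\ms E^j_N}$ yields $u_N(\eta)=\bb E^N_\eta[u_N(\eta^{\ms E_N}(H_{\ms E^j_N}))]$ whenever $\eta\notin \ms E^j_N$, reducing everything to the case $\eta\in \ms E^j_N$. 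For $\eta\in \ms E^j_N\setminus\{\xi^{j,N}\}$, the classical potential-theoretic identification of expected occupation times before hitting a point with ratios of Green functions, combined with the bound relating Green functions to capacities recalled in Section \ref{sec10}, yields an estimate of the form
\begin{equation*}
|u_N(\eta)| \;\le\; C\, \frac{\sum_{\zeta \in \ms E^j_N} \pi_N(\zeta)\, |f_N(\zeta) - F_N(j)|}{\Cap_N(\eta, \xi^{j,N})}\;.
\end{equation*}
Since $|F_N(j)|\le \pi_N(\ms E^j_N)^{-1}\sum_{\zeta}\pi_N(\zeta)|f_N(\zeta)|$, the numerator is controlled by $2\sum_{\zeta\in \ms E^j_N}\pi_N(\zeta)|f_N(\zeta)|$, so hypothesis (a) gives $\theta_N^{-1}\,\|u_N\|_\infty\to 0$ uniformly in $\eta\in \ms E_N$, which closes the argument.

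The hard part is the capacity estimate on $\|u_N\|_\infty$: one must produce a bound whose right-hand side is \emph{exactly} the ratio appearing in hypothesis (a), uniformly in the starting point, and must do so without assuming reversibility of $\eta_N(t)$. This rests entirely on the Green-function/capacity machinery of Section \ref{sec10}. A minor but essential subtlety is handling starting points outside $\ms E^j_N$, which is dealt with by the strong-Markov reduction above; another is the extraction of the constant $F_N(j)$ from the numerator by the triangle inequality, which is what turns the centered quantity in the Poisson equation into the uncentered quantity appearing in hypothesis (a).
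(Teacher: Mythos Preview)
Your approach is correct and coincides with the route behind the references the paper cites in lieu of a proof (Corollary~6.5 and Proposition~6.10 of \cite{bl2}, and Proposition~A.2 of \cite{bl7} in the nonreversible case): solve the Poisson equation for the trace generator, apply Dynkin's formula, and bound $\|u_N\|_\infty$ through the Green function of the chain killed at $\xi^{j,N}$.

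Two points should be made explicit rather than deferred. First, the inequality you need, $G_\xi(\eta,\zeta)\le \pi(\zeta)/\Cap(\eta,\xi)$, is not actually recorded in Section~\ref{sec10}. It follows from $G^*_\xi(\zeta,\eta)\le G^*_\xi(\eta,\eta)=\pi(\eta)/\Cap^*(\eta,\xi)$, the duality $\pi(\eta)\,G_\xi(\eta,\zeta)=\pi(\zeta)\,G^*_\xi(\zeta,\eta)$ for the killed chain, and the identities $\Cap^*(\eta,\xi)=\Cap(\xi,\eta)=\Cap(\eta,\xi)$ of \eqref{J-5} and \eqref{J-8}; note that this derivation already covers the nonreversible case you flag, so no separate treatment is required. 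Second, hypothesis~(a) involves $\Cap_N$ for the original chain, whereas your representation of $u_N$ is through the trace process. Since $\mf h^j_N$ vanishes on $\Delta_N$, the occupation-time integral $\int_0^{H_{\xi^{j,N}}}\mf h^j_N(\eta^{\ms E_N}(u))\,du$ equals $\int_0^{H_{\xi^{j,N}}}\mf h^j_N(\eta_N(u))\,du$, so you may apply the Green-function bound directly to $\eta_N$ and obtain $\Cap_N(\eta,\xi^{j,N})$ without any conversion between trace and original capacities.
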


In the reversible case, this result follows from Corollary 6.5 and
Proposition 6.10 in \cite{bl2} and from the hypotheses of the theorem.
In the nonreversible case, it follows from Corollary 6.5 in \cite{bl2} and
Proposition A.2 in \cite{bl7}.

\begin{remark}
\label{5-rm2}
The proof of this result takes advantage of the fact that the absolute
value is outside of the expectation.
\end{remark}

\begin{remark}
\label{rm6-2}
To turn the martingale $M_N(t)$, introduced in the previous section,
into a function of $X^T_N$, we only need to prove the previous theorem
for $f_N = R^{(j)}_N$, $j\in S$.  In this special case, by \eqref{5-5}
and \eqref{5-1},
\begin{equation*}
\frac 1{\theta_N}\, \sum_{\zeta\in \ms E^j_N} |\, f_N(\zeta)\,| \,
\pi_N(\zeta) \;=\; \Cap_N(\ms E^j_N, \breve{\ms E}^j_N)\;.
\end{equation*}
In particular, condition (a) of the theorem becomes that for all $j\in
S$, there exists $\xi^{j,N} \in \ms E^j_N$ such that
\begin{equation}
\label{6-2}
\lim_{N\to\infty} \max_{\eta\in \ms E^j_N \,,\, \eta\not = \xi^{j,N}} 
\frac{\Cap_N(\ms E^j_N, \breve{\ms E}^j_N)}{\Cap_N (\eta, \xi^{j,N})}
\;=\; 0\;.
\end{equation}
\end{remark}

\begin{remark}
\label{rm6-1}
The configuration $\xi^{j,N}$ has no special role. By Theorem 2.7 in
\cite{bl2}, if condition \eqref{6-2} holds for one configuration in
$\ms E^j_N$, it holds for all.
\end{remark}

The coarse-grained jump function, denoted by $r^{(j)}_N$ in \eqref{3-3}, is
given by $\widehat R^{(j)}_N$. Thus, by \eqref{6-1} and \eqref{5-1},
\begin{equation*}
r^{(j)}_N (\eta)\;=\; \widehat R^{(j)}_N (\eta)
\;=\; E_{\pi_{\ms E}} \big[\, R^{(j)}_N \,|\, \mc G_N
\,\big] \;=\; \sum_{k\in S} \bs r_N(k,j)\; 
\chi_{\ms E^k_N} (\eta) \;,
\end{equation*}
where, for $k\not =j$,
\begin{equation}
\label{7-1}
\begin{aligned}
\bs r_N(k,j) \; &=\; \frac {1}{\pi_N(\ms E^k_N)} 
\sum_{\zeta\in \ms E^k_N} \pi_N(\zeta) \,  R^{(j)}_N (\zeta) \\
\; &=\;  \frac {\theta_N}{\pi_N(\ms E^k_N)} 
\sum_{\zeta\in \ms E^k_N} \pi_N(\zeta) \, \lambda_N(\zeta) \, 
\bb P_\zeta \big[\, H_{\ms E^j_N} < H^+_{\breve{\ms E}^j_N} \,\big]\;.
\end{aligned}
\end{equation}

\begin{remark}
\label{5-rm1}
Hypothesis (a) of Theorem \ref{l6-1} requires the process to visit all
configurations of the valley $\ms E^j_N$ before it reaches a new
one. Dynamics which display this behavior are said to ``visit
points''.  This class includes condensing zero-range processes
\cite{bl3, l2014, agl2017, s2018}, random walks in a potential field
\cite{lmt2015, ls2016, ls2018} or models in which the valleys are
singletons as the inclusion process \cite{bdg17} or random walks
evolving among random traps \cite{fm1, fli1, jlt11, jlt14}, but it does not
contain the example of Section 1. For such dynamics, in which the
entropy plays a role in the metastable behavior, a different approach
is needed. This is discussed in Sections \ref{sec5} and \ref{sec11}.
\end{remark}

\section{The coarse-grained jump rates}
\label{sec7}

In this section, we investigate the asymptotic behavior of the
coarse-grained jump rates $\bs r_N(k,j)$, defined in \eqref{7-1}.
This is condition \eqref{3-3} of Theorem \ref{3-t1}.

\subsection{Reversible case}
In the reversible case, we may express the coarse-grained jump rates
$\bs r_N(j,k)$ in terms of capacities. If follows from the explicit
formulae \eqref{5-5}, \eqref{7-1} and from an elementary argument
taking advantage of the reversibility that
\begin{equation*}
\begin{aligned}
& \pi_N(\ms E^j) \, \bs r_N(j,k) \; \\
&\quad =\; \theta_N\,
\frac{1}{2} \, \Big\{\,\Cap_N(\ms E^j, \breve{\ms E}^j) \,+\, \Cap_N(\ms
E^k, \breve{\ms E}^k) \,-\,  \Cap_N\big({\ms E}^j\cup{\ms E}^k\,,\,
\cup_{\ell\not = j,k} {\ms E}^\ell \big)\,\Big\} \;.
\end{aligned}
\end{equation*}
Here and below we often write $\color{blue}\ms E^j$,
$\color{blue}\breve{\ms E}^j$ for $\ms E^j_N$, $\breve{\ms E}^j_N$,
respectively.  Therefore, in the reversible case, one can compute the
limit of the coarse-grained jump rates $\bs r_N(j,k)$ if one can
calculate the asymptotic behavior of $\pi_N(\ms E^j)$ and of
\begin{equation*}
\theta_N \, \Cap_N \big( \, \cup_{j\in A} \ms E^j
\,,\, \cup_{k\in B} \ms E^k\,\big)
\end{equation*}
for non-empty subset $A$, $B$ of $S$ such that
$A\cap B = \varnothing$, $A\cup B = S$.

\subsection{Nonreversible case}  

Summing over $j\not =k$ in \eqref{7-1} provides a formula for the
coarse-grained holding rates, denoted by $\bs\lambda_N(j)$:
\begin{equation}
\label{7-10}
{\color{blue} \bs\lambda_N(k)} \;:=\; \sum_{j\not = k} \bs r_N(k,j) \;=\;
\frac {\theta_N}{\pi_N(\ms E^k)} 
\sum_{\zeta\in \ms E^k} \pi_N(\zeta) \, \lambda(\zeta)
\, \bb P_\zeta \big[\, H_{\breve{\ms E}^k} < H^+_{\ms E^k} \, \big]\;.
\end{equation}
The expression on the right-hand side corresponds to the capacity
between $\ms E^k$ and $\breve{\ms E}^k$. Therefore,
\begin{equation}
\label{7-2}
\pi_N(\ms E^j)\, \bs\lambda_N(j) \;=\; 
\theta_N \, \Cap_N(\ms E^j, \breve{\ms E}^j) \;, \quad
j\in S \;.
\end{equation}

\begin{remark}
\label{r7-1}
Equation \eqref{7-2} provides a formula for the magnitude of the
scaling parameter $\theta_N$. To derive a non-trivial limit for the
coarse-grained model $\bs X^T_N$, time has to be rescaled by the
inverse of the capacity between the sets $\ms E^j$ and $\breve{\ms
  E}^j$:
\begin{equation*}
\theta_N \;\approx \; \frac{\pi_N(\ms E^j)}  
{\Cap_N(\ms E^j, \breve{\ms E}^j)} \;\cdot
\end{equation*}
\end{remark}

The asymptotic behavior of the coarse-grained holding rates can be
computed through formula \eqref{7-2} provided one can estimate the
capacities and the measures of the valleys. Once this has been done,
to compute the jump rates, it remains to estimate the jump
probabilities.

Recall from Section \ref{sec13} the definition of a collapsed
chain. Fix $j\in S$, and denote by $\color{blue}\eta^{C,j}(t)$ the
Markov chain obtained from the chain $\eta_N(t)$ by collapsing the
valley $\ms E^j$ to a point, denoted by $\mf j$. The chain
$\eta^{C,j}(t)$ takes value in ${\color{blue}E^{C,j}_N} :=
(E_N \setminus \ms E^j) \cup \{\mf j\}$.

Let $\color{blue}\bb P^{C,j}_\eta$, $\eta\in E^{C,j}_N$, be the
probability measure on $D([0,\infty), E^{C,j}_N)$ induced by the
collapsed process $\eta^{C,j}(t)$ starting from $\eta$. Expectation
with respect to $\bb P^{C,j}_\eta$ is represented by $\color{blue} \bb
E^{C,j}_\eta$. By the last formula of the proof of \cite[Proposition
3.4]{bl7}, for any $k\in S$, $k\not = j$,
\begin{equation*}
\bs p_N(j,k) \;:=\; \frac{\bs r_N(j,k)}{\bs \lambda_N(j)}\; =\; 
\bb P^{C,j}_{\mf j} \big[ \, H_{\ms E^k} < H_{\breve{\ms E}^{j,k}}\,
\big] \;, \quad \text{where}\quad
{\color{blue} \breve{\ms E}^{j,k}} := \bigcup_{\ell \in S
  \setminus \{j,k\}} \ms E^\ell_N\;.
\end{equation*}

\smallskip 

Denote by $\color{blue}\mb P_j$ the probability measure on
$D([0,\infty), S)$ induced by the reduced model $\bs X(t)$ starting
from $j$.  We present below a set of sufficient conditions which
ensure that $\bs p_N(j,k)$ converges to
$\mb P_j[H_k < H_{S\setminus \{j,k\}}]$. This approach has been
developed and gradually refined in \cite{l2014, ls2016, s2018}, and it
is based on the premise that the capacities can be calculated through
the Thomson and the Dirichlet principles.

Denote by $\color{blue} L^2(\pi_N)$ the space of square-summable
functions $f: E_N \to \bb R$ endowed with the scalar product
$\<\,\cdot\,,\,\cdot\, \>_{\pi_N}$ given by
\begin{equation*}
{\color{blue} \<\, f \,,\, g\, \>_{\pi_N}} 
\;:=\; \sum_{\eta\in E_N} f(\eta)\, g(\eta)\, \pi_N(\eta)\;.
\end{equation*}
We assume that the generator $\ms L_N$ of the Markov chain $\eta_N(t)$
satisfies a \emph{\color{blue} sector condition} with a constant $C_0$
independent of $N$: For every $f$, $g\in L^2(\pi_N)$,
\begin{equation}
\label{7-4}
\<\, \ms L_N f \,,\,  g\,\>_{\pi_N}^2 \;\le\; C_0 \, 
\<\, (-\ms L_N) f \,,\,  f\,\>_{\pi_N} \,
\<\, (-\ms L_N) g \,,\,  g\,\>_{\pi_N}\;. 
\end{equation}

Suppose that for fixed $j$, $k\in S$, $k\not = j$, 
\begin{equation}
\label{7-5}
\begin{aligned}
& \lim_{N\to\infty} \theta_N\, \Cap_N(\ms E^k, \breve{\ms E}^{j,k}) 
\,=\, \Cap_S(k, S\setminus \{j,k\})\;, \\
& \quad
\lim_{N\to\infty} \theta_N\, \Cap_N(\ms E^j, \breve{\ms E}^{j}) 
\,=\, \Cap_S(j, S\setminus \{j\})\;,
\end{aligned}
\end{equation}
where $\color{blue} \Cap_S(A,B)$, $A$, $B\subset S$, represents the
capacity with respect to the reduced model $\bs X(t)$.

We also assume that the capacities for the collapsed process
$\eta^{C,j}(t)$ can be calculated: Denote by
$\color{blue}\Cap^{C,j}_N(\ms A, \ms B)$ the capacity between $\ms A$,
$\ms B\subset E^{C,j}_N$, $\ms A \cap \ms B = \varnothing$ induced by
the collapsed process $\eta^{C,j}(t)$.  We assume that the limit of
the capacity $\Cap^{C,j}_N(\ms E^k, \breve{\ms E}^{j,k})$ coincides
with $\Cap_S(k, S\setminus \{j,k\})$:
\begin{equation}
\label{7-6}
\lim_{N\to\infty} \theta_N\, \Cap^{C,j}_N(\ms E^k, \breve{\ms E}^{j,k}) 
\,=\, \Cap_S(k, S\setminus \{j,k\})\;.
\end{equation}

The computation of the capacities requires test flows or test
functions which approximate the optimal ones in the variational
principles. It is thus implicitly assumed in hypotheses
\eqref{7-5} and \eqref{7-6} that explicit expressions for such flows
or functions are available. We assume below that there exists a
sequence of functions ${\color{blue} V^N_{j,k}}: \ms E_N \to [0,1]$
close to the \emph{\color{blue} equilibrium potential}
$h^N_{\ms E^k, \breve{\ms E}^{j,k}}$, given by
\begin{equation*}
{\color{blue} h^N_{\ms E^k, \breve{\ms E}^{j,k}} (\eta)} \;:=\; 
\bb P^N_\eta \big[\, H_{\ms E^k} < H_{\breve{\ms   E}^{j,k}} 
\,\big]\;,
\end{equation*}
in the sense that
\begin{equation}
\label{7-8}
\lim_{N\to\infty} \theta_N\, D_N(V^N_{j,k}) \;=\;
\lim_{N\to\infty} \theta_N\, 
D_N \big( h^N_{\ms E^k, \breve{\ms E}^{j,k}}\big) \;=\;
\Cap_S(k, S\setminus\{j,k\}) \;,
\end{equation}
where $D_N(f)$ stands for the Dirichlet form of $f$:
\begin{equation*}
{\color{blue}  D_N(f)} \;:=\; \<\, (-\, \ms L_N) \, f \,,\,  f\,\>_{\pi_N} \;.
\end{equation*}
The last identity in \eqref{7-8} follows from the fact, proved in
\eqref{J-3}, that
$D_N ( h^N_{\ms E^k, \breve{\ms E}^{j,k}}) = \Cap_N(\ms E^k,
\breve{\ms E}^{j,k})$ and from assumption \eqref{7-5}.

We assume, furthermore, that $V^N_{j,k}$ is constant in each valley
$\ms E^\ell_N$:
\begin{equation}
\label{7-7}
V^N_{j,k}(\eta) \;=\; \sum_{\ell \in S} \mb P_\ell[H_k < H_{S\setminus
  \{j,k\}}] \, \chi_{\ms E^\ell}(\eta)
\quad\text{for all} \quad \eta\in \ms E_N \;.
\end{equation}
Hence, $V^N_{j,k}$ is equal to $1$, $0$ in $\ms E^k_N$,
$\breve{\ms E}^{j,k}_N$, respectively, while on $\ms E^j_N$ it is
given by the probability appearing in \eqref{7-7}.

Finally, as $V^N_{j,k}$ approximates
$h^N_{\ms E^k, \breve{\ms E}^{j,k}}$, which is harmonic on
$\Delta_N \cup \ms E^j$, it is also reasonable to require
$\ms L_N \, V^N_{j,k}$ to be small in these sets. We assume that
\begin{equation}
\label{7-9}
\lim_{N\to\infty} \theta_N\,
\sum_{\eta\in \Delta_N} \big|\, (\ms L_N \, V^N_{j,k}) \, (\eta) \, \big|
\, \pi_N(\eta) \;=\; 
\lim_{N\to\infty} \theta_N\, \Big| \, \sum_{\eta\in \ms E^j}
(\ms L_N \, V^N_{j,k}) \, (\eta) \, \pi_N(\eta) \,\Big| \;=\; 0\;.
\end{equation}

\begin{proposition}
\label{l7-1}
Fix $j$, $k\in S$, $k\not = j$, and assume that conditions
\eqref{7-4}--\eqref{7-9} are in force.  Then, 
\begin{equation*}
\lim_{N\to\infty}  \bs p_N(j,k) 
\;=\; \mb P_j[H_k < H_{S\setminus \{j,k\}}] \;.
\end{equation*}
\end{proposition}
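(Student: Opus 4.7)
The plan is to write $\bs p_N(j,k) = \bs r_N(j,k)/\bs\lambda_N(j)$ and handle the numerator and denominator separately after multiplying both by $\pi_N(\ms E^j_N)$, so as to relate them to capacities. The denominator is immediate: by identity \eqref{7-2}, $\pi_N(\ms E^j_N)\bs\lambda_N(j) = \theta_N \Cap_N(\ms E^j, \breve{\ms E}^j)$, which by \eqref{7-5} converges to $\Cap_S(j, S\setminus\{j\})$. The substance of the proof lies in computing the limit of $\theta_N \pi_N(\ms E^j_N) \bs r_N(j,k)$, or equivalently the limit of the ratio $\bs p_N(j,k) = \bb P^{C,j}_{\mf j}[H_{\ms E^k} < H_{\breve{\ms E}^{j,k}}] = h^{C,j}(\mf j)$, where $h^{C,j}$ is the equilibrium potential for the collapsed chain between $\ms E^k$ and $\breve{\ms E}^{j,k}$.

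The idea is to use $V^N_{j,k}$ as a test function approximating $h^{C,j}$. Since $V^N_{j,k}$ is constant on each valley $\ms E^\ell_N$ by \eqref{7-7}, it descends to a function on the collapsed state-space $E^{C,j}_N$ with prescribed values $1$ on $\ms E^k$, $0$ on $\breve{\ms E}^{j,k}$, and precisely $\mb P_j[H_k < H_{S\setminus\{j,k\}}]$ at $\mf j$. Moreover, being constant on $\ms E^j_N$, its Dirichlet forms in the original and collapsed chains coincide: $D^{C,j}_N(V^N_{j,k}) = D_N(V^N_{j,k})$. Combining \eqref{7-8} (which pins down $\theta_N D_N(V^N_{j,k})$) with \eqref{7-6} (which pins down $\theta_N \Cap^{C,j}_N(\ms E^k, \breve{\ms E}^{j,k})$) yields
\begin{equation*}
\lim_{N\to\infty} \theta_N\, \big[\, D^{C,j}_N(V^N_{j,k}) \;-\; \Cap^{C,j}_N(\ms E^k, \breve{\ms E}^{j,k}) \,\big] \;=\; 0\;.
\end{equation*}
In other words, $V^N_{j,k}$ is an asymptotically optimal test function for the Dirichlet principle. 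Through the sector condition \eqref{7-4} (inherited by the collapsed chain with the same constant) and a standard Cauchy--Schwarz manipulation, this optimality upgrades to the $H^1$-type estimate $\theta_N D^{C,j}_N(V^N_{j,k} - h^{C,j}) \to 0$.

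To turn this energy-level closeness into the pointwise statement $V^N_{j,k}(\mf j) - h^{C,j}(\mf j) \to 0$, I would use the harmonicity hypotheses \eqref{7-9}. The first bound controls $\|\ms L_N V^N_{j,k}\|_{L^1(\pi_N|_{\Delta_N})}$, and the second controls the integrated defect over $\ms E^j_N$. Together with the sector condition, these let us replace $V^N_{j,k}$ by the true harmonic extension $h^{C,j}$ in any expression of the form $\sum_\eta \pi_N(\eta) (\ms L_N V)(\eta) \cdot \varphi(\eta)$ over $\ms E^j_N \cup \Delta_N$, for bounded $\varphi$. Applied to a suitable $\varphi$ that isolates the value at $\mf j$ (concretely, one writes the defining relation $\lambda^{C,j}(\mf j) h^{C,j}(\mf j) = \sum_{\xi} R^{C,j}(\mf j,\xi) h^{C,j}(\xi)$, transforms it back to the original chain via the constancy of $V^N_{j,k}$ on $\ms E^j_N$, and uses \eqref{7-9} to cancel the defect), this gives the required convergence.

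The main obstacle is the nonreversible case in the step that upgrades matching Dirichlet-form asymptotics to the bound $\theta_N D^{C,j}_N(V^N_{j,k} - h^{C,j}) \to 0$. In the reversible case this follows from the orthogonal expansion $D(V^N_{j,k}) = D(h^{C,j}) + D(V^N_{j,k} - h^{C,j})$; in the nonreversible setting the cross term $\<\ms L_N(V^N_{j,k} - h^{C,j}), h^{C,j}\>_{\pi_N}$ must be absorbed by the sector inequality and closed via a small-parameter argument. A secondary technical point is verifying that the sector condition and the Dirichlet form pass cleanly to the collapsed dynamics with uniform constants; this is standard but must be checked.
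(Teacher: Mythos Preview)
Your overall architecture matches the paper's: identify $\bs p_N(j,k)$ with the value at $\mf j$ of the collapsed equilibrium potential $h^{C,j} = h^{(\mf j)}_{\ms E^k,\breve{\ms E}^{j,k}}$, lift it to a function $h_{j,k}$ on $E_N$ constant on $\ms E^j$, compare it in Dirichlet form to $V^N_{j,k}$, and then deduce pointwise closeness of their values on $\ms E^j$. The difficulty is that you have the roles of hypotheses \eqref{7-4} and \eqref{7-9} interchanged, and the swap does not close.

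In your scheme the sector condition is supposed to upgrade $\theta_N\bigl(D(V^N_{j,k})-D(h_{j,k})\bigr)\to 0$ to $\theta_N D(V^N_{j,k}-h_{j,k})\to 0$. Expanding and using the harmonicity of $h_{j,k}$ (and $V=h$ on $\ms E^k\cup\breve{\ms E}^{j,k}$) gives $\langle -\ms L_N h_{j,k}, V^N_{j,k}\rangle_{\pi_N}=D(h_{j,k})$, hence
\[
D(V^N_{j,k}-h_{j,k}) \;=\; D(V^N_{j,k})-D(h_{j,k}) \;-\; \langle -\ms L_N(V^N_{j,k}-h_{j,k}),\, h_{j,k}\rangle_{\pi_N}\,.
\]
The sector bound on the last term yields only $|\langle\cdot\rangle|\le \sqrt{C_0\,D(V-h)\,D(h)}$. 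Writing $y_N=\theta_N D(V-h)$ and using $\theta_N D(h)\to c>0$, you get $y_N\le o(1)+\sqrt{C_0 c}\,\sqrt{y_N}$, which gives boundedness of $y_N$, not $y_N\to 0$. So the ``small-parameter'' closure you allude to does not exist.

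The paper handles this cross term not via \eqref{7-4} but by computing $\langle -\ms L_N V^N_{j,k}, h_{j,k}\rangle_{\pi_N}$ directly: on $\ms E^k\cup\breve{\ms E}^{j,k}$ one has $h_{j,k}=V^N_{j,k}$, so the difference reduces to a sum over $\Delta_N\cup\ms E^j$ of $(h_{j,k}-V^N_{j,k})(-\ms L_N V^N_{j,k})$, and \emph{this} is where \eqref{7-9} enters (both functions are constant on $\ms E^j$ and bounded by $1$, so the sum is controlled by the two quantities in \eqref{7-9}). This yields $\theta_N D_N(h_{j,k}-V^N_{j,k})\to 0$ (Lemma~\ref{l7-2}).

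Conversely, the sector condition \eqref{7-4} is used in the pointwise step, not the Dirichlet-form step. Once $\theta_N D_N(f_{j,k})\to 0$ with $f_{j,k}:=h_{j,k}-V^N_{j,k}$, note that $f_{j,k}$ vanishes on $\breve{\ms E}^j$ and is constant on $\ms E^j$; so $F:=f_{j,k}/f_{j,k}(\ms E^j)\in\mf C_{1,0}(\ms E^j,\breve{\ms E}^j)$ and $D_N(F)\ge \Cap^s_N(\ms E^j,\breve{\ms E}^j)\ge C_0^{-1}\Cap_N(\ms E^j,\breve{\ms E}^j)$ by \eqref{J-13} and Lemma~\ref{J-l2}. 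Multiplying by $\theta_N$ and using \eqref{7-5} forces $f_{j,k}(\ms E^j)\to 0$ (Lemma~\ref{l7-3}). Your alternative pointwise argument via \eqref{7-9} is left vague (``a suitable $\varphi$ that isolates the value at $\mf j$'') and, in any case, \eqref{7-9} has already been spent on the cross term.

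In short: the strategy is right, but the bookkeeping of which hypothesis feeds which step is wrong, and the sector-condition closure of the cross term is a genuine gap.
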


The proof of this proposition is divided in several lemmata.
Since $V^N_{j,k}$ is constant on $\ms E^j$, we may collapse it to a
function defined on $E^{C,j}_N$. Recall
from \eqref{7-7} the value of $V^N_{j,k}$ at $\ms E^j$ and let 
$V^{C,j}_{j,k}: E^{C,j}_N \to [0,1]$ be given by
\begin{equation*}
V^{C,j}_{j,k} (\mf j) \;=\; \mb P_j[H_k < H_{S\setminus
  \{j,k\}}] \;, \quad
V^{C,j}_{j,k} (\eta) \;=\; V^N_{j,k} (\eta) \;,\quad
\eta\in E^{C,j}_N \setminus \{\mf j\}\;.
\end{equation*}
The dependence of $V^{C,j}_{j,k}$ on $N$ has been omitted.

Let $\color{blue}\ms L^{C,j}_N$ be the generator of the collapsed
process $\eta^{C,j}(t)$.  For $\ms A$, $\ms B\subset E^{C,j}_N$,
$\ms A \cap \ms B = \varnothing$, denote by
$h^{(\mf j)}_{\ms A, \ms B}$ the solution of the boundary value
elliptic problem
\begin{equation*}
\left\{
\begin{aligned}
& \; (\, \ms L^{C,j}_N \, h \,)\, (\eta) \;=\;
0\;,\;\; \eta\not \in \ms A \cup \ms B \;, \\
& \; h (\eta) \;=\; \chi_{\ms A} (\eta) 
\;,\;\; \eta \,\in\, \ms A \cup \ms B \;.
\end{aligned}
\right.
\end{equation*}

Denote by $h_{j,k}: E_N \to \bb R$ the lifting of the function
$h^{(\mf j)}_{\ms E^k, \breve{\ms E}^{j,k}}$:
\begin{equation*}
h_{j,k}(\eta) \;=\; h^{(\mf j)}_{\ms E^k, \breve{\ms E}^{j,k}}(\mf j)
\;, \quad \eta\in \ms E^j\;,\qquad
h_{j,k}(\eta) \;=\; h^{(\mf j)}_{\ms E^k, \breve{\ms E}^{j,k}}(\eta) \;,\quad
\eta\in E_N \setminus \ms E^j\;.
\end{equation*}
Note that the function $h_{j,k}$ is constant and equal to
$\bs p_N(j,k)$ on the set $\ms E^j$.

\begin{lemma}
\label{l7-2}
We claim that
\begin{equation*}
\lim_{N\to\infty}   \theta_N\, D_N\big( \,h_{j,k}
\,-\, V^N_{j,k} \,\big)  \;=\; 0 \;.
\end{equation*}
\end{lemma}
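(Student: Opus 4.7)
The plan is to expand the Dirichlet form by bilinearity,
\begin{equation*}
D_N(h_{j,k} - V^N_{j,k}) \;=\; D_N(h_{j,k}) \,+\, D_N(V^N_{j,k}) \,-\, \<\,(-\ms L_N) h_{j,k}\,,\, V^N_{j,k}\,\>_{\pi_N} \,-\, \<\,(-\ms L_N) V^N_{j,k}\,,\, h_{j,k}\,\>_{\pi_N}\,,
\end{equation*}
and to show that after multiplying by $\theta_N$ each of the four terms on the right converges to the same limit $\Cap_S(k, S\setminus\{j,k\})$, so that the contributions cancel in pairs. The statement $\theta_N D_N(V^N_{j,k}) \to \Cap_S(k, S\setminus\{j,k\})$ is exactly \eqref{7-8}. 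For $\theta_N D_N(h_{j,k})$ I would use that $h_{j,k}$ is constant on $\ms E^j$: writing $D_N$ in the symmetric form $\tfrac12\sum_{\eta,\xi}\pi_N(\eta)R_N(\eta,\xi)[f(\eta)-f(\xi)]^2$ and regrouping the transitions in and out of $\ms E^j$ through the definition of the collapsed rates identifies $D_N(h_{j,k})$ with $D^{C,j}_N(h^{(\mf j)}_{\ms E^k,\breve{\ms E}^{j,k}})$, which equals $\Cap^{C,j}_N(\ms E^k, \breve{\ms E}^{j,k})$ by the collapsed-chain analogue of \eqref{J-3}; hypothesis \eqref{7-6} then supplies the limit.

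For the first cross term, $\theta_N \<(-\ms L_N) h_{j,k}, V^N_{j,k}\>_{\pi_N}$, I would split the sum along the partition $\{\ms E^j, \ms E^k, \breve{\ms E}^{j,k}, \Delta_N\}$. The same lifting computation shows $(\ms L_N h_{j,k})(\eta) = (\ms L^{C,j}_N h^{(\mf j)})(\eta)$ for every $\eta\in\Delta_N$, so $h_{j,k}$ is harmonic on $\Delta_N$ and that piece vanishes. On $\ms E^j$, $V^N_{j,k}$ is the constant $\mb P_j[H_k < H_{S\setminus\{j,k\}}]$, while $\sum_{\eta\in\ms E^j}\pi_N(\eta)(\ms L_N h_{j,k})(\eta) = \pi^{C,j}_N(\mf j)(\ms L^{C,j}_N h^{(\mf j)})(\mf j) = 0$ because $h^{(\mf j)}$ is harmonic at $\mf j$ as well. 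The $\breve{\ms E}^{j,k}$ piece vanishes since $V^N_{j,k}=0$ there. Only the $\ms E^k$ piece remains, and since $V^N_{j,k}=1$ on $\ms E^k$ and the collapsed identification also holds for $\eta\in\ms E^k$, it reduces to $\sum_{\eta\in\ms E^k}\pi^{C,j}_N(\eta)(-\ms L^{C,j}_N h^{(\mf j)})(\eta) = \Cap^{C,j}_N(\ms E^k, \breve{\ms E}^{j,k})$, whose $\theta_N$-rescaled limit is again $\Cap_S(k, S\setminus\{j,k\})$ by \eqref{7-6}.

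The second cross term $\theta_N \<(-\ms L_N) V^N_{j,k}, h_{j,k}\>_{\pi_N}$ is the main obstacle, since no structural identity for $\ms L_N V^N_{j,k}$ is available away from the valleys; one must instead exploit the approximate harmonicity supplied by \eqref{7-9}. Splitting again along the four regions: on $\ms E^j$ the coefficient $h_{j,k}$ is the constant $\bs p_N(j,k)\in[0,1]$, and on $\Delta_N$ one has $|h_{j,k}|\le 1$, so \eqref{7-9} controls both contributions by $o(1/\theta_N)$; on $\breve{\ms E}^{j,k}$ the coefficient $h_{j,k}$ vanishes. Hence
\begin{equation*}
\<(-\ms L_N) V^N_{j,k}, h_{j,k}\>_{\pi_N} \;=\; \sum_{\eta\in\ms E^k}\pi_N(\eta)(-\ms L_N V^N_{j,k})(\eta) \,+\, o(1/\theta_N).
\end{equation*}
Applying the very same decomposition to $D_N(V^N_{j,k}) = \<(-\ms L_N) V^N_{j,k}, V^N_{j,k}\>_{\pi_N}$, using $|V^N_{j,k}|\le 1$ on $\Delta_N$ and the fact that $V^N_{j,k}$ is constant on $\ms E^j$, yields $D_N(V^N_{j,k}) = \sum_{\eta\in\ms E^k}\pi_N(\eta)(-\ms L_N V^N_{j,k})(\eta) + o(1/\theta_N)$ as well. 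Comparing these two identities and invoking \eqref{7-8} closes the argument, and the four $\theta_N$-rescaled terms cancel to give the claimed limit zero.
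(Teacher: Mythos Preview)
Your proof is correct and follows essentially the same route as the paper: expand $D_N(h_{j,k}-V^N_{j,k})$ bilinearly, identify $\theta_N D_N(h_{j,k})$ with $\theta_N \Cap^{C,j}_N(\ms E^k,\breve{\ms E}^{j,k})$ via the collapsed chain, use \eqref{7-8} for $\theta_N D_N(V^N_{j,k})$, reduce the cross term $\<\,V^N_{j,k}, (-\ms L_N) h_{j,k}\,\>_{\pi_N}$ to $\Cap^{C,j}_N(\ms E^k,\breve{\ms E}^{j,k})$ by harmonicity of $h^{(\mf j)}$ on $\Delta_N\cup\{\mf j\}$, and control the remaining cross term through \eqref{7-9}. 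The only organizational difference is in this last step: the paper writes $\<\,h_{j,k}, (-\ms L_N) V^N_{j,k}\,\>_{\pi_N} = D_N(V^N_{j,k}) + \sum_{\Delta_N\cup\ms E^j}(h_{j,k}-V^N_{j,k})(-\ms L_N V^N_{j,k})\,\pi_N$ and bounds the correction in one stroke, whereas you show separately that both $\<\,h_{j,k}, (-\ms L_N) V^N_{j,k}\,\>_{\pi_N}$ and $D_N(V^N_{j,k})$ equal $\sum_{\ms E^k}\pi_N(-\ms L_N V^N_{j,k}) + o(1/\theta_N)$ and compare; this is the same computation split in two.
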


\begin{proof}
Rewrite $D_N( h - V)$ as $\< \, h-V \,,\, (-\ms L_N\, [h-V]\,)\,
\>_{\pi_N}$ and compute separately the limit of the four
terms. 

By equation \eqref{L-3},
$D_N(h_{j,k}) = D_N^{C, j}( h^{(\mf j)}_{\ms E^k, \breve{\ms
    E}^{j,k}})$,
where $D_N^{C, j}$ represents the Dirichlet form associated to the
collapsed process.  By \eqref{J-3},
$D_N^{C, j}( h^{(\mf j)}_{\ms E^k, \breve{\ms E}^{j,k}}) = \Cap^{C,
  j}_N(\ms E^k, \breve{\ms E}^{j,k})$.
Hence, by assumption \eqref{7-5},
\begin{equation*}
\lim_{N\to\infty} 
\theta_N\, D_N(h_{j,k}) \;=\; 
\Cap_S(k, S\setminus \{j,k\}) \;.
\end{equation*}
By assumption \eqref{7-8}, the same result holds for $V_{j,k}$ in place of
$h_{j,k}$. 

It remains to examine the cross terms. By \eqref{L-3},
\begin{equation*}
\< \, V_{j,k} \,,\, (- \ms L_N \, h_{j,k}) \, \>_{\pi_N} \;=\;
\< \, V^{C, j}_{j,k} \,,\, (- \ms L^{C, j}_N \, 
h^{(\mf j)}_{\ms E^k, \breve{\ms E}^{j,k}}) \, \>_{\pi^{C, j}_N}\;,
\end{equation*}
where $\pi^{C, j}_N$ stands for the stationary measure $\pi_N$
collapsed at $\ms E^j$. Since
$h^{(\mf j)}_{\ms E^k, \breve{\ms E}^{j,k}}$ is harmonic on
$\Delta_N \cup \{\mf j\}$, and since $V^{C, j}_{j,k}$ vanishes
on $\breve{\ms E}^{j,k}$ and coincides with
$h^{(\mf j)}_{\ms E^k, \breve{\ms E}^{j,k}}$ on $\ms E^k$, the last
expression is equal to
\begin{align*}
& \sum_{\eta\in \ms E^k} V^{C, j}_{j,k} (\eta)\, 
(- \ms L^{C, j}_N \, h^{(\mf j)}_{\ms E^k, \breve{\ms E}^{j,k}}) 
\, (\eta)\, \pi^{C, j}_N(\eta) \\
&\quad 
\; =\; \sum_{\eta\in \ms E^k} h^{(\mf j)}_{\ms E^k, \breve{\ms E}^{j,k}}  (\eta)\, 
(- \ms L^{C, j}_N \, h^{(\mf j)}_{\ms E^k, \breve{\ms E}^{j,k}})
  \, (\eta)\, \pi^{C, j}_N(\eta)\;.
\end{align*}
Using again the harmonicity of
$h^{(\mf j)}_{\ms E^k, \breve{\ms E}^{j,k}}$ on
$\Delta_N \cup \{\mf j\}$, and the fact that it vanishes on
$\breve{\ms E}^{j,k}$, we may extend the sum to the entire set
$E^{C, j}_N$ and conclude, as at the beginning of the
proof, that
\begin{equation*}
\lim_{N\to\infty} 
\theta_N\, \< \,V_{j,k} \,,\, (- \ms L_N \, h_{j,k})
\,\>_{\pi_N} \;=\;  \Cap_S(k, S\setminus \{j,k\}) \;.
\end{equation*}

Similarly, since $h_{j,k}$ is equal to
$V_{j,k}$ on $\ms E^k \cup \breve{\ms E}^{j,k}$,
\begin{align*}
\< \, h_{j,k} \,,\, (- \ms L_N V_{j,k} \,)
\, \>_{\pi_N} \; & =\; \sum_{\eta\in \ms E^k} V_{j,k}(\eta)\,
(- \ms L_N \, V_{j,k}) \, (\eta)\, \pi_N(\eta) \\
& +\; \sum_{\eta\in \Delta_N \cup \ms E^j} h_{j,k}  (\eta)\, 
(- \ms L_N \, V_{j,k}) \, (\eta)\, \pi_N(\eta)\;.
\end{align*}
Since $V_{j,k}$ vanishes on $\breve{\ms E}^{j,k}$, the first term on
the righ-hand side is equal to
\begin{equation*}
D_N(V_{j,k}) \;-\;
\sum_{\eta\in \Delta_N \cup \ms E^j}  V_{j,k} (\eta)\, 
(- \ms L_N \, V_{j,k}) \, (\eta)\, \pi_N(\eta)\;.
\end{equation*}
Therefore,
\begin{align*}
& \< \, h_{j,k} \,,\, (- \ms L_N V_{j,k} \,)
\, \>_{\pi_N} \;=\;  D_N(V_{j,k}) \\
&\quad  +\; \sum_{\eta\in \Delta_N \cup \ms E^j} 
\{\, h_{j,k}  (\eta) \,-\, V_{j,k} (\eta) \} \, 
(- \ms L_N \, V_{j,k}) \, (\eta)\, \pi_N(\eta)\;.
\end{align*}
Since $h_{j,k}$ and $V_{j,k}$ are constant in
$\ms E^j$, non-negative and bounded by $1$, the absolute value of the
second term on the right-hand side is less than or equal to
\begin{equation*}
\sum_{\eta\in \Delta_N} \big|\, (\ms L_N \, V_{j,k}) \, (\eta) \, \big|
\, \pi_N(\eta) \;+ \; \Big| \, \sum_{\eta\in \ms E^j}
(\ms L_N \, V_{j,k}) \, (\eta) \, \pi_N(\eta) \,\Big|\;.
\end{equation*}
By condition \eqref{7-9}, this expression multiplied by $\theta_N$
converges to $0$ as $N\to\infty$. Thus, by \eqref{7-8},
\begin{equation*}
\lim_{N\to\infty}   \theta_N\,
\< \, h_{j,k} \,,\, 
(- \ms L_N V_{j,k} \,) \, \>_{\pi_N} \;=\; 
\Cap_S(k, S\setminus \{j,k\})\;.
\end{equation*}
Putting together all previous estimates yields the assertion.  
\end{proof}

Fix two non-empty subsets $\ms A$, $\ms B$ of $E_N$ such that
$\ms A \cap \ms B = \varnothing$, Recall from Section \ref{sec10} that
we represent by $\color{blue} \mf C_{1,0}(\ms A , \ms B)$ the space of
functions $f:E_N \to [0,1]$ which are equal to $1$ on $\ms A$ and $0$
on $\ms B$.  Let $f^N_{j,k} = h_{j,k} \,-\, V_{j,k}$, and note that
this function is constant on each valley $\ms E^\ell$.

\begin{lemma}
\label{l7-3}
Let $f_{j,k}(\ms E^j)$ be the value of $f_{j,k}$ at $\ms E^j$.  Then,
$\lim_{N\to\infty} f_{j,k}(\ms E^j) =0$.
\end{lemma}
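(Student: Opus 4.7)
My plan is to first compute $c_N := f^N_{j,k}(\ms E^j)$ explicitly from the definitions, and then derive $c_N \to 0$ by combining the Dirichlet form estimate from Lemma~\ref{l7-2} with a variational lower bound on $D_N(f^N_{j,k})$.

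For the first step, I would unpack the definitions. The function $h_{j,k}$ takes the constant value $\bs p_N(j,k)$ on $\ms E^j$, equals $1$ on $\ms E^k$ (where $h^{(\mf j)}_{\ms E^k, \breve{\ms E}^{j,k}}$ equals $1$), and equals $0$ on each $\ms E^\ell$ with $\ell \in S \setminus \{j,k\}$. From the explicit formula \eqref{7-7}, $V^N_{j,k}$ takes the matching values $1$ and $0$ on these same sets, while on $\ms E^j$ it equals $\mb P_j[H_k < H_{S \setminus \{j,k\}}]$. Hence, on $\ms E_N$,
\begin{equation*}
f^N_{j,k}(\eta) \;=\; c_N\, \chi_{\ms E^j_N}(\eta), \qquad c_N \;:=\; \bs p_N(j,k) \,-\, \mb P_j\big[H_k < H_{S\setminus\{j,k\}}\big],
\end{equation*}
so $f^N_{j,k}$ vanishes on $\breve{\ms E}^j$ and is constantly $c_N$ on $\ms E^j$.

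For the second step, I would argue by contradiction. Suppose $|c_N| \ge \delta > 0$ along a subsequence. The normalized function $g_N := c_N^{-1}\, f^N_{j,k}$ equals $1$ on $\ms E^j$ and $0$ on $\breve{\ms E}^j$, with $D_N(g_N) = c_N^{-2}\, D_N(f^N_{j,k})$. By the Dirichlet principle applied to the symmetric part of $\ms L_N$ (whose Dirichlet form coincides with $D_N$), and by the comparison $\Cap_N \le C_1\, \Cap^s_N$ available under the sector condition \eqref{7-4}, one obtains
\begin{equation*}
\Cap_N(\ms E^j, \breve{\ms E}^j) \;\le\; C_1\, D_N(g_N) \;\le\; C_1\, \delta^{-2}\, D_N(f^N_{j,k}),
\end{equation*}
for a constant $C_1$ depending only on the sector constant $C_0$. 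Multiplying by $\theta_N$ and letting $N\to\infty$, the left-hand side converges, by the second limit in \eqref{7-5}, to $\Cap_S(j, S\setminus\{j\}) > 0$ (strictly positive by irreducibility of the reduced chain $\bs X$), while the right-hand side vanishes by Lemma~\ref{l7-2}. This contradicts the assumption, forcing $c_N \to 0$.

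The delicate step is the variational comparison, since in the nonreversible case the Dirichlet principle for $\Cap_N$ can fail without the sector condition; hypothesis \eqref{7-4} is used precisely to close this gap, passing through $\Cap^s_N$. One should also note that, although $g_N$ need not take values in $[0,1]$ (its values on $\Delta_N$ are unconstrained), the Dirichlet principle for $\Cap^s_N$ holds without this restriction, so the argument applies as stated. The algebraic identification of $c_N$ in the first step is immediate from the definitions and \eqref{7-7}.
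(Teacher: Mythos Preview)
Your proposal is correct and follows essentially the same route as the paper: both arguments observe that $f^N_{j,k}$ vanishes on $\breve{\ms E}^j$ and is constant on $\ms E^j$, normalize by that constant to land in $\mf C_{1,0}(\ms E^j,\breve{\ms E}^j)$, apply the Dirichlet principle for the symmetric part together with the sector condition \eqref{7-4} to bound $D_N$ below by $\Cap_N(\ms E^j,\breve{\ms E}^j)$ up to a constant, and then combine \eqref{7-5} with Lemma~\ref{l7-2} to force the constant to vanish. Your explicit identification of $c_N$ and the subsequence contradiction are cosmetic variations on the paper's presentation, not a different method.
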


\begin{proof}
The function $f_{j,k}$ vanishes on $\ms E^k \cup \breve{\ms E}^{j,k} =
\breve{\ms E}^{j}$, and it is constant on $\ms E^{j}$. Hence, if
$f_{j,k}(\ms E^j)\not = 0$, the function $F$ defined as $F(\eta) =
f_{j,k}(\eta)/f_{j,k}(\ms E^j)$ belongs to $\mf C_{1,0}(\ms E^j,
\breve{\ms E}^{j})$.

Suppose that $f_{j,k}(\ms E^j)\not =0$.  On the one hand, by Lemma
\ref{l7-2}, $\theta_N\, f_{j,k}(\ms E^j)^2 \, D_N(F) = \theta_N\,
D_N(f_{j,k}) \to 0$.  On the other hand, since $F$ belongs to $\mf
C_{1,0}(\ms E^j, \breve{\ms E}^{j})$, by \eqref{J-13}, $D_N(F) \ge
\Cap^s_N(\ms E^j, \breve{\ms E}^{j})$, where $\Cap^s_N(\ms E^j,
\breve{\ms E}^{j})$ represents the capacity associated to the
symmetric dynamics.  By the sector condition, stated in assumption
\eqref{7-4}, and Lemma \ref{J-l2}, this symmetric capacity is bounded
below by $c_0\, \Cap_N(\ms E^j, \breve{\ms E}^{j})$, where $c_0 =
1/C_0>0$. Hence, by \eqref{7-5}, $\liminf_{N\to \infty} \theta_N\,
D_N(F_N) \ge c_0\, \Cap_S(j, S\setminus \{j\}) > 0$, which proves the
assertion of the lemma.
\end{proof}

\begin{proof}[Proof of Proposition \ref{l7-1}]
By definition \eqref{7-7} of $V_{j,k}$, $V_{j,k}(\ms E^j) = \mb
P_j[H_k < H_{S\setminus \{j,k\}}]$. The claim of the proposition
follows from Lemma \ref{l7-3} and the fact that
\begin{equation*}
h_{j,k}(\ms E^j) \;=\; h^{(\mf j)}_{\ms E^k, \breve{\ms E}^{j,k}}(\mf
j) \;=\; \bs p_N(j,k)\;.
\end{equation*}
\end{proof}

\begin{remark}
\label{r7-2}
Assumption \eqref{7-4} can be replaced by the hypothesis that 
\begin{equation}
\label{7-3}
\liminf_{N\to\infty} \theta_N\, \Cap^s_N(\ms E^j, \breve{\ms E}^{j})
\;>\; 0\;.  
\end{equation}
\end{remark}

\begin{proof}
We only used the sector condition, assumption \eqref{7-4}, in the
proof of Lemma \ref{l7-3} to guarantee that $\theta_N\, D_N(F)$ is
bounded below by a strictly positive constant. Since
$D_N(F) \ge \Cap^s_N(\ms E^j, \breve{\ms E}^{j})$, by \eqref{7-3},
$\liminf_{N\to\infty} \theta_N\,D_N(F) >c_0$, as needed.
\end{proof}

\section{The negligible set $\Delta_N$}
\label{sec15}

We provide in this section sufficient conditions for assumption (LP2)
or (T2) to hold. Recall from \eqref{7-1} that $\bs r_N(k,j)$
represents the coarse-grained jump rates. Assume that they converge:
For all $j\not = k$, there exists $\bs r(k,j) \in [0,\infty)$ such
that
\begin{equation}
\label{o-2}
\lim_{N\to\infty} \bs r_N(k,j) \;=\; \bs r(k,j)\;.
\end{equation}
Recall that we represent by $\bs X(t)$ the reduced model, the
$S$-valued Markov chain whose jump rates are given by $\bs r(k,j)$.
Denote by $\color{blue} A\subset S$ the subset of $S$ formed by the
points which are absorbing for the reduced model $\bs X(t)$.  Next
result is Theorem 2.7 in \cite{bl2} and Theorem 2.1 in \cite{bl7}.

\begin{theorem}
\label{o-t1}
Assume that conditions \eqref{6-2} and \eqref{o-2} are in force.
Assume, furthermore, that for all $k\in A$, $t>0$, 
\begin{equation}
\label{o-3}
\lim_{N\to\infty}
\max_{\eta\in\ms E^k_N} \bb E^N_{\eta} \Big[\,
\int_0^t \chi_{\Delta_N} \big(\eta_N(s\theta_N)\big) 
\; ds\, \Big] \;=\; 0\;;
\end{equation}
and that for all $j\not \in A$ 
\begin{equation}
\label{o-1}
\lim_{N \to \infty} \frac{ \pi_N(\Delta_N) }{ \pi_N(\ms E^j_N) } 
\;=\; 0 \;.
\end{equation}
Then, property (LP2) is in force.
\end{theorem}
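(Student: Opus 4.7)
The strategy is to split the supremum over $\eta \in \ms E_N = \bigcup_{k \in S} \ms E^k_N$ into two cases according to whether the initial valley is absorbing for $\bs X(t)$. For $\eta \in \ms E^k_N$ with $k \in A$, the bound is exactly assumption \eqref{o-3}, so the real effort concerns a starting configuration $\eta \in \ms E^j_N$ with $j \notin A$. For such an $\eta$, set $\ms E^A_N := \bigcup_{k \in A} \ms E^k_N$ and let $H^A_N$ denote the hitting time of $\ms E^A_N$. I would decompose
\begin{equation*}
\int_0^{t} \chi_{\Delta_N}\!\big(\eta_N(s\theta_N)\big)\, ds \;=\; \int_0^{t \wedge (H^A_N/\theta_N)} \!\!\chi_{\Delta_N}\!\big(\eta_N(s\theta_N)\big)\, ds \,+\, \int_{t \wedge (H^A_N/\theta_N)}^{t} \!\!\chi_{\Delta_N}\!\big(\eta_N(s\theta_N)\big)\, ds .
\end{equation*}

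The expectation of the second piece is handled by the strong Markov property at $H^A_N$: on $\{H^A_N \le t\theta_N\}$ the process restarts from some configuration in some $\ms E^k_N$ with $k \in A$, so this expectation is at most $\max_{k \in A}\max_{\zeta \in \ms E^k_N} \bb E^N_\zeta\big[\int_0^t \chi_{\Delta_N}(\eta_N(s\theta_N))\, ds\big]$, which vanishes by \eqref{o-3}. For the first piece, which equals $\theta_N^{-1}\bb E^N_\eta\big[\int_0^{t\theta_N \wedge H^A_N}\chi_{\Delta_N}(\eta_N(u))\, du\big]$, I would invoke \eqref{o-2} together with \eqref{6-2} and Theorem \ref{3-t1}: convergence of the coarse-grained jump rates forces every non-absorbing state of $\bs X(t)$ to have a strictly positive escape rate, hence finite expected hitting time of $A$ from any $j \notin A$; transferring this to $\eta_N$ via Theorem \ref{3-t1} gives $\bb E^N_\eta[H^A_N] = O(\theta_N)$ uniformly in $\eta \in \ms E^j_N$.

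The main obstacle is to extract the small factor $\pi_N(\Delta_N)/\pi_N(\ms E^j_N)$ needed to close the first piece, since the $O(\theta_N)$ estimate on $H^A_N$ by itself only gives an $O(1)$ contribution. The idea is that prior to $H^A_N$ the chain alternates between long sojourns in non-absorbing valleys $\ms E^\ell_N$ ($\ell \notin A$) and short traversals of $\Delta_N$; the local-ergodic estimate encoded in condition \eqref{6-2} (cf.\ Theorem \ref{l6-1} and Remark \ref{rm6-1}) forces each sojourn in $\ms E^\ell_N$ to be asymptotically distributed as $\pi_N$ restricted to $\ms E^\ell_N$, so that the fraction of time spent in $\Delta_N$ up to $H^A_N$ is asymptotically $\pi_N(\Delta_N)/\pi_N(\ms E_N \setminus \ms E^A_N)$. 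Multiplying by $\bb E^N_\eta[H^A_N \wedge t\theta_N]/\theta_N = O(1)$ and applying \eqref{o-1} to each $j \notin A$ yields an upper bound of order $\max_{j \notin A}\pi_N(\Delta_N)/\pi_N(\ms E^j_N) \to 0$ for the first piece. Uniformity of the local-ergodic replacement over the initial point $\eta \in \ms E^j_N$ is the most delicate issue; it is precisely what condition \eqref{6-2} and Remark \ref{rm6-1} are designed to furnish, so they should close the argument.
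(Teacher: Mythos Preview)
The paper does not prove Theorem~\ref{o-t1}; it simply quotes it from \cite{bl2} (Theorem~2.7, reversible case) and \cite{bl7} (Theorem~2.1, non-reversible case). So there is no in-paper argument to compare against, but your sketch can be checked against what those references actually do.

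Your architecture---split at $H^A_N$, use \eqref{o-3} for $k\in A$, and handle the post-$H^A_N$ piece by the strong Markov property---is correct and is exactly how the argument in \cite{bl2,bl7} begins.

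The gap is in the pre-$H^A_N$ piece. You propose to invoke Theorem~\ref{l6-1} and condition~\eqref{6-2} to conclude that ``the fraction of time spent in $\Delta_N$ up to $H^A_N$ is asymptotically $\pi_N(\Delta_N)/\pi_N(\ms E_N\setminus\ms E^A_N)$''. This is a misapplication: Theorem~\ref{l6-1} is a statement about the \emph{trace} process $\eta^{\ms E_N}(t)$ on $\ms E_N$; it lets you replace time-averages of functions \emph{defined on $\ms E_N$} by their valley-means. It gives no information whatsoever about the occupation time of $\Delta_N$ for the original chain $\eta_N(t)$. Knowing that the sojourn in $\ms E^\ell_N$ is ``distributed like $\pi_N$ restricted to $\ms E^\ell_N$'' tells you nothing about the length of the subsequent $\Delta_N$-traversal. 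There is also a mild circularity in first claiming $\bb E^N_\eta[H^A_N]=O(\theta_N)$ for the \emph{original} process via Theorem~\ref{3-t1}: that theorem controls the trace hitting time, and $H^A_N$ differs from it precisely by the $\Delta_N$-occupation time you are trying to bound.

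What actually closes the first piece in \cite{bl2,bl7} is a direct potential-theoretic occupation-time estimate, not local ergodicity. For $\eta\in\ms E^j_N$ one has (via the Green's function representation)
\[
\bb E^N_{\eta}\Big[\int_0^{H_{\breve{\ms E}^j_N}}\chi_{\Delta_N}(\eta_N(s))\,ds\Big]\;\le\;\frac{\pi_N(\Delta_N)}{\Cap_N(\eta,\breve{\ms E}^j_N)}\;,
\]
and condition~\eqref{6-2} is used---via the reference point $\xi^{j,N}$---to compare $\Cap_N(\eta,\breve{\ms E}^j_N)$ with $\Cap_N(\ms E^j_N,\breve{\ms E}^j_N)$ uniformly over $\eta\in\ms E^j_N$. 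By~\eqref{7-2} the right-hand side then becomes $\theta_N\,\bs\lambda_N(j)^{-1}\,\pi_N(\Delta_N)/\pi_N(\ms E^j_N)$; since $j\notin A$ gives $\bs\lambda_N(j)\to\bs\lambda(j)>0$ by~\eqref{o-2}, division by $\theta_N$ and~\eqref{o-1} yield $o(1)$ for one excursion. Summing over the (tight, by convergence of the trace) number of inter-valley excursions before reaching $\ms E^A_N$ finishes the proof. So~\eqref{6-2} enters to make a capacity bound uniform in the starting point, not through Theorem~\ref{l6-1}; that is the mechanism your sketch is missing.
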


\begin{remark}
\label{o-r1} 
In the previous theorem, we may replace conditions \eqref{o-3},
\eqref{o-1} by the assumption
\begin{equation*}
\lim_{N \to \infty} \frac 1{ \sum_{k\not = j}\bs r_N(j,k)}
\, \frac{ \pi_N(\Delta_N) }{ \pi_N(\ms E^j_N) }\;=\; 0
\end{equation*}
for all $j\in S$.
\end{remark}

In some spin dynamics, the valleys are formed by few configurations
and the following simple argument applies.

\begin{lemma}
\label{o-t2}
Assume that 
\begin{equation*}
\lim_{N \to \infty} \max_{\eta\in \ms E_N} 
\frac{\pi_N(\Delta_N) }{ \pi_N(\eta) }\;=\; 0\;.
\end{equation*}
Then, condition (LP2) is in force.
\end{lemma}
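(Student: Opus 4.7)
The plan is a short stationarity argument that converts the hypothesis into the desired bound almost mechanically. First, I rescale time by the change of variables $u = s\theta_N$ to write
\begin{equation*}
\int_0^t \chi_{\Delta_N}\big(\eta_N(s\theta_N)\big)\, ds
\;=\; \frac{1}{\theta_N} \int_0^{t\theta_N} \chi_{\Delta_N}\big(\eta_N(u)\big)\, du\;.
\end{equation*}

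Next, I exploit the invariance of $\pi_N$. For every $T \geq 0$, Fubini and stationarity give
\begin{equation*}
\sum_{\xi \in E_N} \pi_N(\xi)\, \bb E^N_\xi\Big[\int_0^T \chi_{\Delta_N}\big(\eta_N(u)\big)\, du\Big]
\;=\; \int_0^T \pi_N(\Delta_N)\, du \;=\; T\, \pi_N(\Delta_N)\;.
\end{equation*}
Every summand on the left-hand side is non-negative, so each individual term is bounded by the total: for any $\eta \in E_N$,
\begin{equation*}
\pi_N(\eta)\, \bb E^N_\eta\Big[\int_0^T \chi_{\Delta_N}\big(\eta_N(u)\big)\, du\Big]
\;\leq\; T\, \pi_N(\Delta_N)\;.
\end{equation*}

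Setting $T = t\theta_N$ and combining with the change of variables, I obtain for every $\eta \in \ms E_N$,
\begin{equation*}
\bb E^N_\eta\Big[\int_0^t \chi_{\Delta_N}\big(\eta_N(s\theta_N)\big)\, ds\Big]
\;\leq\; t\, \frac{\pi_N(\Delta_N)}{\pi_N(\eta)}\;.
\end{equation*}
Taking the maximum over $\eta \in \ms E_N$ and invoking the hypothesis yields condition (LP2). The argument is entirely elementary and I do not anticipate any obstacle; the hypothesis of the lemma is tailor-made to fit into the stationarity bound.
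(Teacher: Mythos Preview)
Your proof is correct and follows essentially the same approach as the paper: both bound the expectation starting from $\eta$ by the expectation starting from $\pi_N$ divided by $\pi_N(\eta)$, then use stationarity to evaluate the latter as $t\,\pi_N(\Delta_N)$. The paper compresses the argument into a single line, but the underlying idea is identical.
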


\begin{proof}
Fix $t>0$. Clearly, dividing and multiplying by $\pi_N(\eta)$,
\begin{equation*}
\bb E^N_{\eta} \Big[\,
\int_0^t \chi_{\Delta_N} \big(\eta_N(s\theta_N)\big) 
\; ds\, \Big]  \;\le\; \frac{1}{ \pi_N(\eta) }\,
\bb E^N_{\pi_N} \Big[\,
\int_0^t \chi_{\Delta_N} \big(\eta_N(s\theta_N)\big) 
\; ds\, \Big] \;.
\end{equation*}
Since $\pi_N$ is the stationary state, the previous expression is
equal to $t \, \pi_N(\Delta_N) / \pi_N(\eta)$, which proves the
lemma. 
\end{proof}

\section{The Poisson equation}
\label{sec5}

We present here an alternative method to prove uniqueness of limit
points of the sequence of measures $\mb P^N$ introduced in Section
\ref{sec3}. It relies on asymptotic properties of the solutions of
Poisson equations.

Assume that we are able to foretell the dynamics of the reduced model,
and denote by $L$ its generator.  Fix a function
$F \colon S \to \bb R$, and let $G = L F$. Denote by $f$,
$g\colon E_N \to\bb R$ the function given by
\begin{equation}
\label{5-4}
f \;=\;  \sum_{k\in S} F(k) \, \chi_{\ms E^k_N}\;, \quad 
g \;=\; \sum_{k\in S} G(k) \, \chi_{\ms E^k_N}\;.  
\end{equation}
The functions $f$, $g$ are constant on each valley $\ms E^\ell_N$ and
vanish at $\Delta_N$. The method presented below relies on the
assumption that the solution $f_N$ of the Poisson equation
\begin{equation*}
\theta_N\, \ms L_N \, f_N \;=\; g
\end{equation*}
is almost constant on each set $\ms E^k_N$. A solution of this
equation exists only if $g$ has zero-mean with respect to $\pi_N$,
which is not necessarily the case. Therefore, we need first to turn
$g$ into a zero-mean function and then to consider the solution of the
Poisson equation. This is the content of conditions (A1), (A2).

Assume that there exists a sequence of function
$g_N\colon E_N \to \bb R$ such that
\begin{itemize}
\item[(A1)] $g_N$ has zero-mean with respect to $\pi_N$, vanishes on
  $\Delta_N$ and converges to $g$ uniformly on $\ms E_N$;
\item[(A2)] Denote by $f_N$ the unique solution of the Poisson
  equation
\begin{equation}
\label{5-6}
\theta_N\, \ms L_N f \;=\; g_N
\end{equation}
in $E_N$. There exists a finite constant $C_0$ such that
\begin{equation*}
\sup_{N\ge 1}\, \max_{\eta\in E_N} |f_N(\eta)| \;\le\; C_0\;,
\quad\text{and} \quad
\lim_{N\to \infty}  \max_{\eta\in \ms E_N} \big|\, f_N(\eta) 
- f(\eta)\, \big| \;=\; 0\;.
\end{equation*}
\end{itemize}

The natural candidate for $g_N$ in conditions (A1) and (A2) is the
function $g$ itself, but it does not have zero-mean. To fulfill this
condition, denote by $\color{blue}\pi$ the stationary state of the
reduced model. We expect $\pi_N(\ms E^k_N)$ to converge to
$\pi(k)$. Hence,
\begin{equation*}
\lim_{N\to \infty} E_{\pi_{N}}[g] \;=\; 
\lim_{N\to \infty} \sum_{k\in S} G(k) \,
\pi_N(\ms E^k_N) \; = \; \sum_{k\in S} (L F) (k) \,
\pi(k) \;=\; 0\;.
\end{equation*}
A reasonable candidate for $g_N$ is thus
$g \,-\, \epsilon_N \, \chi_{\ms E^1_N}$, where
$\epsilon_N = E_{\pi_N}[g]/\pi_N(\ms E^1_N)$ vanishes as $N\to\infty$
[if $\pi_N(\ms E^1_N) \to \pi(1)>0$].

Properties (A1), (A2) have been proved in \cite{et2016, st2017} for
elliptic operators on $\bb R^d$ of the form $\mc L_N f =
e^{N\, V} \nabla \cdot (e^{-N\, V} a \nabla f)$ and in
\cite{ls2017} for one-dimensional diffusions with periodic boundary
conditions. It is an open problem to prove these conditions in the
context of interacting particle systems, say for condensing zero-range
processes.

The main result of this section, Theorem \ref{5-p1} below, asserts
that conditions (A1), (A2) guarantee uniqueness of limit points of the
sequence $\mb P^N$. The proof of this result requires some
preparation.

Let $\color{blue} \bb Q^N_\eta$, $\eta\in E_N$, be the probability
measure on $D([0,\infty), E_N)$ induced by the speeded-up process
${\color{blue}\xi^N(t)} := \eta_N(t\theta_N)$ starting from
$\eta$. Keep in mind that the generator of this process is
$\theta_N\, \ms L_N$.  Denote by $(\mc F^o_t : t\ge 0)$ the
$\sigma$-algebra of subsets of $D([0,\infty), E_N)$ generated by
$\{\eta(s): 0\le s\le t\}$, where $\eta(s)$ represents the coordinate
process.  Fix $\eta\in E_N$ and denote by
$\color{blue} \{\mc F^\eta_t : t\ge 0\}$ the usual augmentation of
$\{\mc F^o_t : t\ge 0\}$ with respect to $\bb Q^N_{\eta}$. We refer to
Section III.9 of \cite{RogWil94} for a precise definition. The
advantage of $\mc F^\eta_t$ with respect to $\mc F^o_t$ is that it is
right-continuous: $\mc F^\eta_t = \cap_{s>t} \mc F^\eta_s$.

Recall from \eqref{1-3} the definition of the time change
$S_{\ms E_N}(t)$ associated to the additive functional $T_{\ms E_N}(t)$.
Clearly, for all $r\ge 0$, $t\ge 0$,
\begin{equation}
\label{5-2}
\{ S_{\ms E_N} (r) \ge t\} \;=\; \{ T_{\ms E_N} (t) \le r\}\;.
\end{equation}

\begin{lemma}
\label{5-p2}
For each $t\ge 0$ and $\eta\in E_N$, $S_{\ms E_N} (t)$ is a stopping
time with respect to the filtration $(\mc F^\eta_t: t\ge 0)$.
\end{lemma}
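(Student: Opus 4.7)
The plan is to verify the defining condition of a stopping time, namely that $\{S_{\ms E_N}(t) \le s\} \in \mc F^\eta_s$ for every $s \ge 0$, by starting from the identity \eqref{5-2} and exploiting the right-continuity of the augmented filtration $(\mc F^\eta_t)$.

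First I would rewrite the event in terms of the additive functional. Taking complements in \eqref{5-2} (with the roles of the variables interchanged) gives
\begin{equation*}
\{\, S_{\ms E_N}(t) < u\, \} \;=\; \{\, T_{\ms E_N}(u) > t\, \}
\end{equation*}
for every $u \ge 0$. The random variable $T_{\ms E_N}(u) = \int_0^u \chi_{\ms E_N}(\eta(v))\, dv$ is measurable with respect to $\mc F^o_u$, being a Lebesgue integral of a Borel function of the coordinate process on $[0,u]$. A fortiori it is measurable with respect to $\mc F^\eta_u$, so $\{S_{\ms E_N}(t) < u\} \in \mc F^\eta_u$.

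Next I would pass from strict to non-strict inequality. Writing
\begin{equation*}
\{\, S_{\ms E_N}(t) \le s\, \} \;=\; \bigcap_{n\ge 1}
\{\, S_{\ms E_N}(t) < s + 1/n\, \}
\end{equation*}
and using the previous step at $u = s+1/n$, the event on the left belongs to $\bigcap_{n} \mc F^\eta_{s+1/n}$. The usual augmentation is right-continuous, i.e.\ $\mc F^\eta_s = \bigcap_{u>s} \mc F^\eta_u$, so this intersection coincides with $\mc F^\eta_s$, which is exactly what is required.

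I do not foresee any real obstacle: the only subtlety is that the identity \eqref{5-2} naturally yields events of the form $\{S_{\ms E_N}(t) < u\}$ rather than $\{S_{\ms E_N}(t) \le s\}$, and it is precisely to absorb this strict-versus-non-strict gap that one invokes right-continuity of the augmented filtration. Had one worked with the raw filtration $(\mc F^o_t)$ the conclusion would only give that $S_{\ms E_N}(t)$ is an optional time, which is why the statement is phrased with the augmented filtration $(\mc F^\eta_t)$.
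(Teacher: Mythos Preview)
Your proof is correct and follows essentially the same approach as the paper: both rewrite $\{S_{\ms E_N}(t)\le s\}$ as a countable intersection of events $\{T_{\ms E_N}(s+\epsilon)>t\}\in\mc F^\eta_{s+\epsilon}$ and then invoke right-continuity of the augmented filtration. Your closing remark about why the augmented filtration is needed is a nice addition.
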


\begin{proof}
Fix $t\ge 0$, $r\ge 0$ and $\eta\in E_N$. By \eqref{5-2},
\begin{equation*}
\{S_{\ms E_N} (t) \le r\} \;=\; \bigcap_{q} \, \{S_{\ms E_N} (t) < r + q\}
\;=\; \bigcap_{q} \, \{T_{\ms E_N} (r + q)>t \}\;,
\end{equation*}
where the intersection is carried out over all
$q\in (0,\infty)\cap \bb Q$. By definition of $T_{\ms E_N}$,
$\{T_{\ms E_N} (r + q)>t \}$ belongs to $\mc F^\eta_{r+q}$. Hence, as the
filtration is right-continuous,
$\{S_{\ms E_N} (t) \le r\} \in \cap_q \, \mc F^\eta_{r+q} = \mc F^\eta_{r}$, which
proves the lemma.
\end{proof}

Let $\color{blue}(\mc G^{N,\eta}_t : t\ge 0)$ be the filtration given
by $\mc G^{N,\eta}_t = \mc F^\eta_{S_{\ms E_N}(t)}$, and denote by
$\color{blue} \xi^{\ms E_N}(t)$ the trace of the coordinate process $\eta(t)$
on $\ms E_N$: $\xi^{\ms E_N}(t) = \eta (S_{\ms E_N}(t))$. Clearly, the process
$\xi^{\ms E_N}(t)$ is adapted to the filtration $(\mc
G^{N,\eta}_t)$. Moreover, as the coordinate process corresponds to the
distribution of $\xi^N(t)$, $\xi^{\ms E_N}(t)$ corresponds to the
trace of the speeded-up process $\xi^N(t)$ on $\ms E_N$.

It is easy to check that we may commute the trace operation with the
acceleration of the process:
\begin{equation*}
\eta^{\ms E_N}(t \theta_N) \;=\; \xi^{\ms E_N}(t)\;.
\end{equation*}
On the left-hand side, we first computed the trace of the chain
$\eta_N(t)$ on $\ms E_N$ and then accelerated it by $\theta_N$, while
on the right-hand side we first speeded-up the chain $\eta_N(t)$ by
$\theta_N$ and then computed the trace of the result on $\ms E_N$. In
particular, the process $\bs X^T_N(t)$, introduced in assumption (T1),
corresponds to the projection of $\xi^{\ms E_N}(t)$ on $S$ through $\Psi_N$:
\begin{equation}
\label{5-3}
\bs X^T_N(t) \;=\; \Psi_N(\xi^{\ms E_N}(t))\;.
\end{equation}
Moreover, the measure $\mb P^N$ on $D([0,\infty), S)$ represents the
distribution of the process $\Psi_N(\xi^{\ms E_N}(t))$.  We may now state the
main result of this section.

\begin{theorem}
\label{5-p1}
Fix $k\in S$ and a sequence $\eta_N \in \ms E^k_N$.  Assume that
conditions (A1) and (A2) are in force for every function $F: S\to \bb
R$. Then, every limit point $\mb P$ of the sequence $\mb P^N$
such that 
\begin{equation}
\label{5-7}
\mb P\big[\, X(t-) \,=\, X(t)\,\big] \quad\text{for all}\;\; t>0\;.
\end{equation}
solves the $(L,\delta_k)$ martingale problem.
\end{theorem}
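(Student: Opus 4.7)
The plan is to convert the Poisson equation \eqref{5-6} into an $(\mc F^{\eta_N}_t)$-martingale via Dynkin's formula, apply optional stopping at the time-change $S_{\ms E_N}(t)$ to obtain a $(\mc G^{N,\eta_N}_t)$-martingale expressed in terms of the trace process, and then pass to the limit along any subsequence of $\mb P^N$ that satisfies \eqref{5-7}. Fix $F:S\to\bb R$, set $G = LF$, and let $f$, $g:E_N\to\bb R$ be the lifts given by \eqref{5-4}. Pick $g_N$ as in (A1), with zero $\pi_N$-mean, vanishing on $\Delta_N$, and converging uniformly on $\ms E_N$ to $g$; let $f_N$ be the Poisson solution supplied by (A2), uniformly bounded by $C_0$, with $f_N \to f$ uniformly on $\ms E_N$.

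Since $\theta_N\,\ms L_N$ is the generator of $\xi^N(t)$, Dynkin's formula produces the $(\mc F^{\eta_N}_t)$-martingale
\[
M^N(t) \;=\; f_N(\xi^N(t)) \,-\, f_N(\xi^N(0)) \,-\, \int_0^t g_N(\xi^N(s))\,ds
\]
under $\bb Q^N_{\eta_N}$, which is uniformly integrable on bounded intervals by the $C_0$-bound on $f_N$ and the boundedness of $g_N$. Lemma \ref{5-p2} says $S_{\ms E_N}(t)$ is a stopping time for $(\mc F^{\eta_N}_t)$. Apply optional stopping at $S_{\ms E_N}(t)\wedge n$ and let $n\to\infty$: since $g_N$ vanishes on $\Delta_N$, the absolute value of the time integral on $[0,S_{\ms E_N}(t)\wedge n]$ is bounded by $t\,\|g_N\|_\infty$ uniformly in $n$, so dominated convergence legitimizes the passage and yields that $M^N(S_{\ms E_N}(t))$ is a $(\mc G^{N,\eta_N}_t)$-martingale. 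Using $\xi^N(S_{\ms E_N}(t)) = \xi^{\ms E_N}(t)$ and the standard time-change identity (valid because $g_N$ is supported on $\ms E_N$), this martingale rewrites as
\[
\widetilde M^N(t) \;=\; f_N(\xi^{\ms E_N}(t)) \,-\, f_N(\xi^{\ms E_N}(0)) \,-\, \int_0^t g_N(\xi^{\ms E_N}(u))\,du .
\]

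Projecting to $S$ through \eqref{5-3}, one has $f(\xi^{\ms E_N}(t)) = F(\bs X^T_N(t))$ and $g(\xi^{\ms E_N}(t)) = (LF)(\bs X^T_N(t))$, so the uniform convergences in (A1)--(A2) give, for every $T>0$,
\[
\sup_{t\in [0,T]}\, \Bigl|\,\widetilde M^N(t) \,-\, \Bigl[\, F(\bs X^T_N(t)) - F(\bs X^T_N(0)) - \int_0^t (LF)(\bs X^T_N(s))\,ds\,\Bigr]\,\Bigr| \;\longrightarrow\; 0 .
\]
Extract a subsequence with $\mb P^{N_j}\to\mb P$. By \eqref{5-7}, the evaluation maps $\omega\mapsto X(t)$ are $\mb P$-a.s.\ continuous at every $t>0$, so the finite-dimensional marginals of $\bs X^T_{N_j}$ converge in distribution to those of $\mb P$. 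Combining the above uniform approximation with the $C_0$-bound and applying Lemma \ref{D-l1} to test functions $Y = \prod_{i=1}^n h_i(X(s_i))$ with bounded continuous $h_i$ and $s_1<\cdots<s_n\le s$, the conditional-expectation identity defining the martingale property passes to the limit and shows that $F(X(t)) - F(X(0)) - \int_0^t (LF)(X(s))\,ds$ is a $\mb P$-martingale. Since $\eta_N\in\ms E^k_N$ gives $\mb P^N[X(0)=k]=1$, the same holds under $\mb P$, so $\mb P$ solves the $(L,\delta_k)$ martingale problem and Theorem \ref{3-l1} identifies its law.

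The main technical obstacle is the optional-stopping step: the time-change $S_{\ms E_N}(t)$ is not bounded a priori, and one must truncate at $S_{\ms E_N}(t)\wedge n$ and send $n\to\infty$. The twin ingredients that unlock this are the uniform bound on $f_N$ from (A2) and the vanishing of $g_N$ on $\Delta_N$ from (A1); together they produce an $n$-independent dominating function and thereby place the stopping time inside the martingale identity.
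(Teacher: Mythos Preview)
Your proof is correct and follows essentially the same route as the paper's: Dynkin's formula for $f_N$, optional stopping at $S_{\ms E_N}(t)$, the time-change identity using that $g_N$ vanishes on $\Delta_N$, uniform replacement of $f_N,g_N$ by $f,g$ via (A1)--(A2), projection through $\Psi_N$, and passage to the limit using \eqref{5-7}. If anything, you are more careful than the paper at the optional-stopping step, where you truncate at $S_{\ms E_N}(t)\wedge n$ and invoke dominated convergence; the paper simply asserts that $M_N(S_{\ms E_N}(t))$ is a $(\mc G^{N,\eta_N}_t)$-martingale without discussing the unboundedness of the stopping time.
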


\begin{proof}
Fix a function $F: S \to \bb R$.  Let
$f_N: E_N \to \bb R$ be the function given by assumption
(A2). Then,
\begin{align*}
M_N(t) \;=& \; f_N (\eta(t)) \;-\; f_N (\eta(0))
\;-\; \int_0^t \theta_N\,  (\ms L_N f_N)(\eta(s))\, ds \\
\;= & \; f_N (\eta(t)) \;-\; f_N (\eta(0))
\;-\; \int_0^t g_N (\eta(s))\; ds
\end{align*}
is a martingale in $(D([0,\infty), E_N), (\mc F^{\eta_N}_t), \bb
Q^N_{\eta_N})$.  

Since $\{S_{\ms E_N} (t): t\ge 0\}$ are stopping times with respect to
the filtration $(\mc F^{\eta_N}_t)$, $M_N(S_{\ms E_N}(t))$ is a
martingale with respect to
$(\mc F^{\eta_N}_{S_{\ms E_N}(t)}) = (\mc G^{N,\eta_N}_t)$. Hence, by
definition of the trace process $\xi^{\ms E_N} (t)$,
\begin{equation*}
\widehat M_N(t) \;:=\; M_N(S_{\ms E_N}(t)) \;=\;  
f_N (\xi^{\ms E_N} (t)) \;-\; f_N (\xi^{\ms E_N} (0))
\;-\; \int_0^{S_{\ms E_N}(t)} g_N (\eta(s) )\; ds
\end{equation*}
is a martingale with respect to the filtration $\mc G^{N,\eta_N}_t$.
Since $g_N$ vanishes on $\Delta_N$, we may insert in the integral the
indicator function of the set $\ms E_N$. Then, a change of variables
yields that this integral is equal to
\begin{equation*}
\int_0^{S_{\ms E_N}(t)} g_N (\eta(s))\, \chi_{\ms E_N} (\eta(s)) \, ds 
\;=\; \int_0^{t} g_N (\eta (S_{\ms E_N}(s)))\; ds\;.
\end{equation*}
Therefore,
\begin{equation*}
\widehat M_N(t) \;=\; f_N (\xi^{\ms E_N} (t)) \;-\; 
f_N (\xi^{\ms E_N} (0)) \;-\; 
\int_0^{t} g_N (\xi^{\ms E_N} (s))\, ds
\end{equation*}
is a $\{\mc G^{N,\eta_N}_t\}$-martingale.

By (A1) and (A2), $g_N$, resp. $f_N$, converge to $g$, resp. $f$,
uniformly in $\ms E^N$ as $N \to 0$. Hence, since
$\xi^{\ms E_N} (s) \in \ms E^N$ for all $s\ge 0$, we may replace in
the previous equation $g_N$, $f_N$ by $g$, $f$, respectively, at a
cost which vanishes as $N\to \infty$. Therefore,
\begin{equation*}
\widehat M_N(t) \;=\; f (\xi^{\ms E_N}(t)) \;-\; f (\xi^{\ms E_N}(0)) 
\;-\; \int_0^{t} g (\xi^{\ms E_N} (s))\; ds \;+\; o_N(1)\, 
\end{equation*}
is a $\{\mc G^{N,\eta_N}_t\}$-martingale.

Since $f$ and $g$ are constant on each set $\ms E^k_N$, by \eqref{5-4}
and \eqref{5-3},
$f (\xi^{\ms E_N}(t)) = F(\Psi_N(\xi^{\ms E_N}(t))) = F(\bs X^T_N(t))$,
$g (\xi^{\ms E_N}(s)) = G(\Psi_N(\xi^{\ms E_N}(s))) = G(\bs X^T_N(s))$, and 
\begin{equation*}
F(\bs X^T_N(t)) \;-\; F(\bs X^T_N(0)) \;-\; 
\int_0^t (L F)(\bs X^T_N(s))\; ds \;+\; o_N(1) 
\end{equation*}
is a martingale because $G=LF$.

Since $\mb P^N$ corresponds to the distribution of $\bs X^T_N$,
\begin{equation*}
\widehat M (t) \;=\; F(X(t)) \;-\; F(X(0)) \;-\; \int_0^t (L
F)(X(s))\, ds 
\end{equation*}
is a martingale under $\mb P^N$ up to a small error. Let $\mb P$ be a
limit point of the sequence $\mb P^N$ satisfying \eqref{5-7}, and
assume, without loss of generality, that $\mb P^N$ converges to
$\mb P$.  By \eqref{5-7}, the one-dimensional projections are
continuous, and we may pass to the limit to obtain that
$\widehat M (t)$ is a martingale under $\mb P$.

On the other hand, as $\eta_N\in \ms E^k_N$, $\mb P^N[X(0)=k] = 1$ for
all $N$, so that $\mb P[X(0)=k] = 1$. This proves that any limit point
of the sequence $\mb P^N$ satisfying \eqref{5-7} is a solution of the
$(L,\delta_k)$ martingale problem.
\end{proof}

\section{Local ergodic theorem in $L^2$}
\label{sec11}

It is not clear whether the scheme presented in the previous section
can be applied to a large class of dynamics. The proof of condition
(A2) is unclear even for the simple example of Section \ref{sec1}.

The method presented in Sections \ref{sec3}--\ref{sec7} has also a
drawback. As the function $f = \sum_{k\in S} F(k) \chi_{\ms E^k_N}$
has a sharp interface, the jump rates $R^{(k)}_N$ which appear in the
computation of $\ms L_{\ms E_N} f$, are singular functions, vanishing
at the interior of the valleys and taking large values at the
boundary. This lack of smoothness turns the proof of the local ergodic
theorem more demanding.

Following \cite{bl9}, we propose below an alternative approach, in
which we replace the indicator function $\chi_{\ms E^k_N}$ by
``smooth'' approximations obtained by solving the resolvent equation
\begin{equation}
\label{h-1}
\big(\, I \,-\, \gamma_N \,\ms L_{\ms E_N}\, \big ) \, f \;=\;
\chi_{\ms E^k_N} \;,
\end{equation}
where $\ms L_{\ms E_N}$ represents the generator of the trace
process $\eta^{\ms E_N}(t)$, $I$ the identity and $\gamma_N$ a suitable
sequence of positive numbers. 

The resolvent equation \eqref{h-1} has a unique solution, denoted by
$u^k_N$. Equation \eqref{h-2} provides a stochastic representation of
the solution, different from the usual one given in terms of a time
integral. This guarantees existence. Uniqueness can be proven as
follows. Let $u_1$, $u_2$ be two solutions, and set $w= u_1-u_2$. The
function $w$ solves \eqref{h-1} with a right-hand side equal to
$0$. Multiply both sides of the equation by $w$ and integrate with
respect to $\pi_{\ms E}$ to get that $w=0$ because $\< \,\ms L_{\ms
  E_N}\, f \,,\, f\>_{\pi_{\ms E}} \le 0$ for all functions $f: \ms
E_N \to \bb R$.

Note that $\gamma_N\, \ms L_{\ms E_N} u^k_N$ has the same regularity
as $u^k_N$ because it is equal to $u^k_N - \chi_{\ms E^k_N}$.  We
prove in Lemmata \ref{h-s2}, \ref{h-s3} that $u^k_N$ is close to
$\chi_{\ms E^k_N}$ and that the local ergodic theorem holds for $\ms
L_{\ms E_N} u^k_N$ if $\gamma_N$ is larger than the equilibration
times in the valleys and smaller than the transition times between
valleys.

\subsection{The enlarged process} 

We assume below that the reader is familiar with the results on
enlarged and reflected chains summarized in Section \ref{sec13}.

We do not require below the process $\eta_N(t)$ to be reversible, but
we impose certain conditions on the reflected processes.  Denote by
$\color{blue} \eta^{R,k}_N(t)$ the process $\eta_N(t)$ reflected at
$\ms E^k_N$. Recall that this means that we forbid all jumps between
$\ms E^k_N$ and its complement, and consider the resulting dynamics in
$\ms E^k_N$.

Denote by $\color{blue} \pi_{\ms E^k}$ the stationary measure $\pi_N$
conditioned to $\ms E^k_N$.  We assume that for all $k\in S$ the
reflected process at $\ms E^k_N$ is \emph{\color{blue} irreducible}
and that $\pi_{\ms E^k}$ is a stationary state (and therefore the
unique stationary state up to multiplicative constants).  If the
process is reversible, the second condition follows from the first
one. By Lemma \ref{M-s4}, this is also the case in the non-reversible
setting if the valley $\ms E^k_N$ is formed by cycles.

Denote by $\ms L^{R,k}_N$ the generator of the process
$\eta^{R,k}_N(t)$ and by $\color{blue} t^{k,N}_{\rm rel}$ the
\emph{\color{blue} relaxation time} of the symmetric part of the
generator:
\begin{equation*}
{\color{blue} \frac 1{t^{k,N}_{\rm rel}}} 
\;=\; \inf_f \frac{\< \,(-\, \ms L^{R,k}_N)\,
  f \,,\, f\, \>_{\pi_{\ms E^k}}}{\< \, f \,,\, f\, \>_{\pi_{\ms E^k}}} \;,
\end{equation*}
where the infimum is carried over all zero-mean functions $f: \ms
E^k_N \to \bb R$.
\smallskip

Let $\color{blue}\ms E^{\star, k}_N$ be copies of the sets
$\ms E^k_N$, $k\in S$, and set
\begin{equation*}
{\color{blue} \ms E^\star_N} \;:=\; \bigcup_{k\in S} \ms E^{\star,
  k}_N\;, \quad
{\color{blue} \breve{\ms E}^{\star, k}_N}  \;:=\; 
\bigcup_{j\not = k} \ms E^{\star, j}_N\;.
\end{equation*}
Denote by $\color{blue} P_\star: \ms E_N \cup \ms E^\star_N \to \ms E_N
\cup \ms E^\star_N$ the application which maps a configuration in $\ms
E_N$, $\ms E^\star_N$, to its copy in $\ms E^\star_N$, $\ms E_N$,
respectively.

Fix a sequence $\gamma_N$, and denote by $\color{blue} \zeta_N(t)$ the
\emph{\color{blue} $\gamma_N$-enlargement} of the trace process
$\eta^{\ms E_N}(t)$.  The process $\zeta_N(t)$ is a Markov chain
taking values in $\ms E_N \cup \ms E^\star_N$ and whose generator,
denoted by $\color{blue} \ms L_{\ms E_N, \star}$, is given by
\begin{gather*}
(\ms L_{\ms E_N, \star} f) (\eta) \,=\, \sum_{\xi\in \ms E_N} R^T_N(\eta,\xi)
\, \big\{f(\xi)-f(\eta)\big\} \;+\; \frac 1{\gamma_N}\,
\big\{f(P_\star \eta)-f(\eta)\big\}
\;, \quad \eta\in \ms E_N \;, \\
(\ms L_{\ms E_N, \star} f) (\eta) \,=\, \frac 1{\gamma_N}\,
\big\{f(P_\star \eta)-f(\eta)\big\}
\;, \quad \eta\in \ms E^\star_N \;.
\end{gather*}
In this formula, $R^T_N(\eta,\xi)$ represents the jump rates of the
trace process $\eta^{\ms E_N}(t)$. Hence, from a configuration
$\eta\in \ms E^\star_N$ the chain may only jump to $P_\star \eta$ and
this happens at rate $1/\gamma_N$. From a configuration $\eta\in \ms
E_N$, besides the jumps of the original chain, the enlarged process
may also jump to $P_\star \eta$ and this happens at rate
$1/\gamma_N$. The parameter $\gamma_N$ will be large, which makes the
jumps between $\ms E^\star_N$ and $\ms E_N$ rare.

The stationary state of $\zeta_N(t)$, denoted by $\color{blue}
\pi^\star_{\ms E}$, is given by
\begin{equation*}
\pi^\star_{\ms E} (\eta) \;=\; \pi^\star_{\ms E} (P_\star\eta) 
\;=\; (1/2)\, \pi_{\ms E} (\eta)  \; , \quad \eta\in \ms E_N\;,
\end{equation*}
where, recall, $\pi_{\ms E}$ stands for the stationary state $\pi_N$
conditioned to $\ms E_N$.

In dynamics in which the process jumps to a new valley before visiting
all configurations in the valley, as configurations are not visited,
it makes more sense to suppose that the dynamics starts from a
distribution rather than from a configuration. Denote this initial
distribution by $\nu_N$ and assume that there exist $\ell\in S$ and a
finite constant $C_0$ such that for all $N\ge 1$
\begin{equation}
\label{h-12a}
\nu_N(\ms E^{\ell}_N)\;=\;1 \quad\text{and}\quad
E_{\pi_{\ms E}} \Big[ \, \Big( \frac {d\nu_N}{d\pi_{\ms E}} 
\Big)^2 \,\Big] \;\le\; \frac{C_0}{\pi_{\ms E}(\ms E^{\ell}_N) }\;\cdot
\end{equation}
Note that the measure $\pi_{\ms E^\ell}$ satisfies this condition.

For two non-empty, disjoint subsets $\ms A$, $\ms B$ of
$\ms E_N \cup \ms E^*_N$, denote by
$\color{blue} \Cap_\star (\ms A, \ms B)$ the capacity between $\ms A$
and $\ms B$ for the enlarged process.  Consider two sequences
$(a_N : N\ge 1)$, $(b_N : N\ge 1)$ of positive real numbers. We say
that $a_N$ is much smaller than $b_N$, $\color{blue} a_N \ll b_N$, if
$\lim_{N\to \infty} a_N/b_N =0$.

\begin{lemma}
\label{h-s2}
Fix $k\in S$, two sequences of positive numbers $\gamma_N$,
$\theta_N$, and a sequence of probability measures $\nu_N$ satisfying
\eqref{h-12a}. Assume that $\gamma_N \ll \theta_N$ and that there
exists a finite constant $C_0$ such that for all $N\ge 1$
\begin{equation}
\label{h-12b}
\frac {\Cap_\star(\ms E^{\star, k}_N, \breve{\ms E}^{\star, k}_N) }
{\pi^\star_{\ms E} (\ms E^{\star, k}_N)}
\;\le\; \frac{C_0}{\theta_N} \;\cdot
\end{equation}
Then, representing by $u^k_N$ the solution of the resolvent equation
\eqref{h-1}, 
\begin{equation*}
\lim_{N\to \infty} \sup_{t\ge 0}\, 
\bb E_{\nu_N} \Big[ \,\big |\, \chi_{\ms E^k_N} (\eta^{\ms E}
(t\theta_N)) \,-\, u^k_N (\eta^{\ms E}(t\theta_N)) \,\big|\, 
\Big] \;=\;0\;.
\end{equation*}
\end{lemma}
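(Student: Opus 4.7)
The strategy is to identify $u^k_N$ with the restriction to $\ms E_N$ of the equilibrium potential of a hitting problem in the enlarged process $\zeta_N$, then to use this identification to convert the capacity estimate \eqref{h-12b} into an $L^2(\pi_{\ms E})$ bound on $v_N := \chi_{\ms E^k_N} - u^k_N$. Combined with the $L^2$-contractivity of the trace semigroup and the $L^2$-density hypothesis \eqref{h-12a}, this $L^2$ bound yields the desired estimate, which is automatically uniform in $t$.

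First I would identify $u^k_N$. Let $h_\star \colon \ms E_N \cup \ms E^\star_N \to [0,1]$ be the equilibrium potential $h_\star(\eta) = \bb P^\star_\eta[H_{\ms E^{\star,k}_N} < H_{\breve{\ms E}^{\star,k}_N}]$, so that $h_\star \equiv 1$ on $\ms E^{\star,k}_N$, $h_\star \equiv 0$ on $\breve{\ms E}^{\star,k}_N$, and $\ms L_\star h_\star = 0$ on $\ms E_N$. Writing out the harmonicity condition for $\eta\in\ms E_N$ with the explicit formula for $\ms L_\star$ gives exactly $(I - \gamma_N \ms L_{\ms E_N})(h_\star|_{\ms E_N}) = \chi_{\ms E^k_N}$, the resolvent equation \eqref{h-1}. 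Uniqueness of the solution therefore forces $u^k_N = h_\star|_{\ms E_N}$; in particular $h_\star(P_\star\eta) = \chi_{\ms E^k_N}(\eta)$ for every $\eta\in\ms E_N$. Next, splitting the transitions of $\zeta_N$ into intra-$\ms E_N$ jumps (with the trace rates $R^T_N$) and the "bridge" jumps at rate $1/\gamma_N$, and using $\pi^\star_{\ms E}(\eta) = \tfrac12\pi_{\ms E}(\eta)$ on $\ms E_N$, a short bookkeeping gives the key identity
\begin{equation*}
\Cap_\star(\ms E^{\star,k}_N, \breve{\ms E}^{\star,k}_N)
\;=\; D_\star(h_\star)
\;=\; \tfrac12\, D_{\ms E_N}(u^k_N)
\;+\; \tfrac{1}{2\gamma_N}\, \|v_N\|^2_{L^2(\pi_{\ms E})}.
\end{equation*}
Discarding the nonnegative first term and invoking \eqref{h-12b} yields
\begin{equation*}
\|v_N\|^2_{L^2(\pi_{\ms E})}
\;\leq\; 2\gamma_N\, \Cap_\star(\ms E^{\star,k}_N, \breve{\ms E}^{\star,k}_N)
\;\leq\; C_0\, \frac{\gamma_N}{\theta_N}\, \pi_{\ms E}(\ms E^k_N).
\end{equation*}
In the non-reversible setting one works with the symmetric part of $D_\star$; the bridge is reversible by construction, so only $D_{\ms E_N}$ needs to be symmetrised, and the identity above persists.

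Finally, set $\rho_N := d\nu_N/d\pi_{\ms E}$. Since $\pi_{\ms E}$ is invariant for the trace chain, its semigroup $P_t$ is a contraction on $L^2(\pi_{\ms E})$, so Cauchy--Schwarz gives, uniformly in $t \ge 0$,
\begin{equation*}
\bb E_{\nu_N}\big[\,|v_N(\eta^{\ms E}(t\theta_N))|\,\big]
\;=\; \big\langle \rho_N\,,\, P_{t\theta_N}|v_N|\big\rangle_{\pi_{\ms E}}
\;\leq\; \|\rho_N\|_{L^2(\pi_{\ms E})}\, \|v_N\|_{L^2(\pi_{\ms E})}.
\end{equation*}
Combining with \eqref{h-12a} and the bound above,
\begin{equation*}
\sup_{t\ge 0} \bb E_{\nu_N}\big[\,|v_N(\eta^{\ms E}(t\theta_N))|\,\big]
\;\leq\; C\, \sqrt{\frac{\gamma_N}{\theta_N}}\,
\sqrt{\frac{\pi_{\ms E}(\ms E^k_N)}{\pi_{\ms E}(\ms E^\ell_N)}},
\end{equation*}
which vanishes as $N\to\infty$ under $\gamma_N \ll \theta_N$ and the standing assumption that the valley masses are of comparable order. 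The hard part is really the Dirichlet form computation: one must carefully track the factor $1/2$ coming from $\pi^\star_{\ms E}$, the fact that each bridge pair $(\eta, P_\star\eta)$ contributes from both directions, and the identity $h_\star(P_\star\eta) = \chi_{\ms E^k_N}(\eta)$, since it is precisely this algebraic accident that converts the capacity estimate into a sharp $L^2$ bound on $v_N$.
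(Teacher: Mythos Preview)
Your proof is correct and follows essentially the same route as the paper's: the paper isolates your Dirichlet-form identity as a separate lemma (Lemma~\ref{h-s4}) and runs the Cauchy--Schwarz step in dual form (bounding the evolved density $f_t = d(\nu_N S_{\ms E}(t))/d\pi_{\ms E}$ via \eqref{M-6} rather than contracting $P_t|v_N|$), but the arguments are otherwise identical. Your closing remark about needing comparable valley masses is apt---the paper's proof relies on the same point without making it explicit.
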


Denote by $\zeta^{\ms E^\star_N}(t)$ the trace of the process
$\zeta_N(t)$ on $\ms E^\star_N$, and by $\color{blue} \bb P^{\star,
  \gamma_N}_\eta$, $\eta\in \ms E_N \cup \ms E^\star_N$, the
probability measure on $D([0,\infty), \ms E_N \cup \ms E^\star_N)$
induced by the enlarged process starting from $\eta$. Let
$r^\star_N(j,k)$, $j\not = k\in S$, be the coarse-grained jump rates
at which the trace process $\zeta^{\ms E^\star_N}(t)$ jumps from $\ms
E^{\star, j}_N$ to $\ms E^{\star, k}_N$. By \eqref{7-1}, these rates
are given by
\begin{equation}
\label{h-13}
{\color{blue} r^\star_N(j,k)} \; =\; 
\frac{1}{\pi^\star_{\ms E} (\ms E^{\star, j}_N)}
\sum_{\eta\in \ms E^{\star, j}_N} \pi^\star_{\ms E} (\eta)\, \lambda_\star(\eta)
\, \bb P^{\star, \gamma_N}_\eta
\big[ \, H(\ms E^{\star, k}_N) \,<\,  H^+(\breve{\ms E}^{\star, j}_N)\, \big]\;,
\end{equation}
where $\lambda_\star(\eta)$ represents the holding rates of
$\zeta_N(t)$. Since the enlarged process jumps from $\eta\in \ms
E^{\star}_N$ to $P_\star \eta$ at rate $\gamma_N$, the previous
expression is equal to
\begin{equation*}
\frac{1}{\gamma_N\, \pi_{\ms E} (\ms E^{j}_N)}
\sum_{\eta\in \ms E^{j}_N} \pi_{\ms E} (\eta)\, \bb P^{\star, \gamma_N}_\eta
\big[ \, H (\ms E^{\star, k}_N) \,<\, H(\breve{\ms E}^{\star, j}_N)\, \big] \;.  
\end{equation*}

According to Section \ref{sec7}, in the reversible case, the
coarse-grained jump rates $r^\star_N(j,k)$ can be expressed in terms
of capacities, while in the non-reversible case they can be computed
if there are good approximations of the equilibrium potential.
Assume, from now on, that these rates converge: There exist a
time-scale $\theta_N$ and jump rates $\bs r(j,k)$ such that
\begin{equation}
\label{h-6}
\lim_{N\to \infty} \theta_N\, r^\star_N(j,k) \;=\; \bs r(j,k)
\quad\text{for all $j\not = k \in S$} \;.
\end{equation}
Condition \eqref{h-12b} follows from this hypothesis
since 
\begin{equation*}
\Cap_\star(\ms E^{\star, k}_N, \breve{\ms E}^{\star, k}_N) \;=\;
\pi^\star_{\ms E} (\ms E^{\star, k}_N)\, \sum_{\ell\not = k}
r^\star_N(k,\ell) \;.
\end{equation*}

The sequence $\theta_N$ represents the time-scale at which the process
jumps between valleys. The proof of a metastable behavior is set up on
the ground that this time-scale is much larger than the equilibration
time inside the valleys. This hypothesis is formulated here by
requiring the relaxation times of the processes reflected at a valleys
to be much smaller than $\theta_N$: for all $k\in S$,
\begin{equation}
\label{h-7}
 t^{k,N}_{\rm rel} \;\ll\; \theta_N \;.
\end{equation}

Let $\color{blue} w^k_N = \ms L_{\ms E_N} u^k_N$. Recall from
\eqref{6-1} that $\mc G_N$ represents the $\sigma$-algebra of subsets
of $\ms E_N$ generated by the sets $\ms E^j_N$, $j\in S$. Let
\begin{equation}
\label{h-4}
\widehat w^k_N (\eta) \;=\; E_{\pi_{\ms E}} \big[\, w^k_N \,|\, \mc G_N
\,\big]\;.
\end{equation}

\begin{lemma}
\label{h-s3}
Fix $\ell\in S$, and a sequence of probability measures $\nu_N$
satisfying \eqref{h-12a}.  Assume that for all $j$, $k\in S$, 
\begin{equation}
\label{h-16}
\frac{\pi_{N} (\ms E^k_N)}{\pi_{N} (\ms E^\ell_N)} 
\; t^{j,N}_{\rm rel} \;\ll\; \theta_N \;.
\end{equation}
Let $\gamma_N$ be a sequence such that $\max_{j,k} \alpha^{j,k}_N \ll
\gamma_N\ll \theta_N$, where $\alpha^{j,k}_N$ stands for the left-hand
side of \eqref{h-16}.  Then, for all $T>0$, $k\in S$,
\begin{equation*}
\lim_{N\to \infty} \bb E_{\nu_N} \Big[ \, \sup_{t\le T \theta_N}
\Big| \, \int_0^t  \Big\{  w^k_N (\eta^{\ms E_N} (s)) - 
\widehat w^k_N  (\eta^{\ms E_N}(s)) \Big\} \, ds \, \Big|\; \Big] 
\;=\;0\;.
\end{equation*}
\end{lemma}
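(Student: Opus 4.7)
The first observation is that $\chi_{\ms E^k_N}$ is $\mc G_N$-measurable, so $\widehat{\chi_{\ms E^k_N}} = \chi_{\ms E^k_N}$. Combined with the resolvent identity $\gamma_N\, w^k_N = u^k_N - \chi_{\ms E^k_N}$ coming from \eqref{h-1}, this gives
\[
\gamma_N\,(w^k_N - \widehat w^k_N) \;=\; u^k_N - \widehat{u^k_N},
\]
and reduces the lemma to showing that $\bb E_{\nu_N}\bigl[\sup_{t \le T\theta_N} \bigl|\int_0^t (u^k_N - \widehat{u^k_N})(\eta^{\ms E_N}(s))\, ds\bigr|\bigr]$ is $o(\gamma_N)$.

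For each valley $j \in S$, $(u^k_N - \widehat{u^k_N})\chi_{\ms E^j_N}$ has zero mean under $\pi_{\ms E^j}$, so the Poisson equation $\ms L^{R,j}_N \phi^j_N = u^k_N - \widehat{u^k_N}$ on $\ms E^j_N$ admits a zero-mean solution satisfying $\|\phi^j_N\|_{L^2(\pi_{\ms E^j})} \le 2\, t^{j,N}_{\rm rel}$ (by the Poincar\'e inequality encoded in the definition of $t^{j,N}_{\rm rel}$ and the bound $|u^k_N|\le 1$ inherited from the maximum principle for \eqref{h-1}). Setting $\phi_N := \sum_{j\in S} \phi^j_N\, \chi_{\ms E^j_N}$ on $\ms E_N$ and applying Dynkin's formula to the trace chain yields the decomposition
\[
\int_0^t (u^k_N - \widehat{u^k_N})(\eta^{\ms E_N}(s))\, ds \;=\; \phi_N(\eta^{\ms E_N}(t)) - \phi_N(\eta^{\ms E_N}(0)) - \int_0^t r_N(\eta^{\ms E_N}(s))\, ds - M^N_t,
\]
where $M^N_t$ is a martingale and $r_N := \ms L_{\ms E_N}\phi_N - (u^k_N - \widehat{u^k_N})$ is an error supported on configurations from which the trace chain may jump directly between distinct valleys (i.e.\ the discrepancy between $\ms L_{\ms E_N}$ and $\ms L^{R,j}_N$).

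Each of the four right-hand side contributions, divided by $\gamma_N$, must be shown to vanish in $L^1(\bb P_{\nu_N})$ uniformly for $t \in [0, T\theta_N]$. The endpoint terms are controlled via Cauchy--Schwarz and the density bound \eqref{h-12a}: invariance of $\pi_{\ms E}$ under the trace semigroup gives
\[
\bb E_{\nu_N}\bigl[|\phi_N(\eta^{\ms E_N}(t))|\bigr]
\;\le\; \|d\nu_N/d\pi_{\ms E}\|_{L^2(\pi_{\ms E})} \, \|\phi_N\|_{L^2(\pi_{\ms E})}
\;\le\; C \Bigl(\sum_{j\in S} \tfrac{(t^{j,N}_{\rm rel})^2\, \pi_N(\ms E^j_N)}{\pi_N(\ms E^\ell_N)}\Bigr)^{1/2}
\;\le\; C'\, \max_{j,k}\alpha^{j,k}_N,
\]
which is $o(\gamma_N)$ by hypothesis. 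The martingale is handled via Doob's $L^2$ inequality applied to its predictable quadratic variation $\langle M^N\rangle_t = \int_0^t \Gamma\phi_N(\eta^{\ms E_N}(s))\, ds$; the Dirichlet-form bound $\<\phi_N, -\ms L_{\ms E_N}\phi_N\>_{\pi_{\ms E}} \le C\sum_j t^{j,N}_{\rm rel}\,\pi_{\ms E}(\ms E^j_N)$, combined with \eqref{h-12a}, yields a bound of the same order after taking a square root. The main obstacle is the error term $\int_0^t r_N$: using that the trace chain jumps between valleys at total rate $\lesssim 1/\theta_N$ by \eqref{h-6} and \eqref{h-13}, the expected number of such transitions in $[0, T\theta_N]$ is $O(T)$; each such jump contributes to $\int r_N$ an amount bounded, after a measure-change from the arrival distribution to $\pi_{\ms E^{j'}}$, by $\|\phi^{j'}_N\|_{L^2(\pi_{\ms E^{j'}})}\sqrt{\pi_N(\ms E^{j'}_N)/\pi_N(\ms E^\ell_N)}\,$, giving in total $\lesssim T\max_{j,k}\alpha^{j,k}_N = o(\gamma_N)$. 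The precise form of the weight $\alpha^{j,k}_N$ --- in particular the measure ratio $\pi_N(\ms E^k_N)/\pi_N(\ms E^\ell_N)$ --- is exactly what is needed to close this last step, which explains the appearance of this ratio in hypothesis \eqref{h-16}.
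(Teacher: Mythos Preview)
Your reduction $\gamma_N(w^k_N-\widehat w^k_N)=u^k_N-\widehat u^k_N$ is correct, and the endpoint bound closes. But the martingale estimate does not. With your crude bound $\|u^k_N-\widehat u^k_N\|_{L^2(\pi_{\ms E^j})}\le 1$ you get $D_{\ms E}(\phi_N)\lesssim\sum_j t^{j,N}_{\rm rel}\,\pi_{\ms E}(\ms E^j_N)$, and Doob's inequality then yields
\[
\bb E_{\nu_N}\Big[\sup_{t\le T\theta_N}|M^N_t|\Big]
\;\lesssim\;
\Big(\frac{T\theta_N}{\pi_{\ms E}(\ms E^\ell_N)}\sum_j t^{j,N}_{\rm rel}\,\pi_{\ms E}(\ms E^j_N)\Big)^{1/2}
\;=\;\big(T\theta_N\textstyle\sum_j\alpha^{j,j}_N\big)^{1/2}.
\]
You need this to be $o(\gamma_N)$, but the hypotheses only give $\alpha^{j,j}_N\ll\gamma_N\ll\theta_N$, hence $(\theta_N\alpha^{j,j}_N)^{1/2}\ll(\theta_N\gamma_N)^{1/2}$, and $(\theta_N\gamma_N)^{1/2}\gg\gamma_N$. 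The bound blows up. What is missing is the sharp estimate from Lemma~\ref{h-s4}, which gives $\|u^k_N-\widehat u^k_N\|^2_{L^2(\pi_{\ms E^j})}\lesssim(\gamma_N/\theta_N)\cdot\pi_{\ms E}(\ms E^k_N)/\pi_{\ms E}(\ms E^j_N)$ rather than $O(1)$; this extra $\gamma_N/\theta_N$ is exactly what cancels the time horizon $T\theta_N$. Your treatment of the cross-valley error $\int_0^t r_N$ is also loose: $r_N$ is a function of the configuration, not of jump events, and you have no control on $\phi^{j'}_N$ at the specific boundary configurations where the trace process lands after an inter-valley jump; the phrase ``measure-change from the arrival distribution'' does not correspond to an actual step.

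The paper avoids both problems by working directly with the $\mc H_{-1}$ norm. It invokes the Kipnis--Varadhan type bound \eqref{h-11}, then the elementary inequality \eqref{h-10} bounding the full $\mc H_{-1}$ norm by the sum of the per-valley $\mc H_{-1}$ norms for the \emph{reflected} generators; this absorbs the cross-valley jumps for free (since they only increase the Dirichlet form in the denominator of the variational formula) and produces no error term $r_N$. Then $\|w^k_N-\widehat w^k_N\|^2_{j,-1}\le t^{j,N}_{\rm rel}\,\gamma_N^{-2}\,\|u^k_N-\widehat u^k_N\|^2_{L^2(\pi_{\ms E^j})}$ combined with Lemma~\ref{h-s4} gives the closing bound $C\,T\,\gamma_N^{-1}\max_{j}\alpha^{j,k}_N\to 0$. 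If you want to salvage your Poisson-equation route, you must both use Lemma~\ref{h-s4} in place of $|u^k_N|\le 1$ and find an honest way to control the cross-valley part of $\ms L_{\ms E_N}\phi_N$; the $\mc H_{-1}$ argument is the clean way to do this.
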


By \eqref{h-2} and a straightforward computation,
\begin{equation*}
\widehat w^k_N (\eta) \;=\; \sum_{j\in S} \bs r^\star_N(j,k)\; 
\chi_{\ms E^j_N} (\eta) \;,
\end{equation*}
where $\bs r^\star_N(j,k)$, $j\not = k$, are the coarse-grained jump
rates introduced in \eqref{h-13}, and $\bs r^\star_N(j,j) = -\,
\sum_{k\not = j} \bs r^\star_N(j,k)$. Thus, for every function $F:S
\to \bb R$,
\begin{equation}
\label{h-3}
\sum_{k\in S} F(k)\, \widehat w^k_N (\eta) \;=\;
\sum_{j\in S} \chi_{\ms E^j_N} (\eta) 
\sum_{k\not = j} \bs r^\star_N(j,k) \, [\, F(k) - F(j)\,] \;.
\end{equation}

Fix $\ell\in S$ and a sequence of probability measures satisfying
conditions \eqref{h-12a}.  Let $\mb P^N$ be the probability measure on
$D([0,\infty), S)$ induced by the process $\bs X^T_N$ and the measure
$\nu_N$. Next theorem is the main result of this section.

\begin{theorem}
\label{h-s1}
Fix $\ell\in S$ and a sequence $\nu_N$ of probability measures
satisfying \eqref{h-12a}. Assume that conditions \eqref{h-6} and
\eqref{h-16} are in force. Then, every limit
point $\mb P$ of the sequence $\mb P^N$ such that
\begin{equation}
\label{h-14}
\mb P\big[\, X(t-) \,=\, X(t)\,\big] \quad\text{for all}\;\; t>0\;.
\end{equation}
solves the $(L,\delta_\ell)$ martingale problem, where $L$ is the
generator of the $S$-valued Markov chain whose jump rates are
$\bs r(j,k)$.
\end{theorem}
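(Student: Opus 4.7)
The plan is to follow the martingale strategy of Section \ref{sec3}, in the same spirit as the proof of Theorem \ref{5-p1}, except that the singular indicator $\chi_{\ms E^k_N}$ is replaced by its resolvent regularization $u^k_N$ solving \eqref{h-1}. Fix $F\colon S \to \bb R$ and set $U_N := \sum_{k\in S} F(k)\, u^k_N$. Since $\eta^{\ms E_N}(t)$ is a Markov chain with generator $\ms L_{\ms E_N}$, applying the martingale identity \eqref{2-2} to $U_N$, speeding up time by $\theta_N$, and writing $\bs\eta^{\ms E_N}(s) := \eta^{\ms E_N}(s\theta_N)$, produces the $\bb P^N_{\nu_N}$-martingale
\begin{equation*}
\widehat M_N(t) \;=\; U_N(\bs\eta^{\ms E_N}(t)) \,-\, U_N(\bs\eta^{\ms E_N}(0)) \,-\, \theta_N \int_0^t \sum_{k\in S} F(k)\, w^k_N(\bs\eta^{\ms E_N}(s))\, ds\;,
\end{equation*}
with $w^k_N = \ms L_{\ms E_N} u^k_N$. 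Condition \eqref{h-12b} follows from \eqref{h-6} via the identity $\Cap_\star(\ms E^{\star,k}_N, \breve{\ms E}^{\star,k}_N) = \pi^\star_{\ms E}(\ms E^{\star,k}_N)\, \sum_{\ell\neq k} r^\star_N(k,\ell)$, and \eqref{h-16} allows one to pick $\gamma_N$ with $\max_{j,k}\alpha^{j,k}_N \ll \gamma_N \ll \theta_N$, so that both Lemma \ref{h-s2} and Lemma \ref{h-s3} apply.

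Next I would use these two lemmata to rewrite $\widehat M_N(t)$ in terms of $\bs X^T_N$. Lemma \ref{h-s2} gives $\sup_{t\ge 0} \bb E_{\nu_N}[\,|u^k_N - \chi_{\ms E^k_N}|(\bs\eta^{\ms E_N}(t))\,] \to 0$ for each $k\in S$; combined with the identity $\sum_k F(k)\,\chi_{\ms E^k_N}(\bs\eta^{\ms E_N}(t)) = F(\bs X^T_N(t))$ from \eqref{5-3}, this yields $U_N(\bs\eta^{\ms E_N}(t)) = F(\bs X^T_N(t)) + o_N(1)$ in $L^1(\bb P^N_{\nu_N})$, uniformly in $t\ge 0$. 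For the integral term, a change of variables recasts the conclusion of Lemma \ref{h-s3} as the $L^1$-smallness of $\sup_{t\le T}\, |\theta_N \int_0^t [w^k_N - \widehat w^k_N](\bs\eta^{\ms E_N}(s))\, ds|$; formula \eqref{h-3} and hypothesis \eqref{h-6} then give the uniform convergence $\theta_N \sum_k F(k)\, \widehat w^k_N(\bs\eta^{\ms E_N}(s)) \to (LF)(\bs X^T_N(s))$, where $L$ is the generator of the reduced model. Altogether, $\widehat M_N(t)$ differs from
\begin{equation*}
\mf M_N(t) \;:=\; F(\bs X^T_N(t)) \,-\, F(\bs X^T_N(0)) \,-\, \int_0^t (LF)(\bs X^T_N(s))\, ds
\end{equation*}
by an error that tends to $0$ in $L^1(\bb P^N_{\nu_N})$, uniformly on each compact interval $[0,T]$.

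It remains to take limits. Let $\mb P$ be any limit point of $\mb P^N$ satisfying \eqref{h-14}, and up to extraction suppose that $\mb P^N$ converges weakly to $\mb P$. By \eqref{h-14}, the finite-dimensional projections of the coordinate process are $\mb P$-almost surely continuous, hence $\mf M_N(t) \to M(t) := F(X(t)) - F(X(0)) - \int_0^t (LF)(X(s))\, ds$ in distribution; as $\mf M_N(t)$ is uniformly bounded, this convergence upgrades to $L^1$, and combining with the $L^1$-proximity to $\widehat M_N(t)$ established above, Lemma \ref{D-l1} shows that $M(t)$ is a martingale under $\mb P$. Finally, \eqref{h-12a} guarantees $\bs X^T_N(0) = \ell$ almost surely under $\bb P^N_{\nu_N}$, so $\mb P[X(0) = \ell] = 1$, and $\mb P$ solves the $(L,\delta_\ell)$ martingale problem. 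The main work is of course packaged inside Lemmata \ref{h-s2} and \ref{h-s3}; granted these, the only nontrivial point in the argument above is the existence of a scale $\gamma_N$ satisfying simultaneously the hypotheses of both lemmata, and condition \eqref{h-16} is tailored precisely to provide the required spectral gap between the in-valley relaxation times and the inter-valley transition scale $\theta_N$.
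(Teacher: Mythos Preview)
Your proposal is correct and follows essentially the same route as the paper: define $H_N=\sum_k F(k)\,u^k_N$ (your $U_N$), write the associated martingale for the speeded-up trace process, invoke Lemma~\ref{h-s2} to replace $u^k_N$ by $\chi_{\ms E^k_N}$ and Lemma~\ref{h-s3} together with \eqref{h-3} and \eqref{h-6} to replace the drift by $(LF)(\bs X^T_N)$, and then pass to the limit using \eqref{h-14}. The paper makes the limit step explicit by testing against $G_N=g(\bs X^T_N(t_1),\dots,\bs X^T_N(t_q))$ and rewriting the martingale identity under $\mb P^N$ before invoking weak convergence, whereas you appeal to Lemma~\ref{D-l1}; note that Lemma~\ref{D-l1} is stated for a single filtered probability space, so strictly speaking what is used is the boundedness of $\mf M_N$ plus the $\mb P$-a.s.\ continuity of the finite-dimensional projections to pass the expectation $\mb E^N[\,\mf M_N(t)\,G_N\,]$ to $\mb E[\,M(t)\,G\,]$, exactly as the paper does.
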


Theorem \ref{h-s1} describes the asymptotic evolution of the trace of
the Markov $\eta(t)$ on $\ms E_N$. The next lemma shows that in the
time scale $\theta_N$ the time spent on the complement of $\ms E_N$ is
negligible. The proof is similar to the one of Lemma \ref{o-t2} and
uses Schwarz inequality and assumption \eqref{h-12a} to replace $\nu_N$
by $\pi_N$.

\begin{lemma}
\label{h-s6}
Assume that
\begin{equation*}
\lim_{N\to\infty} \frac{\pi(\Delta_N)}{\pi(\ms E^j_N)} \;=\; 0
\end{equation*}
for all $j\in S$. Fix $\ell\in S$, and let $\{\nu_N : N\ge 1\}$ be a
sequence of probability measures satisfying \eqref{h-12a}. Then, for
every $t>0$,
\begin{equation*}
\lim_{N\to\infty} \bb E_{\nu_N}
\Big[\int_0^t \chi_{\Delta_N} (\eta (s\theta_N))
\, ds \,\Big]\, =\,0\,.
\end{equation*}
\end{lemma}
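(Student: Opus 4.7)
The plan is to reduce the problem to the corresponding statement under the stationary measure $\pi_N$ (which is just Lemma \ref{o-t2}'s calculation), then bridge from $\pi_N$ to the initial distribution $\nu_N$ via a Cauchy--Schwarz argument powered by the $L^2$ bound in \eqref{h-12a}.

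First, set $\phi \,:=\, \int_0^t \chi_{\Delta_N}\bigl(\eta(s\theta_N)\bigr)\,ds$, which is non-negative and bounded by $t$. Since $\nu_N$ is supported on $\ms E^\ell_N \subset \ms E_N$, I would rewrite
\begin{equation*}
\bb E_{\nu_N}[\phi] \;=\; \sum_{\eta \in \ms E_N} \pi_{\ms E}(\eta)\, \frac{d\nu_N}{d\pi_{\ms E}}(\eta)\, \bb E_\eta[\phi]
\end{equation*}
and apply Cauchy--Schwarz with respect to $\pi_{\ms E}$:
\begin{equation*}
\bb E_{\nu_N}[\phi]^2 \;\le\; E_{\pi_{\ms E}}\!\Big[\Bigl(\tfrac{d\nu_N}{d\pi_{\ms E}}\Bigr)^{\!2}\Big]\; E_{\pi_{\ms E}}\bigl[(\bb E_\eta[\phi])^2\bigr].
\end{equation*}
Assumption \eqref{h-12a} takes care of the first factor: it is at most $C_0/\pi_{\ms E}(\ms E^\ell_N) = C_0\, \pi_N(\ms E_N)/\pi_N(\ms E^\ell_N)$.

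For the second factor I would use that $0 \le \bb E_\eta[\phi] \le t$, hence $(\bb E_\eta[\phi])^2 \le t\, \bb E_\eta[\phi]$, which yields
\begin{equation*}
E_{\pi_{\ms E}}\bigl[(\bb E_\eta[\phi])^2\bigr] \;\le\; t\, \bb E_{\pi_{\ms E}}[\phi] \;\le\; \frac{t}{\pi_N(\ms E_N)}\, \bb E_{\pi_N}[\phi].
\end{equation*}
Because $\pi_N$ is the stationary state of $\eta_N(t)$, stationarity gives $\bb E_{\pi_N}[\phi] = t\, \pi_N(\Delta_N)$, so this factor is at most $t^2\, \pi_N(\Delta_N)/\pi_N(\ms E_N)$.

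Combining the two bounds, the factors $\pi_N(\ms E_N)$ cancel and I obtain
\begin{equation*}
\bb E_{\nu_N}[\phi]^2 \;\le\; C_0\, t^2\, \frac{\pi_N(\Delta_N)}{\pi_N(\ms E^\ell_N)},
\end{equation*}
which tends to zero by the hypothesis $\pi_N(\Delta_N)/\pi_N(\ms E^j_N)\to 0$ (applied with $j=\ell$), concluding the proof. There is no serious obstacle here; the only subtlety is noticing that the factor $1/\pi_{\ms E}(\ms E^\ell_N) = \pi_N(\ms E_N)/\pi_N(\ms E^\ell_N)$ coming from \eqref{h-12a} exactly cancels the factor $\pi_N(\ms E_N)$ that stationarity of $\pi_N$ (as opposed to $\pi_{\ms E}$) introduces, leaving the clean ratio $\pi_N(\Delta_N)/\pi_N(\ms E^\ell_N)$ that the hypothesis controls.
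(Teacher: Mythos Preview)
Your proof is correct and follows essentially the same route as the paper: Cauchy--Schwarz against $d\nu_N/d\pi_{\ms E}$, then bounding the second factor via the trivial bound $\phi \le t$ and stationarity of $\pi_N$ to get $C_0\,t^2\,\pi_N(\Delta_N)/\pi_N(\ms E^\ell_N)$. The only cosmetic difference is that the paper passes through $\bb E_{\pi_N}[\phi^2]$ (via Jensen) rather than $E_{\pi_{\ms E}}[(\bb E_\eta[\phi])^2]$ directly, but the bound and the conclusion are identical.
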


\begin{remark}
\label{h-rm3}
The introduction of the enlarged process is inspired by the definition
of the \emph{\color{blue} soft hitting time} of Bianchi and
Gaudilli\`ere \cite{bg16}.
\end{remark}

\begin{remark}
\label{h-rm5}
Hypothesis \eqref{h-16} can be divided in two: Assume \eqref{h-7}, and
suppose that there exist constants $0<c_0<C_0<\infty$ such that $c_0\,
\pi_{N} (\ms E^k_N) \le \pi_{N} (\ms E^\ell_N) \le C_0\, \pi_{N} (\ms
E^k_N)$ for all $k$, $\ell\in S$.
\end{remark}

\begin{remark}
\label{h-rm6}
Hypothesis \eqref{h-16} can be weaken as follows. Instead of fixing
the same rate $\gamma_N$ for all valleys, we may choose a
valley-dependent rate. This does not alter the stationary
state, and it permits to choose larger parameters $\gamma_N$ for
deeper valleys. Assumption \eqref{h-16} may also be weaken to admit a
deep valley, all the other ones being shallow (cf. \cite{bl9}).
\end{remark}

\begin{remark}
\label{h-rm4}
In Subsection \ref{14-4}, we apply the method presented above to a
polymer model examined by Caputo et al. in \cite{cmt, clmst}. It can
also be employed to derive the reduced model of the random walk
presented in Section \ref{sec1}. We refer to in \cite{bl9}.  Lacoin
and Teixeira \cite{lt15} followed this scheme to prove the metastable
behavior of a polymer interface which interacts with an attractive
substrate.
\end{remark}

\begin{proof}[Proof of Theorem \ref{h-s1}]
Fix $\ell\in S$ and a sequence $\nu_N$ of probability measures
satisfying \eqref{h-12a}.
Fix a function $F: S\to\bb R$ and a limit point $\mb P$ of the
sequence $\mb P^N$ satisfying \eqref{h-14}. Assume, without loss of
generality, that $\mb P^N$ converges to $\mb P$. We claim that
\begin{equation}
\label{h-15}
M^F(t) \;:=\; F(X_t) \;-\; F(X_0) \;-\; \int_0^t (L\, F)(X_s)\, ds
\end{equation}
is a martingale under $\mb P$, where $L$ is the generator associated
to the jump rates $\bs r(j,k)$ introduced in \eqref{h-6}.

Fix $0\le s< t$, $q\ge 1$, $0\le t_1< \cdots < t_q\le s$, and a bounded
function $g: S^q \to \bb R$. Let $G = g(X(t_1), \dots, X(t_q))$, where
$X(s)$ represents the coordinate process of $D([0,\infty), S)$. We
shall prove that
\begin{equation*}
\mb E \,\big[\, M^F (t) \, G\, \big] \;=\; 
\mb E\,\big[\, M^F (s) \, G \, \big]\;,
\end{equation*}
where $\mb E$ stands for the expectation with respect to $\mb P$. 

Fix a sequence $\gamma_N$ such that for all $j$, $k\in S$,
\begin{equation*}
\frac{\pi_{N} (\ms E^k_N)}{\pi_{N} (\ms E^\ell_N)} 
\; t^{j,N}_{\rm rel} \;\ll\; \gamma_N \;\ll\; \theta_N \;,   
\end{equation*}
which is possible in view of \eqref{h-16}, and recall that we denote
by $u^k_N$ the solution of \eqref{h-1}.  Let
\begin{equation*}
H_N(\eta) \;=\; \sum_{k\in S} F(k) \, u^k_N (\eta)\;, \quad
\eta\in \ms E_N\;.
\end{equation*}
By the Markov property of the trace process $\eta^{\ms E}(t)$,
\begin{equation*}
M^N_t \;=\; H_N(\eta^{\ms E}(t\theta_N)) \;-\; H_N(\eta^{\ms E}(0))
\;-\; \int_0^{t \theta_N} (\ms L_{\ms E_N} H_N) (\eta^{\ms E}(s))\, ds
\end{equation*}
is a martingale. In particular, if 
\begin{equation*}
G_N \;=\;  g\big(\, \bs X^T_N(t_1)
\,,\, \dots \,,\, \bs X^T_N(t_q)\, \big)
\;=\;  g\big(\, X^T_N(t_1\theta_N)
\,,\, \dots \,,\, X^T_N(t_q\theta_N)\, \big)\;,
\end{equation*}
we have that
\begin{equation*}
\bb E_{\nu_N} \big[\, M^N_t \, G_N\, \big]
\, = \, \bb E_{\nu_N} \big[\, M^N_s \, G_N\, \big]\;,
\end{equation*}
so that
\begin{equation*}
\bb E_{\nu_N} \Big[ \, G_N \, \Big\{ H_N (\eta^{\ms E}(t\theta_N))
\;-\; H_N(\eta^{\ms E}(s \theta_N))
\;-\; \int_{s \theta_N}^{t \theta_N} (\ms L_{\ms E_N} H_N) (\eta^{\ms E}(r))\, dr
\Big\} \, \Big ] \,=\, 0\,.
\end{equation*}

By Lemma \ref{h-s2}, 
\begin{equation*}
\lim_{N\to \infty} \sup_{r\ge 0}\,
\bb E_{\nu_N} \Big[ \, \big| \, H_N (\eta^{\ms E}(r\theta_N))
\,-\, (F\circ\Psi_N) (\eta^{\ms E} (r\theta_N))  \,\big|\,
\Big] \;=\;0\;.
\end{equation*}
Thus, by the penultimate equation and since $\bs X^T_N (t) = X^T_N
(t\theta_N) = \Psi_N(\eta^{\ms E_N}(t \theta_N))$,
\begin{equation*}
\lim_{N\to\infty} \bb E_{\nu_N} \Big[ G_N\, \Big\{ 
F(\bs X^T_N(t)) \,-\, F(\bs X^T_N(s))
\;-\; \int_{s \theta_N}^{t \theta_N} 
(\ms L_{\ms E_N} H_N) (\eta^{\ms E}(r))\, dr
\Big\} \Big ] \,=\, 0\,.
\end{equation*}

By definition of $H_N$ and $w^k_N$, introduced just above \eqref{h-4},
$\ms L_{\ms E_N} H_N = \sum_k F(k) \, w^k_N$. Hence, by Lemma
\ref{h-s3} and \eqref{h-3},
\begin{equation*}
\lim_{N\to\infty} \bb E_{\nu_N} \Big[\, G_N\,  \Big\{ 
F(\bs X^T_N(t)) \,-\,  F(\bs X^T_N(s))
\;-\; \int_{s}^{t}   \, (L^\star_N F)
(\bs X^T_N(r))\, dr \Big\} \, \Big ] \,=\, 0\,.
\end{equation*}
where $L^\star_N$ is the generator of a $S$-valued Markov chain given
by
\begin{equation*}
(L^\star_N F) (j) \;=\;  \sum_{k\in S} \theta_N\, 
\bs r^\star_N(j,k) \, [\, F(k) - F(j)\,] \;.
\end{equation*}

At this point, the martingale has been expressed as a function of the
process $\bs X^T_N(t)$. By definition of the measure $\mb P^N$, the
previous expectation is equal to
\begin{equation*}
\mb E^N \Big[\, g\big(\, X(t_1)
\,,\, \dots \,,\, X (t_q)\, \big) \,  \Big\{
F(X(t)) \,-\,  F(X(s))
\;-\; \int_{s}^{t}   \, (L^\star_N F)
(X(r))\, dr \Big\} \, \Big ]\;, 
\end{equation*}
where, recall, $X(t)$ represents the coordinate process in
$D([0,\infty), S)$ and $\mb E^N$ expectation with respect to $\mb
P^N$.

By assumption \eqref{h-6}, $(L^\star_N F)(k)$ converges to $(L F)(k)$
for all $k \in S$. Therefore, as $\mb P^N$ converges to $\mb P$ and in
view of \eqref{h-14} [which guarantees that the finite-dimensional
projections are continuous], passing to the limit, we get that
\begin{equation*}
\mb E \Big[\, g\big(\, X(t_1)
\,,\, \dots \,,\, X (t_q)\, \big) \,  \Big\{
F(X(t)) \,-\,  F(X(s))
\;-\; \int_{s}^{t}   \, (L F)
(X(r))\, dr \Big\} \, \Big ] \;=\; 0\;. 
\end{equation*}
This shows that \eqref{h-15} holds, and completes the proof of the
theorem.  
\end{proof}

\subsection{The resolvent equation}  

We examine in this subsection the asymptotic behavior of the solution
of resolvent equation \eqref{h-1}.

Fix $\gamma_N>0$ and consider the $\gamma_N$-enlargement of the
process $\eta^{\ms E_N}(t)$.  Let
$h^k_N: \ms E_N \cup \ms E^\star_N \to [0,1]$ be the equilibrium
potential between the sets $\ms E^{\star, k}_N$ and
$\breve{\ms E}^{\star, k}_N$:
\begin{equation}
\label{h-2}
{ \color{blue} h^k_N (\eta)} \;:=\; \bb P^{\star, \gamma_N}_\eta \big[ \, 
H(\ms E^{\star, k}_N) \,<\, H (\breve{\ms E}^{\star, k}_N) \,\big ]\;.
\end{equation}
Since $\ms L_{\ms E_N, \star} h^k_N =0$ on $\ms E_N$, we deduce that
the restriction of $h^k_N$ to $\ms E_N$ solves the resolvent equation
\eqref{h-1}. Since the solution is unique, $u^k_N = h^k_N$ on
$\ms E_N$ and we have a simple stochastic representation of the
solution of the resolvent equations.

\begin{remark}
\label{h-rm1}
The enlargement of the chain $\eta^{\ms E_N}(t)$ thus provides a
stochastic representation of the resolvent equation \eqref{h-1}.
\end{remark}

\begin{lemma}
\label{h-s4}
There exists a finite constant $C_0$, independent of $N$, such that for
all $k\in S$, 
\begin{equation*}
\begin{aligned}
& \frac 1{\gamma_N} \, \sum_{\eta\in \ms E^{k}_N} 
\pi_{\ms E}(\eta) \, [1- u^k_N(\eta)]^2 
\;+\; \< \, (-\ms L_{\ms E_N}) \, u^k_N 
\,,\, u^k_N \, \>_{\pi_{\ms E}} \;+\;
\frac 1{\gamma_N} \, \sum_{\eta\in \breve{\ms E}^{k}_N} 
\pi_{\ms E}(\eta) \, u^k_N  (\eta)^2 \\
&\qquad \;\le\; 
\frac{C_0}{\theta_N} \, \pi_{\ms E} (\ms E^{k}_N) \;.
\end{aligned}
\end{equation*}
\end{lemma}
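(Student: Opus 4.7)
The strategy is to identify the three non-negative quantities on the left-hand side as the pieces of the Dirichlet form of the equilibrium potential $h^k_N$ of the enlarged process, and then to bound that Dirichlet form by the capacity $\Cap_\star(\ms E^{\star,k}_N, \breve{\ms E}^{\star,k}_N)$ via \eqref{h-12b}. Recall from the paper that $u^k_N$ coincides on $\ms E_N$ with $h^k_N$, and that $h^k_N$ equals $1$ on $\ms E^{\star,k}_N$ and $0$ on $\breve{\ms E}^{\star,k}_N$. By the standard identity (valid for any stationary measure, not only reversible ones),
\begin{equation*}
D_\star(h^k_N) \;=\; \< (-\ms L_{\ms E_N,\star})\, h^k_N\,,\, h^k_N\, \>_{\pi^\star_{\ms E}}
\;=\; \frac12 \sum_{\eta,\xi} \pi^\star_{\ms E}(\eta)\, R^\star(\eta,\xi)\, [h^k_N(\xi)-h^k_N(\eta)]^2 \;,
\end{equation*}
and $D_\star(h^k_N) \;=\; \Cap_\star(\ms E^{\star,k}_N, \breve{\ms E}^{\star,k}_N)$ by \eqref{J-3}.

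Next I would split the double sum into two pieces: transitions internal to $\ms E_N$, which use the trace rates $R^T_N(\eta,\xi)$, and bridge transitions of rate $1/\gamma_N$ between $\eta\in \ms E_N$ and $P_\star \eta\in \ms E^\star_N$. For the internal piece, using $\pi^\star_{\ms E}(\eta) = \tfrac12 \pi_{\ms E}(\eta)$ on $\ms E_N$ and the fact that $h^k_N$ restricted to $\ms E_N$ equals $u^k_N$, one obtains
\begin{equation*}
\frac12 \sum_{\eta,\xi \in \ms E_N} \pi^\star_{\ms E}(\eta)\, R^T_N(\eta,\xi)\, [u^k_N(\xi)-u^k_N(\eta)]^2 \;=\; \frac12 \,\<\, (-\ms L_{\ms E_N})\, u^k_N \,,\, u^k_N\,\>_{\pi_{\ms E}} \;.
\end{equation*}
For the bridge piece, each unordered pair $\{\eta, P_\star\eta\}$ contributes twice to the sum. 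Since $h^k_N(P_\star\eta)=1$ for $\eta\in \ms E^k_N$ and $h^k_N(P_\star\eta)=0$ for $\eta\in \breve{\ms E}^k_N$, one gets
\begin{equation*}
\frac{1}{2\gamma_N}\sum_{\eta\in \ms E^k_N} \pi_{\ms E}(\eta)\,(1-u^k_N(\eta))^2 \;+\; \frac{1}{2\gamma_N}\sum_{\eta\in \breve{\ms E}^k_N} \pi_{\ms E}(\eta)\, u^k_N(\eta)^2 \;.
\end{equation*}

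Adding the two pieces and multiplying by $2$ shows that the left-hand side of the lemma equals $2\,\Cap_\star(\ms E^{\star,k}_N, \breve{\ms E}^{\star,k}_N)$. Invoking \eqref{h-12b}, which under the standing assumption \eqref{h-6} follows from the identity $\Cap_\star(\ms E^{\star,k}_N,\breve{\ms E}^{\star,k}_N) = \pi^\star_{\ms E}(\ms E^{\star,k}_N)\sum_{\ell\neq k} r^\star_N(k,\ell)$, we have $\Cap_\star(\ms E^{\star,k}_N, \breve{\ms E}^{\star,k}_N) \le (C_0/\theta_N)\, \pi^\star_{\ms E}(\ms E^{\star,k}_N) = (C_0/(2\theta_N))\,\pi_{\ms E}(\ms E^k_N)$. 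After absorbing the factor into the constant, this yields the stated bound.

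The only real point requiring care is the bookkeeping: the factors of $\tfrac12$ from $\pi^\star_{\ms E} = \tfrac12 \pi_{\ms E}$ and from the Dirichlet-form convention must be tracked carefully, but once the capacity is split into its internal and bridge contributions, the identification is immediate and no analytical difficulty arises. I expect the main (very mild) obstacle to be verifying that the general non-reversible Dirichlet identity $\<-Lf,f\>_\pi = \tfrac12 \sum \pi(\eta) R(\eta,\xi)[f(\xi)-f(\eta)]^2$ applies both to $\ms L_{\ms E_N}$ on $\ms E_N$ and to the enlarged generator $\ms L_{\ms E_N,\star}$, which follows from stationarity of $\pi_{\ms E}$ and $\pi^\star_{\ms E}$ respectively.
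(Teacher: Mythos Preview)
Your proof is correct and follows essentially the same approach as the paper's: both identify the left-hand side as $2\,D_{N,\star}(h^k_N) = 2\,\Cap_\star(\ms E^{\star,k}_N,\breve{\ms E}^{\star,k}_N)$ via the decomposition of the enlarged Dirichlet form into the trace part and the bridge part, and then invoke \eqref{h-12b}. The paper's proof is just a more compressed version of the same bookkeeping you wrote out explicitly.
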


\begin{proof}
Denote the left-hand side of the inequality by $A_N$, and by $B_N$
the same expression with $\pi_{\ms E}$ in place of $\pi^\star_{\ms E}$. Since
$\pi^\star_{\ms E} (\eta) = (1/2) \, \pi_{\ms E}(\eta)$,
$\eta\in \ms E_N$, $A_N=2B_N$.  As $u^k_N$ and $h^k_N$ coincide on
$\ms E_N$, we may replace the former by the latter. On the other hand,
as $h^k_N = \chi_{\ms E^{k,\star}_N}$ on $\ms E^\star_N$,
$B_N = D_{N,\star}(h^k_N)$, where $D_{N,\star}(f)$ represents the
Dirichlet form of $f$ with respect to the enlarged process
$\zeta_N(t)$. 

By \eqref{J-3}, 
\begin{equation*}
D_{N,\star}(h^k_N) \;=\;
\Cap_\star(\ms E^{\star, k}_N, \breve{\ms E}^{\star, k}_N) \;. 
\end{equation*}
Thus,
$A_N = 2 \, \Cap_\star(\ms E^{\star, k}_N, \breve{\ms E}^{\star,
  k}_N)$.
By assumption \eqref{h-12b}, the capacity is less than or equal to
$C_0\, \pi^\star_{\ms E} (\ms E^{\star, k}_N) / \theta_N$ for some
finite constant $C_0$.  This proves the assertion because
$ \pi^\star_{\ms E} (\ms E^{\star, k}_N) = (1/2)\, \pi_{\ms E} (\ms
E^{k}_N)$.
\end{proof}

\begin{proof}[Proof of Lemma \ref{h-s2}]
Fix $\ell\in S$, a sequence of probability measures $\nu_N$ satisfying
the hypotheses of the lemma and $t>0$.
Denote by $S_{\ms E}(t)$, $t\ge 0$, the semigroup associated
to the trace process $\eta^{\ms E}(t)$, and by $f^N_t$ the Radon-Nikodym
derivative $d\nu_N S_{\ms E}(t)/ d\pi_{\ms E}$. By \eqref{M-6},
$E_{\pi_{\ms E}}[(f^N_t)^2] \le E_{\pi_{\ms E}}[(f^N_0)^2]$.  Hence, by
Schwarz inequality, the square of the expectation appearing in the
statement of the lemma is bounded above by
\begin{equation*}
E_{\pi_{\ms E}} \Big[ \Big( \frac{d\nu_N}{d\pi_{\ms E}} \Big)^2 \Big]
\; E_{\pi_{\ms E}} \Big[\,\big( \chi_{\ms E^\ell_N}  - u^\ell_N \big)^2\, \Big]\;.
\end{equation*}
By Lemma \ref{h-s4}, the second term is bounded by $C_0 \, \gamma_N\,
\pi_{\ms E} (\ms E^{\ell}_N) / \theta_N$. Thus, by the assumption on
the sequence of probability measures $\nu_N$, the previous displayed
formula is bounded by $C_0 \, \gamma_N / \theta_N$. This expression
vanishes as $N\to\infty$ by the hypothesis on $\gamma_N$.
\end{proof}

\subsection{Local ergodicity} 

The proof of Lemma \ref{h-s3} is divided in several steps.  Denote by
$\color{blue} \<\, \cdot\,,\,\cdot\,\>_{\pi_{\ms E}}$ the scalar
product in $L^2(\pi_{\ms E})$. For a zero-mean function $f: \ms E_N\to
\bb R$, let $\color{blue} \Vert f\Vert_{-1}$ be the $\mc H_{-1}$ norm
of $f$ associated to the generator $\ms L_{\ms E_N}$:
\begin{equation*}
\Vert f \Vert^2_{-1}
\;=\; \sup_{h} \Big\{ 2\, \<\, f \,,\, h \>_{\pi_{\ms E}} 
\,-\, \<\, h \,,\, (- \ms L_{\ms E_N}) \, h \,\>_{\pi_{\ms E}} \Big\}\; ,
\end{equation*}
where the supremum is carried over all functions $h:\ms E_N\to \bb
R$. By \cite[Lemma 2.4]{klo12}, for every function $f: \ms E_N \to \bb
R$ which has zero-mean with respect to $\pi_{\ms E}$, and every $T>0$,
\begin{equation}
\label{h-11}
\bb E_{\pi_{\ms E}} \Big[ \sup_{0\le t\le T}
\Big( \int_0^{t} f(\eta^{\ms E_N} (s)) 
\, ds  \, \Big)^2 \, \Big] \;\le\; 24\, T\, \Vert f \Vert^2_{-1}\;.
\end{equation}

Recall that we denote by $\pi_{\ms E^k}$ the stationary measure
$\pi_N$ conditioned to $\ms E^k_N$. Let
$\color{blue} \ms L_{R, \ms E^k_N}$ be the generator of the reflected
process $\eta_N(t)$ at $\ms E^k_N$.  For a function
$f: \ms E^k_N\to\bb R$ which has zero-mean with respect to
$\pi_{\ms E^k}$, denote by $\color{blue} \Vert f\Vert_{k,-1}$ the
$\mc H_{-1}$ norm of $f$ with respect to the generator
$\ms L_{R, \ms E^k_N}$:
\begin{equation*}
\Vert f \Vert^2_{k,-1}
\;=\; \sup_{h} \Big\{ 2 \, \<\, f \,,\, h \, \>_{\pi_{\ms E^k}} 
- \<\, h \,,\, (- \ms L_{R, \ms E^k_N}) \,  h \,\>_{\pi_{\ms E^k}} \Big\}\; ,
\end{equation*}
where the supremum is carried over all functions $h:\ms E^k_N\to \bb
R$. It is clear that
\begin{equation*}
\sum_{j\in S} \pi_{\ms E}(\ms E^j_N) \, 
\< \,  h \,,\, (-\ms L_{R, \ms E^j_N}) \, h \,\>_{\pi_{\ms E^j}} 
\;\le\; \<\, h \,,\, (- \ms L_{\ms E_N}) \, h \, \>_{\pi_{\ms E}}
\end{equation*}
for any function $h: \ms E_N \to\bb R$. These expression are not equal
because two kinds of jumps appear on the right-hand side and do not on
the left: The trace process may jump between valleys, and it may also
perform a jump inside a valley (crossing the set $\Delta_N$) which is
not possible in the original dynamics.

It follows from the previous inequality and from the formulae for the
$\mc H_{-1}$ norms that for every function $f:\ms E_N\to \bb R$ which
has zero-mean with respect to each measure $\pi_{\ms E^j}$,
\begin{equation}
\label{h-10}
\Vert f \Vert^2_{-1} \;\le\; \sum_{j\in S} 
\pi_{\ms E}(\ms E^j_N)\,
\Vert f \Vert^2_{j,-1}\;.
\end{equation}

\begin{lemma}
\label{h-s5}
Let $\{\nu_N : N\ge 1\}$ be a sequence of probability measures on $\ms
E_N$. Then, for every function $f:\ms E_N\to \bb R$ which has 
zero-mean with respect to each measure $\pi_{\ms E^j}$ and for every $T>0$,
\begin{equation*}
\Big( \, \bb E_{\nu_N} \Big[ \sup_{t\le T} \, \Big|\int_0^{t} 
f(\eta^{\ms E}(s)) \, ds \Big| \,\Big] \, \Big)^2
\;\le\; 24 \, T\, 
E_{\pi_{\ms E}} \Big[ \Big( \frac {\nu_N}{\pi_{\ms E}} \Big)^2 \Big] 
\, \sum_{j\in S} \pi_{\ms E}(\ms E^j_N) \, \Vert f \Vert^2_{j,-1} \; .
\end{equation*}
\end{lemma}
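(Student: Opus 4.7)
The plan is to chain together three ingredients, all essentially already assembled in the text: the Kipnis--Varadhan bound \eqref{h-11} for the supremum of an additive functional, the decomposition \eqref{h-10} of the $\mc H_{-1}$-norm in terms of the reflected processes, and a change of the initial distribution from $\nu_N$ to $\pi_{\ms E}$ via Cauchy--Schwarz.

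First I would abbreviate $X_N := \sup_{t\le T}|\int_0^{t} f(\eta^{\ms E}(s))\, ds|$ and let $\psi_N = d\nu_N/d\pi_{\ms E}$. Since the initial law enters the process only through $\eta^{\ms E}(0)$, we have $\bb E_{\nu_N}[X_N] = \bb E_{\pi_{\ms E}}\big[X_N\, \psi_N(\eta^{\ms E}(0))\big]$. By the Cauchy--Schwarz inequality,
\begin{equation*}
\big(\bb E_{\nu_N}[X_N]\big)^2
\;\le\; \bb E_{\pi_{\ms E}}\!\big[X_N^2\big]
\; \bb E_{\pi_{\ms E}}\!\big[\psi_N(\eta^{\ms E}(0))^2\big]
\;=\; \bb E_{\pi_{\ms E}}\!\big[X_N^2\big]
\; E_{\pi_{\ms E}}\!\Big[\Big(\tfrac{d\nu_N}{d\pi_{\ms E}}\Big)^{\!2}\,\Big]\;,
\end{equation*}
where in the last identity I used that under $\bb P_{\pi_{\ms E}}$ the random variable $\eta^{\ms E}(0)$ is distributed according to $\pi_{\ms E}$.

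Next, observe that $f$ has zero mean with respect to $\pi_{\ms E}$, since $\pi_{\ms E} = \sum_{j\in S} \pi_{\ms E}(\ms E^j_N)\, \pi_{\ms E^j}$ and $f$ is $\pi_{\ms E^j}$-centered for each $j\in S$. Hence \eqref{h-11} applies and yields $\bb E_{\pi_{\ms E}}[X_N^2] \le 24\, T\, \|f\|^2_{-1}$. Finally, the same centering hypothesis allows me to invoke \eqref{h-10}, giving $\|f\|^2_{-1} \le \sum_{j\in S} \pi_{\ms E}(\ms E^j_N)\, \|f\|^2_{j,-1}$. Substituting this into the previous display produces exactly the claimed inequality.

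There is no real obstacle here: the proof is a straight concatenation of three results stated above, the only minor point being the correct form of Cauchy--Schwarz (applied to $X_N \cdot \psi_N$ rather than $X_N^2 \cdot \psi_N$) so that only $\bb E_{\pi_{\ms E}}[X_N^2]$ appears, which is the quantity controlled by \eqref{h-11}.
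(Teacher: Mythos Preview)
Your proof is correct and follows essentially the same approach as the paper: Cauchy--Schwarz to pass from $\nu_N$ to $\pi_{\ms E}$, then the Kipnis--Varadhan bound \eqref{h-11}, then the decomposition \eqref{h-10}. You are slightly more explicit than the paper in justifying the applicability of \eqref{h-11} (noting that $f$ is $\pi_{\ms E}$-centered because $\pi_{\ms E}=\sum_j \pi_{\ms E}(\ms E^j_N)\,\pi_{\ms E^j}$) and in spelling out the Cauchy--Schwarz step via the density $\psi_N$, but the argument is the same.
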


\begin{proof}
By Schwarz inequality, the expression on the left hand side
is bounded above by
\begin{equation*}
E_{\pi_{\ms E}} \Big[ \Big( \frac {\nu_N}{\pi_{\ms E}} \Big)^2 \Big] 
\,  \bb E_{\pi_{\ms E}} \Big[ \sup_{t\le T} \Big( \int_0^{t} 
f (\eta^{\ms E}(s)) \,  ds \Big)^2 \,\Big] \;.
\end{equation*}
By \eqref{h-11} and by \eqref{h-10}, the second expectation is bounded by
\begin{equation*}
24 \, T\, \sum_{j\in S} \pi_{\ms E}(\ms E^j_N) 
\, \Vert f \Vert^2_{j,-1} \; ,
\end{equation*}
as claimed.
\end{proof}

\begin{proof}[Proof of Lemma \ref{h-s3}]
Fix $\ell\in S$, and a sequence of probability measures $\nu_N$
satisfying the hypotheses of the lemma. Fix $k\in S$.
Since $w^k_N - \widehat w^k_N$ has zero-mean with respect to each
$\pi_{\ms E^j}$, by the assumption on the sequence $\nu_N$ and
Lemma \ref{h-s5}, the square of the expectation appearing in the
statement of the lemma is bounded by
\begin{equation}
\label{h-9}
\frac{C_0 \, T\, \theta_N}{\pi_{\ms E}(\ms E^\ell)} \, 
\sum_{j\in S}  \pi_{\ms E}(\ms E^j_N) 
\, \Vert  w^k_N - \widehat w^k_N \Vert^2_{j,-1} 
\end{equation}
for some finite constant $C_0$.  

By \eqref{h-1}, on the set $\ms E^k_N$,
$\ms L_{\ms E_N} u^k_N = - \, (1/\gamma_N) \,(1- u^k_N)$, so that
$w^k_N - \widehat w^k_N = (1/\gamma_N) \,(u^k_N - \widehat u^k_N)$.
Hence, by the spectral gap of the reflected process,
\begin{equation*}
\Vert \, w^k_N - \widehat w^k_N \, \Vert^2_{k,-1} \;=\; 
\frac 1{\gamma^2_N} \, \Vert  \, u^k_N - \widehat u^k_N \, \Vert^2_{k,-1} 
\;\le \;  \frac {t^{k,N}_{\rm rel}}{\gamma^2_N} \, 
\Vert\, u^k_N - \widehat u^k_N \, \Vert^2_{\pi_{\ms E^k}}\;.
\end{equation*}
Since $\Vert\, u^k_N - \widehat u^k_N \, \Vert^2_{\pi_{\ms E^k}} \le
\Vert\, u^k_N - 1 \, \Vert^2_{\pi_{\ms E^k}}$, by Lemma \ref{h-s4},
\begin{equation*}
\Vert \, w^k_N - \widehat w^k_N \, \Vert^2_{k,-1} \;\le\; 
C_0 \, \frac{t^{k,N}_{\rm rel}}{\gamma_N \, \theta_N} 
\end{equation*}
for some finite constant $C_0$. 

Similarly, since $\ms L_{\ms E_N} u^k_N = (1/\gamma_N) \, u^k_N$ on
the sets $\ms E^j_N$, $j\not = k$,
\begin{equation*}
\Vert \, w^k_N - \widehat w^k_N \, \Vert^2_{j,-1} \;=\; 
\frac 1{\gamma^2_N} \, \Vert  \, u^k_N - \widehat u^k_N \, \Vert^2_{j,-1} 
\;\le\; C_0 \,  \frac{\pi_{\ms E} (\ms E^k_N)}{\pi_{\ms E} (\ms E^j_N)}\, 
\frac{t^{j,N}_{\rm rel}} {\gamma_N\, \theta_N}\;\cdot
\end{equation*}
Therefore, the sum appearing in \eqref{h-9} is bounded by 
\begin{equation*}
C_0\, T \, |S|\, \frac{\max_{j\in S} t^{j,N}_{\rm rel}}{\gamma_N}\,
\frac{\pi_{N} (\ms E^k_N)}{\pi_{N} (\ms E^\ell_N)} \;\cdot
\end{equation*}
By the hypotheses of the lemma, this expression vanishes as
$N\uparrow\infty$, which completes the proof. 
\end{proof}

\begin{proof}[Proof of Lemma \ref{h-s6}.]
Fix $\ell\in S$, and let $\nu_N$ be a sequence of probability measures
satisfying \eqref{h-12a}.  By Schwarz inequality, the square of the
expectation appearing in the statement of the lemma is bounded above
by
\begin{equation*}
\frac 1{\pi_N (\ms E_N)} \, E_{\pi_{\ms E}} \Big[ \,
\Big( \frac{d\nu_N}{d\pi_{\ms E}} \Big)^2 \Big] \,
\bb E_{\pi_N}
\Big[ \Big(\int_0^t \chi_{\Delta_N} \big(\, \eta (s\theta_N) \,\big)
\, ds \Big)^2 \, \Big]
\end{equation*}
By assumption \eqref{h-12a}, the first expectation is bounded by
$C_0/\pi_{\ms E}(\ms E^\ell_N)$. On the other hand, by Schwarz
inequality, the second expectation is less than or equal to
\begin{equation*}
t \, \bb E_{\pi_N}
\Big[ \int_0^t \chi_{\Delta_N} \big( \, \eta (s\theta_N) \,\big)
\, ds \, \Big] \;=\; t^2 \pi_N (\Delta_N)\;.
\end{equation*}
The expression appearing in the penultimate displayed formula is thus
bounded above by
$C_0 \, t^2\, [\, \pi_N (\Delta_N)/\pi_{N}(\ms E^\ell_N)\,]$, which
concludes the proof of the lemma.
\end{proof}

\section{Tightness}
\label{sec14}

In this section, we present sufficient conditions for the tightness of
the sequence $\mb P^N$ introduced in Theorems \ref{3-t1}, \ref{5-p1}
and \ref{h-s1}. We need a slight generalization of Lemma \ref{5-p2}.
Recall the notation introduced just before this lemma.  We proved
there that for each $t\ge 0$ and $\eta\in E_N$, $S_{\ms E_N} (t)$ is a
stopping time with respect to the filtration $(\mc F^\eta_t: t\ge 0)$.

\begin{lemma}
\label{I-s2}
Let $\{\mc G_r : r\ge 0\}$ be the filtration given by $\mc G_r = \mc
F^\eta_{S_{\ms E_N}(r)}$, and let $\tau$ be a stopping time with respect to
$\{\mc G_r\}$. Then, $S_{\ms E_N}(\tau)$ is a stopping time with
respect to $\{\mc F^\eta_t\}$.
\end{lemma}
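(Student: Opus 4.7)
The plan is to reduce to the case where $\tau$ takes countably many values (where Lemma \ref{5-p2} applies directly), and then pass to the limit using right-continuity of $S_{\ms E_N}(\cdot)$ and of the augmented filtration $\{\mc F^\eta_t\}$.

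First, I would approximate $\tau$ from above by the dyadic stopping times $\tau_n := 2^{-n}\lceil 2^n \tau\rceil$ (with the convention $\tau_n = \infty$ on $\{\tau = \infty\}$). A routine check shows that each $\tau_n$ is a $\{\mc G_r\}$-stopping time taking values in $\{k\,2^{-n}: k\in\mathbb Z_{\ge 0}\}\cup\{\infty\}$, and that $\tau_n\downarrow \tau$ pointwise. Next, for each fixed $n$ I would verify that $S_{\ms E_N}(\tau_n)$ is a $\{\mc F^\eta_t\}$-stopping time via the decomposition
\[
  \{S_{\ms E_N}(\tau_n) \le t\} \;=\; \bigcup_{k\ge 0} \Big(\{\tau_n = k\,2^{-n}\} \cap \{S_{\ms E_N}(k\,2^{-n}) \le t\}\Big).
\]
By definition of $\mc G_{k 2^{-n}}$, the event $\{\tau_n = k\,2^{-n}\}$ lies in $\mc F^\eta_{S_{\ms E_N}(k\,2^{-n})}$; since $S_{\ms E_N}(k\,2^{-n})$ is a stopping time by Lemma \ref{5-p2}, the general identity $A\cap \{\sigma\le t\}\in\mc F^\eta_t$ for $A\in\mc F^\eta_\sigma$ places each term of the union in $\mc F^\eta_t$.

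Third, I would pass to the limit $n\to\infty$. Because $T_{\ms E_N}$ is continuous and non-decreasing, $S_{\ms E_N}$ is right-continuous in its argument; hence $S_{\ms E_N}(\tau_n)\downarrow S_{\ms E_N}(\tau)$. Consequently
\[
  \{S_{\ms E_N}(\tau)\le t\} \;=\; \bigcap_{m\ge 1}\{S_{\ms E_N}(\tau)<t+1/m\} \;=\; \bigcap_{m\ge 1}\bigcup_{n\ge 1}\{S_{\ms E_N}(\tau_n)<t+1/m\}.
\]
Each $\{S_{\ms E_N}(\tau_n)<t+1/m\}$ is a countable union $\bigcup_{j}\{S_{\ms E_N}(\tau_n)\le t+1/m-1/j\}$ and thus lies in $\mc F^\eta_{t+1/m}$ by the previous step. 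The right-continuity of the augmented filtration then yields $\{S_{\ms E_N}(\tau)\le t\}\in \bigcap_m \mc F^\eta_{t+1/m} = \mc F^\eta_t$, as claimed.

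The main obstacle I anticipate is the careful bookkeeping between the two filtrations: one must convert events that are measurable at the \emph{random} time $S_{\ms E_N}(k\,2^{-n})$ (i.e.\ events in $\mc G_{k\,2^{-n}}$) into events measurable at the \emph{deterministic} time $t$, and this is precisely where one leans on both the identity $\mc G_r = \mc F^\eta_{S_{\ms E_N}(r)}$ and the defining property $A\cap\{\sigma\le t\}\in\mc F^\eta_t$ of the stopping-time $\sigma$-algebra. The right-continuity of $S_{\ms E_N}$ (which is needed to close the approximation) is the only analytic input beyond this, and follows from continuity of $T_{\ms E_N}$.
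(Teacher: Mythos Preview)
Your argument is correct but follows a different route from the paper's proof. You use the classical dyadic-approximation technique: discretize $\tau$ to $\tau_n$, show $S_{\ms E_N}(\tau_n)$ is a stopping time by summing over the countably many atoms of $\tau_n$ (invoking Lemma~\ref{5-p2} at each atom), and then pass to the limit via right-continuity of $S_{\ms E_N}(\cdot)$ and of the augmented filtration. This is a robust, textbook approach that would work for any right-continuous, non-decreasing family of $\{\mc F^\eta_t\}$-stopping times in place of $\{S_{\ms E_N}(r)\}_{r\ge 0}$.

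The paper instead exploits the specific duality \eqref{5-2} between $S_{\ms E_N}$ and $T_{\ms E_N}$. It first records that $\{\tau\le q\}\in\mc F^\eta_{S_{\ms E_N}(q)}$ for every $q$, then writes
\[
\{S_{\ms E_N}(\tau)<t\}=\{T_{\ms E_N}(t)>\tau\}=\bigcup_{q\in\bb Q}\{\tau\le q\}\cap\{S_{\ms E_N}(q)<t\},
\]
and shows directly that each piece lies in $\mc F^\eta_t$; the passage from ``$<$'' to ``$\le$'' is then the same right-continuity step you use. The paper's argument is shorter because the duality \eqref{5-2} replaces the approximation machinery, but it is tied to the time-change structure; your approach trades a few extra lines for generality and requires no appeal to \eqref{5-2}.
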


\begin{proof}
Fix a stopping time $\tau$ with respect to the filtration
$\{\mc G_r\}$. This means that for every $t\ge 0$,
$\{\tau\le t\} \in \mc G_t = \mc F^\eta_{S_{\ms E_N}(t)}$. Hence, for all
$r\ge 0$,
\begin{equation*}
\{\tau\le t\} \,\cap\, \{S_{\ms E_N}(t) \le r \} \,\in\,  \mc F^\eta_{r}\;.
\end{equation*}

We claim that $\{S_{\ms E_N}(\tau) <t \} \,\in\,  \mc F^\eta_{t}$. Indeed, by
\eqref{5-2}, this event is equal to $\{T_{\ms E_N} (t)> \tau \}$, which
can be written as
\begin{align*}
& \bigcup_{q\in \bb Q}\, \{\tau \le q \} \, \cap \, \{T_{\ms E_N} (t) > q \}
\;=\; \bigcup_{q\in \bb Q} \, \{\tau \le q \} \, \cap \, \{S_{\ms E_N} (q)<t \} \\
&\qquad \;=\; \bigcup_{q\in \bb Q} \, \bigcup_{n\ge 1}
\{\tau \le q \} \, \cap \, \{S_{\ms E_N} (q) \le t - (1/n)\}\;.
\end{align*}
By the penultimate displayed equation, each term belongs to $\mc
F^\eta_{t-(1/n)} \subset \mc F^\eta_t$, which proves the claim.

We may conclude. Since 
\begin{equation*}
\{S_{\ms E_N}(\tau) \le t \} \;=\; \bigcap_{q} \, \{S_{\ms E_N} (\tau) < t + q\}\;,
\end{equation*}
where the intersection is carried out over all
$q\in (0,\infty)\cap \bb Q$, and since the filtration $\{\mc F^\eta_t\}$ is
right continuous, by the previous claim, $\{S_{\ms E_N}(\tau) \le t \}
\in \mc F^\eta_t$.
\end{proof}

Recall that $\xi_N(t) = \eta_N(t\theta_N)$, and the definition of the
measure $\bb Q^N_\eta$ introduced just before Lemma
\ref{5-p2}. Expectation with respect to this measure is denoted by
$\bb Q^N_\eta$, as well. Note that $\eta^{\ms E_N}(t\theta_N) =
\xi^{\ms E_N}(t)$.

\begin{lemma}
\label{I-s1}
Suppose that for all $t>0$,
\begin{equation}
\label{I-1}
\lim_{N\to\infty} \max_{\xi\in \ms E_N} \bb Q^N_\xi \Big[  
\int_{0}^{t} \chi_{\Delta_N}(\xi_N(s))\, ds \, \Big] 
\;=\; 0\;,
\end{equation}
and that
\begin{equation}
\label{I-2}
\lim_{\delta\to 0}
\limsup_{N\to\infty} \max_{j\in S} \max_{\xi \in\ms E^N_j} 
\bb Q^N_\xi \big[\, H(\breve{\ms E}^j_N) \le \delta  \,\big]  
\;=\; 0 \;.
\end{equation}
Then, the sequence of measures $\mb P^N$ is tight. Moreover, every
limit point $\mb P$ is such that
\begin{equation*}
\mb P \big[\,  X(t) \,\not =\,  X(t-)\,\big] \;=\; 0
\end{equation*}
for every $t>0$.
\end{lemma}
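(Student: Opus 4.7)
Since $S$ is finite, I would obtain tightness via Aldous' criterion, which reduces the claim to showing that for every $T>0$,
\begin{equation*}
\lim_{\delta\downarrow 0}\limsup_{N\to\infty}\sup_{\tau\in \mathcal T^N_T}\sup_{0\le\gamma\le\delta}
\mb P^N\bigl[\,\bs X^T_N(\tau+\gamma)\neq \bs X^T_N(\tau)\,\bigr]\;=\;0,
\end{equation*}
where $\mathcal T^N_T$ is the class of $(\mc G^{N,\eta_N}_t)$-stopping times bounded by $T$. The bridge to the hypotheses, which are formulated in terms of the original speeded-up process $\xi_N(t)=\eta_N(t\theta_N)$, is Lemma \ref{I-s2}: for any such $\tau$, both $\sigma:=S_{\ms E_N}(\tau)$ and $\sigma':=S_{\ms E_N}(\tau+\gamma)$ are stopping times in the filtration $(\mc F^{\eta_N}_t)$.

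\textbf{Key decomposition.} Writing $j=\bs X^T_N(\tau)=\Psi_N(\xi_N(\sigma))$, the event $\{\bs X^T_N(\tau+\gamma)\neq j\}$ forces $\xi_N(s)\in \breve{\ms E}^j_N$ for some $s\in[\sigma,\sigma']$. The identity $T_{\ms E_N}(s)+T_{\Delta_N}(s)=s$ moreover gives $\sigma'-\sigma=\gamma+R_N$, where $R_N:=\int_\sigma^{\sigma'}\chi_{\Delta_N}(\xi_N(s))\,ds$. Applying the strong Markov property at $\sigma$ and conditioning on the value of $\xi_N(\sigma)$, for any auxiliary $\kappa>0$ I obtain
\begin{equation*}
\mb P^N\bigl[\bs X^T_N(\tau+\gamma)\neq \bs X^T_N(\tau)\bigr]
\;\le\; \bb Q^N_{\eta_N}[\,R_N>\kappa\,]
\;+\;\max_{j\in S}\max_{\xi\in\ms E^j_N}\bb Q^N_\xi\bigl[\,H(\breve{\ms E}^j_N)\le\gamma+\kappa\,\bigr].
\end{equation*}
The second term vanishes letting $N\to\infty$ and then $\kappa,\delta\downarrow 0$, by assumption \eqref{I-2}. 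For the first term I would fix $T'>T+1$ and dichotomise on $\{\sigma'\le T'\}$: on this event $R_N\le\int_0^{T'}\chi_{\Delta_N}(\xi_N(s))\,ds$, so Markov's inequality and \eqref{I-1} finish the job; on the complement $\int_0^{T'}\chi_{\Delta_N}(\xi_N(s))\,ds>T'-T-\delta$, again controlled by \eqref{I-1}.

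\textbf{Continuity at fixed $t>0$.} Applying the same decomposition to the deterministic stopping time $\tau=t$ with increments $\pm\gamma$ yields
\begin{equation*}
\lim_{\gamma\downarrow 0}\limsup_{N\to\infty}\mb P^N\bigl[\,\bs X^T_N(t-\gamma)\neq \bs X^T_N(t+\gamma)\,\bigr]\;=\;0.
\end{equation*}
Since the set of $s>0$ with $\mb P[X(s)\neq X(s-)]>0$ is at most countable, I would choose a sequence $\gamma_n\downarrow 0$ avoiding it so that the two-dimensional distributions of $\mb P^N$ at $(t-\gamma_n,t+\gamma_n)$ converge to those of $\mb P$; continuity of $\mathbf 1_{\{x\neq y\}}$ on $S\times S$ then lets me pass to the limit and conclude $\mb P[X(t-)\neq X(t)]=0$.

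\textbf{Main obstacle.} The delicate point I expect is the control of $R_N$ over the random horizon $[\sigma,\sigma']$, since assumption \eqref{I-1} is only stated on deterministic time intervals; the two-step dichotomy based on $\{\sigma'\le T'\}$ is designed precisely to reduce both pieces to deterministic-horizon statements covered by \eqref{I-1}.
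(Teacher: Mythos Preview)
Your proposal is correct and follows essentially the same route as the paper: Aldous' criterion, the stopping-time property of $S_{\ms E_N}(\tau)$ from Lemma~\ref{I-s2}, the strong Markov property at $\sigma=S_{\ms E_N}(\tau)$, and a decomposition according to whether the excess time $\sigma'-\sigma-\gamma$ spent in $\Delta_N$ is large. The only cosmetic difference is that the paper controls the event $\{\sigma'-\sigma>b\}$ (taking $b=2a_0$) by observing $\int_\sigma^{\sigma+b}\chi_{\Delta_N}\ge b-a$ and applying the strong Markov property at $\sigma$ to reduce to a fixed horizon from a fresh start in $\ms E_N$, rather than via your two-step dichotomy on $\{\sigma'\le T'\}$; both arguments land on the deterministic-horizon estimate~\eqref{I-1}.
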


\begin{proof}
Fix $\eta\in \ms E_N$.  According to Aldous' criterion \cite{Bil99}, we
have to show that for every $\delta>0$, $R>0$,
\begin{equation*}
\lim_{a_0\to 0} \limsup_{\epsilon \to 0} \, 
\sup \bb P^N_{\eta} \big[ \, \big |\, \bs X^T_N (\tau+a)  
\,-\, \bs X^T_N (\tau) \,\big | \,>\, \delta\, \big] \;=\; 0\;,
\end{equation*}
where the supremum is carried over all stopping times $\tau$ bounded
by $R$ and all $0\le a<a_0$.  Since $\bs X^T_N (t) = \Psi_N ( \xi^{\ms
  E_N} (t))$, the previous probability can be written as
\begin{equation*}
\bb Q^N_{\eta} \big[ \, \big |\, \Psi_N \big( \xi^{\ms E_N}
(\tau+a) \big)  \,-\, \Psi_N \big( \xi^{\ms E_N}
(\tau) \big) \,\big | \,>\, \delta\, \big] \;.
\end{equation*}

Since $|\Psi_N \big( \xi^{\ms E_N} (\tau+a) \big) \,-\,
\Psi_N \big( \xi^{\ms E_N} (\tau) \big)|>\delta$ entails
that $\Psi_N \big( \xi^{\ms E_N} (\tau+a) \big) \not =
\Psi_N \big( \xi^{\ms E_N} (\tau) \big)$, the expression
in the previous displayed equation is bounded by
\begin{equation*}
\bb Q^N_{\eta} \big[\,  \Psi_N \big( \xi^{\ms E_N} (\tau+a) 
\big) \not = \Psi_N \big( \xi^{\ms E_N} (\tau) \big)\, \big]\;.
\end{equation*}
Fix $b=2a_0$ so that $b-a\ge a_0$. Decompose this probability
according to the event $\{S_{\ms E_N} (\tau+a) - S_{\ms
  E_N} (\tau) > b \}$ and its complement.

Suppose that $S_{\ms E_N} (\tau+a) - S_{\ms E_N} (\tau) > b$. In this
case, $ S_{\ms E_N} (\tau) + b < S_{\ms E_N} (\tau+a)$, so that
$T_{\ms E_N}(S_{\ms E_N} (\tau) + b) \le T_{\ms E_N}(S_{\ms E_N}
(\tau+a)) = \tau+a$. Hence, as $T_{\ms E_N}(S_{\ms E_N} (t)) = t$,
$T_{\ms E_N}(S_{\ms E_N} (\tau) + b) - T_{\ms E_N}(S_{\ms E_N} (\tau))
\le a$, that is,
\begin{equation*}
\int_{S_{\ms E_N} (\tau)}^{S_{\ms E_N} (\tau) + b} 
\chi_{\ms E_N}(\xi_N(s))\, ds \;\le\; a \;. 
\end{equation*}
In other words,
\begin{equation*}
\int_{S_{\ms E_N} (\tau)}^{S_{\ms E_N} (\tau) + b }
\chi_{\Delta_N}(\xi_N (s))\,
ds \;\ge\; b - a \;.  
\end{equation*}

By Lemma \ref{I-s1}, $S_{\ms E} (\tau)$ is a stopping time for the
filtration $\{\mc F^\eta_t\}$. Hence, by the strong Markov property
and since $\xi_N(S_{\ms E_N} (t))$ belongs to $\ms E_N$ for all $t\ge
0$,
\begin{align*}
& \bb Q^N_\eta \big[ \,  S_{\ms E_N} (\tau+a) - S_{\ms E_N}
(\tau) \,> b \, \big]  \;\le\;
\bb Q^N_\eta \Big[ \, 
\int_{S_{\ms E_N} (\tau)}^{S_{\ms E_N} (\tau) + b} 
\chi_{\Delta_N}(\xi_N(s))\, ds \;\ge\; b - a \, \Big]
\\
&\quad \;\le\; \max_{\xi \in \ms E_N} \bb Q^N_\xi \Big[  
\int_{0}^b \chi_{\Delta_N}(\xi_N(s))\, ds 
\;\ge\; b - a \, \Big] \;.
\end{align*}
By Chebychev inequality, a change of variables and by our choice of
$b$, this expression is less than or equal to
\begin{equation*}
\frac 1{(b-a)} \, \max_{\xi \in \ms E_N} \bb Q^N_\xi \Big[  
\int_{0}^{b} \chi_{\Delta_N}(\xi_N(s))\, ds \, \Big] \;\le\;
\frac 1{a_0} \, \max_{\xi\in \ms E_N} \bb Q^N_\xi \Big[  
\int_{0}^{2a_0} \chi_{\Delta_N}(\xi_N(s ))\, ds \, \Big]\;.
\end{equation*}
By assumption \eqref{I-1}, this expression vanishes as $N\to\infty$
for every $a_0>0$.

We turn to the case $\{S_{\ms E_N} (\tau+a) - S_{\ms E_N}
(\tau ) \le b \}$.  On this set we have that
\begin{align*}
& \big\{\, \Psi_N(\xi_N (S_{\ms E_N} (\tau+a))) \,\not =\,
\Psi_N(\xi_N (S_{\ms E_N} (\tau ))) \, \big\} \\
& \;\subset\;
\big\{\, \Psi_N(\xi_N (S_{\ms E_N} (\tau )+c )) \,\not = \,
\Psi_N(X (S_{\ms E} (\tau ))) \text{ for some } 0\le c\le b\,\big
\}\;. 
\end{align*}
Since $S_{\ms E_N} (\tau )$ is a stopping time for the
filtration $\{\mc F_t\}$ and since $\xi_N (S_{\ms E_N} (t))$ belongs
to $\ms E_N$ for all $t$,
\begin{align*}
& \bb Q^N_\eta \Big[ \Psi_N(\xi_N (S_{\ms E_N} (\tau+a))) \not = 
\Psi(\xi_N (S_{\ms E_N} (\tau )))  \,,\,
S_{\ms E_N} (\tau + a) - S_{\ms E} (\tau ) \le b  \Big] \\
&\quad \le\;
\max_{\xi\in\ms E_N} \bb Q^N_\xi \Big[ \, \Psi_N(\xi_N (c )) \not
  = \Psi_N(\xi) \text{ for some } 0\le c\le b \Big] \;.  
\end{align*}
If $\xi\in\ms E^j_N$, this later event corresponds to the event
$\{H(\breve{\ms E}^j_N) \le b \}$. The maximum is thus bounded
by
\begin{equation*}
\max_{j\in S} \max_{\xi \in\ms E^j_N} \bb Q^N_\xi
\big[\, H(\breve{\ms E}^j_N) \le b  \,\big] \;=\;
\max_{j\in S} \sup_{\xi \in\ms E^j_N} \bb Q^N_\xi
\big[\, H(\breve{\ms E}^j_N) \le 2a_0 \,\big]  \;.
\end{equation*}
By assumption \eqref{I-2}, this expression vanishes as $N\to \infty$
and then $a_0\to 0$. This completes the proof of the tightness.

The same argument shows that for every $t>0$, 
\begin{equation*}
\lim_{a_0\to 0} \limsup_{N\to\infty} 
\mb P^N \big[ \, X(t-a) \not = X(t) \text{ for some
} 0\le a\le a_0 \big] \;=\; 0\;.
\end{equation*}
Hence, if $\mb P$ is a limit point of the sequence $\mb
P^N$,
\begin{equation*}
\lim_{a_0\to 0} \mb P \big[ \, X(t-a) \not = X(t) \text{ for some
} 0\le a\le a_0 \big] \;=\; 0\;.
\end{equation*}
This completes the proof of the second assertion of the lemma since
$\{X(t) \not = X(t-)\}\subset \{X(t-a) \not = X(t) \text{ for
  some } 0\le a\le a_0\}$ for all $a_0>0$. 
\end{proof}

Conditions \eqref{I-1}, \eqref{I-2}, can be formulated in terms of
capacities. Next results is Theorem 2.6 in \cite{bl2} and Theorem 2.1
in \cite{bl7}. Note that we do not require the process to be
reversible.

\begin{theorem}
\label{I-s4}
Assume that condition \eqref{6-2} is in force: For all $j\in S$, there
exists $\xi^{j,N} \in \ms E^j_N$ such that
\begin{equation*}
\lim_{N\to\infty} \max_{\eta\in \ms E^j_N \,,\, \eta\not = \xi^{j,N}} 
\frac{\Cap_N(\ms E^j_N, \breve{\ms E}^j_N)}{\Cap_N (\eta, \xi^{j,N})}
\;=\; 0\;.
\end{equation*}
Assume, furthermore, that the coarse-grained jump rates converge: For
all $j\not = k\in S$, there exists $\bs r(j,k) \in [0,\infty)$ such
that
\begin{equation*}
\lim_{N\to\infty} \bs r_N(j,k) \;=\; \bs r(j,k) \;.
\end{equation*}
Let $A\subset S$ be the set of absorbing points of the Markovian dynamics
induced by the rates $\bs r(j,k)$. Assume that for all $j\in A$,
$t>0$, 
\begin{equation*}
\limsup_{N\to\infty} \max_{\xi \in\ms E^N_j} 
\bb Q^N_\xi \Big[  
\int_{0}^{t} \chi_{\Delta_N}(\xi_N(s))\, ds \, \Big] 
\;=\; 0\;.
\end{equation*}
Assume that for all $k\in S\setminus A$,
\begin{equation*}
\lim_{N\to\infty} \frac{\pi_N(\Delta_N)}{\pi_N(\ms E^k_N)} \;=\; 0\;.
\end{equation*}
Then, conditions \eqref{I-1}, \eqref{I-2} hold.
\end{theorem}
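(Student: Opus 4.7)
The plan is to establish the two tightness conditions \eqref{I-1} and \eqref{I-2} separately. Condition \eqref{I-1}, after substituting $\xi_N(s) = \eta_N(s\theta_N)$, is exactly property (LP2) of Theorem \ref{o-t1}, and the hypotheses imposed here are precisely those required there: condition \eqref{6-2} appears literally, \eqref{o-2} is the assumed convergence of the coarse-grained rates, \eqref{o-3} is the time-in-$\Delta_N$ hypothesis on absorbing points, and \eqref{o-1} is the small-$\pi_N(\Delta_N)$ hypothesis on non-absorbing points. Theorem \ref{o-t1} therefore yields \eqref{I-1} immediately.

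For \eqref{I-2}, fix $j \in S$ and $\xi \in \ms E^j_N$. The strong Markov property at $\tau = H(\{\xi^{j,N}\} \cup \breve{\ms E}^j_N)$ gives the decomposition
\begin{equation*}
\bb Q^N_\xi\bigl[H(\breve{\ms E}^j_N)\le \delta\bigr] \;\le\; \bb P^N_\xi\bigl[H(\breve{\ms E}^j_N) < H(\xi^{j,N})\bigr] \;+\; \bb Q^N_{\xi^{j,N}}\bigl[H(\breve{\ms E}^j_N)\le \delta\bigr].
\end{equation*}
The first term on the right is an equilibrium potential, which, by the standard capacity bounds from Section \ref{sec10}, is dominated by a constant multiple of $\Cap_N(\ms E^j_N,\breve{\ms E}^j_N)/\Cap_N(\xi,\xi^{j,N})$; hypothesis \eqref{6-2} forces this ratio to vanish uniformly in $\xi \in \ms E^j_N \setminus \{\xi^{j,N}\}$ as $N \to \infty$.

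For the second term I pass to the trace process $\eta^{\ms E_N}(\cdot)$, using that its trace-time hitting time of $\breve{\ms E}^j_N$ is dominated by the original-time one. A mean-number-of-exits calculation under the conditional stationary measure $\pi_{\ms E^j_N}$, combined with formula \eqref{7-10} for the coarse-grained holding rate, yields
\begin{equation*}
\bb Q^N_{\pi_{\ms E^j_N}}\bigl[H(\breve{\ms E}^j_N)\le \delta\bigr] \;\le\; \delta\,\bs\lambda_N(j),
\end{equation*}
where $\bs\lambda_N(j) = \sum_{k \ne j}\bs r_N(j,k) \to \bs\lambda(j) := \sum_{k \ne j}\bs r(j,k)$, finite in all cases and zero exactly when $j \in A$. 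The final step is to replace the stationary start by the deterministic one at $\xi^{j,N}$: this is the content of the arguments in \cite{bl2, bl7}, and relies again on \eqref{6-2}, now used to compare the distribution of $\eta_N$ under $\bb P^N_{\xi^{j,N}}$ with $\pi_{\ms E^j_N}$ on a time-scale negligible compared with $\theta_N$. Combining the two contributions, sending $N\to\infty$ and then $\delta \to 0$ produces \eqref{I-2}.

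The main obstacle is exactly this transfer from the averaged measure $\pi_{\ms E^j_N}$ to the specific configuration $\xi^{j,N}$. The capacity-ratio bound for the first term is routine, and the stationary-start bound on the second term is the natural output of formula \eqref{7-10}. Producing the analogue starting from a single configuration, however, calls for a quantitative equilibration statement for the chain inside the valley $\ms E^j_N$ on a sub-$\theta_N$ time-scale, and is the technical heart of the proof.
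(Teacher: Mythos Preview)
The paper does not prove Theorem~\ref{I-s4}; it simply records it as Theorem~2.6 in \cite{bl2} and Theorem~2.1 in \cite{bl7}. Your outline is faithful to the strategy in those references: \eqref{I-1} is indeed identical to (LP2)/(T2) and is delivered by Theorem~\ref{o-t1} under the present hypotheses, while \eqref{I-2} is obtained in \cite{bl2,bl7} via exactly the decomposition you wrote, splitting according to whether $\breve{\ms E}^j_N$ or the reference point $\xi^{j,N}$ is reached first, bounding the first piece by a capacity ratio controlled through \eqref{6-2}, and handling the second piece by a stationary-start estimate of the type $\delta\,\bs\lambda_N(j)$ followed by a replacement of $\pi_{\ms E^j_N}$ by $\delta_{\xi^{j,N}}$.

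Two small corrections. First, the inequality you invoke for the first piece, namely a bound on $\bb P^N_\xi[H_{\breve{\ms E}^j_N} < H_{\xi^{j,N}}]$ by a constant times $\Cap_N(\ms E^j_N,\breve{\ms E}^j_N)/\Cap_N(\xi,\xi^{j,N})$, is \emph{not} contained in Section~\ref{sec10} of this survey; it is a lemma in \cite{bl2} (reversible case) and \cite{bl7} (general case), so cite those directly. Second, your stationary-start computation is cleanest if you start from $\pi_{\ms E}$ rather than $\pi_{\ms E^j_N}$: under $\pi_{\ms E}$ the expected number of jumps from $\ms E^j_N$ to $\breve{\ms E}^j_N$ in time $\delta\theta_N$ equals $\delta\,\pi_{\ms E}(\ms E^j_N)\,\bs\lambda_N(j)$ by \eqref{7-2}, and the density $d\pi_{\ms E^j_N}/d\pi_{\ms E}$ is bounded by $1/\pi_{\ms E}(\ms E^j_N)$, which gives your bound. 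The transfer from $\pi_{\ms E^j_N}$ to $\xi^{j,N}$ that you flag as the technical heart is exactly the content of the replacement lemmas in \cite{bl2,bl7} (the same machinery underlying Theorem~\ref{l6-1}), and your identification of this as the nontrivial step is correct.
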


This result, which guarantees tightness, together with Theorems
\ref{3-t1}, \ref{l6-1} and Remark \ref{rm6-2}, which provide
uniqueness, yield the convergence of the sequence $\bs X^T_N$.

\begin{theorem}
\label{I-s3}
Fix $k\in S$, a sequence $\eta_N\in \ms E^k_N$, and denote by
$\mb P^N$ the probability measure on $D([0,\infty), S)$ induced by the
process $\bs X^T_N(t)$ and the measure $\bb P^N_{\eta_N}$.  Assume the
hypotheses of Theorem \ref{I-s4}. Then, the sequence $\mb P^N$
converges to the solution of the $(L,\delta_k)$ martingale problem,
where $L$ is the generator of the $S$-valued Markov chain whose jump
rates are $\bs r(j,k)$.
\end{theorem}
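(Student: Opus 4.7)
The plan is to assemble pieces that are already in place in the paper. First, I would invoke Theorem \ref{I-s4} under the stated hypotheses to conclude that conditions \eqref{I-1} and \eqref{I-2} hold. Lemma \ref{I-s1} then yields simultaneously the tightness of $\mb P^N$ in $D([0,\infty),S)$ and the fact that every limit point $\mb P$ satisfies
\begin{equation*}
\mb P\big[\, X(t-) \,=\, X(t)\,\big] \;=\; 1 \quad\text{for all } t>0\;.
\end{equation*}
This continuity property of the one-dimensional projections is precisely what is required to enter Theorem \ref{3-t1}.

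Next, I would verify the two hypotheses (P1) and (P2) of Theorem \ref{3-t1}. Condition (P2), i.e.\ the convergence \eqref{3-3} of the coarse-grained jump rates $\bs r_N(j,\ell) \to \bs r(j,\ell)$, is one of the assumptions of Theorem \ref{I-s4}. For the local ergodicity (P1), I would apply Theorem \ref{l6-1} with $f_N = R^{(\ell)}_N$ and $g_N = F\circ \Psi_N$ for an arbitrary $F: S \to \bb R$. Since $S$ is finite, $g_N$ is uniformly bounded and is, by construction, constant on each valley $\ms E^j_N$, so hypothesis (b) of Theorem \ref{l6-1} holds. By Remark \ref{rm6-2}, hypothesis (a) of Theorem \ref{l6-1} for this choice of $f_N$ reduces exactly to condition \eqref{6-2}, which is also assumed in Theorem \ref{I-s4}. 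Hence Theorem \ref{l6-1} delivers (P1), taking $r^{(\ell)}_N = \widehat R^{(\ell)}_N$.

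With (P1) and (P2) in hand, Theorem \ref{3-t1} asserts that every limit point $\mb P$ of $\mb P^N$ solves the martingale problem $(L,\delta_k)$, where $L$ is the generator of the $S$-valued chain with rates $\bs r(j,\ell)$. By Theorem \ref{3-l1}, this martingale problem admits a unique solution. Therefore the family of limit points of the tight sequence $\mb P^N$ is a singleton, which forces convergence of the whole sequence to this unique solution, as claimed.

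I do not anticipate a genuine obstacle, since the theorem is essentially a synthesis: tightness comes from Theorem \ref{I-s4} via Lemma \ref{I-s1}, characterization of limit points comes from Theorems \ref{3-t1} and \ref{l6-1} (together with Remark \ref{rm6-2}), and uniqueness comes from Theorem \ref{3-l1}. The only point requiring a little care is checking that the continuity assumption $\mb P[X(t-)=X(t)]=1$ extracted in Lemma \ref{I-s1} matches the hypothesis needed to invoke Theorem \ref{3-t1}, which it does verbatim.
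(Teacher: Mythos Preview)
Your proposal is correct and follows essentially the same route as the paper: the sentence preceding Theorem \ref{I-s3} explicitly indicates that the result is obtained by combining Theorem \ref{I-s4} and Lemma \ref{I-s1} for tightness with Theorems \ref{3-t1}, \ref{l6-1} and Remark \ref{rm6-2} for uniqueness of limit points, exactly as you outline.
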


\section{The last passage}
\label{sec8}

We prove in this section that the last passage process, introduced in
Definition \ref{l1-1}, converges if conditions (T1), (T2) hold.  In
order to prove this statement, we first define a metric in the path
space $D([0,\infty),S\cup \{{\mf d}\})$ which induces the Skorohod
topology. Assume that $0\not\in S$ and identify the point $\mf d$ with
$0\in \bb Z$ so that $S\cup \{{\mf d}\}$ is a metric space with the
metric induced by $\bb Z$.

For each integer $m\ge 1$, let $\Lambda_m$ denote the class of
strictly increasing, continuous mappings of $[0,m]$ onto itself. If
${\lambda} \in \Lambda_m$, then $\lambda_0=0$ and $\lambda_m=m$. In
addition, consider the function
\begin{equation*}
g_m(t)\;=\;
\left\{
\begin{array}{ll}
1 & \textrm{if \;$t\le m-1$}\;,\\
m-t & \textrm{if \;$m-1 \le t\le m$\;,}\\
0 & \textrm{if \;$t\ge m$}\;.
\end{array}
\right.
\end{equation*}

For any integer $m\ge 1$ and $\omega, \omega' \in D([0,\infty) ,
S\cup\{\mf d\})$, define $d_m(\omega,\omega')$ to be the infimum of
those positive $\epsilon$ for which there exists $\lambda\in \Lambda_m$
satisfying
\begin{equation*}
\sup_{t\in [0,m]} |\lambda_t-t| \;<\; \epsilon \quad\text{and}\quad
\sup_{t\in [0,m]}|\, g_m(\lambda_t) \,\omega(\lambda_t) \,-\, g_m(t) 
\, \omega'(t) \,| \;<\; \epsilon\;.
\end{equation*}
Define the metric $d$ in $D([0,\infty),S\cup \{\mf d\})$ by
\begin{equation*}
d( \omega , \omega') \;=\; \sum_{m=1}^{\infty} 
\frac 1{2^{m}}\, \big\{\, 1\land d_m( \omega , \omega')\, \big\}\;.
\end{equation*}
This metric induces the Skorohod topology in the path space
$D([0,\infty) ,S \cup \{\mf d\})$ \cite{Bil99}. Next result is
Proposition 4.4 in \cite{bl2}.

Recall from \eqref{A-2} the definition of $X^V_N(t)$ and let $\bs
X^V_N(t) = X^V_N(t \theta_N)$. Recall from assumption (T1) the
definition of $\bs X^T_N(t)$

\begin{theorem}
\label{8-t1}
Suppose that $(\eta_N(t) :t\ge 0)$, $N\ge 1$, satisfies condition {\rm
  (T2)}. Then, for any sequence $(\eta_N : N\ge 1)$,  $\eta_N \in \ms E_N$,
\begin{equation*}
\lim_{N\to\infty} {\bb E}_{\eta_N}\big[ \,d(\bs X^V_N , \bs X^T_N)\, \big] \;=\; 0\;.
\end{equation*}
\end{theorem}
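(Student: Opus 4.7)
The plan is to exploit the observation that $\bs X^V_N$ and $\bs X^T_N$ are \emph{the same piecewise-constant $S$-valued path, merely reparametrized in time}: both step through the identical sequence of valleys visited by $\eta_N$, with $\bs X^V_N$ including the $\Delta_N$-excursions in its time axis while $\bs X^T_N$ compresses them out via the trace construction. Formally, let $0 < J_1 < J_2 < \cdots$ be the jump times of $\bs X^V_N$, so that $J_i\theta_N$ is the moment $\eta_N$ first enters its $i$th new valley $v_i\in S$. Then $\bs X^T_N$ jumps to the same value $v_i$ at time $J'_i := T_{\ms E_N}(J_i\theta_N)/\theta_N$, because the trace process occupies valley $v_{i-1}$ on the trace-time interval $[J'_{i-1},J'_i)$ while $\bs X^V_N$ takes value $v_{i-1}$ on $[J_{i-1},J_i)$.

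For each fixed $m\ge 1$, I would exhibit an explicit time change $\lambda\in\Lambda_m$ realizing this correspondence. Let $K_m$ be the number of jumps of $\bs X^V_N$ in $[0,m]$, and define $\lambda\colon [0,m]\to[0,m]$ to be continuous and piecewise linear, with $\lambda(0)=0$, $\lambda(J_i)=J'_i$ for $1\le i\le K_m$, and $\lambda(m)=m$. Since the chain a.s.\ spends strictly positive time in each visited valley, $0<J'_1<\cdots<J'_{K_m}\le m$, so $\lambda$ is strictly increasing and belongs to $\Lambda_m$. Writing $\beta_m := \int_0^m\chi_{\Delta_N}(\eta_N(s\theta_N))\,ds$, the elementary identity
\begin{equation*}
J_i - J'_i \;=\; \frac{1}{\theta_N}\int_0^{J_i\theta_N}\chi_{\Delta_N}(\eta_N(s))\,ds \;\le\; \beta_m
\end{equation*}
combined with a piecewise-linear interpolation argument yields $\sup_{t\in[0,m]}|\lambda_t-t|\le \beta_m$.

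Next I would check the second defining inequality of $d_m$. On each $[J_{i-1},J_i]$ with $i\le K_m$, $\bs X^V_N\equiv v_{i-1}$ and $\lambda$ maps the interval into $[J'_{i-1},J'_i]$, on which $\bs X^T_N\equiv v_{i-1}$ as well; hence $\bs X^T_N(\lambda_t)=\bs X^V_N(t)$ throughout $[0,J_{K_m}]$, and the relevant discrepancy reduces to $|g_m(\lambda_t)-g_m(t)|\,|\bs X^V_N(t)|\le \mf n\,\beta_m$ because $g_m$ is $1$-Lipschitz. On the tail interval $[J_{K_m},m]$, $\bs X^V_N$ remains constant but $\bs X^T_N$ may contain at most one further jump at some $J'_{K_m+1}\in(J'_{K_m},m]$; any such point satisfies $J'_{K_m+1}\ge T_{\ms E_N}(m\theta_N)/\theta_N = m-\beta_m$, so both $g_m(t)$ and $g_m(\lambda_t)$ are dominated by $\beta_m$ in this window and their contribution to the supremum is at most $2\mf n\,\beta_m$. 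Combining the two regions yields $d_m(\bs X^V_N,\bs X^T_N)\le C\,\mf n\,\beta_m$ for a universal constant $C$.

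Summing with the weights $2^{-m}$,
\begin{equation*}
d(\bs X^V_N,\bs X^T_N)\;\le\;\sum_{m\ge 1}2^{-m}\bigl(1\wedge C\mf n\,\beta_m\bigr),
\end{equation*}
and taking expectations, hypothesis (T2) gives $\bb E^N_{\eta_N}[\beta_m]\to 0$ for each $m$; dominated convergence (the summands are bounded by $2^{-m}$) then concludes the proof. The main technical delicacy is the tail region $[J_{K_m},m]$, where the two step functions can genuinely disagree after time change, but this is exactly what the damping factor $g_m$ is designed to absorb: its linear decay near $m$ matches the $O(\beta_m)$ size of the mismatching window, so no finer control of the jump structure beyond time $m$ is needed.
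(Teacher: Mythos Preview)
Your argument is essentially correct and follows the natural direct route; the paper itself does not prove this result but refers to Proposition~4.4 of \cite{bl2}, where the argument proceeds along the same lines (an explicit time change matching the common sequence of valley labels, with the discrepancy controlled by the $\Delta_N$-occupation time).

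One point needs correction. The assertion that $\bs X^T_N$ has \emph{at most one} further jump in $(J'_{K_m},m]$ is not true in general: after real time $m\theta_N$ the chain $\eta_N$ may perform several valley transitions whose trace-times $J'_{K_m+1}<J'_{K_m+2}<\cdots$ all land in $(J'_{K_m},m]$. Your own estimate, however, applies to every one of them: since $J_{K_m+i}>m$ for each $i\ge 1$,
\[
J'_{K_m+i}\;=\;\frac{T_{\ms E_N}(J_{K_m+i}\theta_N)}{\theta_N}\;\ge\;\frac{T_{\ms E_N}(m\theta_N)}{\theta_N}\;=\;m-\beta_m\;.
\]
Hence whenever $\bs X^T_N(\lambda_t)\neq\bs X^V_N(t)$ on the tail interval one necessarily has $\lambda_t\ge m-\beta_m$, so (for $\beta_m\le 1$) $g_m(\lambda_t)\le\beta_m$ and $g_m(t)\le g_m(\lambda_t)+|\lambda_t-t|\le 2\beta_m$, giving the crude bound $g_m(\lambda_t)|\bs X^T_N(\lambda_t)|+g_m(t)|\bs X^V_N(t)|\le 3\mf n\beta_m$. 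With this adjustment the remainder of your argument is unaffected; when $\beta_m>1$ the truncation $1\wedge d_m$ already suffices, as you observe.
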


It follows from this result that the last-passage process $\bs
X^V_N(t)$ converges whenever the trace process $\bs X^T_N(t)$
converges and (T2) is in force.

\section{The finite-dimensional distributions}
\label{sec9}

Recall the definition of the process $X_N(t)$ defined in \eqref{A-1},
and the one of the reduced model $\bs X(t)$ introduced in Definition
\ref{l1-1}.  Next result is Proposition 1.1 of \cite{llm}.

\begin{theorem}
\label{9T1}
Assume that conditions (T1) and (T2) of Definition \ref{l1-2} are in
force, and that
\begin{equation}
\label{9-1}
\lim_{\delta\to 0} \limsup_{N\to 0} \sup_{\eta \in \ms E_N} 
\sup_{\delta\le s\le 2\delta} \bb P^N_{\eta} 
\big[\, \eta(s\theta_N) \in \Delta_N \,\big]\;=\;0\;. 
\end{equation}
Then, the finite-dimensional distributions of $\bs X_N (t) =
X_N(t\theta_N)$ converge to the finite-dimensional distributions of
$\bs X(t)$.
\end{theorem}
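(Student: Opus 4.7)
The plan is to prove that for every $0<t_1<\cdots<t_k$, the joint distribution of $(\bs X_N(t_1),\dots,\bs X_N(t_k))$ converges to that of $(\bs X(t_1),\dots,\bs X(t_k))$, by relating each $\bs X_N(t_i)=\Phi_N(\eta_N(t_i\theta_N))$ to the value $\bs X^T_N(u^N_i)$ of the trace process at a random time $u^N_i$ close to $t_i$, and then invoking the Skorohod convergence supplied by (T1). The first and key step, which is the sole substantive use of hypothesis \eqref{9-1}, is to upgrade the time-integrated control of (T2) to a pointwise-in-time control: for each fixed $t>0$,
\begin{equation*}
\lim_{N\to\infty}\bb P^N_{\eta_N}\big[\,\eta_N(t\theta_N)\in\Delta_N\,\big]\;=\;0\;.
\end{equation*}
To obtain this I apply the Markov property at time $(t-s)\theta_N$ for $s\in[\delta,2\delta]$ with $\delta<t/2$, splitting according to whether $\eta_N((t-s)\theta_N)$ lies in $\Delta_N$ or in $\ms E_N$:
\begin{equation*}
\bb P^N_{\eta_N}[\eta_N(t\theta_N)\in\Delta_N]\;\le\;\bb P^N_{\eta_N}[\eta_N((t-s)\theta_N)\in\Delta_N]\;+\;\sup_{\xi\in\ms E_N}\bb P^N_\xi[\eta_N(s\theta_N)\in\Delta_N]\;.
\end{equation*}
Integrating in $s$ over $[\delta,2\delta]$ and dividing by $\delta$, the first term is bounded by $\delta^{-1}\bb E^N_{\eta_N}[\int_0^t\chi_{\Delta_N}(\eta_N(r\theta_N))\,dr]$, which vanishes as $N\to\infty$ for each fixed $\delta$ by (T2); letting then $\delta\to 0$, the second term vanishes by \eqref{9-1}.

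Equipped with this, I introduce the random times
\begin{equation*}
u^N_i\;:=\;\theta_N^{-1}\, T_{\ms E_N}(t_i\theta_N)\;=\;\int_0^{t_i}\chi_{\ms E_N}(\eta_N(r\theta_N))\,dr\;,
\end{equation*}
which by (T2) satisfy $u^N_i\to t_i$ in $L^1$, hence in probability. On the event $\{\eta_N(t_i\theta_N)\in\ms E_N\}$---whose probability tends to $1$ by the first step---the almost surely strictly positive exponential holding time of the chain at $\eta_N(t_i\theta_N)$ forces $T_{\ms E_N}$ to be strictly increasing in a right neighborhood of $t_i\theta_N$, so $S_{\ms E_N}(T_{\ms E_N}(t_i\theta_N))=t_i\theta_N$ and consequently
\begin{equation*}
\bs X_N(t_i)\;=\;\Phi_N(\eta_N(t_i\theta_N))\;=\;\Psi_N(\eta^{\ms E_N}(u^N_i\theta_N))\;=\;\bs X^T_N(u^N_i)\;.
\end{equation*}

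By (T1), $\bs X^T_N$ converges in distribution to $\bs X$ in the Skorohod topology, while $u^N_i$ converges in probability to the deterministic constant $t_i$ for each $i$; a Slutsky-type argument then yields joint convergence of $(\bs X^T_N,u^N_1,\dots,u^N_k)$ in distribution to $(\bs X,t_1,\dots,t_k)$ on $D([0,\infty),S)\times[0,\infty)^k$. Because each $t_i>0$ is fixed and a continuous-time Markov chain has, almost surely, no discontinuity at any given deterministic time, the evaluation map $(\omega,s_1,\dots,s_k)\mapsto(\omega(s_1),\dots,\omega(s_k))$ is almost surely continuous at the limit, and the continuous mapping theorem delivers convergence in distribution of $(\bs X^T_N(u^N_1),\dots,\bs X^T_N(u^N_k))$ to $(\bs X(t_1),\dots,\bs X(t_k))$. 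Combined with the pointwise identification of the preceding paragraph, valid off an event of vanishing probability, this establishes the convergence of finite-dimensional distributions. The principal difficulty is the first step: hypothesis \eqref{9-1} is exactly the missing uniform short-time bound that, through the Markov-property splitting, converts the $L^1$-in-time estimate (T2) into a pointwise statement; once this is achieved, the remaining time-change identification and the passage to the limit are largely routine consequences of Skorohod convergence at deterministic continuity points.
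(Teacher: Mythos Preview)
Your proof is correct. The paper does not supply its own proof of this statement---it merely cites Proposition~1.1 of \cite{llm}---so there is no in-paper argument to compare against; that said, your three-step scheme (upgrading (T2) to a pointwise bound via the Markov property and \eqref{9-1}, identifying $\bs X_N(t_i)$ with $\bs X^T_N(u^N_i)$ on the high-probability event $\{\eta_N(t_i\theta_N)\in\ms E_N\}$, and then invoking Skorohod convergence at deterministic continuity points through Slutsky and the continuous mapping theorem) is exactly the natural route and is in line with the argument in the cited reference.
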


With further mixing conditions one can prove that the state of the
process at time $t\theta_N$ is a time-dependent convex combinations of
states supported in the valleys. 

Denote by $p_t(j,k)$ the transition probabilities of the reduced model
$\bs X(t)$, by $\pi^k_N$ the measure $\pi_N$ conditioned to $\ms
E^k_N$, and by $\Vert \mu - \nu \Vert_{\rm TV}$ the total variation
distance between two probability measures $\mu$ and $\nu$ defined on
$E_N$. Let $(S_N(t): t\ge 0)$ be the semigroup associated to the
Markov chain $\eta_N(t)$. Then, under mixing conditions specified in
\cite{llm}, for every $j\in S$ and sequence $\eta_N\in\ms E^N_j$,
\begin{equation*}
\lim_{N\to\infty} \big\| \delta_{\eta_N} \, S_N(t\theta_N)
\,-\, \sum_{k\in S} p_t(j,k)  \,\pi^k_N \, \big\|_{\rm TV}\;=\;0\;,
\end{equation*}
where $\delta_{\eta}$, $\eta\in E_N$, stands for the Dirac measure
concentrated on the configuration $\eta$.

\section{Markov chains}
\label{sec13}

We briefly present in this section some results on Markov chains used
in the article.  Fix a
finite set $\color{blue} E$. Consider a continuous-time, $E$-valued,
Markov chain $\color{blue} ( \eta(t) : t \geq 0 )$. Assume that the
chain $\eta(t)$ is irreducible and denote by $\color{blue} \pi$ the
unique stationary state.

Elements of $E$ are represented by the letters $\eta$, $\xi$. Let
$\color{blue} \bb P_\eta$, $\eta\in E$, be the probability measure on
$D([0,\infty), E)$ induced by the Markov chain $\eta(t)$ starting from
$\eta$. Recall from \eqref{1-5} the definition of the hitting time and
the return time to a set.

Denote by $\color{blue} R(\eta,\xi)$, $\eta\not = \xi\in E$, the jump
rates of the Markov chain $\eta(t)$, and let
${\color{blue}\lambda (\eta)} = \sum_{\xi\in E} R (\eta,\xi)$ be the
holding rates. Denote by $\color{blue} p (\eta,\xi)$ the jump
probabilities, so that
$R (\eta,\xi) = \lambda (\eta) \, p (\eta,\xi)$. The stationary state
of the embedded discrete-time Markov chain is given by
$\color{blue} M (\eta) = \pi (\eta) \, \lambda (\eta)$.

Denote by $L$ the generator of the Markov chain
$\eta(t)$,
\begin{equation*}
{\color{blue} (Lf)(\eta)} \;=\; \sum_{\xi\in E} R(\eta,\xi)\, 
[\, f(\xi) - f(\eta)\,]\;.
\end{equation*}
Let $L^2(\pi)$ be the set of square-summable functions $f:E \to \bb R$
endowed with the scalar product $\<\,\cdot\,,\,\cdot\, \>_{\pi}$ given
by
\begin{equation*}
{\color{blue} \<\, f \,,\, g\, \>_{\pi}} 
\;:=\; \sum_{\eta\in E_N} f(\eta)\, g(\eta)\, \pi(\eta)\;,
\quad {\color{blue} \Vert\, f \,\Vert^2} 
\;=\; \<\, f \,,\, f\, \>_{\pi} \;.
\end{equation*}
Denote by $L^*$ the adjoint of the operator $L$ in $L^2(\pi)$: For all
functions $f$, $g:E \to \bb R$,
\begin{equation}
\label{M-2}
\<\, L^*\, f \,,\, g\, \>_{\pi} \;=\; \<\, f \,,\, L\, g\, \>_{\pi} \;.
\end{equation}
An elementary computation yields that
\begin{equation*}
{\color{blue} (L^*f)(\eta)} \;=\; \sum_{\xi\in E} R^*(\eta,\xi)\, 
[\, f(\xi) - f(\eta)\,]\;,
\end{equation*}
where the jump rates $R^*(\eta,\xi)$ satisfy
\begin{equation*}
\pi(\eta)\,  {\color{blue} R^*(\eta,\xi)} 
\;=\; \pi(\xi)\,  R(\xi, \eta)\;, \quad \eta \,\not = \, \xi\,\in\, E\; .
\end{equation*}

The chain is said to be \emph{\color{blue} reversible} if the
generator $L$ is self-adjoint: $L^*=L$. It is reversible if and only
if the jump rates satisfy the \emph{\color{blue} detailed balance
  conditions}:
\begin{equation}
\label{M-10}
\pi(\eta)\,  R (\eta,\xi)
\;=\; \pi(\xi)\,  R(\xi, \eta)\;, \quad \eta \,\not = \, \xi\,\in\, E\; .
\end{equation}

The operator $L^*$ corresponds to the generator of a Markov chain,
represented by $\color{blue} \eta^*(t)$, and called the adjoint or
time-reversed process. The holding rates
${\color{blue} \lambda^*(\eta)} = \sum_{\xi\in E} R^*(\eta,\xi)$ of
this chain coincide with the original ones,
$\lambda^*(\eta) = \lambda(\eta)$, and the jump probabilities
$p^*(\eta\,\xi)$ satisfy the balance conditions
\begin{equation}
\label{J-4}
M(\eta)\, {\color{blue} p^*(\eta,\xi) }\;=\; M(\xi)\, p(\xi, \eta)
\;, \quad \eta \,\not = \, \xi\,\in\, E \;.
\end{equation}

Let $\color{blue} L^s$ be the symmetric part of the generator $L$:
\begin{equation}
\label{M-9}
L^s \;=\; \frac 12\, \{\, L \,+\, L^*\,\} \;.
\end{equation}
The operator $L^s$ is self-adjoint in $L^2(\pi)$ and it corresponds to
the generator of the Markov chain whose jump rates, denoted by
$R^s(\eta,\xi)$, are given by ${\color{blue} R^s(\eta,\xi)} = (1/2)
\{R(\eta,\xi) + R^*(\eta,\xi)\}$. A simple computation shows that
these rates satisfy the detailed balance conditions \eqref{M-10}.

Denote by $D(f)$ the Dirichlet form of a function $f:E \to
\bb R$:
\begin{equation}
\label{M-8}
{\color{blue}  D(f)} \;:=\; \<\, (-\, L) \, f \,,\, f \,\>_\pi
\;=\; \<\, (-\, L^s) \, f \,,\, f \,\>_\pi\;. 
\end{equation}
We leave to the reader the assignment of checking the last equality.
An elementary computation shows that
\begin{equation}
\label{M-5}
D(f) \;=\; \frac 12\, \sum_{\eta\in E} \sum_{\xi\in E}
\pi(\eta)\,  R(\eta,\xi)\, [\, f(\xi) - f(\eta)\,]^2 \;.
\end{equation}
This formula holds even in the non-reversible case.  In the sum, each
unordered pair $\{\eta, \xi\} \subset E$, $\xi\not = \eta$, appears
twice.

Denote by $\color{blue} (S(t) : t\ge 0)$, the semigroup associated to
the generator $L$, so that $(d/dt) S(t) = L\, S(t) = S(t)\, L$. Fix a
probability measure $\nu$ on $E$ and let $f_t$ be the Radon-Nikodym
derivative of $\nu S(t)$ with respect to $\pi$. We claim that
\begin{equation}
\label{M-1}
\frac d{dt} f_t \;=\; L^* f_t\;.
\end{equation}
Indeed, fix a function $g:E \to \bb R$ and consider the mean $\bb
E_{\nu}[g(\eta(t))]$, where $\color{blue} \bb E_{\nu}$ represents the
expectation with respect to the measure $\color{blue} \bb P_\nu =
\sum_{\eta\in E} \nu(\eta) \, \bb P_\eta$. This expectation can be
written as
\begin{equation*}
\sum_{\eta\in E} \nu(\eta) \, [S(t) \,g] (\eta) \;=\;
\sum_{\eta\in E} [\nu\, S(t)](\eta) \, g (\eta) \;=\;
\sum_{\eta\in E} \pi(\eta) \, f_t (\eta)\, g (\eta) \;=\;
\<\, f_t \,,\, g\, \>_\pi\;.
\end{equation*}
As $(d/dt) S(t)g = S(t)\, L\, g$, taking derivative on both sides of
this identity we get that
\begin{equation*}
\sum_{\eta\in E} \nu(\eta) \, [S(t) \, L\, g] (\eta) \;=\;
\<\, \frac {d}{dt}\, f_t \,,\, g\,\>_\pi\;.
\end{equation*}
The left-hand side can be written as $\<\, f_t \,,\, L\, g\, \>_\pi \;=\;
\<\, L^* f_t \,,\, g \,\>_\pi$. Hence, for all functions $g$, $\<\,
(d/dt) f_t \,,\, g \,\>_\pi \;=\; \<\, L^* f_t \,,\, g\,\>_\pi$, which
proves claim \eqref{M-1}.

By \eqref{M-1} and \eqref{M-2},
\begin{equation*}
\frac {d}{dt}\, \<\, f_t \,,\, f_t \,\>_\pi \;=\; 
2\, \<\, L \, f_t \,,\, f_t \,\>_\pi 
\;=\; -\, 2\, D(f_t) \;\le\; 0\;.
\end{equation*}
The inequality follows from the positiveness of the Dirichlet form
derived in \eqref{M-5}. Integrating in time yields that
\begin{equation*}
\Vert\, f_t \,\Vert^2 \;+\; 
2\, \int_0^t  D(f_s)\, ds  \;\le\; \Vert\, f_0 \,\Vert^2 \;.
\end{equation*}
In particular, for all $0\le s\le t$,
\begin{equation}
\label{M-6}
\<\, f_t \,,\, f_t \,\>_\pi \;\le\; \<\, f_s \,,\, f_s \,\>_\pi \;.
\end{equation}

The \emph{\color{blue} spectral gap} of the generator, denoted by $\mf
g$, is the value of the smallest positive eigenvalue of the symmetric
part of the generator:
\begin{equation*}
\mf g \;=\; \inf_{f} \frac{\<\, (-\, L) \, f \,,\, f \,\>_\pi}
{\<\, \, f \,,\, f \,\>_\pi} \;,
\end{equation*}
where the infimum is carried over all functions $f:E\to \bb R$ which
are orthogonal to the constants, i.e., which have zero-mean with
respect to $\pi$: $E_\pi[f] = \<\, \, f \,,\, 1 \,\>_\pi =0$.

\subsection{Reflected chain}

Fix a non-empty, proper subset $F$ of $E$.  Denote by $(\eta^{R,F} (t)
: t\ge 0)$, the Markov chain $\eta (t)$ reflected at $F$. This is the
$F$-valued process obtained from $\eta(t)$ by forbidding all jumps
between $F$ and $E\setminus F$. The generator $L_{R, F}$ of this
Markov process is given by
\begin{equation*}
{\color{blue} (L_{R,F} f)\, (\eta)} \,=\, \sum_{\xi\in  F} R(\eta,\xi)
\, \big\{f(\xi)-f(\eta)\big\}\;, \quad \eta\in F\;.
\end{equation*}

Assume that the reflected process $\eta^{R,F} (t)$ is irreducible. It
is easy to show that the conditioned probability measure $\pi_{F}$
defined by
\begin{equation}
\label{M-3}
\pi_F (\eta) \;=\; \frac{\pi(\eta)}{\pi(F)}\;,
\quad \eta\in F \;, 
\end{equation}
satisfies the detailed balance conditions \eqref{M-10} for the
reflected process if the chain is reversible.

In general, $\pi_{F}$ may not be invariant. Consider, for example, an
asymmetric random walk on the circle. The uniform measure is
invariant, but its restriction to an interval $I$ is not invariant for
the process reflected at $I$.  For cycle generators, however, it is
possible to reflect the chain preserving the stationary state.

\subsection{Cycle generators}
\label{12.2}

The results of this subsection are taken from Section 4 of
\cite{lx15}. We refer to \cite{ls2018} for an application.

\smallskip\noindent\emph{\color{blue} Cycle}: A cycle is a sequence of
distinct configurations $(\eta_0, \eta_1, \dots, \eta_{n-1},
\eta_n=\eta_0)$ whose initial and final configuration coincide:
$\eta_i \not = \eta_j\in E$, $i\not = j\in \{0, \dots, n-1\}$. The
number $n$ is called the length of the cycle.

\smallskip\noindent\emph{\color{blue} Cycle generator}: A generator
$L$ is said to be a cycle generator associated to the cycle $\mf c =
(\eta_0, \eta_1, \dots, \eta_{n-1}, \eta_n=\eta_0)$ if there exists
reals $r_i>0$, $0\le i<n$, such that
\begin{equation*}
R(\eta,\xi) \;=\; 
\begin{cases}
r_i & \text{if $\eta=\eta_i$ and $\xi=\eta_{i+1}$ for some $0\le i
  <n$}\;, \\
0 & \text{otherwise}\;.
\end{cases}
\end{equation*}
We denote this cycle generator by $\mc L_{\mf c, {\bs r}}$, where $\bs
r = (r_0, \dots, r_{n-1})$. Most of the time we omit the dependence on
$\bs r$ and write $\mc L_{\mf c, {\bs r}}$ simply as $\mc L_{\mf
  c}$. Note that
\begin{equation*}
{\color{blue} (\mc L_{\mf c, {\bs r}}  f) \, (\eta) \;=\;
(\mc L_{\mf c} f)\, (\eta)} \;=\; \sum_{i=0}^{n-1} \chi_{\{\eta_i\}}
(\eta) \, r_i \, [f(\eta_{i+1}) - f(\eta_i)]\;,
\end{equation*}
and that the chain is irreducible only if $\{\eta_0, \eta_1, \dots,
\eta_{n-1}\}=E$.

Consider a cycle $\mf c = (\eta_0, \eta_1, \dots, \eta_{n-1},
\eta_n=\eta_0)$ of length $n\ge 2$ and let $\mc L_{\mf c}$ be a cycle
generator associated to $\mf c$. Denote the jump rates of $\mc L_{\mf
  c}$ by $R(\eta_i,\eta_{i+1})$. A measure $\pi$ is stationary for
$\mc L_{\mf c}$ if and only if
\begin{equation}
\label{M-7}
\pi(\eta_i) \, R(\eta_i,\eta_{i+1}) \;\; \text{is constant}\;.  
\end{equation}

\smallskip\noindent\emph{\color{blue} Sector condition}: 
Next lemma asserts that every cycle generator satisfies a sector
condition.  The proof of this result can be found in \cite[Lemma
5.5.8]{klo12}.

\begin{lemma}
\label{M-s1}
Let $\mc L_{\mf c}$ be a cycle generator associated to a cycle $\mf c$ of length
$n$. Then, $\mc L_{\mf c}$ satisfies a sector condition with constant $2n$: For all
$f$, $g:E\to \bb R$,
\begin{equation*}
\<\, \mc L_{\mf c} \, f\, ,\, g \,\>^2_\pi \;\le\; 2n \, 
\<\, (-\, \mc L_{\mf c} \, f) \,,\, f \,\>_\pi\, 
\<\, (-\,\mc L_{\mf c} \, g) \,,\, g \,\>_\pi\;. 
\end{equation*}
\end{lemma}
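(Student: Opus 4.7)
The plan is to reduce the claim to a combinatorial inequality on the cycle and then close it with a Poincar\'e estimate on $\bb Z/n\bb Z$. Set $c := \pi(\eta_i)\, r_i$, which is constant in $i$ by \eqref{M-7}, and write $a_i := f(\eta_{i+1})-f(\eta_i)$, $b_i := g(\eta_{i+1})-g(\eta_i)$, with indices read modulo $n$. A direct computation, using the cyclic telescoping identity $2\sum_i(f_{i+1}-f_i)f_i = -\sum_i(f_{i+1}-f_i)^2$, gives
\begin{equation*}
\<\mc L_{\mf c} f,g\>_\pi \;=\; c\sum_i a_i\, g(\eta_i),
\qquad
\<-\mc L_{\mf c} f,f\>_\pi \;=\; \frac{c}{2}\sum_i a_i^2 ,
\end{equation*}
so the desired inequality reduces to the purely combinatorial statement
\begin{equation*}
\Big(\sum_i a_i\, g(\eta_i)\Big)^{\!2} \;\le\; \frac{n}{2}\,\Big(\sum_i a_i^2\Big)\Big(\sum_i b_i^2\Big).
\end{equation*}

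Next, since $\sum_i a_i = 0$ on a cycle, I may replace $g(\eta_i)$ by $g(\eta_i)-\bar g$, where $\bar g := n^{-1}\sum_i g(\eta_i)$, without changing the left-hand side. Cyclic summation by parts then yields
\begin{equation*}
\sum_i a_i\, g(\eta_i) \;=\; -\sum_i \big(f(\eta_i)-\bar f\big)\, b_{i-1},
\end{equation*}
and a single application of Cauchy--Schwarz gives
\begin{equation*}
\Big(\sum_i a_i\, g(\eta_i)\Big)^{\!2} \;\le\; \sum_i (f(\eta_i)-\bar f)^2\;\cdot\; \sum_i b_i^2 .
\end{equation*}

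The remaining step---and the main obstacle---is to bound $\sum_i (f(\eta_i)-\bar f)^2$ by a suitable multiple of $\sum_i a_i^2$. This is a Wirtinger-type Poincar\'e inequality on the discrete cycle, with constant controlled by the spectral gap $2(1-\cos(2\pi/n))$ of the symmetric nearest-neighbour walk on $\bb Z/n\bb Z$. To extract the linear-in-$n$ constant claimed in the lemma, rather than the $n^2$ that the naive Poincar\'e bound produces, one should first split $\mc L_{\mf c} = \mc L^s + \mc L^a$ into symmetric and antisymmetric parts: the symmetric part obeys $|\<\mc L^s f, g\>_\pi|\le \sqrt{D(f)D(g)}$ by Cauchy--Schwarz on the Dirichlet form directly, and only the antisymmetric remainder $\<\mc L^a f,g\>_\pi = -\tfrac{c}{2}\sum_i f(\eta_i)(b_i+b_{i-1})$ needs the Poincar\'e estimate, as carried out in \cite[Lemma 5.5.8]{klo12}.
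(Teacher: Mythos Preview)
The paper does not prove this lemma; it simply cites \cite[Lemma~5.5.8]{klo12}, which is exactly where you end up as well. So there is no argument in the paper to compare your route against.

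Your reduction is correct: with $c=\pi(\eta_i)r_i$ constant along the cycle, the sector condition with constant $2n$ is equivalent to
\[
\Big(\sum_i a_i\,g(\eta_i)\Big)^{2}\;\le\;\frac{n}{2}\,\Big(\sum_i a_i^{2}\Big)\Big(\sum_i b_i^{2}\Big).
\]
You also correctly diagnose that Cauchy--Schwarz followed by the discrete Poincar\'e inequality on $\bb Z/n\bb Z$ only yields a constant of order $n^2$.

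The genuine gap is that your proposed repair --- passing to the symmetric/antisymmetric decomposition of $\mc L_{\mf c}$ --- does not recover a linear constant either: the antisymmetric term $\tfrac{c}{2}\sum_i(a_i+a_{i-1})g_i$ still needs exactly the same Poincar\'e bound after Cauchy--Schwarz, so you are back to order $n^2$. In fact no argument can produce the constant $n/2$, because the displayed inequality is \emph{false} for large $n$. Taking $f_i=\cos(2\pi i/n)$ and $g_i=\sin(2\pi i/n)$, a direct computation gives
\[
\frac{\bigl(\sum_i a_i\,g_i\bigr)^{2}}{\bigl(\sum_i a_i^{2}\bigr)\bigl(\sum_i b_i^{2}\bigr)}
\;=\;\frac{\cos^{2}(\pi/n)}{4\sin^{2}(\pi/n)}\;\sim\;\frac{n^{2}}{4\pi^{2}},
\]
which already exceeds $n/2$ at $n=21$. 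So the constant $2n$ as quoted here appears to be a slip; the sharp order is $n^2$, and your Poincar\'e route is essentially optimal. None of this affects the uses of the lemma in the paper --- Corollary~\ref{M-s3} and the arguments of Section~\ref{sec7} require only \emph{some} finite constant depending on $|E|$ --- but you should not continue searching for a proof of the linear bound.
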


\smallskip\noindent\emph{\color{blue} Cycle decomposition}: Every
generator $L$, stationary with respect to a probability measure $\pi$,
can be decomposed as the sum of cycle generators which are stationary
with respect to $\pi$.

\begin{lemma}
\label{M-s2}
Let $L$ be a generator of an $E$-valued, irreducible Markov chain.
Denote by $\pi$ the unique invariant probability measure.  Then, there
exists cycles $\mf c_1, \dots, \mf c_p$ such that
\begin{equation*}
L \;=\; \sum_{j=1}^p \mc L_{\mf c_j}\;,
\end{equation*}
where $\mc L_{\mf c_j}$ are cycle generators associated to $\mf c_j$ which
are stationary with respect to $\pi$.
\end{lemma}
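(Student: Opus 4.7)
The plan is to view the generator as a flow on the directed graph with vertices $E$ and to use the classical cycle decomposition of divergence-free flows. Specifically, define the flow
\begin{equation*}
\phi(\eta,\xi) \;:=\; \pi(\eta)\, R(\eta,\xi)
\end{equation*}
on the directed graph $G$ whose edges are those pairs with $R(\eta,\xi)>0$. The stationarity equation $\pi L = 0$ is exactly Kirchhoff's conservation law: for every $\eta$,
\begin{equation*}
\sum_{\xi}\phi(\eta,\xi) \;=\; \sum_{\xi}\phi(\xi,\eta)\;.
\end{equation*}
The strategy is to peel off cycles one at a time while preserving this conservation law, reducing the support of $\phi$ at each step.

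The algorithm I would use is the following. As long as some edge carries positive flow, pick one, call it $(\eta_0,\eta_1)$. Conservation at $\eta_1$ forces the existence of an outgoing edge $(\eta_1,\eta_2)$ with $\phi(\eta_1,\eta_2)>0$; iterating, construct an infinite sequence $\eta_0,\eta_1,\eta_2,\ldots$ along edges of positive flow. Since $E$ is finite, the first repetition produces a cycle $\mf c=(\eta_i,\eta_{i+1},\ldots,\eta_{i+n}=\eta_i)$ of length $n\ge 1$ along edges of positive flow (self-loops being excluded because $R(\eta,\eta)$ has no meaning, so $n\ge 2$ automatically). Let
\begin{equation*}
c \;:=\; \min_{0\le k<n}\phi(\eta_{i+k},\eta_{i+k+1}) \;>\;0\;,
\end{equation*}
and replace $\phi$ by $\phi - c\,\mathbf{1}_{\mf c}$, where $\mathbf{1}_{\mf c}$ is the indicator-flow of the cycle $\mf c$. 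The cycle flow is itself a circulation, so the new $\phi$ is still nonnegative and still satisfies the conservation law; moreover at least one edge of $\mf c$ now carries zero flow, so the support of $\phi$ strictly decreases. Define the corresponding cycle generator $\mc L_{\mf c}$ by setting its rate on $(\eta_{i+k},\eta_{i+k+1})$ equal to $r_k := c/\pi(\eta_{i+k})$; by criterion \eqref{M-7}, $\pi$ is stationary for $\mc L_{\mf c}$ because $\pi(\eta_{i+k})\,r_k=c$ is constant. Since the number of edges of $G$ is finite, the procedure terminates after finitely many rounds, producing cycles $\mf c_1,\ldots,\mf c_p$ and associated stationary cycle generators $\mc L_{\mf c_1},\ldots,\mc L_{\mf c_p}$.

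It remains to verify the identity $L=\sum_{j=1}^p \mc L_{\mf c_j}$. By construction, the flow $\phi$ has been written as $\phi=\sum_{j=1}^p c_j\,\mathbf{1}_{\mf c_j}$, so for every ordered pair $\eta\ne\xi$,
\begin{equation*}
\pi(\eta)\,R(\eta,\xi) \;=\; \sum_{j=1}^p \pi(\eta)\,R_{\mf c_j}(\eta,\xi)\;,
\end{equation*}
where $R_{\mf c_j}$ denotes the rates of $\mc L_{\mf c_j}$. Since $\pi(\eta)>0$ by irreducibility, dividing through yields $R=\sum_j R_{\mf c_j}$, hence $L=\sum_j \mc L_{\mf c_j}$ as required.

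The main obstacle is the well-posedness of the cycle-extraction step: one must check that conservation at every vertex forces the existence of a closed trail along positive-flow edges, and that subtracting the cycle preserves both nonnegativity (guaranteed by the choice $c=\min$) and the circulation property (guaranteed because any cycle flow is itself divergence-free). A minor technical point is ensuring that the rates of $\mc L_{\mf c_j}$ constructed this way are consistent when the same directed edge lies on several extracted cycles; this is automatic because the decomposition of $\phi$ is edge-by-edge, and $L$ is recovered edge-by-edge.
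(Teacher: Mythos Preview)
Your proof is correct and follows essentially the same approach as the paper: both peel off cycles from the conductance flow $\phi(\eta,\xi)=\pi(\eta)R(\eta,\xi)$, using stationarity as Kirchhoff's law and subtracting the minimum along each extracted cycle until no edges remain. The only cosmetic difference is that the paper extracts cycles in order of increasing length (first all $2$-cycles, then $3$-cycles, etc.), whereas you locate a cycle by walking along positive-flow edges until the first repetition; your version is the standard circulation-decomposition argument and is arguably cleaner, while the paper's ordering has the side benefit of making Remark~\ref{M-r1} (reversible $\Leftrightarrow$ only $2$-cycles needed) immediate.
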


\begin{proof}
The proof consists in eliminating successively all $2$-cycles (cycles
of length $2$), then all $3$-cycles and so on up to the $|E|$-cycle if
there is one left. Denote by $R(\eta,\xi)$ the jump rates of the
generator $L$ and by $\bb C_2$ the set of all $2$-cycles
$(\eta,\xi, \eta)$ such that $R(\eta,\xi) R(\xi,\eta)>0$. Note that
the cycle $(\eta,\xi,\eta)$ coincides with the cycle $(\xi,\eta,\xi)$.

Fix a cycle $\mf c = (\eta,\xi,\eta)\in\bb C_2$. Let $\bar c(\eta,\xi) =
\min\{ \pi(\eta) R(\eta,\xi) , \pi(\xi) R(\xi,\eta)\}$ be the minimal
conductance of the edge $(\eta,\xi)$, and let $R_{\mf c} (\eta,\xi)$
be the jump rates given by $R_{\mf c} (\eta,\xi) =
\bar c(\eta,\xi)/\pi(\eta)$, $R_{\mf c} (\xi,\eta) =
\bar c(\eta,\xi)/\pi(\xi)$. Observe that $R_{\mf c} (\zeta,\zeta') \le R
(\zeta,\zeta')$ for all $(\zeta,\zeta')$, and that $R_{\mf c} (\xi,\eta) = R
(\xi,\eta)$ or $R_{\mf c} (\eta,\xi) = R (\eta,\xi)$.

Denote by $\mc L_{\mf c}$ the generator associated the the jump rates
$R_{\mf c}$. Since $\pi(\eta) R_{\mf c} (\eta,\xi) = \bar c(\eta,\xi) =
\pi(\xi) R_{\mf c} (\xi,\eta)$, by \eqref{M-7}, $\pi$ is a stationary
state for $\mc L_{\mf c}$ (actually, reversible). Let $\mc L_1 = \mc L - \mc L_{\mf
  c}$ so that
\begin{equation*}
L \;=\; L_1 \;+\; \mc L_{\mf c}\;.
\end{equation*}
As $R_{\mf c} (\zeta,\zeta') \le R (\zeta,\zeta')$, $L_1$ is the
generator of a Markov chain. Since both $L$ and $\mc L_{\mf c}$ are
stationary for $\pi$, so is $L_1$. Finally, if we draw an arrow
from $\zeta$ to $\zeta'$ if the jump rate from $\zeta$ to $\zeta'$ is
strictly positive, the number of arrows for the generator $L_1$ is
equal to the number of arrows for the generator $L$ minus $1$ or $2$.
This procedure has therefore strictly decreased the number of arrows
of $L$.

We may repeat the previous algorithm to $L_1$ to remove from $L$ all
$2$-cycles $(\eta,\xi, \eta)$ such that $R(\eta,\xi) R(\xi,\eta)>0$.
Once this has been accomplished, we may remove all $3$-cycles
$(\eta_0,\eta_1, \eta_2, \eta_3=\eta_0)$ such that $\prod_{0\le i <3}
R(\eta_i,\eta_{i+1}) >0$. At each step at least one arrow is removed
from the generator which implies that after a finite number of steps
all $3$-cycles are removed.

Once all $k$-cycles have been removed, $2\le k<|E|$, we have obtained
a decomposition of $L$ as
\begin{equation*}
L \;=\; \sum_{k=2}^{|E|-1} \mc L_k \;+\; \hat {L}\;,
\end{equation*}
where $\mc L_k$ is the sum of $k$-cycle generators and is stationary
with respect to $\pi$, and $\hat {L}$ is a generator, stationary with
respect to $\pi$, and with no $k$-cycles, $2\le k<|E|$. If $\hat {L}$ has
an arrow, as it is stationary with respect to $\pi$ and has no
$k$-cycles, $\hat {L}$ must be an $|E|$-cycle generator, providing the
decomposition stated in the lemma.
\end{proof}

\begin{corollary}
\label{M-s3}
The generator $L$ satisfies a sector condition with constant bounded
by $2|E|$: For all
$f$, $g:E\to \bb R$,
\begin{equation*}
\<\,  L \,  f \,,\, g \,\>^2_\pi \;\le\; 2|E| \, 
\< \, (- \, L \, f) \,,\, f \,\>_\pi\, 
\<\, (- \, L \, g) \,,\, g \,\>_\pi\;. 
\end{equation*} 
\end{corollary}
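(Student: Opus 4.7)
The plan is to combine the cycle decomposition of Lemma \ref{M-s2} with the per-cycle sector condition of Lemma \ref{M-s1}. First I would write $L = \sum_{j=1}^p \mc L_{\mf c_j}$, where each $\mc L_{\mf c_j}$ is a cycle generator associated to a cycle $\mf c_j$ of some length $n_j$ and is stationary with respect to $\pi$. Since every cycle visits only distinct configurations of $E$, one has $n_j \le |E|$ for each $j$.

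Next, by bilinearity and the triangle inequality, $|\<\, L f \,,\, g\,\>_\pi| \le \sum_{j=1}^p |\<\, \mc L_{\mf c_j} f \,,\, g \,\>_\pi|$. Applying Lemma \ref{M-s1} to each summand with constant $2 n_j \le 2|E|$ bounds this expression by
\[
\sqrt{2|E|}\, \sum_{j=1}^p \sqrt{D_j(f)}\, \sqrt{D_j(g)},
\quad\text{where}\quad
D_j(h) \,:=\, \<\,(-\,\mc L_{\mf c_j})\,h\,,\,h\,\>_\pi \,\ge\, 0.
\]
A further Cauchy--Schwarz inequality in the index $j$ majorises this by $\sqrt{2|E|}\, \sqrt{\sum_j D_j(f)}\, \sqrt{\sum_j D_j(g)}$. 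Since $L = \sum_j \mc L_{\mf c_j}$ and each $D_j$ is a nonnegative quadratic form, the two sums inside the radicals collapse to $D(f) = \<\,(-\,L)\,f\,,\,f\,\>_\pi$ and $D(g) = \<\,(-\,L)\,g\,,\,g\,\>_\pi$ respectively. Squaring both sides delivers the claim.

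The argument presents no real obstacle: Lemma \ref{M-s2} does the heavy lifting by furnishing a decomposition whose pieces are already known to satisfy a sector condition, and Lemma \ref{M-s1} bounds the per-piece constant by $2n_j$, uniformly controlled by $2|E|$. The only subtlety worth pointing out is that the double use of Cauchy--Schwarz requires the individual summands $D_j$ to share the linear additivity $\sum_j D_j = D$, which is automatic because each $\mc L_{\mf c_j}$ is a Markov generator and we are working with the symmetric quadratic form associated to $-L$. Verifying $n_j \le |E|$ is immediate from the definition of a cycle.
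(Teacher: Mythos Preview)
Your proof is correct and follows essentially the same approach as the paper: decompose $L$ into cycle generators via Lemma~\ref{M-s2}, apply the per-cycle sector condition of Lemma~\ref{M-s1} with the uniform bound $n_j\le |E|$, and recombine using Cauchy--Schwarz in the index $j$ together with the additivity $\sum_j D_j = D$. The only cosmetic difference is that the paper squares first and then applies Schwarz, whereas you take absolute values first and square at the end.
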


\begin{proof}
Fix $f$ and $g:E\to \bb R$. By Lemma \ref{M-s2},
\begin{equation*}
\< Lf, g\>^2_\pi \;=\; \Big( \sum_{j=1}^p \< \mc L_{\mf c_j} f, g\>_\pi
\Big)^2  \;,
\end{equation*}
where $\mc L_{\mf c_j}$ is a cycle generator, stationary with respect to
$\pi$, associated to the cycle $\mf c_j$. By Lemma \ref{M-s1} and by
Schwarz inequality, since all cycles have length at most $|E|$, the
previous sum is bounded by
\begin{equation*}
2|E| \, \sum_{j=1}^p \< (- \, \mc L_{\mf c_j} f), f\>_\pi 
\, \sum_{k=1}^p \< (-\, \mc L_{\mf c_k} g), g\>_\pi \;=\; 
2|E| \, \< (-L f), f\>_\pi \, \< (-L g), g\>_\pi\;,
\end{equation*}
as claimed
\end{proof}

\begin{remark}
\label{M-r1}
A generator $L$ is reversible with respect to $\pi$ if and only if it
has a decomposition in $2$-cycles. Given a measure $\pi$ on a finite
state space, by introducing $k$-cycles satisfying
\eqref{M-7} it is possible to define non-reversible dynamics which are
stationary with respect to $\pi$. The previous lemma asserts that this
is the only way to define such dynamics.
\end{remark}

\begin{remark}
\label{M-r2}
The decomposition in cycles is not unique. There may exist cycles and
vectors $\mf c_1, \dots, \mf c_p$, $\bs r_1, \dots, \bs r_p$ and
$\hat{\mf c}_1, \dots, \hat{\mf c}_q$, $\hat{\bs r}_1, \dots, \hat{\bs
  r}_q$ such that $\{\mf c_1, \dots, \mf c_p\} \not = \{\hat{\mf c}_1,
\dots, \hat{\mf c}_q\}$,
\begin{equation*}
L \;=\; \sum_{j=1}^p \mc L_{\mf c_j, \bs r_j} \;=\; \sum_{k=1}^q \mc
L_{\hat{\mf c}_k, \hat{\bs r}_k}\;,
\end{equation*}
and $\pi$ is a stationary state for all cycle generators.  We leave
the reader to find an example. However, in view of Lemma \ref{M-s1},
it is natural to look for one which minimizes the length of the
longest cycle.
\end{remark}

\begin{remark}
\label{M-r3}
In a finite set, the decomposition of a generator into cycle
generators is very simple. The problem for countably-infinite sets is
much more delicate. We refer to \cite{gv12} for a discussion.
\end{remark}

Let $F$ be a proper subset of $E$ and consider the chain reflected at
$F$.  The last result of this subsection provides sufficient
conditions for the measure $\pi$ conditioned to $F$ to be a stationary
state for the reflected process in the non-reversible case.

\begin{lemma}
\label{M-s4}
Assume that the generator $L$ can be written as a sum of cycle
generators:
\begin{equation*}
L \;=\; \sum_{j=1}^p \mc L_{\mf c_j}\;,
\end{equation*}
where $\mf c_1, \dots, \mf c_p$ are cycles and $\pi$ is a stationary
state for each $\mc L_{\mf c_j}$. Then, the measure $\pi$ conditioned
to $F$ is stationary for the reflected chain at $F$ if there exists a
subset $A$ of $\{1, \dots, p\}$ such that
\begin{equation*}
L_{R, F} \;=\; \sum_{j\in A} \mc L_{\mf c_j}\;.
\end{equation*}
\end{lemma}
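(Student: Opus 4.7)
The plan is to verify stationarity of $\pi_F$ for $L_{R,F}$ through the dual characterization: $\pi_F$ is stationary if and only if $\sum_{\eta \in F} \pi_F(\eta)\, (L_{R,F} f)(\eta) = 0$ for every function $f : F \to \bb R$.

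The key preliminary step would be to argue that every cycle $\mf c_j$ with $j \in A$ is entirely contained in $F$. This is forced by the hypothesis $L_{R,F} = \sum_{j\in A} \mc L_{\mf c_j}$ read as an identity of operators on $E$ (with $L_{R,F}$ extended by zero outside $F$): the left-hand side produces no jump rates originating in $F^c$, whereas each $\mc L_{\mf c_j}$ contributes a strictly positive jump rate at every vertex of the cycle $\mf c_j$; since all these contributions are non-negative they cannot cancel one another, so no cycle indexed by $A$ can touch $F^c$.

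Given this, extend any $f : F \to \bb R$ arbitrarily to $E$ and compute
\begin{equation*}
\sum_{\eta \in F} \pi(\eta)\, (L_{R,F} f)(\eta) \;=\; \sum_{\eta \in E} \pi(\eta) \sum_{j \in A}(\mc L_{\mf c_j} f)(\eta) \;=\; \sum_{j \in A} \sum_{\eta \in E} \pi(\eta)\, (\mc L_{\mf c_j} f)(\eta) \;=\; 0,
\end{equation*}
using in the first equality that $\mc L_{\mf c_j} f$ vanishes outside the cycle $\mf c_j \subset F$, and in the last that $\pi$ is stationary for each $\mc L_{\mf c_j}$. Dividing by $\pi(F)$ yields the claim.

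The only real obstacle is the support observation in the first step; everything else is a one-line duality computation. The conceptual content is that a cycle generator which preserves $\pi$ localizes stationarity to its support, so a reflected generator assembled from such cycles automatically preserves the conditioned measure $\pi_F$.
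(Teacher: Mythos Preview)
Your proposal is correct and follows essentially the same approach as the paper. The paper's proof is more terse: it simply notes that $\pi$ is stationary for each $\mc L_{\mf c_j}$ and hence for their sum $L_{R,F}$, then observes that since the reflected process does not leave $F$, stationarity of $\pi$ is equivalent to stationarity of its restriction $\pi_F$. Your support observation (that the cycles indexed by $A$ lie in $F$) and your dual computation unpack precisely these two sentences.
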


\begin{proof}
Since $\pi$ is a stationary state for each $\mc L_{\mf c_j}$, it is
also a stationary state for $L_{R, F} = \sum_{j\in A} \mc L_{\mf
  c_j}$. As the reflected process does not leave the set $F$, the
measure $\pi$ is stationary if and only if its restriction to $F$ is
stationary. 
\end{proof}

\subsection{Enlarged chains}

Let $E^\star$ be a copy of $E$. The elements of $E^\star$ are
represented by the letters $\eta$, $\xi$. Denote by $P_\star: E \cup
E^\star \to E \cup E^\star$ the application which maps a configuration
in $E$, $E^\star$, to its copy in $E^\star$, $E$, respectively.

Following \cite{bg16}, for $\gamma >0$ denote by $\eta^\gamma (t)$ the
Markov process on $E \cup E^\star$ whose jump rates
$R^\gamma (\eta,\xi)$ are given by
\begin{equation*}
R^\gamma (\eta,\xi) \;=\; 
\begin{cases}
R(\eta,\xi) & \text{if $\eta$ and $\xi\in E$,} \\
1/\gamma & \text{if $\xi = P_\star \eta$,} \\
0 & \text{otherwise.} 
\end{cases}
\end{equation*}
Therefore, being at some state $\xi$ in $E^\star$, the process may
only jump to $P_\star \xi$ and this happens at rate $1/\gamma$. In
contrast, being at some state $\xi$ in $E$, the process
$\eta^\gamma (t)$ jumps with rate $R (\xi, \xi')$ to the state
$\xi'\in E$, and jumps with rate $1/\gamma$ to $P_\star\xi$.  We call
the process $\eta^\gamma (t)$ the \emph{\color{blue}
  $\gamma$-enlargement} of the process $\eta(t)$.

Let $\pi_\star$ be the probability measure on $E \cup E^\star$ defined
by
\begin{equation*}
\pi_\star (\eta) \;=\; \pi_\star (P_\star\eta) 
\;=\; (1/2)\, \pi (\eta)  \; , \quad \eta\in E\;.
\end{equation*}
The probability measure $\pi_\star$ is invariant for the enlarged
process $\eta^\gamma (t)$ and it is reversible whenever $\pi$ is
reversible.  

Let $F$ be a subset of $E$. Think of $F$ as a valley.  If $\gamma$ is
much larger than the mixing time, the distribution of $\eta (
H_{F_\star})$, where $F_\star = \{P_\star\eta : \eta \in F\}$, is very
close the stationary state conditioned to $F$.

\subsection{Collapsed chains}

The collapsed chain consists in collapsing a subset of the state-space
to a point and in the defining a dynamics which keeps the properties
of the original evolution as much as possible.  This is a well-known
technique, see for instance \cite{br58, ab1}.

Fix a subset $A$ of $E$, and let ${\color{blue} E_A} := [E \setminus
A] \cup \{\mf d\}$, where $\color{blue} \mf d$ stands for an extra
configuration added to $E$ and meant to represent the collapsed set
$A$. Denote by $\color{blue} (\eta^{C,A} (t) : t\ge 0)$ the chain
obtained from $\eta(t)$ by collapsing the set $A$ to the singleton
$\{\mf d\}$. This is the continuous-time Markov chain on $E_A$ with
jump rates $R^{C,A}(\eta,\xi)$, $\eta$, $\xi\in E_A$, given by
\begin{equation}
\label{L-1}
\begin{gathered}
{\color{blue} R^{C,A}(\eta,\xi)} \;=\; R(\eta,\xi) \;, 
\quad R^{C,A}(\eta,\mf d) \;=\; \sum_{\zeta\in A} R(\eta,\zeta) \;, 
\quad \eta\;,\; \xi \;\in\; E \setminus A\;, \\
R^{C,A}(\mf d,\eta) \;=\; \frac 1{\pi(A)} \sum_{\xi\in A} \pi(\xi) \, 
R(\xi,\eta) \;, \quad \eta \in E \setminus A \;. 
\end{gathered}
\end{equation}

The collapsed chain $\{\eta^{C,A}(t) : t\ge 0\}$ inherits the
irreducibility from the original chain. Denote by $\pi^{C,A}$ the
probability measure on $E_A$ given by
\begin{equation}
\label{L-2}
{\color{blue} \pi^{C,A} (\mf d)} \;=\; \pi(A)\;, \quad 
\pi^{C,A}(\eta) \;=\; \pi(\eta)\;, \quad \eta \in E \setminus A \;.
\end{equation}
Since
\begin{equation*}
\sum_{\xi\not\in A , \zeta\in A} \pi(\xi)\, R(\xi,\zeta) \;=\; 
\sum_{\xi\not\in A , \zeta\in A} \pi(\zeta)\, R (\zeta,\xi)\;,
\end{equation*}
one checks that $\pi^{C,A}$ is a stationary state, and therefore the
unique invariant probability measure, for the collapsed chain
$\eta^{C,A}(t)$.

The collapsed chain has to be understood as follows. Until the process
hits the set $A$, it evolves as the original one. When it reaches this
set, it immediately equilibrates and its position is replaced by the
stationary distribution conditioned to $A$.

In particular, we may couple the collapsed process with the original
one until the set $A$ is reached, so that, for every $\eta\in
E\setminus A$, and $B\subset E\setminus A$,
\begin{equation}
\label{L-4}
\bb P^{C,A}_\eta \big[\, H_{\mf d} < H^+_B\,\big] \;=\;
\bb P_\eta \big[\, H_{A} < H^+_B\,\big]\;,
\end{equation}
provided $\bb P^{C,A}_\eta$ represents the distribution of the
collapsed chain $\eta^{C,A}(t)$ starting from $\eta$. It follows from
this identity and the explicit formulae for the jump rates and the
stationary state that for every $B\subset E\setminus A$,
\begin{equation*}
\Cap (A, B) \;=\; \Cap^{C,A} (\mf d, B)\;,
\end{equation*}
where $\Cap^{C,A} (\mf d, B)$ represents the capacity between $\mf d$
and $B$ for the collapsed chain. 

This identity ceases to hold if we replace $A$ by a set in
$E\setminus A$ because \eqref{L-4} is incorrect if $\mf d$, $A$ are
replaced by a set $D\subset E\setminus A$.

\smallskip

Denote by $\color{blue} L^{C,A}$ the generator of the chain
$\eta^{C,A}(t)$. Fix two functions $f$, $g:E_A\to \bb R$. Let $F$,
$G:E\to \bb R$ be defined by 
\begin{equation*}
F(\eta) \;=\; f(\eta)\;, \quad \eta\in E\setminus
A \;, \quad F(\zeta) = f(\mf d)\;, \quad \zeta\in A\;,
\end{equation*}
with a similar definition for $G$. We claim that
\begin{equation}
\label{L-3}
\<\, L^{C,A} f \,,\, g \,\>_{\pi^{C,A}} 
\;=\; \<\, L F \,,\, G \,\>_{\pi} \;.
\end{equation}
Conversely, if $F$, $G:E\to \bb R$ are two functions constant over
$A$, \eqref{L-3} holds if we define $f$, $g: E_A\to \bb R$ by 
\begin{equation*}
f(\eta) \;=\; F(\eta)\;, \quad \eta\in E\setminus A\;, 
\quad f(\mf d) \;=\; F(\zeta) \quad \text{for some}
\quad \zeta\in A\;,
\end{equation*}
with an analogous equation for $f$, $F$ replaced by $g$, $G$,
respectively. 

To prove \eqref{L-3}, fix two functions $f$, $g: E_A\to \bb R$. By
definition of $L^{C,A}$,
\begin{equation*}
\< \,L^{C,A} f \,,\, g \,\>_{\pi^{C,A}} \;=\; 
\sum_{\eta, \xi \in E_A} \pi^{C,A}(\eta) \,
R^{C,A}(\eta,\xi) \, [\, f(\xi)-f(\eta)\,]\, g(\eta)\;. 
\end{equation*}
In view of \eqref{L-1}, \eqref{L-2}, this expression is equal to
\begin{equation*}
\begin{split}
& \sum_{\eta\in E\setminus A} \pi (\eta) \,
\Big\{ \sum_{\xi\in E\setminus A} 
R(\eta,\xi) \, [\, f(\xi) - f(\eta) \,]  \;+\; 
\sum_{\zeta\in A} R(\eta,\zeta) \, [\, f(\mf d) - f(\eta)\,]
\, \Big\} \, g(\eta) \\
& \qquad \;+\; \sum_{\xi\in E\setminus A} 
\sum_{\zeta\in A} \pi(\zeta)\, R (\zeta,\xi) \,
[\,f(\xi) \,-\, f(\mf d)\,] \, g(\mf d) \;.
\end{split}
\end{equation*}
Since $F(\eta)=f(\eta)$ for $\eta\in E\setminus A$, and $F(\xi)=f(\mf
d)$ for $\xi\in A$, with similar identities with $G$, $g$ replacing
$F$, $f$, the last sum is equal to
\begin{equation*}
\begin{split}
& \sum_{\eta\in E\setminus A} \pi (\eta) \,
\Big\{ \sum_{\xi\in E\setminus A} 
R(\eta,\xi) \, [\, F(\xi) - F(\eta)\,] \,+\,
\sum_{\zeta\in A} R(\eta,\zeta) \, [\,F(\zeta) - F(\eta)\,] \, \Big\}
\, G(\eta) \\
&\qquad \;+\; \sum_{\zeta\in A}  \sum_{\xi\in E\setminus A} 
\pi(\zeta) \, R(\zeta,\xi) \, [\,F(\xi)-F(\zeta)\,] \, G(\zeta)\;.
\end{split}
\end{equation*}
Since $F$ is constant on $A$, we may add to this expression
\begin{equation*}
\sum_{\eta\in A}  \sum_{\xi\in A} \pi(\eta) \, 
R(\eta,\xi) \, [F(\xi)-F(\eta)] \, G(\eta)
\end{equation*}
to obtain that the last displayed expression is equal to $\<LF ,
G\>_\pi$, which concludes the proof of the first assertion of
\eqref{L-3}. The second statement is obtained following the computation
in the reverse order. 

\section{Potential theory}
\label{sec10}

In this section, we present general results on the potential theory of
continuous-time Markov chains used throughout the article. 

Reversible Markov chains can be interpreted in terms of electrical
circuits. This description may provide some intuition on the notions
introduced below, as Dirichlet form, capacity or equilibrium
potential. We refer to the monographs of Doyle and Snell
\cite{DoySne84} and Gaudilli\`ere \cite{Gau09}. The analogy has been
extended to the non-reversible context by Bal\'azs and Folly
\cite{BalFol16}.

\subsection{The capacity}

Fix two non-empty subsets $A$, $B$ of $E$ such that $A \cap B =
\varnothing$.  The capacity between $A$ and $B$, denoted by
$\Cap(A,B)$, is given by
\begin{equation} 
\label{J-1}
{\color{blue} \Cap (A,B)} \;:=\; \sum_{\eta \in A} M (\eta) \, 
\bb P_\eta [ H_B < H_A^+]\; . 
\end{equation}

The capacity is monotone in the second coordinate. Let $B'$ be a
subset of $E$ such that $A\cap B' = \varnothing$, $B\subset B'$. Since
$\bb P_\eta [ H_B < H_A^+] \le \bb P_\eta [ H_{B'} < H_A^+]$, we have
that
\begin{equation}
\label{J-6}
\Cap (A,B) \;\le\; \Cap (A,B') \;.
\end{equation}

By \eqref{J-4}, for any sequence of configurations
$\eta_0, \eta_1, \dots, \eta_n$ such that $p(\eta_i, \eta_{i+1})>0$,
$0\le i<n$,
\begin{equation*}
M(\eta_0) \, \prod_{i=0}^{n-1} p(\eta_i, \eta_{i+1}) \;=\;
M(\eta_n) \, \prod_{i=0}^{n-1} p^*(\eta_{i+1},\eta_i)\;.
\end{equation*}
In particular, for any $\eta\in A$, $\xi\in B$,
\begin{equation*}
M(\eta) \, \bb P_\eta\big[\, H_B< H^+_A \,,\, H_B=H_\xi \,\big] 
\;=\; M(\xi) \, \bb P^*_\xi \big[\, H_A< H^+_B \,,\, H_A=H_\eta
\,\big] \;. 
\end{equation*}
Therefore, since
\begin{equation*}
\sum_{\eta\in A} M(\eta) \, \bb P_\eta \big[\, H_B<
H^+_A \,\big]  \;=\;
\sum_{\eta\in A} \sum_{\xi\in B} M(\eta) \, \bb P_\eta \big[\, H_B<
H^+_A, H_B=H_\xi \,\big] \;,
\end{equation*}
by \eqref{J-1} and the penultimate identity we have that
\begin{equation}
\label{J-5}
\Cap (A,B) \;=\; \sum_{\xi\in B} M(\xi) \, \bb P^*_\xi[ H^+_A< H^+_B]\;
\;=\; \Cap^*(B,A)\;,
\end{equation}
where $\Cap^* (A,B)$ represents the capacity between the sets $A$, $B$
for the adjoint process. 

It follows from \eqref{J-6} and \eqref{J-5} that the capacity is
monotone in the first coordinate as well: if $A'$ is a subset of $E$
such that  $A\subset A'$, $A'\cap B=\varnothing$,
\begin{equation*}
\Cap (A,B) \;\le\; \Cap (A',B) \;.
\end{equation*}

\subsection{A formula for the capacity}

Recall the formula \eqref{M-5} for the Dirichlet form $D(f)$ of a
function $f:E\to \bb R$.  Fix two disjoint subsets $A$, $B$ of $E$:
$A\cap B = \varnothing$.  Denote by $\color{blue} h_{A,B}: E \to \bb
R$ the \emph{\color{blue} equilibrium potential} between $A$ and
$B$. It is the unique solution of the boundary-value elliptic problem
\begin{equation}
\label{J-10}
\left\{
\begin{aligned}
& \; (\, L \, h \,)\, (\eta) \;=\;
0\;,\;\; \eta\not \in A \cup B \;, \\
& \; h (\eta) \;=\; \chi_{A} (\eta) 
\;,\;\; \eta \,\in\, A \cup B \;.
\end{aligned}
\right.
\end{equation}
It has a stochastic representation as
\begin{equation}
\label{J-2}
h_{A,B} (\eta) \;=\;  \bb P_\eta \big[\, H_{A} < H_{B} 
\,\big]\;.
\end{equation}

Since $h_{A,B}$ is harmonic on $(A\cup B)^c$, it vanishes over $B$ and
it is equal to $1$ at $A$,
\begin{equation*}
D(h_{A,B}) \;=\; \<\, (-L h_{A,B}) \,,\, h_{A,B}\, \>_{\pi} \;=\; 
\sum_{\eta \in A} \sum_{\xi\in E} \pi (\eta)\, R (\eta,\xi)\, 
[\, 1 - h_{A,B}(\xi) \,] \;.
\end{equation*}
By the representation \eqref{J-2} of the equilibrium potential,
$1 - h_{A,B}(\xi) = \bb P_\xi [\, H_{B} < H_{A} \,]$. By the strong
Markov property at the first jump, for every $\eta\in A$,
\begin{equation*}
\bb P_\eta [\, H_{B} < H^+_{A} \,] \;=\; 
\sum_{\xi\in E} p (\eta,\xi) \, \bb P_\xi [\, H_{B} < H_{A} \,] \;.
\end{equation*}
Hence,
\begin{equation}
\label{J-3}
D(h_{A,B}) \;=\; \sum_{\eta \in A} \pi (\eta)\, \lambda (\eta)\, 
\bb P_\eta [\, H_{B} < H^+_{A} \,] \;=\; \Cap(A,B)\;.
\end{equation}

The capacity is symmetric: By \eqref{J-2}, $h_{B,A} = 1 - h_{A,B}$,
and, by \eqref{M-5}, $D(h_{A,B}) = D(1 - h_{A,B})$. Hence,
\begin{equation}
\label{J-8}
\Cap(A,B) \;=\; D(h_{A,B}) \;=\; D(1 - h_{A,B}) \;=\; D(h_{B,A}) \;=\;
\Cap(B,A)\;. 
\end{equation}

\subsection{Flows} 

Denote by $c(\eta,\xi)$ the conductance of the oriented edge
$(\eta,\xi)$, and by $c_s(\eta,\xi)$ its symmetric version:
\begin{equation}
\label{J-20}
{\color{blue} c(\eta,\xi)} \;=\; \pi(\eta)\, R(\eta,\xi)\;, \quad
{\color{blue} c_s(\eta,\xi)} \;=\; \frac 12\, \big\{c(\eta,\xi) 
\,+\, c(\xi,\eta) \big\} \;.
\end{equation}
Note that $c_s(\eta,\xi) = (1/2) \, \pi(\eta)\, \{\, R(\eta,\xi) +
R^*(\eta,\xi) \,\}$. 

Let $\mf E$ be the set of oriented edges defined by
\begin{equation*}
{\color{blue} \mf E} \;: =\; \{(\eta,\xi)\in E\times E~:
\,c_{s}(\eta,\xi)>0\}\;.
\end{equation*}
An anti-symmetric function $\phi:\mf E\to\mathbb{R}$ is called a
$\textit{flow}$. The \emph{\color{blue}divergence} of a flow $\phi$ at
$\eta\in E$ is defined as
\begin{equation*}
{\color{blue} (\mbox{div}\,\phi)(\eta)} \;=\;
\sum_{\xi:(\eta,\xi)\in \mf E} \phi (\eta,\xi)\;,
\end{equation*}
while its divergence on a set $A\subset E$ is given by
\begin{equation*}
(\text{div }\phi)(A) \;=\; \sum_{\eta\in A}
(\text{div}\,\phi)(\eta)\;. 
\end{equation*}
The flow $\phi$ is said to be \textit{\color{blue} divergence-free at}
$\eta$ if $(\mbox{div}\,\phi)(\eta)=0$.

Denote by $\color{blue}\mf F$ the set of flows endowed with the scalar
product given by
\begin{equation*}
\left\langle \phi,\psi\right\rangle \;=\; 
\frac{1}{2}\, \sum_{(\eta,\xi)\in \mf E}
\frac 1{c_{s}(\eta,\xi)} \, \phi(\eta,\xi)\, \psi(\eta,\xi)
\;,\quad
\text{and let} \quad \left\Vert \phi\right\Vert^{2} 
\;=\; \left\langle \phi,\phi\right\rangle\;. 
\end{equation*}

\begin{remark}
\label{J-r1}
If the Markov chain is irreducible, the set of oriented edges $\mf E$
represents the set $\{(\eta, \xi) \in E \times E : R(\eta, \xi) +
R(\xi,\eta)>0\}$. Define the flow $\phi_R: \mf E \to \bb R$ by $\phi_R
(\eta, \xi) = R(\eta, \xi) - R(\xi,\eta)$. In this language, the
stationary state corresponds to the non-negative function $m: E \to
\bb R_+$ defined on the vertices which makes the function $\varphi_R:
\mf E \to \bb R$, defined by $\varphi_R (\eta, \xi) = m(\eta) \phi_R
(\eta, \xi)$ divergence free at every vertex.
\end{remark}

\subsection{The Dirichlet and the Thomson principles}

For a function $f:E\rightarrow\mathbb{R}$, define the flows
$\Phi_{f}$, $\Phi_{f}^{*}$ and $\Psi_{f}$ by
\begin{equation}
\label{J-11}
\begin{aligned}
& \Phi_{f}(\eta,\xi) \;=\; 
f(\eta)\, c(\eta,\xi) \,-\,
f(\xi)\, c(\xi,\eta)\;, \\
& \quad \Phi_{f}^{*}(\eta,\xi) \;=\; 
f(\eta)\, c(\xi,\eta) \,-\,
f(\xi)\, c(\eta,\xi)\;,\\
& \qquad \Psi_{f}(\eta,\xi) \;=\; 
c_s(\eta,\xi)\, [\, f(\eta)-f(\xi)\,]\;.  
\end{aligned}
\end{equation}
It follows from the definition of these flows that for all functions
$f:E\to\bb R$, $g:E\to\bb R$,
\begin{equation}
\label{J-12}
\begin{gathered}
\langle \Psi_{f} , \Phi_g \rangle  \;=\;
\< \, (- L) \, f \,,\, g \,\>_{\pi} \;, \qquad 
\langle \Psi_{f} , \Phi^*_g \rangle \;=\;
\<\, (- L^*) \, f \,,\, g \, \>_{\pi} \;, \\
\langle \Psi_{f} , \Psi_g \rangle  \;=\;
\< \, (- L^s) \, f \,,\, g \,\>_{\pi}
\;. 
\end{gathered}
\end{equation}

Fix two disjoint subsets $A$, $B$ of $E$ and two real numbers $a$,
$b$. Denote by $\mathfrak{C}_{a,b}(A,B)$ the set of functions
$f:E\to \bb R$ which are equal to $a$ on $A$ and $b$ on $B$:
\begin{equation*}
{\color{blue}\mathfrak{C}_{a,b}(A,B)} \;:=\; 
\big \{\, f:E\rightarrow\mathbb{R}:
f|_{A}\equiv a,\,f|_{B}\equiv b  \, \big \} \;.
\end{equation*}
Let $\mf F_{a}(A,B)$ be the set of flows from $A$ to $B$ with strength
$a\in \bb R$:
\begin{equation*}
\begin{aligned}
{\color{blue} \mf F_{a}(A,B)} \;=\;
\big \{\, \phi\in\mf F \,:\,  & (\mbox{div }\phi)(A)
\,=\, a\,=\, -\, (\mbox{div }\phi)(B)\,,\\
&\qquad (\mbox{div}\,\phi)(\eta)=0\,,\,
\eta\in(A\cup B)^{c}\big\}\;.
\end{aligned}
\end{equation*}
In particular, $\mf F_{1}(A,B)$ is the set of \emph{\color{blue}
  unitary} flows from $A$ to $B$.

Let $\color{blue} h_{A,B}^{*}$ be the equilibrium potential
corresponding to the adjoint dynamics. It is the solution of the
elliptic problem \eqref{J-10} with the adjoint generator $L^*$ in
place of $L$. It can be represented through the adjoint chain
$\eta^*(t)$ by equation \eqref{J-2} with the obvious modifications.

\begin{theorem}[Dirichlet principle]
\label{J-t1}
For any disjoint and non-empty subsets $A$, $B$ of $E$,
\begin{equation*}
\Cap (A,B) \;=\; 
\inf_{f\in\mathfrak{C}_{1,0}(A,B)}\,
\inf_{\phi\in\mf F_{0}(A,B)}
\left\Vert \Phi_{f}-\phi\right\Vert ^{2}\;.
\end{equation*}
Furthermore, the unique optimizers of the variational problem 
are given by 
\begin{equation*}
f \,=\,\frac{1}{2}(h_{A,B}+h_{A,B}^{*})\;\;\mbox{ and }\;\;
\phi \,=\,\frac{1}{2}(\Phi_{h_{A,B}^{*}}-\Phi_{h_{A,B}}^{*})\;.
\end{equation*}
\end{theorem}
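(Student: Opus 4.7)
The plan is to verify that the proposed pair $(f_*,\phi_*)$ with $f_*=\tfrac12(h_{A,B}+h_{A,B}^*)$ and $\phi_*=\tfrac12(\Phi_{h_{A,B}^*}-\Phi_{h_{A,B}}^*)$ is admissible, compute the residual flow $\Phi_{f_*}-\phi_*$ explicitly, and then show that any admissible perturbation only enlarges the squared norm, with equality forcing the perturbation to vanish. Admissibility of $f_*$ is immediate since both $h_{A,B}$ and $h_{A,B}^*$ equal $1$ on $A$ and $0$ on $B$, so $f_*\in\mathfrak{C}_{1,0}(A,B)$. For $\phi_*\in\mf F_0(A,B)$ I would first establish the identity $\operatorname{div}\Phi_g=-\pi\,L^*g$ and $\operatorname{div}\Phi_g^*=-\pi\,Lg$ by direct expansion, using the invariance of $\pi$. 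Then, since $h_{A,B}$ is $L$-harmonic on $(A\cup B)^c$ and $h_{A,B}^*$ is $L^*$-harmonic there, $\operatorname{div}\phi_*=0$ on $(A\cup B)^c$, while on $A$ one gets $(\operatorname{div}\phi_*)(A)=\tfrac12\{\langle Lh_{A,B},\chi_A\rangle_\pi-\langle L^*h_{A,B}^*,\chi_A\rangle_\pi\}=\tfrac12\{-\Cap(A,B)+\Cap^*(A,B)\}=0$, using \eqref{J-3}, \eqref{J-5} and \eqref{J-8}.

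Next, a straightforward expansion using $\Phi_f+\Phi_f^*=2\Psi_f$ shows
\begin{equation*}
\Phi_{f_*}-\phi_*\;=\;\tfrac12\bigl(\Phi_{h_{A,B}}+\Phi_{h_{A,B}^*}\bigr)-\tfrac12\bigl(\Phi_{h_{A,B}^*}-\Phi_{h_{A,B}}^*\bigr)\;=\;\tfrac12\bigl(\Phi_{h_{A,B}}+\Phi_{h_{A,B}}^*\bigr)\;=\;\Psi_{h_{A,B}}\;.
\end{equation*}
Combining the last identity in \eqref{J-12} with \eqref{M-8} and \eqref{J-3},
\begin{equation*}
\Vert\Phi_{f_*}-\phi_*\Vert^2\;=\;\Vert\Psi_{h_{A,B}}\Vert^2\;=\;\langle(-L^s)h_{A,B},h_{A,B}\rangle_\pi\;=\;D(h_{A,B})\;=\;\Cap(A,B)\;.
\end{equation*}

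To show this is the minimum, I would write an arbitrary competitor as $f=f_*+g$, $\phi=\phi_*+\psi$ with $g\in\mathfrak{C}_{0,0}(A,B)$ and $\psi\in\mf F_0(A,B)$ (these difference spaces are linear), so that $\Phi_f-\phi=\Psi_{h_{A,B}}+\Phi_g-\psi$, and expand the squared norm. The two cross terms vanish: by \eqref{J-12} and the harmonicity of $h_{A,B}$ on $(A\cup B)^c$ together with $g|_{A\cup B}=0$,
\begin{equation*}
\langle\Psi_{h_{A,B}},\Phi_g\rangle\;=\;\langle(-L)h_{A,B},g\rangle_\pi\;=\;0\;;
\end{equation*}
and an Abel summation using the antisymmetry of $\psi$ yields $\langle\Psi_{h_{A,B}},\psi\rangle=\sum_\eta h_{A,B}(\eta)(\operatorname{div}\psi)(\eta)=(\operatorname{div}\psi)(A)=0$. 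Hence $\Vert\Phi_f-\phi\Vert^2=\Cap(A,B)+\Vert\Phi_g-\psi\Vert^2\ge\Cap(A,B)$.

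Equality forces $\Phi_g=\psi$; then $\operatorname{div}\Phi_g=\operatorname{div}\psi$ vanishes on $(A\cup B)^c$, so by the identity $\operatorname{div}\Phi_g=-\pi\,L^*g$ the function $g$ is $L^*$-harmonic off $A\cup B$ and vanishes on $A\cup B$. By the maximum principle for the adjoint chain, $g\equiv0$, whence $\psi=\Phi_g=0$. This gives uniqueness of $(f_*,\phi_*)$. The one delicate step to execute carefully is the verification $(\operatorname{div}\phi_*)(A)=0$, because it hinges on the non-obvious equality $D(h_{A,B})=D^*(h_{A,B}^*)$; this is precisely where the symmetry \eqref{J-5}–\eqref{J-8} between $\Cap$ and $\Cap^*$ enters.
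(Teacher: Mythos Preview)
Your argument is correct. Note, however, that the paper does not actually prove this theorem: it states the result and attributes it to Gaudilli\`ere and Landim \cite{gl14} (see the sentence following Theorem~\ref{J-t2}), so there is no in-paper proof to compare against.

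Your approach is the natural direct one and is executed cleanly: verify admissibility of $(f_*,\phi_*)$, compute the residual $\Phi_{f_*}-\phi_*=\Psi_{h_{A,B}}$ via the identity $\Phi_f+\Phi_f^*=2\Psi_f$, evaluate $\Vert\Psi_{h_{A,B}}\Vert^2=\Cap(A,B)$ through \eqref{J-12} and \eqref{J-3}, and then kill the cross terms in the perturbation expansion using harmonicity for $\langle\Psi_{h_{A,B}},\Phi_g\rangle$ and the divergence-free condition for $\langle\Psi_{h_{A,B}},\psi\rangle$. The uniqueness step via the maximum principle for the adjoint chain is also sound, since the adjoint dynamics is irreducible. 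The point you flag as delicate---$(\operatorname{div}\phi_*)(A)=0$---is indeed exactly where the chain of identities $\Cap^*(A,B)=\Cap(B,A)=\Cap(A,B)$ from \eqref{J-5} and \eqref{J-8} is needed, and you use it correctly.
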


\begin{theorem}[Thomson principle]
\label{J-t2}
For any disjoint and non-empty subsets $A$, $B$ of $E$,  
\begin{equation*}
\frac 1{\Cap (A,B)} \;=\; \inf_{\psi\in\mf F_{1}(A,B)}\, 
\inf_{g\in\mathfrak{C}_{0,0}(A,B)}\,
\Vert \Phi_{g}-\psi\Vert ^{2}\;.
\end{equation*}
Furthermore, the unique optimizers of the variational problem are
given by
\begin{equation*}
g\,=\, \frac 12\, \frac{h_{A,B}^{*}-h_{A,B}}{\Cap (A,B)}
\;\;\mbox{ and }\;\;\psi \,=\,
\frac 12\, \frac{\Phi_{h_{A,B}^{*}} + \Phi_{h_{A,B}}^{*}}
{\Cap (A,B)}\;\cdot
\end{equation*}
\end{theorem}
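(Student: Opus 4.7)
The plan is to prove the Thomson principle by pairing any competitor $(g,\psi)\in \mf C_{0,0}(A,B)\times \mf F_1(A,B)$ against a fixed test flow built from the equilibrium potential $h=h_{A,B}$, then applying Cauchy--Schwarz, and finally verifying that the announced pair saturates the inequality.

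First, I would establish the key orthogonality--type identity
\begin{equation*}
\< \Phi_g-\psi,\Psi_h\> \;=\; -\,1
\end{equation*}
for every $g\in\mf C_{0,0}(A,B)$ and $\psi\in \mf F_1(A,B)$. The $\Phi_g$-part vanishes by \eqref{J-12}: $\<\Psi_h,\Phi_g\>_\pi = \<(-L)h,g\>_\pi$, and the right-hand side is zero because $Lh=0$ on $(A\cup B)^c$ while $g\equiv 0$ on $A\cup B$. For the $\psi$-part, a direct computation using the antisymmetry of $\psi$ and the definition of $\Psi_h$ yields the discrete integration-by-parts identity $\<\psi,\Psi_h\>=\sum_\eta h(\eta)(\mathrm{div}\,\psi)(\eta)$; since $h\equiv 1$ on $A$, $h\equiv 0$ on $B$, and $\mathrm{div}\,\psi=0$ off $A\cup B$, this sum equals $(\mathrm{div}\,\psi)(A)=1$. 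Now Cauchy--Schwarz gives
\begin{equation*}
1 \;=\; \bigl|\<\Phi_g-\psi,\Psi_h\>\bigr|^2
\;\le\; \|\Phi_g-\psi\|^2\,\|\Psi_h\|^2,
\end{equation*}
and by \eqref{J-12} and \eqref{J-3} one has $\|\Psi_h\|^2 = \<(-L^s)h,h\>_\pi = D(h) = \Cap(A,B)$. Rearranging yields $\|\Phi_g-\psi\|^2 \ge 1/\Cap(A,B)$, which is the lower bound.

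Next I would verify that the claimed optimizers $g^\star=(h^*-h)/(2\Cap(A,B))$ and $\psi^\star=(\Phi_{h^*}+\Phi_h^*)/(2\Cap(A,B))$ saturate this inequality. A direct substitution exploiting the linearity of $f\mapsto\Phi_f$ gives
\begin{equation*}
\Phi_{g^\star}-\psi^\star \;=\; \frac{\Phi_{h^*}-\Phi_h - \Phi_{h^*}-\Phi_h^*}{2\,\Cap(A,B)} \;=\; -\,\frac{\Phi_h+\Phi_h^*}{2\,\Cap(A,B)} \;=\; -\,\frac{\Psi_h}{\Cap(A,B)},
\end{equation*}
where the last identity uses that $(\Phi_h+\Phi_h^*)(\eta,\xi)=2c_s(\eta,\xi)[h(\eta)-h(\xi)]=2\Psi_h(\eta,\xi)$. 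Taking squared norms and again invoking $\|\Psi_h\|^2=\Cap(A,B)$ gives $\|\Phi_{g^\star}-\psi^\star\|^2=1/\Cap(A,B)$. One still must check that $g^\star\in\mf C_{0,0}(A,B)$ (immediate from $h,h^*$ being $1$ on $A$ and $0$ on $B$) and that $\psi^\star\in\mf F_1(A,B)$ (this follows from the divergence formulas $\mathrm{div}\,\Phi_h=-\pi\, L^*h$ and $\mathrm{div}\,\Phi_h^*=-\pi\, Lh$, combined with the harmonicity of $h$ and $h^*$ on $(A\cup B)^c$ and the capacity identity~\eqref{J-3}).

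For uniqueness, suppose $(g_1,\psi_1)$ and $(g_2,\psi_2)$ both achieve the infimum. Equality in Cauchy--Schwarz forces $\Phi_{g_i}-\psi_i=c_i\Psi_h$ for scalars $c_i$, and matching the value of the squared norm fixes $c_i=-1/\Cap(A,B)$. Subtracting the two identities, the function $G=g_1-g_2$ and flow $\tilde\psi=\psi_1-\psi_2$ satisfy $\Phi_G=\tilde\psi$ with $G\in\mf C_{0,0}(A,B)$ and $\tilde\psi\in\mf F_0(A,B)$. Using the divergence formula $\mathrm{div}\,\Phi_G(\eta)=-\pi(\eta)L^*G(\eta)$ and the vanishing of $\mathrm{div}\,\tilde\psi$ off $A\cup B$, we conclude $L^*G=0$ on $(A\cup B)^c$ with $G\equiv 0$ on $A\cup B$; the Dirichlet uniqueness for the adjoint chain then forces $G\equiv 0$, whence $g_1=g_2$ and $\psi_1=\psi_2$. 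The main subtlety I foresee is making sure all divergence and inner-product calculations correctly track the asymmetry between $c(\eta,\xi)$ and $c_s(\eta,\xi)$ in the non-reversible case, but the identities \eqref{J-11}--\eqref{J-12} provide the needed bookkeeping.
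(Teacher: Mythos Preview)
The paper does not actually prove Theorem~\ref{J-t2}; it merely states the result and attributes it to Slowik~\cite{Slo}. So there is no ``paper's own proof'' to compare against.

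Your argument is correct and complete. The Cauchy--Schwarz pairing against $\Psi_h$ is the natural approach: the identity $\langle \Phi_g-\psi,\Psi_h\rangle=-1$ is exactly right (the $\Phi_g$-term vanishes by harmonicity of $h$ and the boundary condition on $g$, the $\psi$-term contributes $-1$ via the integration-by-parts formula $\langle\psi,\Psi_f\rangle=\sum_\eta f(\eta)(\mathrm{div}\,\psi)(\eta)$), and $\|\Psi_h\|^2=D(h)=\Cap(A,B)$ gives the lower bound. Your verification that the claimed pair $(g^\star,\psi^\star)$ is admissible and yields $\Phi_{g^\star}-\psi^\star=-\Psi_h/\Cap(A,B)$ is clean; the admissibility check for $\psi^\star$ uses $\Cap^*(A,B)=\Cap(A,B)$, which follows from \eqref{J-5} and \eqref{J-8}. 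The uniqueness argument via equality in Cauchy--Schwarz followed by the Dirichlet uniqueness for $L^*$ is also sound. The only cosmetic remark is that in the uniqueness step you might note explicitly that $\Psi_h\neq 0$ (which holds since $\Cap(A,B)>0$ for an irreducible chain with $A,B$ disjoint and non-empty), so that the equality case of Cauchy--Schwarz indeed forces proportionality.
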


Theorem \ref{J-t1} appeared in Gaudilli\`ere and Landim \cite{gl14},
and Theorem \ref{J-t2} is due to Slowik \cite{Slo}.  Similar Dirichlet
and Thomson principles are available in the context of diffusions
processes, \cite{lms17, l-icm}.

\begin{remark}
\label{J-rm5}
Both theorems require an explicit knowledge of the invariant measure
which is not always available in non-reversible dynamics. An important
open problem consists therefore to derive formulas for the capacity
which do not involve the stationary state.
\end{remark}

\begin{remark}
\label{J-rm4}
These variational formulae, expressed as infima, provide simple lower
and upper bounds for the capacity. To obtain sharp bounds, 
good approximations of the harmonic functions are needed to produce
test functions and test flows close to the optimal ones. In concrete
examples, one of the difficulties is that the test flows constructed
are never divergence free, and a correction has to be introduced to
remove the divergence of the test flow, \cite{lmt2015, ls2018, s2018}.
\end{remark}

\subsection{Reversible dynamics}  

In the reversible case, the conductance is symmetric: $c(\eta,\xi) =
c(\xi,\eta)$. In particular, all flows $\Phi_f$, $\Phi^*_f$, $\Psi_f$,
introduced in \eqref{J-11}, coincide, and the optimal flow $\phi$ of
Theorem \ref{J-t1} vanishes because the equilibrium potentials
$h^*_{A,B}$, $h_{A,B}$ are equal. Hence, in the reversible case,
\begin{equation*}
\Cap (A,B) \;=\; 
\inf_{f\in\mathfrak{C}_{1,0}(A,B)}\,
\left\Vert \Phi_{f}\right\Vert ^{2}
\;=\; \inf_{f\in\mathfrak{C}_{1,0}(A,B)}\,
\< \, (- L) \, f \,,\, f \,\>_{\pi}  \;.
\end{equation*}
where the last identity follows from \eqref{J-12}. We recover in this
way the Dirichlet principle for reversible dynamics:
\begin{equation}
\label{J-13}
\Cap (A,B) \;=\; \inf_{f\in\mathfrak{C}_{1,0}(A,B)}\, D(f)
\end{equation}

In the Thomson principle, the optimal function $g$ vanishes, and we
recover the Thomson principle for reversible dynamics:
\begin{equation*}
\frac 1{\Cap (A,B)} \;=\; \inf_{\psi\in\mf F_{1}(A,B)}\, 
\Vert \psi\Vert ^{2}\;.
\end{equation*}

In the reversible case, the Thomson principle can also be expressed in
terms of functions.

\begin{lemma}
\label{J-l3}
We have that
\begin{equation*}
\frac 1{\Cap (A,B)} \;=\; \inf_{f}\, 
\frac{D(f)}{\Big(\sum_{\eta \in A} \pi (\eta)\, (L\, f) (\eta) \Big)^2} \;,
\end{equation*}
where the infimum is carried over all functions $f:E\to \bb R$ such
that $(Lf)(\eta)=0$ for all $\eta\in E \setminus (A\cup B)$.
\end{lemma}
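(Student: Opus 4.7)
The plan is to prove the identity by verifying that $f = h_{A,B}$ is an admissible candidate achieving the value $1/\Cap(A,B)$, and then using Cauchy–Schwarz for the polarized Dirichlet form to show this is the infimum.

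First I would check admissibility and equality at $f = h_{A,B}$. By \eqref{J-10}, $h_{A,B}$ is $L$-harmonic on $E\setminus(A\cup B)$, so it is admissible. The Dirichlet form in the numerator is $D(h_{A,B})=\Cap(A,B)$ by \eqref{J-3}. For the denominator, repeat the computation used in the proof of \eqref{J-3}: for $\eta\in A$, $h_{A,B}(\eta)=1$ and
\[
(L\,h_{A,B})(\eta)\;=\;\sum_{\xi}R(\eta,\xi)\,[h_{A,B}(\xi)-1]\;=\;-\,\lambda(\eta)\,\bb P_\eta[H_B<H^+_A],
\]
which summed against $\pi(\eta)$ over $\eta\in A$ yields $-\Cap(A,B)$. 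Hence the ratio equals $\Cap(A,B)/\Cap(A,B)^2 = 1/\Cap(A,B)$, which gives the ``$\ge$'' inequality for the infimum.

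For the ``$\le$'' inequality, I would exploit that in the reversible case the bilinear form $(f,g)\mapsto \langle -Lf,g\rangle_\pi$ is symmetric and positive semi-definite. Indeed, by \eqref{M-5} and polarization,
\[
\langle -Lf,g\rangle_\pi\;=\;\tfrac{1}{2}\sum_{\eta,\xi}\pi(\eta)R(\eta,\xi)\,[f(\xi)-f(\eta)]\,[g(\xi)-g(\eta)],
\]
to which Cauchy–Schwarz applies, giving $|\langle -Lf,h_{A,B}\rangle_\pi|^2\le D(f)\,D(h_{A,B})=D(f)\,\Cap(A,B)$. Then for any admissible $f$ the function $Lf$ is supported on $A\cup B$, so using $h_{A,B}|_A=1$, $h_{A,B}|_B=0$,
\[
\langle -Lf,h_{A,B}\rangle_\pi\;=\;\sum_{\eta\in A\cup B}\pi(\eta)(-Lf)(\eta)\,h_{A,B}(\eta)\;=\;-\sum_{\eta\in A}\pi(\eta)(Lf)(\eta).
\]
Squaring and combining with the Cauchy–Schwarz bound gives
\[
\Bigl(\sum_{\eta\in A}\pi(\eta)(Lf)(\eta)\Bigr)^2\;\le\;\Cap(A,B)\,D(f),
\]
which is exactly the required lower bound on the functional (and trivially valid when the denominator vanishes, since then $f$ is a constant, forcing $D(f)=0$ as well and excluding $f$ from the infimum).

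There is no serious obstacle: the proof is essentially a single Cauchy–Schwarz inequality. The one subtlety is that one must recognise that the harmonicity constraint on $f$ together with the boundary conditions of $h_{A,B}$ is precisely what converts the pairing $\langle -Lf,h_{A,B}\rangle_\pi$ into the normalisation appearing in the denominator; without that constraint the identification fails, and one would obtain a weaker bound.
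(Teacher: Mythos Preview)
Your proof is correct and follows essentially the same approach as the paper: Cauchy--Schwarz on the symmetric Dirichlet bilinear form against $h_{A,B}$, using harmonicity of $f$ on $(A\cup B)^c$ and the boundary values of $h_{A,B}$ to identify the pairing with the denominator, and checking that $f=h_{A,B}$ attains the infimum. The only (harmless) slip is the parenthetical claim that a vanishing denominator forces $f$ to be constant; this is false, but such $f$ give ratio $+\infty$ and are irrelevant to the infimum anyway.
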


\begin{proof}
Fix a function $f:E\to \bb R$ such that $(Lf)(\eta)=0$ for all
$\eta\in E \setminus (A\cup B)$.  By Schwarz inequality and equation
\eqref{M-5} for the Dirichlet form,
\begin{equation*}
\Big( \, \frac 12 \sum_{\eta,\xi \in E} \pi (\eta)\, R (\eta,\xi)\, 
[\, f(\xi) \,-\, f(\eta) \,] \, [\, h_{A,B}(\xi) \,-\, h_{A,B}(\eta) \,] \,\Big)^2
\;\le\; D(f)\, D(h_{A,B})\;. 
\end{equation*}
As the chain is reversible, the jump rates satisfy the detailed
balance conditions \eqref{M-10}. We may thus rewrite the sum appearing
on the left-hand side as
\begin{equation*}
-\, \sum_{\eta,\xi \in E} \pi (\eta)\, R (\eta,\xi)\, 
[\, f(\xi) \,-\, f(\eta) \,] \, h_{A,B}(\eta)  \;=\;
-\, \sum_{\eta \in E} \pi (\eta)\, (L\, f) (\eta) \, h_{A,B}(\eta) \;.
\end{equation*}
Since $h_{A,B} = \chi_A$ on $A\cup B$ and $Lf=0$ on the complement,
the previous sum is equal to
\begin{equation*}
-\, \sum_{\eta \in A} \pi (\eta)\, (L\, f) (\eta) \;.
\end{equation*}

We have thus proved that 
\begin{equation*}
\sup_f \Big( \sum_{\eta \in A} \pi (\eta)\, (L\, f) (\eta) \,\Big)^2 \,
\frac 1{D(f)} \,
\;\le\; D(h_{A,B})\;, 
\end{equation*}
where the supremum is carried over all functions $f$ satisfying the
assumptions of the lemma. This inequality is actually an identity
because the equilibrium potential $h_{A,B}$ belongs to the class of
functions considered [it is harmonic on $(A\cup B)^c$] and
\begin{equation*}
\sum_{\eta \in A} \pi (\eta)\, (L\, h_{A,B}) (\eta) \;=\; D(h_{A,B})\;.
\end{equation*}
To complete the proof of the lemma, it remains to recall that 
$\Cap (A,B) = D(h_{A,B})$.
\end{proof}

\begin{remark}
\label{M-rm1}
By inserting test functions, the previous lemma provides lower bounds
for the capacity between two sets. In practical situations, however,
it is almost impossible to find functions which are harmonic at every
point of $(A\cup B)^c$. But it might be possible to find functions
which are almost harmonic in the sense that $Lf$ is small. The
previous proof applied to any test function yields that for every
$\epsilon>0$, 
\begin{equation*}
(1-\epsilon)\,
\Big( \sum_{\eta \in A} \pi (\eta)\, (L\, f) (\eta) \,\Big)^2 \,
\;-\; \frac{1}{\epsilon} \, \Big( \sum_{\eta \in (A\cup B)^c} \pi (\eta)\, \big|\,
(L\, f) (\eta) \,\big| \,\Big)^2  \;\le\; D(f) \, D(h_{A,B})
\end{equation*}
where we used Young's inequality $2ab \ge - \epsilon a^2 -
\epsilon^{-1} b^2$ and the fact that the absolute value of the
harmonic function is bounded by $1$. The advantage of this inequality
with respect to the Thomson principle lies in the fact that it holds
for all functions $f:E\to \bb R$ and not only for the harmonic ones in
$(A\cup B)^c$. However, the resulting lower bound for the capacity
will be sharp only if $f$ is almost harmonic on $(A\cup B)^c$.
\end{remark}

\begin{remark}
\label{M-rm2}
The previous remark can be extended to all principles stated in the
previous and in the next section. It is this version which is used in
concrete examples. We refer to Theorem 5.3 of \cite{s2018}.
\end{remark}

\subsection{Dirichlet principle II}  

We provide in this subsection an alternative variational formula for
the capacity in terms of functions only.

Fix two disjoint subsets $A$, $B$ of $E$. Let
$\color{blue} \mf F_{0}(A,B)^\perp$ be the set of flows in $\mf F$
which are orthogonal to all flows in $\mf F_{0}(A,B)$. By
\cite[Theorem 8.7]{lax}, for every function $f$ in
$\mathfrak{C}_{1,0}(A,B)$,
\begin{equation*}
\inf_{\phi\in\mf F_{0}(A,B)}
\left\Vert \Phi_{f}-\phi\right\Vert ^{2} \;=\; \sup_{\psi\in\mf F_{0}(A,B)^\perp}
\frac{\<\, \Phi_{f} \,,\, \psi\,\>^2 }{\<\, \psi \,,\, \psi\,\>}  \;,
\end{equation*}
where the supremum is carried over all $\psi\not = 0$. We may rewrite
the right-hand side to obtain that
\begin{equation}
\label{J-16}
\inf_{\phi\in\mf F_{0}(A,B)}
\left\Vert \Phi_{f}-\phi\right\Vert ^{2} \;=\; \sup_{\psi\in\mf F_{0}(A,B)^\perp}
\big\{\, 2 \, \<\, \Phi_{f} \,,\, \psi\,\> \;-\; \<\, \psi \,,\,
\psi\,\> \, \big\}  \;,
\end{equation}
which is more convenient.

\begin{lemma}
\label{J-l1b}
We have that
\begin{equation*}
\mf F_{0}(A,B)^\perp \;=\; {\color{blue} \mf C(A,B)} \;:=\;
\big\{\, \Psi_f : f \in \mf C_{a,b} 
\;\text{ for some } a\,,\, b \in \bb R\,\big\}\;.
\end{equation*}
\end{lemma}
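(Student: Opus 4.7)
The plan is to prove the two inclusions separately. The inclusion $\mf C(A,B) \subset \mf F_0(A,B)^\perp$ is a short direct calculation: for $f\in \mf C_{a,b}(A,B)$ and $\phi\in \mf F_0(A,B)$, the factors of $c_s$ in $\Psi_f$ and in the scalar product cancel, giving
$$\langle \Psi_f,\phi\rangle \;=\; \tfrac{1}{2}\sum_{(\eta,\xi)\in\mf E}[f(\eta)-f(\xi)]\,\phi(\eta,\xi).$$
Splitting into two sums and relabeling $(\eta,\xi)\to(\xi,\eta)$ in the second, the antisymmetry of $\phi$ then collapses this to $\sum_{\eta\in E} f(\eta)\,(\mathrm{div}\,\phi)(\eta)$. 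This vanishes because $\mathrm{div}\,\phi$ is zero pointwise on $(A\cup B)^c$, while on $A$ and on $B$ the function $f$ is constant and the summed divergence is zero.

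For the reverse inclusion $\mf F_0(A,B)^\perp \subset \mf C(A,B)$, I would first show that any such $\phi$ has the form $\Psi_g$ for some $g\colon E\to\bb R$. Given any simple cycle $\mf c=(\eta^{(0)},\dots,\eta^{(k)}=\eta^{(0)})$ in the undirected graph $(E,\mf E)$, the associated unit cycle flow $\psi_{\mf c}$ is divergence-free everywhere and thus belongs to $\mf F_0(A,B)$. The orthogonality condition $\langle \phi,\psi_{\mf c}\rangle=0$ is precisely Kirchhoff's voltage law
$$\sum_{i=0}^{k-1}\frac{\phi(\eta^{(i)},\eta^{(i+1)})}{c_{s}(\eta^{(i)},\eta^{(i+1)})}\;=\;0.$$
Since the chain is irreducible, the graph $(E,\mf E)$ is connected, so fixing a basepoint $\eta_\circ$ and setting $g(\eta)$ equal to the (signed) sum of $\phi/c_s$ along any path from $\eta_\circ$ to $\eta$ defines a function consistently, and a one-line check gives $\phi=\Psi_g$.

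It then remains to verify that $g$ is constant on $A$ and constant on $B$. For any $\eta,\xi\in A$, pick a simple path from $\eta$ to $\xi$ in $(E,\mf E)$ and let $\psi_{\eta,\xi}$ be the associated unit path flow. Its divergence is $+1$ at $\eta$, $-1$ at $\xi$, and $0$ at every other vertex; so $\mathrm{div}\,\psi_{\eta,\xi}$ vanishes pointwise on $(A\cup B)^c$, and on $A$ it sums to $+1-1=0$ (intermediate $A$-vertices of the simple path contribute $0$), and similarly on $B$. Hence $\psi_{\eta,\xi}\in \mf F_0(A,B)$, and applying the identity from the forward direction with $\phi=\Psi_g$ yields
$$0 \;=\; \langle \Psi_g,\psi_{\eta,\xi}\rangle \;=\; \sum_\zeta g(\zeta)\,(\mathrm{div}\,\psi_{\eta,\xi})(\zeta) \;=\; g(\eta)-g(\xi),$$
so $g$ is constant on $A$; the same argument on $B$ concludes $g\in \mf C_{a,b}(A,B)$ for some $a,b\in\bb R$, and hence $\phi\in \mf C(A,B)$.

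The only mildly delicate point is ensuring that the path flow really sits in $\mf F_0(A,B)$; this is why one must choose a \emph{simple} path, so each intermediate vertex is traversed exactly once and contributes zero divergence. Irreducibility of the chain guarantees such simple paths exist in the undirected graph, so no genuine obstacle arises.
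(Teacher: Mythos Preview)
Your proof is correct, but it takes a genuinely different route from the paper for the reverse inclusion. The paper does not prove $\mf F_0(A,B)^\perp \subset \mf C(A,B)$ directly; instead it shows the dual inclusion $\mf C(A,B)^\perp \subset \mf F_0(A,B)$ by testing an arbitrary $\phi \in \mf C(A,B)^\perp$ against all $\Psi_f$ with $f\in \mf C_{a,b}(A,B)$, using the same summation-by-parts identity you derived to read off that $(\mathrm{div}\,\phi)(\zeta)=0$ for $\zeta\notin A\cup B$ and that the aggregate divergences on $A$ and $B$ vanish. The desired equality then follows from the double-orthocomplement identity $\mf C(A,B)^{\perp\perp}=\mf C(A,B)$, valid because $\mf F$ is finite-dimensional. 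Your argument is more constructive and closer to the electrical-network picture: you exhibit specific elements of $\mf F_0(A,B)$ (cycle flows to extract a potential via Kirchhoff's voltage law, then simple-path flows between pairs in $A$ or in $B$ to force the potential to be constant there). This buys you a proof that does not invoke the double-perp identity and so does not implicitly lean on finite-dimensionality; the paper's approach, on the other hand, is shorter and stays purely at the level of the bilinear pairing without ever constructing the potential $g$ explicitly.
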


\begin{proof}
Denote by $\mf A$ the set on the right-hand side. Its is clear that
$\mf A \subset \mf F_{0}(A,B)^\perp$. Indeed, fix $\phi\in \mf
F_{0}(A,B)$ and $f$ in $\mf C_{a,b}$ for some $a$, $b\in \bb R$. Then,
\begin{equation*}
\<\, \Psi_{f} \,,\, \phi\,\> \;=\; \frac{1}{2}\, \sum_{(\eta,\xi)\in
  \mf E} [\, f(\eta)-f(\xi)\,] \, \phi (\eta,\xi) \;=\;
\sum_{\eta \in E} f(\eta)\, (\mbox{div}\,\phi) (\eta)\;.
\end{equation*}
As $f$ is constant equal to $a$, $b$ on $A$, $B$, respectively, this
sum can be written as
\begin{equation}
\label{J-18}
a\, \sum_{\eta \in A} (\mbox{div}\,\phi) (\eta)
\;+\; 
\sum_{\eta \not \in A \cup B} f(\eta)\, (\mbox{div}\,\phi) (\eta)
\;+\; 
b\, \sum_{\eta \in B} (\mbox{div}\,\phi) (\eta)
\;.
\end{equation}
Each of these sums vanish because $\phi$ belongs to $\mf F_{0}(A,B)$.

It remains to show that $\mf A^\perp \subset \mf F_{0}(A,B)$. Let
$\phi$ be a flow in $\mf A^\perp$. Then, for all $a$, $b\in \bb R$,
$f$ in $\mf C_{a,b}$, 
\begin{equation*}
\<\, \Psi_{f} \,,\, \phi\,\> \;=\; 0\;.
\end{equation*}
In the first part of the proof, we showed that the left-hand side of
this identity is equal to \eqref{J-18}. Hence, for all $a$, $b\in \bb
R$ and all $f: E \setminus (A\cup B) \to \bb R$, \eqref{J-18}
vanishes. From this we conclude that for all $\xi\not \in A \cup B$,
\begin{equation*}
\sum_{\eta \in A} (\mbox{div}\,\phi) (\eta)
\;=\; 
(\mbox{div}\,\phi) (\xi)
\;=\; 
\sum_{\eta \in B} (\mbox{div}\,\phi) (\eta) \;=\; 0\;.
\end{equation*}
This proves that $\phi$ belongs to $\mf F_{0}(A,B)$ and completes the
proof of the lemma.
\end{proof}

It follows from \eqref{J-16}, the previous lemma and \eqref{J-12}
that
\begin{equation*}
\inf_{\phi\in\mf F_{0}(A,B)}
\left\Vert \Phi_{f}-\phi\right\Vert ^{2} \;=\; \sup_{g\in\mf C(A,B)}
\big\{\, 2 \, \<\, f \,,\, L\, g\,\>_\pi \;-\; \<\, (-\, L^s g) \,,\,
g\,\>_\pi \, \big\}  \;,
\end{equation*}
where the set $\mf C(A,B)$ has been introduced in the statement of
Lemma \ref{J-l1b}. We replaced $g$ by $-g$ in the previous expression
to remove the minus sign in the first term. 

The previous argument permitted to formulate in terms of functions a
variational formula originally expressed through flows.  Since, by
\eqref{M-8}, $\<\, L^s g \,,\, g\,\>_\pi = \<\, L g \,,\, g\,\>_\pi$, in
the previous formula we may replace $L^s$ by $L$. This identity
together with Theorem \ref{J-t1} provides a Dirichlet principle in
terms of functions only. This is the content of the next result. In
contrast with the one formulate in terms of flows, it involves an
$\inf\, \sup$ instead of an $\inf\, \inf$ which is simpler to
estimate.

\begin{theorem}
\label{J-t4}
Let $A$, $B$ be disjoint, non-empty subsets of $E$. Then,
\begin{equation*}
\Cap (A,B) \;=\; 
\inf_{f\in\mathfrak{C}_{1,0}(A,B)}\, \sup_{g\in \mf C(A,B)}
\Big\{\, 2\, \<\,  f \,,\, L \, g \,\>_{\pi} \;-\; 
\<\, (- L) \, g \,,\, g\,\>_{\pi} \Big\} \;.
\end{equation*}
Moreover, the optimal function is given by $f = (1/2) \{ h_{A,B} +
h^*_{A,B}\}$.
\end{theorem}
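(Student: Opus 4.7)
The plan is to combine the flow-based Dirichlet principle (Theorem~\ref{J-t1}) with the orthogonality duality and Lemma~\ref{J-l1b} that the excerpt has just proved, converting every flow-level quantity into a function-level one via the bilinear identities~\eqref{J-12}. Almost everything has been assembled in the paragraphs preceding the statement, so the proof is mainly a matter of chaining the pieces together cleanly.

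First I would fix $f \in \mf C_{1,0}(A,B)$ and analyze the inner infimum $\inf_{\phi \in \mf F_0(A,B)} \Vert \Phi_f - \phi \Vert^2$ appearing in Theorem~\ref{J-t1}. Since this is a squared distance from $\Phi_f$ to the closed subspace $\mf F_0(A,B)$ of the Hilbert space $\mf F$, the projection theorem (in the $\inf$–$\sup$ form~\eqref{J-16}) gives
\begin{equation*}
\inf_{\phi\in\mf F_{0}(A,B)} \Vert \Phi_{f}-\phi\Vert^{2}
\;=\; \sup_{\psi\in\mf F_{0}(A,B)^\perp}
\bigl\{\, 2 \, \<\, \Phi_{f} \,,\, \psi\,\> \,-\, \<\, \psi \,,\, \psi\,\> \,\bigr\}.
\end{equation*}

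Next I would substitute the characterization $\mf F_{0}(A,B)^\perp = \mf C(A,B) = \{\Psi_g : g \in \mf C_{a,b}(A,B) \text{ for some } a,b\}$ provided by Lemma~\ref{J-l1b}, so that the sup ranges over functions $g$ via $\psi = \Psi_g$. Then I would rewrite the two inner products using the identities in~\eqref{J-12}: the cross term becomes $\<\Phi_f , \Psi_g\> = \<\Psi_g , \Phi_f\> = \<\,(-L)g\,,\, f\,\>_\pi = \<\, f\,,\, (-L)g\,\>_\pi$ by symmetry of $\<\cdot,\cdot\>_\pi$, and the quadratic term becomes $\<\Psi_g, \Psi_g\> = \<\, (-L^s) g \,,\, g\,\>_\pi = \<\, (-L) g \,,\, g\,\>_\pi$, where the last equality uses~\eqref{M-8}. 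Since $\mf C(A,B)$ is closed under $g \mapsto -g$ and the quadratic term is invariant under this map, I can flip the sign in the linear term to obtain
\begin{equation*}
\inf_{\phi\in\mf F_{0}(A,B)} \Vert \Phi_{f}-\phi\Vert^{2}
\;=\; \sup_{g \in \mf C(A,B)}\bigl\{\, 2\, \<\, f \,,\, L g\,\>_\pi \,-\, \<\, (-L) g \,,\, g\,\>_\pi \,\bigr\}.
\end{equation*}

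Finally I would take $\inf$ over $f \in \mf C_{1,0}(A,B)$ on both sides and invoke Theorem~\ref{J-t1} to identify the left-hand side with $\Cap(A,B)$, which yields the claimed inf--sup formula. For the optimizer, Theorem~\ref{J-t1} already identifies the optimal $f$ in the outer variational problem as $(1/2)(h_{A,B} + h^*_{A,B})$, and the reformulation above does not alter the outer infimum, so the same function is optimal here. There is no real obstacle—every step is either a Hilbert space projection formula or a direct bookkeeping identity from~\eqref{J-12} and~\eqref{M-8}; the only point requiring a moment of care is correctly tracking the sign flip coming from $g \mapsto -g$ and verifying that $\<\Psi_g, \Psi_g\> = \<(-L)g, g\>_\pi$ rather than $\<(-L^s)g, g\>_\pi$ (they agree by~\eqref{M-8}), which is what makes the final formula use $L$ instead of its symmetrization.
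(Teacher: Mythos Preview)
Your proposal is correct and follows essentially the same route as the paper: the argument is precisely the chain \eqref{J-16} $\to$ Lemma~\ref{J-l1b} $\to$ \eqref{J-12} $\to$ sign flip $g\mapsto -g$ $\to$ \eqref{M-8} $\to$ Theorem~\ref{J-t1}, with the optimizer inherited from Theorem~\ref{J-t1}. The paper lays out exactly these steps in the paragraphs immediately preceding the statement of Theorem~\ref{J-t4}.
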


Theorem \ref{J-t4} has been proved by Doyle \cite{doy} and,
independently, by Gaudilli\`ere and Landim \cite{gl14}. A version in
the context of diffusions is due to Pinsky \cite{p1, p2}.

\begin{remark}
\label{J-rm2}
It is also possible to transform the variational problem
\begin{equation*}
\inf_{g\in\mathfrak{C}_{0,0}(A,B)}\, \Vert \Phi_{g}-\psi\Vert ^{2}
\end{equation*}
into a supremum over flows satisfying certain identities. The
resulting variational formula does not seem to be useful.
\end{remark}

\subsection{Sector condition}

Recall from \eqref{M-9} that we denote by $L^s$ the symmetric part of
the operator $L$ in $L^2(\pi)$: $L^s = (1/2)(L+L^*)$. This operator is
self-adjoint in $L^2(\pi)$ and the corresponding Markov chain, denoted
by $\color{blue} \eta^s(t)$ is reversible.  Moreover, for every
function $f:E\to \bb R$,
\begin{equation*}
\< \, (- L^s) \, f \,,\, f \,\>_{\pi} \;=\; \< \, (- L) \, f \,,\, f
\,\>_{\pi} \;=\; D(f)\;.
\end{equation*}
Therefore, the Dirichlet form associated to the operator $L^s$,
denoted by $D^s(f)$ and defined by the leftmost term of the previous
equation, coincides with the Dirichlet form of the original process.

In particular, if we represent by $\Cap^s (A,B)$ the capacity between
two disjoint, non-empty subsets $A$, $B$ with respect to the chain
$\eta^s(t)$, by \eqref{J-13},
\begin{equation*}
\Cap^s (A,B) \;=\; \inf_{f\in\mathfrak{C}_{1,0}(A,B)}\, D_s(f)
\;=\; \inf_{f\in\mathfrak{C}_{1,0}(A,B)}\, D(f)\;.
\end{equation*}
Hence, as $h_{a,B}$ belongs to $\mf C_{1,0}(A,B)$, by \eqref{J-3} and
the previous identity,
\begin{equation}
\label{J-14}
\Cap^s (A,B) \;\le \; \Cap (A,B) \;.
\end{equation}

It turns out that a converse inequality holds if the generator
satisfies a sector condition.  Recall that a generator $L$ satisfies a
sector condition with constant $C_0$ if for every functions $f$,
$g:E\to \bb R$,
\begin{equation*}
\<\, Lf \,,\, g \,\>_\pi^2 \;\le\; C_0 \, 
\< \, (-L) \, f \,,\, f \,\>_\pi \, \<\, (-L) \, g \, ,\, g \, \>_\pi\;. 
\end{equation*}
Next result states that the capacity between two sets can be estimated
by by the symmetric capacity between these set if the generator
satisfies a sector condition

\begin{lemma}
\label{J-l2}
Suppose that the generator $L$ satisfies a sector condition with
constant $C_0$. Then, for every pair of disjoint subsets $A$, $B$ of
$E$, 
\begin{equation*}
\Cap (A,B) \;\le\; C_0 \, \Cap^s (A,B)\;.
\end{equation*}
\end{lemma}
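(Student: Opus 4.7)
The plan is to exploit the inf-sup representation of the capacity in Theorem \ref{J-t4} and to absorb the cross term in the inner variational problem using the sector condition together with Young's inequality. This will leave only the symmetric Dirichlet form, which by the reversible Dirichlet principle \eqref{J-13} applied to the symmetric chain $\eta^s(t)$ is precisely $\Cap^s(A,B)$.

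Concretely, I would start by fixing $f \in \mf C_{1,0}(A,B)$ and $g \in \mf C(A,B)$. Since the inner product is symmetric, $\<f, Lg\>_\pi = \<Lg, f\>_\pi$, so applying the sector condition with the roles of the two arguments exchanged gives
$$\<f, Lg\>_\pi^{\,2} \;=\; \<Lg, f\>_\pi^{\,2} \;\le\; C_0\, D(g)\, D(f)\;.$$
Young's inequality $2ab \le a^2+b^2$ with $a=\sqrt{C_0\, D(f)}$ and $b=\sqrt{D(g)}$ then yields
$$2\, \<f, Lg\>_\pi \;\le\; 2\sqrt{C_0\, D(f)\, D(g)} \;\le\; C_0\, D(f) \;+\; D(g) \;=\; C_0\, D(f) \;+\; \<(-L)g, g\>_\pi \;,$$
where in the last identity we used \eqref{M-8}. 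Rearranging,
$$2\, \<f, Lg\>_\pi \;-\; \<(-L)g, g\>_\pi \;\le\; C_0\, D(f)\;,$$
uniformly in $g$. Taking the supremum over $g \in \mf C(A,B)$ and invoking Theorem \ref{J-t4}, then the infimum over $f \in \mf C_{1,0}(A,B)$,
$$\Cap(A,B) \;\le\; C_0 \inf_{f \in \mf C_{1,0}(A,B)} D(f)\;.$$
Finally, by \eqref{M-8} the Dirichlet forms of $L$ and $L^s$ coincide, so the reversible Dirichlet principle \eqref{J-13} applied to the symmetric chain identifies $\inf_{f \in \mf C_{1,0}(A,B)} D(f)$ with $\Cap^s(A,B)$, giving the claimed bound.

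There is no genuine obstacle: the argument is a clean pairing of the inf-sup Dirichlet principle of Theorem \ref{J-t4} with the sector condition, the latter being tailor-made to control exactly the cross term $\<f, Lg\>_\pi$ that appears. The only micro-step worth flagging is the application of the sector condition to $(g, f)$ rather than $(f, g)$ to get the inequality in the form needed; this is immediate from symmetry of the inner product.
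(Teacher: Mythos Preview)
Your argument is correct. The paper states Lemma~\ref{J-l2} without proof, so there is no line-by-line comparison to make; however, your route via the inf--sup principle of Theorem~\ref{J-t4} is somewhat heavier than necessary. A shorter argument, almost certainly the one the paper has in mind, runs as follows: let $h=h_{A,B}$ and $h^s=h^s_{A,B}$ be the equilibrium potentials for the original and symmetric chains. Since $h-h^s$ vanishes on $A\cup B$ and $Lh=0$ on $(A\cup B)^c$, one has $\<(-L)h,\,h-h^s\>_\pi=0$, whence by \eqref{J-3}
\[
\Cap(A,B)\;=\;\<(-L)h,\,h\>_\pi\;=\;\<(-L)h,\,h^s\>_\pi\;.
\]
Applying the sector condition directly to this cross term and using \eqref{J-3} again for both chains gives $\Cap(A,B)^2\le C_0\,D(h)\,D(h^s)=C_0\,\Cap(A,B)\,\Cap^s(A,B)$, and dividing by $\Cap(A,B)>0$ yields the claim. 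This avoids Theorem~\ref{J-t4} entirely and uses only \eqref{J-3}, harmonicity, and the sector condition. Your proof has the advantage of making transparent that the sector condition is exactly what controls the gap between the nonreversible Dirichlet principle and the reversible one, but at the cost of invoking a structurally deeper result.
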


\begin{remark}
\label{J-rm6}
By equation \eqref{7-2}, the height of a valley is proportional to the
inverse of the capacity. Thus, equation \eqref{J-14} asserts that the
height of a valley in non-reversible dynamics is smaller than the one
in the reversible version. Therefore, non-reversible dynamics mix
faster than their reversible counterpart.
\end{remark}

\begin{remark}
\label{J-rm1}
When the state space $E$ is finite, the generator always satisfies a
sector condition (cf. Corollary \ref{M-s3}), but Lemma \ref{J-l2}
holds in the context of countably-infinite state spaces and diffusions.
\end{remark}

\subsection{Recurrence}

We assume in this section that the set $E$ is countably infinite. A
classical problem in the theory of Markov chains is to determine
wether a chain is recurrent or not. Potential theory is a powerful tool in
this framework.

Here is an open problem, for instance. Consider the random walk in
random environment evolving on $\bb Z^2$ as follows. For each line
$l(k) = \{ (x,k) : x\in \bb Z\}$ flip a fair coin. If it comes
head, on this line the random walk may only jump to the right, while
it may only jump to the left if it comes tail. This represented by
drawing an arrow from $(x,k)$ to $(x+1,k)$ for each $x\in \bb Z$ if
the side shown is head, or from $(x,k)$ to $(x-1,k)$ if it is tail.
Do the same thing for each column to obtain a graph as in Figure
\ref{fig6}.

\begin{figure}[h]
\centering
\begin{tikzpicture}[scale = .6]
\foreach \x in {0, ..., 5}
\draw (\x,-1) -- (\x,6);
\foreach \y in {0, ..., 5}
\draw (-1,\y) -- (6,\y);
\draw[very thick, blue, ->] (0,5) -- (0,6);
\draw[very thick, blue, ->] (1,5) -- (1,6);
\draw[very thick, blue, ->] (2,0) -- (2,-1);
\draw[very thick, blue, ->] (3,5) -- (3,6);
\draw[very thick, blue, ->] (4,0) -- (4,-1);
\draw[very thick, blue, ->] (5,0) -- (5,-1);
\draw[very thick, blue, ->] (0,0) -- (-1,0);
\draw[very thick, blue, ->] (5,1) -- (6,1);
\draw[very thick, blue, ->] (5,2) -- (6,2);
\draw[very thick, blue, ->] (0,3) -- (-1,3);
\draw[very thick, blue, ->] (5,4) -- (6,4);
\draw[very thick, blue, ->] (0,5) -- (-1,5);
\draw[very thick,red,->] (2,3) -- (2,2);
\draw[very thick,red,->] (2,3) -- (1,3);
\end{tikzpicture}
\caption{A random walk in random environment evolving on $\bb Z^2$. At
  the tail of the two red arrows, the random walk may only jump, with
  equal probability, to the left or to the bottom}
\label{fig6}
\end{figure}
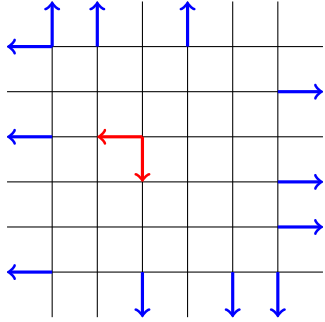 

As illustrated in Figure \ref{fig6}, each point $(x,y)$ in $\bb Z^2$
is the tail of two arrows. Denote by $\eta(t)$ the random walk on $\bb
Z^2$ which waits a mean-one exponential time ate each site of $\bb
Z^2$ and which jumps with equal probability along one of the two arrows.

It is clear that for almost all environments the random walk is
irreducible and that the uniform measure is stationary [because the
flow formed by the arrows is divergence-free]. It is an open problem
to determine if this random walk is almost-surely recurrent or
transient. \smallskip

In view of this example, consider a chain $\eta(t)$ defined on a
countably infinite space $E$ which is irreducible and assume that
there exists a stationary state, denoted by $\pi$. Note that $\pi$ may
not be summable, as in the example above. But we assume that $\pi$ is
explicitly known because all estimates below involve $\pi$. This is
clearly a strong hypothesis and in many cases a stationary state is
not known.

Recall that the Markov chain $\eta(t)$ is recurrent if and only if
there exist a configuration $\eta\in E$ such that $\bb P_\eta [
H^+_\eta = \infty ] =0$. There is nothing special about $\eta$. If
this identity holds for some configuration $\eta$, due to the
irreducibility, it holds for every.  Let $(B_n : n\ge 1)$ be a
sequence of \emph{finite} subsets of $E$ containing $\eta$ and
increasing to $E$, $\eta\in B_n\subset B_{n+1}$, $\cup_n B_n =
E$. Then,
\begin{equation*}
\bb P_\eta \big [\, H^+_\eta = \infty \, \big] \;=\;
\lim_{n\to\infty} \bb P_\eta \big [ H_{B_n^c} < H_\eta^+ \big] \;.
\end{equation*}

By definition \eqref{J-1} of the capacity, for any finite set $B$
containing the site $0$,
\begin{equation*}
\frac 1{M (\eta)} \, \bb P_\eta \big [ H_{B^c} < H_\eta^+ \big] \;=\; 
\Cap (\eta, B^c)\;,
\end{equation*}
where $M(\xi) = \pi(\xi)\, \lambda(\xi)$, $\lambda(\xi)$ being the
holding rate at $\xi$. Hence, the Markov chain $\eta(t)$ is recurrent
if and only if there exist a configuration $\eta\in E$ and a sequence
of finite subsets $B_n$ containing $\eta$ and increasing to $E$ such that
\begin{equation}
\label{J-19}
\lim_{n\to\infty} \Cap (\eta, B_n^c) \;=\; 0\;.
\end{equation}
The proof of the recurrence is thus reduced to the estimation of the
capacity between a configuration and the complement of a finite set. 

Of course, if condition \eqref{J-19} holds for some configuration
$\eta\in E$ and for some sequence of finite subsets $B_n$ containing
$\eta$ and increasing to $E$, it also holds for all configurations
$\xi\in E$ and for all sequences of finite subsets $C_n$ containing
$\xi$ and increasing to $E$.

The next two results, taken from \cite{gl14}, follow from the previous
observation and the estimate \eqref{J-14} and Lemma \ref{J-l2}. Recall
from the previous subsection that $\eta^s(t)$ stands for the
reversible version of the process $\eta(t)$ whose generator is given
by $L^s$ introduced in \eqref{M-9}.

\begin{theorem}
\label{J-t5}
Let $\eta(t)$ be a irreducible Markov chain on a countable state space
$E$ which admits a stationary measure. The process is transient if so
is the Markov chain $\eta^s(t)$.
\end{theorem}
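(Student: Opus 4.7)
The plan is straightforward given the machinery developed in the preceding subsections, and the main observation is that Theorem \ref{J-t5} is essentially a two-line consequence of inequality \eqref{J-14} once the recurrence criterion \eqref{J-19} is in hand.

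First I would record the criterion for transience derived from \eqref{J-19}. Since $\Cap(\eta, B_n^c)$ is monotonically decreasing as $B_n$ increases (because the capacity is monotone in the second coordinate by \eqref{J-6} applied to the decreasing sequence $B_n^c$), the limit $\lim_n \Cap(\eta,B_n^c)$ exists. Thus $\eta(t)$ is transient if and only if there exists $\eta\in E$ and some (equivalently, every) sequence $B_n\uparrow E$ of finite sets containing $\eta$ for which $\lim_{n\to\infty}\Cap(\eta,B_n^c)>0$. The same characterization applies to $\eta^s(t)$ with $\Cap^s$ in place of $\Cap$.

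Second, I would verify that the hypotheses required to apply the criterion to $\eta^s(t)$ are in force. The measure $\pi$ is stationary for $L$ by assumption, and also for $L^*$ since $\sum_\eta \pi(\eta)R^*(\eta,\xi)=\sum_\eta \pi(\xi)R(\xi,\eta)=\pi(\xi)\lambda(\xi)$; therefore $\pi$ is stationary (in fact reversible) for $L^s=(L+L^*)/2$. Irreducibility of $\eta^s(t)$ follows from that of $\eta(t)$, as $R^s(\eta,\xi)>0$ whenever $R(\eta,\xi)>0$.

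Third, I would conclude. Assume $\eta^s(t)$ is transient. By the criterion above there exist $\eta\in E$ and a sequence $B_n\uparrow E$ of finite sets containing $\eta$ such that $\lim_{n\to\infty}\Cap^s(\eta,B_n^c)>0$. Inequality \eqref{J-14} gives
\begin{equation*}
\Cap(\eta,B_n^c)\;\ge\;\Cap^s(\eta,B_n^c)
\end{equation*}
for every $n$, whence $\lim_{n\to\infty}\Cap(\eta,B_n^c)>0$, and the criterion applied in the other direction yields that $\eta(t)$ is transient. There is no serious obstacle here; the only conceptual point worth underlining is that \eqref{J-14} says the symmetrized chain is at least as recurrent as the original, which is exactly why transience of the symmetrized chain is the stronger hypothesis and implies transience of $\eta(t)$ rather than the reverse.
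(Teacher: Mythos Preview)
Your proposal is correct and follows exactly the route indicated in the paper: combine the recurrence criterion \eqref{J-19} with the inequality \eqref{J-14}. The additional checks you include (stationarity of $\pi$ for $L^s$, irreducibility of $\eta^s(t)$, and existence of the limit by monotonicity) are appropriate and make the argument self-contained.
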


\begin{theorem}
\label{J-t6}
Let $\eta(t)$ be a irreducible Markov chain on a countable state
space $E$ which admits a stationary measure. The process is recurrent
if its generator satisfies a sector condition and if the Markov
chain $\eta^s(t)$ is recurrent.
\end{theorem}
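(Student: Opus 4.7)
The plan is to reduce recurrence to the decay of capacities and then transfer this decay from the symmetric chain to the original one using the sector condition. All the pieces are in place: equation \eqref{J-19} gives a capacity criterion for recurrence, and Lemma \ref{J-l2} supplies the comparison between $\Cap$ and $\Cap^s$.

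Concretely, I would fix an arbitrary configuration $\xi_0\in E$ and an increasing sequence $(B_n:n\ge 1)$ of finite subsets of $E$ with $\xi_0\in B_n$ and $\cup_n B_n=E$. By the equivalence established just before \eqref{J-19}, the recurrence of the reversible chain $\eta^s(t)$ is equivalent to
\begin{equation*}
\lim_{n\to\infty}\Cap^s(\xi_0, B_n^c)\;=\;0\;.
\end{equation*}
By hypothesis this holds. Invoking Lemma \ref{J-l2} with the sector constant $C_0$ furnished by the sector condition assumption, one obtains
\begin{equation*}
\Cap(\xi_0, B_n^c)\;\le\; C_0\,\Cap^s(\xi_0, B_n^c)\;,
\end{equation*}
so that $\Cap(\xi_0, B_n^c)\to 0$ as $n\to\infty$. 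Applying \eqref{J-19} in the reverse direction, this forces $\bb P_{\xi_0}[H^+_{\xi_0}=\infty]=0$, and since the chain is irreducible, recurrence of $\eta(t)$ follows.

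The main subtlety, flagged in Remark \ref{J-rm1}, is that Lemma \ref{J-l2} and the Dirichlet principle \eqref{J-13} were stated on a finite state space, whereas here $E$ is countably infinite. The standard way around this is to observe that $\Cap(\xi_0,B_n^c)$ is determined by the trajectory of the chain up to the first exit from $B_n$, hence coincides with the corresponding capacity for the sub-Markov chain obtained by killing $\eta(t)$ on $B_n^c$; this killed chain lives on the finite set $B_n$ and inherits a sector condition with the same constant $C_0$ (since the Dirichlet form, the adjoint, and the scalar product are all defined pointwise). On this finite chain, Lemma \ref{J-l2} applies verbatim, yielding the inequality between $\Cap$ and $\Cap^s$ restricted to $B_n$, which is exactly what the argument needs. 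Granted this routine but essential reduction, the proof is just the three-line comparison above.
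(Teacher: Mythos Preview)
Your proposal is correct and follows essentially the same approach as the paper: the paper states that Theorem \ref{J-t6} follows from the capacity criterion \eqref{J-19} together with Lemma \ref{J-l2}, which is precisely the three-line comparison you outline. Your additional remark on reducing to the finite killed chain to justify applying Lemma \ref{J-l2} in the countable setting is a welcome clarification of a point the paper leaves implicit (cf.\ Remark \ref{J-rm1}).
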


It follows from these results, cf. \cite{gl14}, that a irreducible
Markov chain on a countable state space $E$ which admits a stationary
measure is recurrent if the Markov chain $\eta^s(t)$ is recurrent and
if
\begin{equation*}
\sum_{(\eta,\xi)\in \mf E} \frac{c_a(\eta,\xi)^2}
{c_s(\eta,\xi)}  \;<\; \infty\;,
\end{equation*}
where the symmetric conductance $c_s$ has been introduced in
\eqref{J-20}, and the asymmetric one is given by $c_a(\eta,\xi) =
(1/2) \, [\, c(\eta,\xi) - c(\xi, \eta)\,]$.

Benjamini and Hermon \cite{Her17, BenHer17} used Theorem \ref{J-t5} to
investigate the recurrence of non-backtracking random walks and to
show that for every transient, nearest-neighbor Markov chain on a
graph, the graph formed by the vertices it visited and edges it
crossed is a.s. recurrent for simple random walk.

\section{Examples}
\label{sec12}

We present in this section some dynamics whose metastable behavior has
been derived with the arguments presented in the article.

\subsection{Random walks in a potential field}

We describe the reversible version of the dynamics. The non-reversible
one is obtained by replacing $2$-cycles, in the terminology of
Subsection \ref{12.2}, by $k$-cycles.

Let $\Xi$ be an open and bounded subset of $\bb R^d$, and denote by
$\partial \,\Xi$ its boundary, which is assumed to be a smooth
manifold. Fix a twice continuously differentiable function
$F:\Xi\cup \partial \,\Xi\to\bb R$.  We assume that the second partial
derivatives of $F$ are Lipschitz continuous; that all the eigenvalues
of the Hessian of $F$ at the critical points which are local minima
are \emph{strictly} positive; that the Hessian of $F$ at the critical
points which are not local minima or local maxima has one strictly
negative eigenvalue, all the other ones being strictly positive. In
dimension $1$ this assumption requires the second derivative of $F$ at
the local minima to be strictly negative. Finally, we assume that for
every $\bs x\in \partial\, \Xi$, $(\nabla F)(\bs x) \cdot \bs n (\bs
x) > 0$, where $\bs n (\bs x)$ represents the exterior normal to the
boundary of $\Xi$, and $\bs x\cdot\bs y$ the scalar product of $\bs
x$, $\bs y\in\bb R^d$. This hypothesis guarantees that $F$ has no
local minima at the boundary of $\Xi$.

Denote by $\Xi_N$ the discretization of $\Xi$: $\Xi_N = \Xi \cap
(N^{-1} \bb Z^d)$, $N\ge 1$, where $N^{-1} \bb Z^d = \{\bs k/N : \bs
k\in \bb Z^d\}$. The elements of $\Xi_N$ are represented by the
symbols $\bs x=(\bs x_1, \dots, \bs x_d)$, $\bs y$ and $\bs z$.  Let
$\mu_N$ be the probability measure on $\Xi_N$ defined by
\begin{equation*}
\mu_N(\bs x) \;=\; \frac 1{Z_N} e^{-N F(\bs x)}\;,\quad\bs x\in
\Xi_N\;, 
\end{equation*}
where $Z_N$ is the partition function $Z_N = \sum_{\bs x\in \Xi_N}
\exp\{-N F(\bs x)\}$.  Let $\{\eta_N(t) : t\ge 0\}$ be the continuous-time
Markov chain on $\Xi_N$ whose generator $\ms L_N$ is given by
\begin{equation}
\label{v54}
(\ms L_N f)(\bs x) \;=\; \sum_{\substack{\bs y \in\Xi_N\\
\Vert\bs y - \bs x \Vert = 1/N}}  e^{-(1/2) N [F(\bs y) - F(\bs x)]} 
\, [f(\bs y) - f(\bs x)]\;,
\end{equation}
where $\Vert \,\cdot\,\Vert$ represents the Euclidean norm of $\bb
R^d$.  The rates were chosen for the measure $\mu_N$ to be reversible
for the dynamics. 

We restrict our atention here to the evolution among the shallowest
valleys. One can infer from this discussion the general case which can
be found in \cite{lmt2015}.  Denote by $\mf M$ the set of local minima
and by $\mf S$ the set of saddle points of $F$ in $\Xi$.  Let $\mf
S_1$ be the set of the lowest saddle points:
\begin{equation*}
\mf S_1 \;=\; \Big\{ \bs z\in \mf S : F (\bs z) = \min\{ F (\bs y) :
\bs y\in \mf S\}\, \Big\}\;.
\end{equation*}
We represent by $\bs z^{1}, \dots , \bs z^{n}$ the elements of
$\mf S_1$, $\mf S_1 = \{ \bs z^{1}, \dots , \bs z^{n} \}$. Denote by
$H$ the height of the saddle points in $\mf S_1$:
\begin{equation*}
H\;=\; F(\bs z^{1}) \;.
\end{equation*}

Let $\widehat{\varOmega}$ be the level set of $\Xi$ defined by
\begin{equation*}
\widehat{\varOmega} \;=\; \big\{\bs x\in \Xi : F(\bs x) 
\le H \big\} \;.
\end{equation*}
The set $\widehat{\varOmega}$ can be written as a disjoint union of
connected components: $\widehat{\varOmega} = \cup_{1\le j\le
  \varkappa} \widehat{\varOmega}_j$, where $\widehat{\varOmega}_j \cap
\widehat{\varOmega}_k = \varnothing$, $j\not = k$, and where each set
$\widehat{\varOmega}_j$ is connected. Some connected component may not
contain any saddle point in $\mf S_1$, and some may contain more than
one saddle point. Denote by $\varOmega_j$, $1\le j\le m$, the
connected components $\widehat{\varOmega}_{j'}$ which contain a point
in $\mf S_1$.  

Each component $\varOmega_j$ is a union of valleys,
$\varOmega_j = W_{j,1} \cup \cdots \cup W_{j,m_j}$. The sets
$W_{j,a}$ are defined as follows. Let $\mathring{\varOmega}_j$ be the
interior of $\varOmega_j$. Each set $W_{j,a}$ is the closure of a
connected component of $\mathring{\varOmega}_j$.  The intersection of
two valleys is a subset of the set of saddle points:
$W_{j,a} \cap W_{j,b} \subset \mf S_1$. Figure \ref{fig7} illustrates
the valleys of two connected components. 

\begin{figure}[htb]
  \centering
  \def\svgwidth{350pt}
  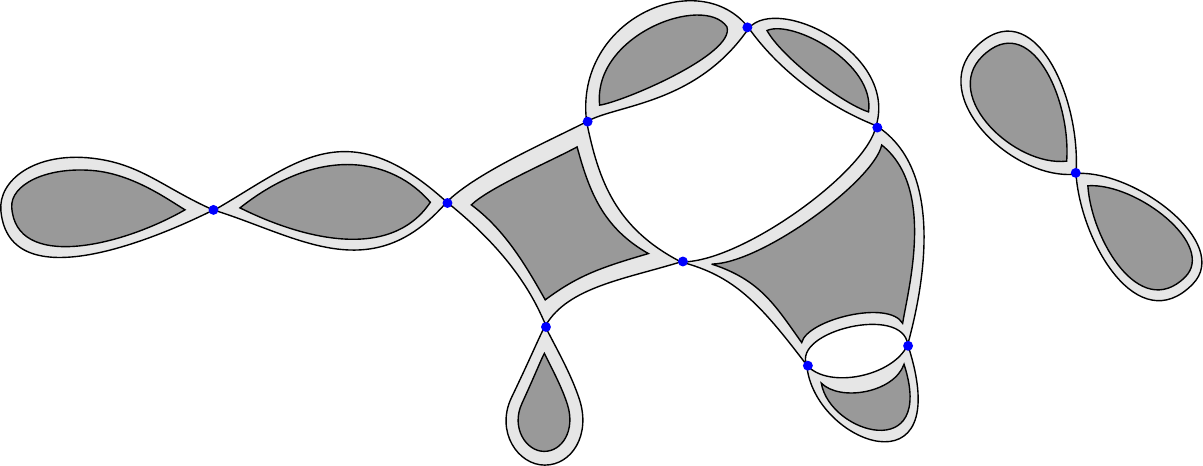
  \caption{Some valleys which form two connected components $\Omega_1$
    and $\Omega_2$. The blue dots represent the saddle points and the
    gray regions the points $\bs x$ in the valleys such that $F(\bs x)
    < H - \epsilon$.}
\label{fig7}
\end{figure}

Fix $1\le j\le m$ and a connected component $\varOmega =
\varOmega_j$. Let $S = \{1, \dots, \ell\}$ denote the set of the
indices of the valleys forming the connected component $\varOmega$:
$\varOmega = W_{1} \cup \cdots \cup W_{\ell}$.  Recall that $F(\bs z)
= H$, $\bs z\in \mf S_1$.  For $\epsilon>0$, $1\le a\le \ell$, let
$W^\epsilon_a = \{\bs x \in W_a : F(\bs x) < H - \epsilon\}$, and let
\begin{equation*}
\ms E^a_N \;=\; W^\epsilon_a \,\cap\, \Xi_N\;,\;\; 1\le a\le
\ell\;.
\end{equation*}
Each valley $W_{a}$ contains exactly one local minimum of $F$, denoted
by $\bs m_{a}$. Let $h_a = F(\bs m_{a})$.

Let $\hat \theta_{a} = H - h_a >0$, $a\in S$, be the depth of the
valley $W_{a} $.  The depths $\hat \theta_{a}$ provide the time-scale
at which a metastable behavior is observed. Let
$\theta_{1} < \theta_{2} < \cdots < \theta_{p}$, $p\le \ell$, be the
increasing enumeration of the sequence $\hat \theta_{a}$,
$1\le a\le \ell$:
\begin{equation*}
\{\hat \theta_{1}, \dots, \hat \theta_{\ell}\} \;=\; 
\{\theta_{1}, \dots, \theta_{p}\}\;.
\end{equation*}

The chain exhibits a metastable behavior on $p$ different time scales
in the set $\varOmega$. Let $T_q = \{a\in S : \hat \theta_{a} =
\theta_q\}$, $1\le q\le p$, so that $T_1, \dots, T_p$ forms a
partition of $S$, and let
\begin{equation*}
S_q \;=\; T_q \cup \cdots \cup T_p\;,\;\; 1\le q\le p\;.
\end{equation*}
Define the projection $\Psi^q_N : \Xi_N\to S_q \cup \{ 0\}$, $1\le
q\le p$, as
\begin{equation*}
\Psi^q_N (\bs x) \;=\; \sum_{a\in S_q} a \, \chi_{\ms E^{a}_N} (\bs x) \;.
\end{equation*}
Note that $\Psi^q_N(\bs x)=0$ for all points $\bs x$ which do not
belong to $\cup_{a\in S_q} \ms E^{a}_N$.  Denote by $X^q_N(t)$ the
projection of the Markov chain $\eta_N(t)$ by $\Psi^q_N$:
\begin{equation*}
X^q_N(t) \;=\; \Psi^q_N (X_N(t))\;.
\end{equation*}

The theory presented in Sections \ref{sec3}--\ref{sec7} yields the
existence, for each $1\le q\le p$, of a time-scale $\beta^q_N$ and a
$S_q$-valued Markov chain $\bs X^q(t)$ with the following property.
For each $a\in S_q$ and sequence of configurations $\bs x_N$ in
$\ms E^{a}_N$, starting from $\bs x_N$, the finite-dimensional
distributions of the projected process
$\bs X^q_N(t) = X^q_N(t \beta^q_N)$ converge to the ones of
$\bs X^q(t)$. The time-scales $\beta^q_N$ can be explicitly computed
and are related to the capacity between valleys.

We refer to \cite{lmt2015, ls2016, ls2018, lms17} for more details.
This model is at the origin of the study of metastability from a
dynamical point of view.  The first results can be traced back at
least to Hood \cite{h78}, van't Hoff \cite{hoff96}, Arrhenius
\cite{a89}, Eyring \cite{e35} and Kramers \cite{kra40}. We refer to
the recent books by Olivieri and Vares \cite{ov05} and Bovier and den
Hollander \cite{bh15} and to the review by Berglund \cite{b13} for
references and alternative derivations of these results.

\subsection{Spin dynamics}

Since the seminal paper by Cassandro, Galves, Olivieri and Vares
\cite{cgov84}, which introduced the pathwise approach to
metastability, the metastable behavior of many spin dynamics have been
derived in different ways.  We do not review here the main results,
but just illustrate the theory developed in the previous sections with
one example. We again refer the reader to \cite{ov05, bh15} for a
complete list of references on the subject.

Denote by $\Lambda_L = \lbrace 1,\dots,L \rbrace^2$ the
two-dimensional discrete torus with $L^2$ elements, and let $\Omega_L
= \lbrace -1, 0, 1 \rbrace^{\Lambda_L}$. Elements of $\Omega_L$ are
represented by the Greek letter $\sigma$. For $x \in \Lambda_L$,
$\sigma(x) \in \{-1,0,1\}$ stands for the value at $x$ of the
configuration $\sigma$ and is called the spin at $x$ of $\sigma$.

The Blume--Capel model was introduced in \cite{Blu66,Cap66} to study
the $^3$He --$^4$He phase transition. One can think as a system of
particles with spins. The value $\sigma(x)=0$ corresponds to the
absence of particles, while $\sigma(x)=\pm 1$ to the presence of a
particle with spin equal to $\pm 1$.

Fix an external field $h\in\bb R$, a magnetic field $\lambda\in\bb
R$, and denote by $\bb H : \Omega_L \rightarrow \bb R$ the Hamiltonian
given by
\begin{equation*} 
\bb H (\sigma) \;=\; \sum \left( \sigma(y) - \sigma(x) \right)^2 
\;-\; h \sum_{x \in \Lambda_L} \sigma(x)
\;-\; \lambda \sum_{x \in \Lambda_L} \sigma(x)^2\;,
\end{equation*}
where the first sum is carried over all unordered pairs of
nearest-neighbor sites of $\Lambda_L$.

Denote by $\mu_\beta$ the Gibbs measure associated to the Hamiltonian
$\bb H$ at inverse temperature $\beta$. This is the probability
measure on $\Omega_L$ given by
\begin{equation} 
\label{N-2}
\mu_\beta(\sigma) = \frac{1}{Z_\beta} e^{-\beta
    \bb H(\sigma)}, 
\end{equation} 
where $Z_\beta$ is the partition function, the normalization constant
which turns $\mu_\beta$ into a probability measure. 

We refer to \cite{co96} for a description of the ground states, the
configurations which minimize the Hamiltonian $\bb H$, according to
the values of the parameters $h$ and $\lambda$.  In all cases, the
ground states form a subset of the set $\{{\bf -1}, {\bf 0}, {\bf +1}
\}$, where ${\bf -1}, {\bf 0}, {\bf +1}$ represent the configurations
of $\Omega_L$ with all spins equal to $-1,0,+1$, respectively.

The continuous-time Metropolis dynamics at inverse temperature $\beta$
is the Markov chain on $\Omega_L$, denoted by $\lbrace \sigma_t : t
\geq 0 \rbrace$, whose infinitesimal generator $L_\beta$ acts on
functions $f : \Omega_L \rightarrow \bb R$ as
\begin{equation*}
\begin{aligned} 
(L_\beta f)(\sigma) &= \sum_{x \in \Lambda_L} R_\beta (\sigma,\sigma^{x,+}) 
\, [ f(\sigma^{x,+}) - f(\sigma) ] \\
&+ \sum_{x \in \Lambda_L} R_\beta (\sigma,\sigma^{x,-}) \, [ f(\sigma^{x,-}) - f(\sigma) ]\;.  
\end{aligned}
\end{equation*}
In this formula, $\sigma^{x,\pm}$ represents the configuration obtained
from $\sigma$ by modifying the spin at $x$ as follows,
\begin{equation*}
\sigma^{x,\pm} (z) := 
\begin{cases}
\sigma (x) \pm 1 \ \textrm{mod}\ 3 & \textrm{ if \ } z=x\;, \\
\sigma (z) & \textrm{ if \ } z\neq x\;, 
\end{cases}
\end{equation*}
where the sum is taken modulo $3$, and the jump rates $R_\beta$ are given
by
\begin{equation*}
R_\beta(\sigma,\sigma^{x,\pm}) \, =\, 
\exp\Big\{ -\beta \, \big[\mathbb H (\sigma^{x ,\pm}) 
- \mathbb H (\sigma) \big]_+\Big\}\, , \quad x\in \Lambda_L\, ,
\end{equation*}
where $a_+$, $a\in \bb R$, stands for the positive part of $a$: $a_+ =
\max\{a, 0\}$.  

The Gibbs measure $\mu_\beta$ introduced in \eqref{N-2} satisfies the
detailed balance conditions \eqref{M-10}, and is therefore reversible
for the dynamics.

Assume from now on that the chemical potential vanishes, $\lambda =0$,
and that the magnetic field $h$ is small and positive, $0<h<2$.  In
this situation, the configurations ${\bf -1}$, ${\bf 0}$ are local
minima of the Hamiltonian, while the configuration ${\bf +1}$ is a
global minimum. Moreover, $\bb H(\textbf{0}) < \bb
H(\textbf{-1})$. 

Assume that $2/h$ is not an integer and let $n_0 = \lfloor 2/h
\rfloor$, where $\lfloor a \rfloor$ stands for the integer part of $a
\in \bb R_+$.  Denote by $\mf R_c$ the set of configurations with $n_0
(n_0 +1) + 1$ $0$-spins forming, in a background of $-1$-spins, a $n_0
\times (n_0 +1)$ rectangle with an extra $0$-spin attached to the
longest side of this rectangle. This means that the extra $0$-spin is
surrounded by three $-1$-spins and one $0$-spins which belongs to the
longest side of the rectangle.

It is proved in \cite{ll18, llm-18} that, as the temperature vanishes,
starting from $\bf -1$ the process visits the set $\mf R_c$ before
hitting $\bf 0$ or $\bf +1$:
\begin{equation*}
\lim_{\beta \to \infty} \bb P_{\bf -1} [ H_{{\mf R}_c} < H_{\{{\bf 0},
  {\bf +1} \}}] \;=\; 1 \;.  
\end{equation*}

The set $\mf R_c$ represents the energetic barrier which has to be
surmounted to pass from $\bf -1$ to $\{{\bf 0}, {\bf +1} \}$. 
Fix $\xi \in \mf R_c$, let
\begin{equation*}
\Delta \;=\; \bb H(\xi) \,-\,  \bb H({\bf -1})  \;=\; 4(n_0+1) -
[n_0(n_0+1) +1] \;,
\end{equation*}
and let $\theta_\beta$ be given by
\begin{equation*}
\theta_\beta \;=\; \frac {\mu_\beta({\bf -1})}
{\Cap({\bf -1}, \{ {\bf 0}, {\bf +1}\})} 
\;=\; \big[ 1+ o_\beta(1) \big]\, \frac{3}{4(2n_0+1)}\,
\frac 1{|\Lambda_L|} \, e^{\Delta\, \beta}\;,
\end{equation*}
where $o_\beta(1)$ is a remainder which vanishes as $\beta\to\infty$.

Fix $\mf d \not = 0$, $\pm 1$, and denote by $\Psi : \Omega_L\to
\{-1,0, 1, \mf d \}$ the projection defined by $\Psi({\bf -1}) = -1$,
$\Psi({\bf 0}) = 0$, $\Psi({\bf +1}) = +1$, and $\Psi(\sigma) = \mf
d$, otherwise. The main results in \cite{ll18, llm-18} state that,
starting from $\bf -1$, the finite-dimensional distributions of the
coarse-grained chain $\bs X_\beta(t) = \Psi\big(\sigma(\theta_\beta
t)\big)$ converge to the ones of the $\{-1,0, 1\}$-valued,
continuous-time Markov chain $\bs X(t)$ in which $1$ is an absorbing
state, and whose jump rates are given by
\begin{equation*}
r(-1,0) \;=\;  r(0,1) \;=\; 1\;, \quad
r(-1,1) \;=\; r(0,-1)\;=\; 0\;. 
\end{equation*}

The metastable behavior of this model has been explored by Cirillo and
Olivieri \cite{co96}, Manzo and Olivieri \cite{mo01}, and more
recently by Cirillo and Nardi \cite{cn13}, and Cirillo, Nardi and
Spitoni \cite{cns17}. The mean-field Potts model is another spin
dynamics in which the spin may take more than two values. It has been
examine recently in \cite{ls2016} and by Nardi and Zocca in
\cite{nz17}.

\subsection{Zero range processes}

Denote by $\bb N$ the set of non-negative integers, $\bb N
=\{0,1,2,...\}$, by $\bb T_L$, $L\ge 1$, the discrete, one-dimensional
torus with $L$ points, and by $\eta$ the elements of $\bb N^{\bb T_L}$
called configurations. The total number of particles at $x\in \bb T_L$
for a configuration $\eta \in \bb N^{\bb T_L}$ is represented by
$\eta_x$. Let $E_N$, $N\ge 1$, be the set of configurations with $N$
particles:
\begin{equation*}
E_{N}\;:=\;\big\{ \eta\in\bb N^{\bb T_L} : \sum_{x\in \bb T_L} \eta_x
= N \big\}\;.
\end{equation*}

Fix $\alpha>1$, and define $g:\bb N\to \bb R_+$ as
\begin{equation*}
g(0)=0\; , \quad g(1)=1  \quad
\textrm{and}\quad g(n)=\frac{a(n)}{a(n-1)}\;, \;\; n\ge 2\;,  
\end{equation*}
where $a(0)=1$, $a(n)=n^{\alpha}$, $n\ge 1$.  In this way,
$\prod_{i=1}^{n}g(i)=a(n)$, $n\ge 1$, and $\{ g(n) : n\ge 2\}$ is a
strictly decreasing sequence converging to $1$ as $n\uparrow\infty$.

Fix $1/2\le p\le 1$, and denote by $p(x)$ the transition probability
given by $p(1)=p$, $p(-1)=1-p$, $p(x)=0$, otherwise.  Let
$\sigma^{x,y}\eta$ be the configuration obtained from $\eta$ by moving
a particle from $x$ to $y$:
\begin{equation*}
(\sigma^{x,y}\eta)_z\;=\;\left\{
\begin{array}{ll}
\eta_x-1 & \textrm{for $z=x$} \\
\eta_y+1 & \textrm{for $z=y$} \\
\eta_z & \rm{otherwise}\;. \\
\end{array}
\right.  
\end{equation*}

The nearest-neighbor, zero-range process associated to the jump rates
$\{g(k) : k\ge 0\}$ and the transition probability $p(x)$ is the
continuous-time, $E_N$-valued Markov process $\{\eta_N(t) : t\ge 0\}$
whose generator $L_N$ acts on functions $f: E_N\to\bb R$ as
\begin{equation*}
(L_N f) (\eta) \;=\; \sum_{\stackrel{x,y\in \bb T_L}{x\not = y}}
g(\eta_x) \, p(y-x) \, \big\{ f(\sigma^{x,y}\eta) - f(\eta) \big\} \;.
\end{equation*}
Hence, if there are $k$ particles at site $x$, at rate $p g(k)$, resp.
$(1-p) g(k)$, one of them jumps to the right, resp. left. Since $g(k)$
decreases to $1$ as $k\to\infty$, the more particles there are at some
site $x$ the slower they jump, but the rate remains bounded below by
$1$. 

This Markov process is irreducible. The stationary probability
measure, denoted by $\pi_N$, is given by
\begin{equation*}
\pi_N(\eta) \;=\; \frac {N^{\alpha}} {Z_{N}} \, 
\prod_{x\in \bb T_L} \frac{1}{ a(\eta_x)} \;,
\end{equation*}
where $Z_{N}$ is the normalizing constant.

Fix a sequence $\{\ell_N : N\ge 1\}$ such that $1\ll \ell_N \ll N$,
and let $\ms E^x_N$, $x\in \bb T_L$, be the set of configurations in
which all but $\ell_N$ particles sit at $x$:
\begin{equation*}
\ms E^x_N  \;:=\; \Big\{\eta\in E_N : \eta_x \ge N - \ell_N \Big\}\;.
\end{equation*}
According to equation (3.2) in \cite{bl2}, for each $x\in \bb
T_L$, $\pi_N(\ms E^x_N) \to 1/L$ as $N\uparrow\infty$.

Denote by $\eta^{\ms E_N}(t)$ the trace of the process $\eta_N(t)$ on
$\ms E_N = \cup_x \ms E^x_N$, and let $\Psi_N:\ms E_N \mapsto S$ be
given by
\begin{equation*}
\Psi_N(\eta) \;=\; \sum_{x\in S} x\; \chi_{_{ \ms E^x_N}}
(\eta)\;.
\end{equation*}

Under some further conditions on the sequence $\ell_N$, it can be
proven, following the method presented in Sections
\ref{sec3}--\ref{sec7}, that the time-rescaled coarse-grained process
$\bs X_N(t) = X_N(t N^{1+\alpha}) = \Psi_N(\eta^{\ms E_N}(t
N^{1+\alpha}))$ converges to a $S$-valued Markov chain $\bs X(t)$.
The jump rates of the reduced model $\bs X(t)$ are proportional to the
capacity of the random walk on the discrete torus with $L$ points
which jumps to the right with probability $p$ and to the left with
probability $1-p$. Moreover, in the time scale $N^{1+\alpha}$ the time
spent by the process $\eta_N(t)$ on $\Delta_N = E_N \setminus \ms E_N$
is negligible.

This model has been introduced by Evans \cite{E-00} Godr\`eche
examined the dynamics of the condensate in \cite{G-03}.  Its
metastable behavior has been derived in \cite{bl3, l2014, s2018}. The
reduced model is a $\bb T_L$-valued Markov chain whose jump rates are
proportional to the capacities of the underlying random walk
associated to $p(\cdot)$.

The nucleation phase of this model has been described in \cite{BJL17}.
Armend\'ariz, Grosskinsky and Loulakis \cite{agl2017} considered the
case in which the total number of sites increases with the number of
particles, keeping a constant density. In this situation, the reduced
model is a L\'evy-process.

Grosskinsky, Redig and Vafayi \cite{GRV-13}, Cao, Chleboun and
Grosskinsky \cite{CCG-14} and Bianchi, Dommers and Giardin{\`a}
\cite{bdg17} proved the metastable behavior of the inclusion process,
another interacting particle system which exhibits condensation.

Static aspects of condensation for this zero-range process and other
dynamics have been examined by Jeon, March and Pittel \cite{JMP-00},
Grosskinsky, Sch\"utz and Spohn \cite{GSS-03}, Armend\'ariz and
Loulakis \cite{AL-09, AL-11}, Chleboun and Grosskinsky \cite{CG-10,
  CG-14, CG-15}, Grosskinsky, Redig and Vafayi \cite{GRV-11},
Godr\`eche and Luck \cite{GL-12}, Armend\'ariz, Grosskinsky and
Loulakis \cite{agl2013}, Fajfrov\'a, Gobron and Saada
\cite{FajGobSaa16}.

In some dynamics the condensate is formed instantaneously as the size
of the system grows, Waclaw and Evans \cite{WE-12}, Chau1, Connaughton
and Grosskinsky \cite{CCG-15}.

\subsection{Random walks among random traps}

Let $(G_N : N\ge 1)$, $G_N = (V_N, E_N)$, be a sequence of possibly
random, finite, connected graphs defined on a probability space
$(\Omega, \mc F, \bb P)$, where $V_N$ represents the set of vertices
and $E_N$ the set of unoriented edges. Assume that the number of
vertices, $|V_N|$, converges to $+\infty$ in $\bb P$-probability.
To fix ideas, one can consider the $d$-dimensional discrete torus with
$N^d$ points.

Assume that on the same probability space $(\Omega, \mc F, \bb P)$, we
are given an i.i.d collection of random variables $\{W^N_j : j\ge
1\}$, $N \geq 1$, independent of the random graph $G_N$ and whose
common distribution belongs to the basin of attraction of an
$\alpha$-stable law, $0<\alpha <1$. Hence, for all $N\ge 1$ and $j\ge
1$,
\begin{equation*}
\bb P[W^N_j>t] \;=\; \frac{L(t)}{t^\alpha}\;, \quad t>0\;,
\end{equation*}
where $L$ is a slowly varying function at infinity.

For each $N\ge 1$, re-enumerate in decreasing order the weights
$W^N_1, \dots , W^N_{|V_N|}$: $\hat W^N_j = W^N_{\sigma(j)}$, $1\le
j\le |V_N|$ for some permutation $\sigma$ of the set $\{1, \dots,
|V_N|\}$ and $\hat W^N_j \ge \hat W^N_{j+1}$ for $1\le j < |V_N|$. Let
$(x^N_1, \dots, x^N_{|V_N|})$ be a random enumeration of the vertices
of $G_N$ and define $W^N_{x^N_j} = \hat W^N_j$, $1\le j\le |V_N|$,
turning $G_N = (V_N, E_N, W^N)$ into a finite, connected,
vertex-weighted graph.

Consider for each $N \geq 1$, a continuous-time random walk
$\{\eta_N(t) : t\ge 0\}$ on $V_N$, which waits a mean $W^N_x$
exponential time at site $x$, after which it jumps to one of its neighbors
with uniform probability. The generator $\ms L_N$ of this walk is given by:
\begin{equation*}
(\ms L_N f)(x) \;=\; \frac 1{\deg(x)} \, \frac 1{W^N_x} \,
\sum_{y\sim x} [f(y) - f(x)]
\end{equation*}
for every $f:V_N\to\bb R$, where $y\sim x$ means that $\{x,y\}$
belongs to the set of edges $E_N$ and where $\deg(x)$ stands for the
degree of $x$: $\deg(x) = \# \{y\in V_N : y\sim x\}$.

Let $\Psi_N : V_N \to \{1, \dots, |V_N|\}$ be given by
$\Psi_N(x^N_j)=j$.  It has been proved for a class of random graphs
that there exists a time-scale $\theta_N$ for which time-rescaled
process $\bs X_N(t) = \Psi_N(\eta_N(t\theta_N))$ converges to a
$K$-process.

To describe the dynamics of the $K$-process, consider two sequences of
positive real numbers $\mb u = (u_k : k \ge 1)$ and $\mb Z = (Z_k : k
\ge 1)$ such that
\begin{equation*}
\sum_{k \ge 1} Z_k \, u_k \;<\; \infty \;, \quad
\sum_{k \ge 1} u_k \;=\; \infty\;.
\end{equation*}

Consider the set $\overline{\mathbb{N}}_* = \{1, 2, \dots \} \cup
\{\infty\}$ of non-negative integers with an extra point denoted by
$\infty$.  We endow this set with the metric induced by the isometry
$\phi:\overline{\mathbb{N}}_* \to \mathbb{R}$, which sends $n \in
\overline{\mathbb{N}}_*$ to $1/n$ and $\infty$ to $0$. This makes the
set $\overline{\mathbb{N}}_*$ into a compact metric space.

The $K$-process with parameter $(Z_k , u_k)$ can be informally
described as follows. Being at $k\in \bb N$, the process waits a mean
$Z_k$ exponential time, at the end of which it jumps to
$\infty$. Immediately after jumping to $\infty$, the process returns
to $\bb N$. The hitting time of any finite subset $A$ of $\bb N$ is
almost surely finite. Moreover, for each fixed $n \ge 1$, the
probability that the process hits the set $\{1, \dots, n\}$ at the
point $k$ is equal to $u_k/\sum_{1\le j\le n} u_j$. In particular, the
trace of the $K$-process on the set $\{1, \dots, n\}$ is the Markov
process which waits at $k$ a mean $Z_k$ exponential time at the end of
which it jumps to $j$ with probability $u_j/\sum_{1\le i\le n} u_i$.

In contrast with the theory presented in the previous sections, here
the reduced model takes value in a countably infinite space. Moreover,
as $\Psi_N$ is a bijection, the process $\bs X_N(t)$ is Markovian, and
we do not need to remove a piece of the state space by considering the
trace, and we prove the convergence of the projection to the reduce
model.

The $K$-process has been introduced by Fontes and Mathieu \cite{fm1}
who also proved the convergence to the $K$-process of the trap model
in the complete graph. Fontes and Lima \cite{fli1} considered the case
of the hypercube. These results have been extended to $d$-dimensional
torus, $d\ge 2$, and to random graphs in \cite{jlt11, jlt14}. More
recently, Cortines, Gold and Louidor considered a continuous time
random walk on the two-dimensional discrete torus, whose motion is
governed by the discrete Gaussian free field \cite{CorGolLou18}.

\subsection{A polymer in the depinned phase}
\label{14-4}

Fix $N\ge 1$ and denote by $E_N$ the set of all lattice paths starting
at $0$ and ending at $0$ after $2N$ steps:
\begin{equation*}
E_N\;=\; \{ \eta\in \bb Z^{2N+1}: \eta_{-N} = \eta_N = 0 \,,\,
\eta_{j+1}-\eta_j = \pm 1 \,,\, -N \le j < N \}\;.
\end{equation*}
Fix $0<\alpha<1$ and denote by $\eta_N(t)$ the $E_N$-valued Markov
chain whose generator $L_N$ is given by
\begin{equation*}
(L_N f)(\eta) \;=\; \sum_{j=-N+1}^{N-1} c_{j,+} (\eta) \, [f(\eta^{j,+})
- f(\eta)] \;+\; \sum_{j=-N+1}^{N-1} c_{j,-} (\eta)\, [f(\eta^{j,-})
- f(\eta)]\;.
\end{equation*}
In this formula $\eta^{j,\pm}$ represents the configuration which is
equal to $\eta$ at every site $k\not = j$ and which is equal to
$\eta_j \pm 2$ at site $j$. 

The jump rate $c_{j,+} (\eta)$ vanishes at configurations $\eta$ which
do not satisfy the condition $\eta_{j-1} = \eta_{j+1} = \eta_j +1$,
and it is given by
\begin{equation*}
c_{j,+} (\eta) \;=\; 
\begin{cases}
1/2 & \text{if $\eta_{j-1} = \eta_{j+1} \not = \pm 
  1$,} \\
1/(1+\alpha) & \text{if $\eta_{j-1} = \eta_{j+1} = 
  1$,} \\
\alpha/(1+\alpha) & \text{if $\eta_{j-1} = \eta_{j+1} = -
  1$} 
\end{cases}
\end{equation*}
for configurations which fulfill the condition $\eta_{j-1} =
\eta_{j+1} = \eta_j +1$. Let $-\eta$ stand for the configuration
$\eta$ reflected around the horizontal axis, $(-\eta)_j = - \eta_j$,
$-N\le j\le N$. The rates $c_{j,-}(\eta)$ are given by $c_{j,-}(\eta)
= c_{j,+}(-\eta)$.

Denote by $\Sigma(\eta)$ the number of zeros in the path $\eta$,
$\Sigma (\eta) = \sum_{-N\le j\le N} \mb 1\{\eta_j=0\}$. The
probability measure $\pi_N$ on $E_N$ defined by $\pi_N(\eta) =
(1/Z_{2N})\, \alpha^{\Sigma(\eta)}$, where $Z_{2N}$ is a normalizing
constant, is easily seen to be reversible for the dynamics generated
by $L_N$.

Denote by $\mf g_N$ the spectral gap of the chain. The exact
asymptotic behavior of $\mf g_N$ is not known, but, by \cite[Theorem
3.5]{cmt}, $\mf g_N \le C(\alpha) (\log N)^8 / N^{5/2}$ for some
finite constant $C(\alpha)$.

Fix a sequence $\ell_N$ such that $1\ll \ell_N \ll N$, and let
\begin{gather*}
\ms E^1_N  \;=\;  \big \{\eta\in E_N : \eta_j>0 \text{ for all } 
-(N - \ell_N) < j < (N-\ell_N) \,\big\}\;, \\
\ms E^2_N = \{\eta\in E_N : -\eta \in \ms E^1_N\}\;,
\quad \Delta_N \;=\; E_N \setminus (\ms E^1_N \cup \ms E^2_N)\;.
\end{gather*}
By equation (2.27) in \cite{clmst}, $\pi_N(\ms E^1_N) = \pi_N(\ms
E^1_N) = (1/2) + O(\ell_N^{-1/2})$.

Denote by $\mf g^{R, j}_N$ the spectral gap of the chain reflected at
$\ms E^j_N$, $j=1$, $2$. By \cite[Proposition 2.6]{clmst}, taking
$\ell_N = (\log N)^{1/4}$, for every $\epsilon >0$, there exists $N_0$
such that for all $N\ge N_0$, $\mf g^{R, j}_N \ge
N^{-(2+\epsilon)}$. In particular, choosing $\epsilon$ small enough
and $\ell_N = (\log N)^{1/4}$,
\begin{equation*}
\mf g _N \;\ll\; \mf g^{R, 1}_N
\end{equation*}
for all $N$ large enough. This shows that the chain equilibrates
inside each valley in a much shorter time-scale than the one in which
it jumps between valleys.

Let $\nu_N$ be a sequence of probability measures concentrated on $\ms
E^1_N$ and which fulfills conditions \eqref{h-12a}. Set $\theta_N = 1/ \mf
g _N$. The method presented in Section \ref{sec11} yields that the
time-rescaled coarse-grained process $\bs X^T_N (t) =
X^T_N(t\theta_N)$, introduced in condition (T1) of Definition
\ref{l1-2}, converges to the $\{1,2\}$-valued Markov chain which
starts from $1$ and jumps from $m$ to $3-m$ at rate $1/2$. Moreover,
in the time scale $\theta_N$, the time spent by the process
$\eta_N(t)$ outside the set $\ms E_N$ is negligible. We refer the
reader to \cite{bl9} for the proofs.

The interest of this model is that the entropy plays an important
role. In contrast with the models presented in the previous
subsections, the metastable behavior is not determined by an energy
landscape, but by a repulsion in a bottleneck region of the space.  In
particular, in the terminology introduced in Remark \ref{5-rm1}, this
dynamics does not visit points and the method presented in Sections
\ref{sec3}--\ref{sec7} does not apply.

Note that the metastable behavior has been derived without a precise
knowledge of the time-scale at which it occurs. Of course, the jumps
between valleys take place in the time-scale $\theta_N$, the inverse of the
spectral gap, but the exact asymptotic behavior of $\mf g_N$ is not
known, and not needed in the proof of the metastable behavior of the
dynamics.

This model has been introduced in \cite{cmt, clmst}. The results
described in this subsection are taken from \cite{bl9}.

\subsection{Coalescing random walks}

Fix $d\ge 2$. Denote $\{e_1, \dots, e_d\}$ the canonical basis of $\bb
R^d$, and by $p$ the probability measure on $\bb Z^d$ given by
\begin{equation*}
\label{22}
p(x) \;=\; \frac 1{2d} \text{ if $x\in \{\pm \, e_1, \dots, \pm \, e_d\}$}\;,
\quad \text{$p(x)= 0$\, otherwise}\;.
\end{equation*}
Let $\bb T^d_N$ be the discrete $d$-dimensional torus with $N^d$
points. Denote by $E_N$ the family of nonempty subsets of $\bb
T^d_N$. Consider coalescing random walks on $\bb T^d_N$. This is the
$E_N$-valued, continuous-time Markov chain, represented by $(A_N(t) :
t\ge 0)$, whose generator $L_N$ is given by
\begin{equation*}
(L_N f)(A) \;=\; \sum_{x\in A} \sum_{y\not \in A} p(y-x) \{ f(A_{x,y}) -
f(A) \} \;+\; \sum_{x\in A} \sum_{y\in A} p(y-x) \{ f(A_{x}) -
f(A) \}\;,
\end{equation*}
where $A_{x,y}$, resp. $A_x$, is the set obtained from $A$ by
replacing the point $x$ by $y$, resp. removing the element $x$:
\begin{equation*}
A_{x,y} \;=\; [A \setminus \{x\}] \cup \{y\} \;, \quad
A_{x} \;=\; A \setminus \{x\} \;.
\end{equation*}

In contrast with the previous dynamics, in this example the reduced
model takes value in a countably infinite state space. Let $S=\{1,1/2
, 1/3, \dots \} \cup\{ 0\}$, and let $C^{1}(S)$ be the set of
functions $f:S\to\mathbb{R}$ of class $C^{1}$, that is $f\in C^{1}(S)$
is the restriction to $S$ of a continuously differentiable function
defined on $\bb R$. For each $f\in C^{1}(S)$ define $\ms
Lf:S\to\mathbb{R}$ as
\begin{equation*}
\label{def1}
(\ms Lf)(y):=
\begin{cases}
\binom{n}{2}\, \Big\{ f\Big(\frac 1{n-1}\Big) - f\Big(\frac 1n\Big) \Big\}\;,
&\quad\textrm{if } y=\frac{1}{n}\;\textrm{and } n\ge 2\;,\\
0\;,&\quad\textrm{if }y=1\;,\\
(1/2)f'(0)\;,&\quad\textrm{if }y=0\;.
\end{cases}
\end{equation*}
Proposition 2.1 in \cite{bcl18} asserts that for each $x\in S$ there
exists a unique solution to the $(\ms L, \delta_x)$-martingale
problem.

Consider the partition of $E_{N}$ given by
\begin{equation*}
\label{parti}
E_{N}=\bigcup_{n\in\mathbb{N}}\ms E_{N}^{n}\;,\quad\text{where}
\quad \ms E_{N}^{n}:=\{A\subset \bb T^d_N :|A|=n\}\,,\;\; n\in\bb N\;.  
\end{equation*}
In this formula, $|A|$ stands for the number of elements of $A$. Let
$\Psi_N:E_N \to S$ be the corresponding projection:
\begin{equation*}
\Psi_N(A) = 1/|A|\;,\quad A\in E_{N}\;.
\end{equation*}

To define the metastable time-scale, consider two independent random
walks $(x_{t}^{N})_{t\geq0}$ and $(y_{t}^{N})_{t\geq0}$ on $\bb
T_{N}^{d}$, both with jump probability given by $p(\cdot)$, starting
at the uniform distribution. Let $\theta_{N}$ be the expected meeting
time:
\begin{equation}
\label{N-1}
\theta_{N}\;:=\; E\big[\,\min\{t\geq0 : x_{t}^{N}=y_{t}^{N}\}\,\big]\,.
\end{equation}
Since $x_{t}^{N}-y_{t}^{N}$ evolves as a random walk speeded-up by
$2$, $\theta_{N}$ represents the expectation of the hitting time of
the origin for a simple symmetric random walk speeded-up by $2$ which
starts from the uniform measure. In a general graph, though, the
time-scale should be given by \eqref{N-1} mutatis mutandis.

Consider a continuous-time, random walk $(x_t)_{t\ge 0}$ on $\bb Z^d$
with jump probabilities given by $p(\cdot)$ and which starts from the
origin. Assume that $d\ge 3$, and denote by $v_d$ the escape
probability: $v_d = P_0[H^{+}_0 = \infty]$. It can be shown that
\begin{equation*}
\label{thetan}
\begin{aligned}
\lim_{N\to \infty} \frac{\theta_{N}}{N^{d}}
\;=\;\frac{1}{2\, v_d}\, \quad \text{in dimension } d\ge 3\;, \\  
\lim_{N\to \infty} \frac{\theta_{N}}{N^{2}\log N} \;=\; 
\frac 1{\pi}  \quad \text{in dimension } d=2\;.
\end{aligned}
\end{equation*}
The factor $2$ in the denominator appears because the process has been
speeded-up by $2$.  In particular, in $d=2$, $1/\pi$ should be
understood as $(1/2) (2/\pi)$. We refer to \cite{bcl18} for a proof of
this result.

Consider the time-rescaled coarse-grained process
\begin{equation*}
\bb X_N(t) \;=\; \Psi_N(A_N(\theta_N t)) \;, \quad t\ge 0 \;.
\end{equation*}
Note that in this example we do not take the trace of the process on
some set, but we just project it on a smaller state space. 

Applying the ideas presented in the previous sections, it is proved in
\cite{bcl18} that, starting from the configuration in which each site
is occupied by a particle, $\bb X_N(t)$ converges in the Skorohod
topology to the Markov chain whose generator is given by $\ms L$ and
which starts from $0$.

This model has been first considered by Cox \cite{c89}, who proved
that the coalescence time [the time all particles coalesced into one]
is asymptotically equal to a sum of independent exponential random
variables. This result has been extended by Oliveira \cite{Oli12,
  Oli13} to the case of transitive graphs. Related questions have been
examined by Aldous and Fill \cite{af01}, Durrett \cite{dur10}, Cooper,
Frieze and Radzik \cite{cfr09}, Chen, Choi and Cox \cite{ccc16}.

\subsection{Further examples}

We mention in this last subsection other models whose metastable
behavior has been derived with the tools presented in the previous
sections. 

The metastable behavior of sequences of continuous-time Markov chains
on a fixed finite state-space has been examined in \cite{bl4,
  lx15}. This problem has been addressed with large deviations
techniques by Scopolla \cite{Sco93}, Olivieri and Scopolla in
\cite{OliSco95, OliSco96}, Manzo, Nardi, Olivieri and Scoppola
\cite{{mnos04}} and Cirillo, Nardi and Sohier \cite{cns14}.

Properties of hitting times of rare events have been considered in
\cite{blm13}. Fernandez, Manzo, Nardi, Scoppola and Sohier
\cite{fmnss15}, and Fernandez, Manzo, Nardi and Scoppola \cite{fmns15}
examined this question through the pathwise approach.

The evolution, in the zero-temperature limit, of a droplet in the
Ising model under the conservative Kawasaki dynamics in a large
two-dimensional square with periodic boundary conditions has been
derived in \cite{bl5, GoiLan15}. The reduced model in this example is
a two-dimensional Brownian motion on the torus.

Misturini \cite{Mis16} considered the ABC model on a ring in a
strongly asymmetric regime. He derived the metastable behavior of the
dynamics among the segregated configurations in the zero-temperature
limit. Here, the reduced model is a Brownian motion.

\smallskip\noindent{\bf Acknowledgments.} The results presented in
this review are the outcome of long standing collaborations. The
author wishes to thank J. Beltr\'an, A. Gaudilli\`ere, M. Jara,
M. Loulakis, M. Mariani, R. Misturini, M. Mourragui, I. Seo,
A. Teixeira, K. Tsunoda.

M. Ayala, B. van Ginkel, F. Sau and I. Seo read parts of a preliminary
version of this review. Their comments permitted to correct some
errors and to clarify some statements. 

This work has been partially supported by FAPERJ CNE
E-26/201.207/2014, by CNPq Bolsa de Produtividade em Pesquisa PQ
303538/2014-7, by ANR-15-CE40-0020-01 LSD of the French National
Research Agency and by the European Research Council (ERC) under the
European Union’s Horizon 2020 research and innovative programme (grant
agreement No 715734).

\end{document}